\documentclass[a4paper,12pt,draft]{article}
\usepackage{amsmath,amssymb,enumerate,color}
\usepackage{amsthm,color}
\usepackage{txfonts}
\usepackage{bigints}
\usepackage{comment}
\usepackage{tabularx}
\usepackage{longtable}
\usepackage{mathtools}
\usepackage{tikz}
\usepackage{ulem}
\usepackage{here}
\usetikzlibrary{positioning, intersections, calc, arrows.meta,math, patterns}

\newcommand{\R}{\mathbb{R}}

\newcommand{\N}{\mathbb{N}}

\newtheorem{thm}{Theorem}[section]
\newtheorem{lemm}[thm]{Lemma}
\newtheorem{prop}[thm]{Proposition}
\newtheorem{cor}[thm]{Corollary}
\theoremstyle{remark}
\newtheorem{rem}{Remark}[section]
\theoremstyle{definition}

\newtheorem{defn}{Definition}[section]

\numberwithin{equation}{section}

\makeatletter
\def\@cite#1#2{[{{\bfseries #1}\if@tempswa , #2\fi}]}
\makeatother                  
\begin{document}
\begin{center}
\Large{{\bf
On solitary wave solutions with two-frequency parameters
to the three-component system of quadratic nonlinear Schr\"odinger equations}}
\end{center}

\vspace{5pt}

\begin{center}
Hiroyuki Hirayama%
\footnote{
Faculty of Education, University of Miyazaki, 1-1, Gakuenkibanadai-nishi, Miyazaki, 889-2192 Japan,  
E-mail:\ {\tt h.hirayama@miyazaki-u.ac.jp}}
and 
Masahiro Ikeda%
\footnote{
Graduate School of Information Science and Technology, The
University of Osaka, 1-5, Yamadaoka, Suita-shi, Osaka
565-0871, Japan, 
E-mail:\ {\tt ikeda@ist.osaka-u.ac.jp}}
\end{center}

\newenvironment{summary}{\vspace{.5\baselineskip}\begin{list}{}{%
     \setlength{\baselineskip}{0.85\baselineskip}
     \setlength{\topsep}{0pt}
     \setlength{\leftmargin}{12mm}
     \setlength{\rightmargin}{12mm}
     \setlength{\listparindent}{0mm}
     \setlength{\itemindent}{\listparindent}
     \setlength{\parsep}{0pt}
     \item\relax}}{\end{list}\vspace{.5\baselineskip}}
{\footnotesize {\bf Abstract.}}
In the present paper, 
we consider the Cauchy problem of 
a system of three nonlinear Schr\"odinger equations 
with quadratic nonlinearity. 
We first prove the existence of ground states in the form of solitary wave solutions with two frequency parameters. 
We then show that the conditions on the frequency parameters for the existence of ground states depend on the resonance structure of the system.

Next, we give the two results for global solutions. 
The first is an improvement of global well-posedness 
for initial data below the ground state threshold. 
The second is an improvement of global well-posedness for oscillating initial data. 
We also prove the orbital stability of the ground state sets 
with small speed parameter.

{\footnotesize{\it Mathematics Subject Classification}\/ (2020): %
35Q55; 
35J50;
35B35
}\\
{\footnotesize{\it Key words and phrases}\/: %
System of Schr\"odinger equations,
Global well-posedness,
Ground state, 
Solitary wave, 
Orbital stability, 
Resonance
}

\section{Introduction}

We consider the Cauchy problem of the 
system of nonlinear Schr\"odinger equations:
\begin{equation}\label{dnls}
\begin{cases}
\left(i\partial_{t}+\dfrac{1}{\sigma_1} \Delta \right)u_1=-\overline{u_2}u_3,\ \ t\in \R,\ x\in \R^d,\\
\left(i\partial_{t}+\dfrac{1}{\sigma_2} \Delta \right)u_2=-\overline{u_1}u_3,\ \ t\in \R,\ x\in \R^d,\\
\left(i\partial_{t}+\dfrac{1}{\sigma_3} \Delta \right)u_3=-u_1u_2,\ \ t\in \R,\ x\in \R^d,\\
    (u_1,u_2,u_3)|_{t=0}=(u_{1,0},u_{2,0},u_{3,0}),\ \ x\in \R^d,
\end{cases}
\end{equation}
where $\partial_t:=\partial/\partial t$,\ 
$\partial_k:=\partial/\partial x_k$,\ 
$\nabla :=(\partial_1,\cdots,\partial_d)$,\ 
$\Delta :=\sum_{k=1}^d\partial_k^2$, 
$\sigma_1,\sigma_2,\sigma_3 \in \R\setminus \{0\}$ are constants, 
and the unknown functions $u_1,u_2,u_3$ are 
complex-valued. 
The system (\ref{dnls}) appears in the laser--plasma interaction 
(see \cite{CC04, CCO09}). 
We put
\[
F_1(U):=-\overline{u_2}u_3,\ \ 
F_2(U):=-\overline{u_1}u_3,\ \ 
F_3(U):=-u_1u_2
\]
for $U=(u_1,u_2,u_3)$. 
Then, (\ref{dnls}) can be written as
\begin{equation}\label{dnls2}
\begin{cases}
i\partial_tu_j+\dfrac{1}{\sigma_j}\Delta u_j=F_j(U),\ t>0,\ x\in \R^d,\ j=1,2,3,\\
U|_{t=0}=U_0,
\end{cases}
\end{equation}
where $U_0=(u_{1,0},u_{2,0},u_{3,0})$. 
The system (\ref{dnls2}) has conserved quantities 
$Q_1$, $Q_2$, $E$, and $P$ defined by
\[
\begin{split}
    Q_1(U)&:=\frac{1}{2}\left(\|u_1\|_{L^2(\R^d)}^2+\|u_3\|_{L^2(\R^d)}^2\right),\ \ 
    Q_2(U):=\frac{1}{2}\left(\|u_2\|_{L^2(\R^d)}^2+\|u_3\|_{L^2(\R^d)}^2\right),\\
    E(U)&:=L(U)+N(U),\ \ L(U):=\frac{1}{2}\sum_{j=1}^3\frac{1}{\sigma_j}\|\nabla u_j\|_{L^2(\R^d)}^2,\ \ N(U):=-{\rm Re}\left(u_1u_2,u_3\right)_{L^2(\R^d)},\\
    P(U)&:=(P_1(U),\cdots,P_d(U)),\ \ 
    P_k(U):=\frac{1}{2}\sum_{j=1}^3{\rm Re}(i\partial_ku_j,u_j)_{L^2(\R^d)}. 
\end{split}
\]
Namely, if $U$ is a smooth solution to (\ref{dnls2}) on $[0,T)$, 
then it holds that
\[
\frac{d}{dt}Q_1(U(t))=0,\ \frac{d}{dt}Q_2(U(t))=0, \frac{d}{dt}E(U(t))=0, 
\frac{d}{dt}P(U(t))=0
\]
for any $t\in (0,T)$. In particular, the energy space of (\ref{dnls2}) is
\[
\mathcal{H}^1(\R^d):=H^1(\R^d)\times H^1(\R^d) \times H^1(\R^d),
\]
where $H^1(\R^d)$ denotes the $L^2$-based Sobolev space. 

The system (\ref{dnls2}) is invariant under the scaling transformation
\[
U_{\lambda}(t,x):=\lambda^{-2}U(\lambda^{-2}t,\lambda^{-1}x),\ \ \lambda>0 
\]
and the scaling critical Sobolev exponent is $s_c=\frac{d}{2}-2$. 
We note that (\ref{dnls2}) is $L^2$-critical when $d=4$, 
and $H^1$-critical when $d=6$.
\subsection{Previous works}
We first introduce the results by Noguera and Pastor. 
In \cite{NP21, NP22},  
they considered the nonlinear Schr\"odinger system
\begin{equation}\label{nlssg}
i\alpha_j\partial_tu_j+\gamma_j\Delta u_j-\beta_ju_j=-f_j(u_1,\cdots,u_l),\ t>0,\ x\in \R^d,\ j=1,\cdots,l
\end{equation}
with general quadratic nonlinearities $f_j$. 
The system (\ref{dnls2}) is a special case of (\ref{nlssg})
with 
\[
l=3,\ 
\alpha_j=1,\ \gamma_j=\frac{1}{\sigma_j},\ \beta_j=0,\ f_j=-F_j. 
\]
Here, we focus on the results for the system (\ref{dnls2}). 
For $1\le d\le 5$, 
Noguera and Pastor proved the existence of standing wave solutions to (\ref{dnls2}) of the form of
\[
U(t,x)=(e^{i\omega t}\varphi_1,e^{i\omega t}\varphi_2,e^{2i\omega t}\varphi_3)
\]
with $\omega >0$, where $\Phi =(\varphi_1,\varphi_2,\varphi_3)$ is 
a ground state solution to the elliptic system
\begin{equation}\label{ellip1}
-\frac{1}{\sigma_j}\Delta\varphi_j+\omega_j\varphi_j=-F_j(\Phi),\ x\in \R^d,\ j=1,2,3 
\end{equation}
with $(\omega_1,\omega_2,\omega_3)=(\omega,\omega,2\omega)$. 
They also obtained the following well-posedness results in the energy space. 
\begin{thm}[\cite{NP21, NP22}]\label{Theorem_NP}
Let $\Phi$ be a ground state solution to {\rm (\ref{ellip1})} with $\omega =1$. 
Put $Q:=Q_1+Q_2$. 
\begin{itemize}
    \item[{\rm (i)}] If $1\le d\le 6$, then 
    {\rm (\ref{dnls2})} is locally well-posed in $\mathcal{H}^1(\R^d)$. 
    \item[{\rm (ii)}] If $1\le d\le 3$, then 
    {\rm (\ref{dnls2})} is globally well-posed in $\mathcal{H}^1(\R^d)$. 
    \item[{\rm (iii)}] If $d=4$, then 
    {\rm (\ref{dnls2})} is globally well-posed in $\mathcal{H}^1(\R^d)$ 
    for initial data $U_0$ satisfying
    \[
    Q(U_0)<Q(\Phi).
    \]
    \item[{\rm (iv)}] If $d=5$, then 
    {\rm (\ref{dnls2})} is globally well-posed in $\mathcal{H}^1(\R^d)$ 
    for initial data $U_0$ satisfying
    \[
    Q(U_0)E(U_0)<Q(\Phi)E(\Phi)
    \]
    and
    \[
    Q(U_0)L(U_0)<Q(\Phi)L(\Phi).
    \]
\end{itemize}
\end{thm}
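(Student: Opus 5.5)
The four parts of Theorem~\ref{Theorem_NP} will be handled in the standard order: a Strichartz contraction gives local theory, and conservation laws plus sharp Gagliardo--Nirenberg-type inequalities, whose optimizers are ground states of (\ref{ellip1}), give global theory.

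\textbf{Local well-posedness (i).} Write (\ref{dnls2}) in Duhamel form $u_j(t)=e^{it\Delta/\sigma_j}u_{j,0}-i\int_0^t e^{i(t-s)\Delta/\sigma_j}F_j(U(s))\,ds$. Run a contraction in $C([0,T];H^1)\cap L^q([0,T];W^{1,r})$ for an admissible pair $(q,r)$. Since $\sigma_j\neq0$, each propagator $e^{it\Delta/\sigma_j}$ obeys the usual Strichartz estimates; a negative $\sigma_j$ only reverses time. The quadratic terms $\overline{u_2}u_3$, $\overline{u_1}u_3$, $u_1u_2$ are controlled by H\"older and Sobolev as in the scalar $|u|u$ problem, because one derivative falls on a single factor.
\begin{itemize}
\item For $d\le5$ the problem is $H^1$-subcritical, and $T$ depends only on $\|U_0\|_{\mathcal H^1}$.
\item For $d=6$ the problem is energy-critical. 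Here one takes $T$ small so that $\|e^{it\Delta/\sigma_j}U_0\|_{L^q_tW^{1,r}}$ is small, which gives existence with $T$ depending on the profile of $U_0$.
\end{itemize}
Continuous dependence follows from the same estimates. One also justifies conservation of $Q_1,Q_2,E$ by regularization.

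\textbf{Global results (ii)--(iv).} The key tool is the sharp inequality
\[
|N(U)|\le C_{\rm opt}\, Q(U)^{\frac{6-d}{4}}\,K(U)^{\frac d4},\qquad K(U):=\sum_j\frac{1}{\sigma_j}\|\nabla u_j\|_{L^2}^2 .
\]
It follows from H\"older and Gagliardo--Nirenberg. This uses $\sigma_j>0$ so that $K$ is coercive; I take that as implicit in the cited setting. The optimal constant $C_{\rm opt}$ is attained by a ground state $\Phi$ of (\ref{ellip1}), via the Weinstein argument. Pohozaev identities for $\Phi$, obtained by multiplying (\ref{ellip1}) by $\overline{\varphi_j}$ and by $x\cdot\nabla\overline{\varphi_j}$, express $Q(\Phi)$, $L(\Phi)$, $N(\Phi)$ in terms of each other and so identify $C_{\rm opt}$.

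The three dimensional regimes then go as follows, with $Q$ and $E$ conserved throughout.
\begin{itemize}
\item For $d\le3$ the exponent $d/4<1$. Hence $L\le E+C Q^{(6-d)/4}L^{d/4}$ bounds $L(U(t))$ uniformly, which gives (ii) via the blow-up alternative.
\item For $d=4$ the inequality reads $|N|\le C_{\rm opt}QL\cdot2$. The threshold $Q(U_0)<Q(\Phi)$ gives $E\ge(1-Q/Q(\Phi))L$, which bounds $L$ and gives (iii).
\item For $d=5$ one sets $f(y)=y-CQ^{1/4}y^{5/4}$ with $y=L$. The scaling-invariant thresholds, together with continuity in time, trap $Q(U_0)L(U(t))$ below $Q(\Phi)L(\Phi)$ for all $t$. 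This is the Holmer--Roudenko argument and gives (iv).
\end{itemize}

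\textbf{Main obstacle.} I expect two places to be hardest. The first is the existence of the optimizer and its link to the ground state: it needs concentration-compactness for a vector functional where only $Q=Q_1+Q_2$ is used, and one must check that the minimizer solves (\ref{ellip1}) with the right frequencies after rescaling. The second, in $d=6$ only, is the critical local theory.
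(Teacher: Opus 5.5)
Your outline is correct. The paper does not prove Theorem~\ref{Theorem_NP} itself: it quotes it from \cite{NP21, NP22} and uses (i) as a black box. Its proof of the refinement Theorem~\ref{GWP_1}, however, follows your scheme for (iii)--(iv):
\begin{itemize}
\item the bound $|N(U)|\le C\,Q(U)^{\frac{6-d}{4}}L(U)^{\frac d4}$;
\item the identities $L(\Phi)=\frac{d}{6-d}Q(\Phi)$ and $N(\Phi)=-\frac{4}{6-d}Q(\Phi)$, obtained from the Nehari identity and Proposition~\ref{Pohoz_id};
\item a direct a priori bound for $d=4$;
\item for $d=5$, the continuity lemma of B\'egout--Pastor (Lemma~\ref{poly_cont_prop}), which is the same trapping argument you attribute to Holmer--Roudenko.
\end{itemize}

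The genuine difference is how the sharp constant is tied to $\Phi$. You follow Weinstein, which per the paper's remark is also how \cite{NP21} proceeds. You minimize $J(U)=Q(U)^{\frac{6-d}{4}}L(U)^{\frac d4}/|N(U)|$ by concentration-compactness. You then check that the Euler--Lagrange equation, whose weights $(1,1,2)$ come from $Q=Q_1+Q_2$, rescales to (\ref{ellip1}) with $\omega=1$.

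The comparison with ground states should be made explicit, since no uniqueness is available. Every nontrivial solution satisfies $E+Q=\frac{2}{6-d}Q$ and $J=\frac14 d^{\frac d4}(6-d)^{1-\frac d4}Q^{\frac12}$. Hence the rescaled $J$-minimizer has least action among solutions. It follows that all ground states share the same $Q$, and every ground state attains $\inf J$.

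The paper instead starts from a Nehari minimizer. Its existence is already known from Lieb's compactness applied to Nehari minimizing sequences (Proposition~\ref{mini_seq_conv}). The paper then uses the rescaling $\widetilde U=aU(bx)$, which matches $L$ and $Q$, together with Lemma~\ref{SJ_eq_lemm}, to show this minimizer minimizes $J$ (Proposition~\ref{gs_op_gag}).

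This removes your first ``main obstacle'' entirely, since no compactness argument for $J$ is needed. It also carries over unchanged to the functionals $J_1,J_2$ built on $Q_1$ or $Q_2$ alone. Those live on the zero-mass spaces $H^1\times\dot H^1\times H^1$ and $\dot H^1\times H^1\times H^1$, where a direct Weinstein minimization would be more delicate. Your route, in exchange, gives existence of a ground state and the sharp inequality in one stroke.

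One small slip: for $d=4$ the power of $Q$ is $\frac12$. The inequality is therefore $|N(U)|\le Q(U)^{\frac12}L(U)/Q(\Phi)^{\frac12}$, giving $E\ge\bigl(1-(Q/Q(\Phi))^{\frac12}\bigr)L$ rather than $(1-Q/Q(\Phi))L$. The threshold $Q(U_0)<Q(\Phi)$ is unaffected.
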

\begin{rem}
\begin{itemize}
\item[{\rm (i)}] It is also proved in \cite{NP21} 
that $Q=Q_1+Q_2$ and $E$ are conserved quantities 
for the solution to {\rm (\ref{dnls2})} constructed 
in {\rm Theorem~\ref{Theorem_NP}\ (i)}. 
The conservation laws for $Q_1$ and $Q_2$ are obtained in a similar way 
to the conservation law for $Q$. 
\item[{\rm (ii)}] The local well-posedness in $L^2(\R^d)\times L^2(\R^d)\times L^2(\R^d)$ is also established when $1\le d\le 4$ in \cite{NP21}. 
The solution can be extended globally in time when $1\le d\le 3$ by 
using the conservation law for $Q$. 
\item[{\rm (iii)}] For $d=6$, existence of the ground states with $\omega =0$ is also obtained in \cite{NP22}. 
\item[{\rm (iv)}] There are also the results on blow-up of solution and stability/instability of ground state in \cite{NP21}, \cite{NP22}, and scattering of solution in \cite{NP21_2}, \cite{NP21_3}. 
\end{itemize}
\end{rem}
The elliptic system {\rm (\ref{ellip1})} with $\sigma_1=\sigma_2=\sigma_3=1$ 
was also studied in \cite{FLYY26} as a limiting system of
\begin{equation}\label{ellip1_power}
-\Delta\phi_j+\lambda_j\phi_j=|\phi_j|^{p-2}\phi_j-\alpha F_j(\phi_1,\phi_2,\phi_3),\ x\in \R^d,\ j=1,2,3.  
\end{equation}
More precisely, when $2+4/d<p<2^*$ and
$\lambda_j=\alpha^{-\frac{4}{4-d}}\omega_j$, by setting
\[
\varphi_j(x)=\alpha^{\frac{d}{4-d}}\phi_j(\alpha^{\frac{2}{4-d}}x)
\]
and letting $\alpha \rightarrow +0$, 
the system (\ref{ellip1_power}) formally converges to the system {\rm (\ref{ellip1})}, 
where $2^*=\infty$ if $d=1,2$ and $2^*=2d/(d-2)$ if $d\ge 3$. 
In \cite{FLYY26}, the existence of ground states for the mass constraint minimizing problem 
\[
E(\Phi)=\inf\{E(U)|\ U\in \mathcal{H}^1(\R^d),\ Q_1(U)=a_1^2,\ Q_2(U)=a_2^2\}
\]
was established for any $a_1,a_2>0$ when $1\le d\le 3$ and $\sigma_1=\sigma_2=\sigma_3=1$. 
Such a ground state satisfies the system {\rm (\ref{ellip1})} for some $\omega_1,\omega_2$. 
In this paper, we will give the existence of ground states under the Nehari constraint (see, Theorem~\ref{ex_gs_1} below). 
For the ground state of the elliptic system (\ref{ellip1_power}), 
there are many previous results (\cite{Ardila18, KM26, KO22, Pomponio10, Wang17}). 

The two-component system
\begin{equation}\label{two_NLSS}
\begin{cases}
    i\partial_tu+\Delta u=-2\overline{u}v,\ \ t>0,\ x\in \R^d,\\
    i\partial_tv+\kappa \Delta v=-u^2,\ \ t>0,\ x\in \R^d
\end{cases}
\end{equation}
is also one of the typical examples of (\ref{nlssg}). 
For (\ref{two_NLSS}), there are many results 
for the local/global well-posedness, 
existence of standing wave solutions, 
blow-up of solution, and scattering of solution (\cite{Hapre, HIN21, HM21, HOT13, IKN19, NP21_4}). 

When the constants $\sigma_1,\sigma_2,\sigma_3$ satisfy
\begin{equation}\label{massres}
\sigma_1+\sigma_2=\sigma_3, 
\end{equation}
which is called the ``mass resonance'' condition, 
the system {\rm (\ref{dnls2})} is invariant under the 
Galilean transformation
\[
u_{c,j}(t,x)= e^{-i\frac{\sigma_j}{4}|c|^2t}e^{i\frac{\sigma_j}{2}c\cdot x}u_j(t,x-ct),\ \ j=1,2,3.
\]
Then, the traveling wave solution to {\rm (\ref{dnls2})} 
can be obtained as the Galilean transform of the standing wave solution. 
On the other hand, if the mass resonance condition {\rm (\ref{massres})} 
is not satisfied, then the existence of traveling wave solutions is not clear. 
For the two-component system (\ref{two_NLSS}), 
Fukaya, Hayashi, and Inui (\cite{FHI24}) proved the existence of 
traveling wave solutions of the form of
\[
(u(t,x),v(t,x))=(e^{i\omega t}\phi (x-ct), e^{2i\omega t}\psi (x-ct))
\]
with $(\omega,c)\in \R\times \R^d$ satisfying the following conditions:
\[
\begin{dcases}
\omega >\max\left\{\dfrac{|c|^2}{4},\dfrac{|c|^2}{8\kappa}\right\}&{\rm if}\ 1\le d\le 5\ {\rm and}\ \kappa >0\\
\omega =\dfrac{|c|^2}{8\kappa}&{\rm if}\ 3\le d\le 5,\ c\ne 0,\ {\rm and}\ 0<\kappa <\dfrac{1}{2},\\
\omega=\dfrac{|c|^2}{4}&{\rm if}\ d=5,\ c\ne 0,\ {\rm and}\ \kappa >\dfrac{1}{2}.
\end{dcases}
\]
We note that the system (\ref{two_NLSS}) 
is a reduction of the system (\ref{dnls2}) such as
\[
\sigma_1=\sigma_2=1,\ \sigma_3=\frac{1}{\kappa},\ u_1=u_2=\sqrt{2}u,\ u_3=2v,
\]
and then, the mass resonance condition {\rm (\ref{massres})} 
can be written $\kappa =\frac{1}{2}$. 
For $d=4$, they also obtained the global well-posedness of {\rm (\ref{two_NLSS})} 
for oscillating initial data $(u,v)|_{t=0}=(e^{\frac{i}{2}c\cdot x}u_0,e^{\frac{i}{2\kappa}c\cdot x}v_0)$ with large enough $|c|$ (which depends on $u_0$, $v_0$) 
under the smallness condition 
$\|u_0\|_{L^2}^2< A_0$ if $0<\kappa <\frac{1}{2}$ 
and $\|v_0\|_{L^2}^2< B_0$ if $\kappa >\frac{1}{2}$ 
for some universal constants $A_0$, $B_0>0$. 
Analogous properties for the system (\ref{dnls2}) were considered by Li in \cite{Lipre}, 
and the existence of traveling wave solutions to (\ref{dnls2}) of the form of
\begin{equation}\label{tws_li}
U(t,x)=(e^{i\omega t}\varphi_1(x-ct),e^{i\omega t}\varphi_2(x-ct),e^{2i\omega t}\varphi_3(x-ct))
\end{equation}
with $(\omega,c)\in \R\times \R^d$ is obtained under the following conditions:
\[
\begin{dcases}
\omega >\max\left\{\dfrac{\sigma_1|c|^2}{4},\dfrac{\sigma_2|c|^2}{4},\dfrac{\sigma_3|c|^2}{8}\right\}&{\rm if}\ 1\le d\le 5\ {\rm and}\ \sigma_1,\sigma_2,\sigma_3 >0\\
\omega =\max\left\{\dfrac{\sigma_1|c|^2}{4},\dfrac{\sigma_2|c|^2}{4},\dfrac{\sigma_3|c|^2}{8}\right\}&{\rm if}\ 3\le d\le 5,\ c\ne 0,\ {\rm and}\ 0<\sigma_3^*\le \sigma_2^*<\sigma_1^*,\\
\omega=\max\left\{\dfrac{\sigma_1|c|^2}{4},\dfrac{\sigma_2|c|^2}{4},\dfrac{\sigma_3|c|^2}{8}\right\}&{\rm if}\ d=5,\ c\ne 0,\ {\rm and}\ 0<\sigma_3^*<\sigma_2^*=\sigma_1^*,
\end{dcases}
\]
where $\sigma_1^*,\sigma_2^*,\sigma_3^*$ are the rearrangement 
of $\sigma_1,\sigma_2,\frac{\sigma_3}{2}$ such that $\sigma_1^*\ge \sigma_2^*\ge \sigma_3^*$. 
We note that $\sigma_1=\sigma_2=\frac{\sigma_3}{2}$ does not occur 
if the mass resonance condition (\ref{massres}) is not satisfied. 
\begin{rem}
    The functions $\varphi_1,\varphi_2,\varphi_3$ in {\rm (\ref{tws_li})} satisfy the elliptic system
    \begin{equation}\label{ellip_sys_tra}
    -\frac{1}{\sigma_j}\Delta\varphi_j+\omega_j\varphi_j+i(c\cdot \nabla)\varphi_j=-F_j(\Phi),\ x\in \R^d,\ j=1,2,3
    \end{equation}
    with $(\omega_1,\omega_2,\omega_3)=(\omega,\omega,2\omega)$. 
    Li also shows that there is no nontrivial solution to this system 
    when $1\le d\le 5$, $\sigma_1=\sigma_2=\frac{\sigma_3}{2}$, 
    and $\omega=\frac{\sigma_1|c|^2}{4}$. 
\end{rem}
Global well-posedness for oscillating initial data was also obtained as follows. 
\begin{thm}[\cite{Lipre}]\label{Theorem_Li}
Let $d=4$, $\sigma_1,\sigma_2,\sigma_3>0$, $c\in \R^4\setminus \{0\}$, $V_0=(v_{1,0},v_{2,0},v_{3,0})\in \mathcal{H}^1(\R^4)$, and put
\[
    U_0(x)=(e^{i\frac{\sigma_1}{2}c\cdot x}v_{1,0},e^{i\frac{\sigma_2}{2}c\cdot x}v_{2,0},e^{i\frac{\sigma_3}{2}c\cdot x}v_{3,0}). 
    \]
\begin{itemize}
    \item[{\rm (i)}] Assume $\sigma_1+\sigma_2<\sigma_3$. 
    There exists $A_0>0$ such that if $\max\{\|v_{1,0}\|_{L^2}^2,\|v_{2,0}\|_{L^2}^2\}<A_0$ 
    and $c\in \R^4$ is large enough (which depends on $V_0$), 
    then the local solution to (\ref{dnls2}) can be extended globally in time.
    \item[{\rm (ii)}] Assume $\sigma_1+\sigma_2>\sigma_3$ and $\sigma_1<\sigma_2$. 
    There exists $B_0>0$ such that if $\max\{\|v_{1,0}\|_{L^2}^2,\|v_{3,0}\|_{L^2}^2\}<B_0$ 
    and $c\in \R^4$ is large enough (which depends on $V_0$), 
    then the local solution to (\ref{dnls2}) can be extended globally in time. 
    \item[{\rm (iii)}] Assume $\sigma_1+\sigma_2>\sigma_3$ and $\sigma_1>\sigma_2$. 
    There exists $C_0>0$ such that if $\max\{\|v_{2,0}\|_{L^2}^2,\|v_{3,0}\|_{L^2}^2\}<C_0$ 
    and $c\in \R^4$ is large enough (which depends on $V_0$), 
    then the local solution to (\ref{dnls2}) can be extended globally in time. 
    \item[{\rm (iv)}] Assume $\sigma_1+\sigma_2>\sigma_3$ and $\sigma_1=\sigma_2$. 
    There exists $D_0>0$ such that if $\|v_{3,0}\|_{L^2}^2<D_0$ 
    and $c\in \R^4$ is large enough (which depends on $V_0$), 
    then the local solution to (\ref{dnls2}) can be extended globally in time. 
\end{itemize}
\end{thm}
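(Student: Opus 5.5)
The plan is to reduce the problem to a perturbation of the traveling-wave setting by a gauge (Galilean-type) change of variables, and then to control the non-resonant part of the nonlinearity by normal forms. Write $U(t,x)$ with components
\[
u_j(t,x)=e^{-i\frac{\sigma_j}{4}|c|^2t}e^{i\frac{\sigma_j}{2}c\cdot x}v_j(t,x-ct).
\]
A direct computation shows that $V=(v_1,v_2,v_3)$ solves a system of the form
\[
i\partial_t v_j+\frac{1}{\sigma_j}\Delta v_j=F_j(V)\,e^{i\theta_j(t,x)},
\]
where the phases $\theta_j$ are linear in $x$ and $t$ with coefficients proportional to $(\sigma_3-\sigma_1-\sigma_2)c$. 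In the mass resonant case these phases vanish. Without mass resonance the nonlinearity carries an oscillating factor $e^{i\frac{\sigma_3-\sigma_1-\sigma_2}{2}c\cdot x}$, up to time-dependent phases. The initial data become $V_0$, which is independent of $c$. The conserved quantities $Q_1,Q_2$ are unchanged by this transformation. The energy of $U$, however, becomes $L(V)$ plus momentum and mass terms plus an oscillating nonlinear term.

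Next, for $d=4$, I would use the local theory of Theorem~\ref{Theorem_NP} (i), whose existence time depends on the $H^1$ norm. Global existence then follows once $\|\nabla V(t)\|_{L^2}$ stays bounded. To get this bound, I would use the localized virial/energy identity for $V$. This identity contains the kinetic part $L(V)$ and the potential term $N$ with an oscillating weight. After integrating by parts in $x$ along the direction $c$, that term gains a factor $|c|^{-1}$. The cost is a derivative, which is controlled by the $H^1$ norm on a bootstrap interval. So for $|c|$ large, depending on $\|V_0\|_{H^1}$, the oscillating contribution is a small perturbation, on any time interval of length $O(1)$ where $\|V\|_{H^1}\le 2C$.

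The key step is to handle the cross terms that do not oscillate, namely those paired with the momentum drift. Here the sign conditions come in. In case (i), $\sigma_1+\sigma_2<\sigma_3$, I would control the nonlinearity by the Gagliardo--Nirenberg inequality $|N(V)|\le C\|v_1\|_{L^2}^{1/2}\|v_2\|_{L^2}^{1/2}\|\nabla v_1\|^{1/2}\cdots$. Arranged so that only the masses of $v_1,v_2$ appear, this gives smallness in terms of $A_0$ through conservation of $Q_1,Q_2$ applied to the individual masses via the resonance decomposition. Cases (ii)--(iv) are handled symmetrically, with the roles of the components permuted according to which of $\sigma_1,\sigma_2$ is larger. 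In case (iv) only $v_3$ needs to be small, because the two lighter components play symmetric roles.

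The main obstacle I expect is closing the bootstrap uniformly in time, since the smallness obtained from oscillation is only local in time. I would first try a normal form transformation (a Poincaré--Dulac-type change of unknown, dividing by the phase $\frac{\sigma_3-\sigma_1-\sigma_2}{2}c\cdot\xi$-type symbol). This should turn the oscillating nonlinearity into cubic terms of size $|c|^{-1}$, which are then summable via Strichartz estimates over unit intervals. The conserved masses give a priori control, and this yields a modified energy that is almost conserved and coercive under the stated mass-smallness assumption.
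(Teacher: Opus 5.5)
There is a genuine gap in how you propagate smallness in time. Only $2Q_1=\|v_1\|_{L^2}^2+\|v_3\|_{L^2}^2$ and $2Q_2=\|v_2\|_{L^2}^2+\|v_3\|_{L^2}^2$ are conserved. The three-wave interaction exchanges mass between $v_3$ and the pair $(v_1,v_2)$ in equal amounts. In case (i) this gives only $\|v_1(t)\|_{L^2}^2\le\|v_{1,0}\|_{L^2}^2+\|v_{3,0}\|_{L^2}^2$, and $v_{3,0}$ is arbitrary. In case (iv), $\|v_3(t)\|_{L^2}^2$ may grow at the expense of the large components $v_1,v_2$. So your Gagliardo--Nirenberg coercivity with small mass factors cannot be propagated, precisely in the cases that need large $|c|$. (In (ii) and (iii) your idea does work, because $2Q_1$, resp.\ $2Q_2$, is exactly the sum of the two small masses. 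But then $|N(U)|\lesssim Q_j(U)^{1/2}L(U)$ and energy conservation give global existence for every $c$, as in Theorem~\ref{GWP_1}, and no oscillation is used.) The normal form step does not repair this. Errors of size $|c|^{-1}$ per unit time interval accumulate linearly in $t$. There is no global Strichartz bound with which to sum them, since in the $L^2$-critical dimension $d=4$ one component has large $L^2$ norm. Your ``almost conserved, coercive modified energy'' is asserted, not constructed, and it is exactly the step that would carry the proof.

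What you are missing is that no time-local smallness is needed: there is an \emph{exactly} conserved functional that is coercive for large $|c|$. It is the action $S_{\omega_1,\omega_2,c}=E+\omega_1Q_1+\omega_2Q_2+c\cdot P$ taken at a zero-mass point of $\partial\Omega_c$. In terms of $V(t)=\Lambda_{-c}U(t)$ (no moving frame needed) it reads
\[
\widetilde S_{\omega_1,\omega_2,c}(V)=\frac12\sum_{j=1}^3\frac{1}{\sigma_j}\|\nabla v_j\|_{L^2}^2+\frac12\sum_{j=1}^3\Bigl(\omega_j-\frac{\sigma_j}{4}|c|^2\Bigr)\|v_j\|_{L^2}^2+N_c(V).
\]
At a suitable point, the coefficients $\omega_j-\frac{\sigma_j}{4}|c|^2$ vanish except for the component(s) on which smallness is imposed, where they are of order $|c|^2$. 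For example, for $\mu>0$ and $(\omega_1,\omega_2)=(\frac{\sigma_1}{4}|c|^2,\frac{\sigma_2}{4}|c|^2)$ only $\frac{\mu}{8}|c|^2\|v_3\|_{L^2}^2$ survives. Moving mass into a massive component costs energy of order $|c|^2$. This, not conservation of $Q_1,Q_2$, is what keeps that component small for all times.

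The paper's Theorem~\ref{GWP_2}, which contains (i)--(iv), combines this with two further ingredients. The first is invariance of the potential well $\mathcal{A}^+_{\omega_1,\omega_2,c}$ (Propositions~\ref{Apm_inv_flow} and~\ref{Ap_Gwp}), which gives a uniform-in-time $H^1$ bound. The second is the scaling $\mathfrak{m}_{\omega_1,\omega_2,c}=|c|^{2}\mathfrak{m}_{\omega_1/|c|^2,\omega_2/|c|^2,e}$ for $d=4$ (Lemma~\ref{S_sca_rot}). Its $|c|^2$ growth matches that of $S_{\omega_1,\omega_2,c}(U_0)$, so the threshold is independent of $c$. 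Oscillation is used only at $t=0$, to make $N_c(V_0)$ negligible. Even without the sharp variational threshold, this conservation law closes globally, with no normal form at all, when combined with $|N_c(V)|\lesssim\|\nabla v_1\|_{L^2}\|\nabla v_2\|_{L^2}\|v_3\|_{L^2}$ (case $\mu>0$) and a continuity argument on $\|v_3(t)\|_{L^2}$.
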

Theorem~\ref{Theorem_Li} says that the global well-posedness 
for oscillating initial data can be obtained 
even if one of $v_{1,0}$,$v_{2,0}$ and $v_{3,0}$ is not small. 
In particular, when $\sigma_1=\sigma_2$, 
the smallness assumption is required only for $v_{3,0}$. 

There are also results for
the three component system with derivative nonlinearities. 
In \cite{HI24}, the authors considered the system
\begin{equation}\label{dNLS_sys}
\begin{cases}
    i\partial_tu_1+\alpha\Delta u_1=-(\nabla \cdot u_3)u_2,\ \ t>0,\ x\in \R^d,\\
    i\partial_tu_2+\beta\Delta u_2=-(\nabla \cdot \overline{u_3})u_1,\ \ t>0,\ x\in \R^d,\\
    i\partial_tu_3+\gamma \Delta u_3=\nabla (u_1\cdot \overline{u_2}),\ \ t>0,\ x\in \R^d
\end{cases}
\end{equation}
and obtained the results for
the existence of ground states and stability of ground state sets. 
\subsection{Main results}
For two-frequency parameters $\omega_1,\omega_2>0$ and 
a velocity parameter $c\in \R^d$, 
we define the action functional $S_{\omega_1,\omega_2,c}$ by
\[
S_{\omega_1,\omega_2,c}(U):=E(U)+\omega_1Q_1(U)+\omega_2Q_2(U)+c\cdot P(U). 
\]
A key point of the present paper is to 
treat the two-frequency parameters $\omega_1$ and $\omega_2$ independently.
Put
\[
\omega_3:=\omega_1+\omega_2. 
\]
A direct calculation shows that
\[
\begin{split}
&\langle D_jS_{\omega_1,\omega_2,c}(\Phi),\theta_j\rangle\\
&={\rm Re}\left(-\frac{1}{\sigma_j}\Delta \varphi_j+\omega_j\varphi_j+i(c\cdot \nabla)\varphi_j+F_j(\Phi),\theta_j\right)_{L^2(\R^d)}\\
&={\rm Re}\left(-\frac{1}{\sigma_j}\Delta (e^{-i\frac{\sigma_j}{2}c\cdot x}\varphi_j)+\left(\omega_j-\frac{\sigma_j}{4}|c|^2\right)e^{-i\frac{\sigma_j}{2}c\cdot x}\varphi_j+e^{-i\frac{\sigma_j}{2}c\cdot x}F_j(\Phi),e^{-i\frac{\sigma_j}{2}c\cdot x}\theta_j\right)_{L^2(\R^d)},\ \ \ j=1,2,3
\end{split}
\]
for smooth $\Phi=(\varphi_1,\varphi_2,\varphi_3)$ and smooth $\Theta =(\theta_1,\theta_2,\theta_3)$, where 
$D_jS_{\omega_1,\omega_2,c}$ denotes the derivative of $S_{\omega_1,\omega_2,c}$ with respect to the $j$-th component defined by
\[
\begin{split}
\langle D_jS_{\omega_1,\omega_2,c}(\Phi),\theta\rangle&:=
\lim_{\epsilon \rightarrow 0}\frac{S_{\omega_1,\omega_2,c}(\Phi +\epsilon E_j(\theta))-S_{\omega_1,\omega_2,c}(\Phi)}{\epsilon},\\
E_1(\theta)&=(\theta, 0,0),\ \ 
E_2(\theta)=(0,\theta, 0),\ \ 
E_3(\theta)=(0,0,\theta). 
\end{split}
\]
We define the function space
\[
X_{\omega_1,\omega_2,c}
=X_1\times X_2\times X_3
\]
by
\[
X_j=X_j(\omega_1,\omega_2,c)
:=
\begin{dcases}
    e^{i\frac{\sigma_j}{2}c\cdot (\cdot)}H^1(\R^d)&{\rm if}\ \omega_j-\frac{\sigma_j}{4}|c|^2>0,\\
    e^{i\frac{\sigma_j}{2}c\cdot (\cdot)}\dot{H}^1(\R^d)&{\rm if}\ \omega_j-\frac{\sigma_j}{4}|c|^2=0. 
\end{dcases}
\]
The norm on $X_{\omega_1,\omega_2,c}$ will be defined below (see, Remark~\ref{rel_X_tilX}). 
In particular, $X_j=H^1(\R^d)$ holds if $\omega_j-\frac{\sigma_j}{4}|c|^2>0$ 
(see, Proposition~\ref{norm_X_H1_equi} and Remark~\ref{norm_eq_X_H_rem}). 
\begin{defn}
    We say that $\Phi=(\varphi_1,\varphi_2,\varphi_3)\in X_{\omega_1,\omega_2,c}$ 
    is a weak solution to (\ref{ellip_sys_tra}) if 
    \[
    \frac{1}{\sigma_j}\left(\nabla (e^{-i\frac{\sigma_j}{2}c\cdot x}\varphi_j),\nabla (e^{-i\frac{\sigma_j}{2}c\cdot x}\theta_j)\right)_{L^2(\R^d)}
    +\left(\left(\omega_j -\frac{\sigma_j}{4}|c|^2\right)\varphi_j+F_j(\Phi),\theta_j\right)_{L^2(\R^d)}
    =0
    \]
    holds for any $\Theta =(\theta_1,\theta_2,\theta_3)\in X_{\omega_1,\omega_2,c}$. 
\end{defn}
\begin{rem}
When $X_{\omega_1,\omega_2,c}=\mathcal{H}^1(\R^d)$, 
$\Phi =(\varphi_1,\varphi_2,\varphi_3)\in \mathcal{H}^1(\R^d)$ is a weak solution to (\ref{ellip_sys_tra})
if and only if
\[
\frac{1}{\sigma_j}\left(\nabla \varphi_j,\nabla\theta_j\right)_{L^2(\R^d)}+\left(\omega_j\varphi_j+i(c\cdot \nabla )\varphi_j+F_j(\Phi),\theta_j\right)_{L^2(\R^d)}=0
\]
holds for any $\Theta =(\theta_1,\theta_2,\theta_3)\in \mathcal{H}^1(\R^d)$. 
\end{rem}
Because the smooth solution $\Phi =(\varphi_1,\varphi_2,\varphi_3)$ to the elliptic system (\ref{ellip_sys_tra}) satisfies
\begin{equation}\label{S_derive_0}
D_1S_{\omega_1,\omega_2,c}(\Phi)
=D_2S_{\omega_1,\omega_2,c}(\Phi)
=D_3S_{\omega_1,\omega_2,c}(\Phi)
=0,
\end{equation}
we write $S_{\omega_1,\omega_2,c}'(\Phi)=0$ 
when $\Phi$ is a weak solution to (\ref{ellip_sys_tra}). 
For a weak solution $\Phi$ to (\ref{ellip_sys_tra}), we put
\begin{equation}\label{2req_para_sol}
U(t,x)=(e^{i\omega_1t}\varphi_1(x-ct),e^{i\omega_2t}\varphi_2(x-ct) ,e^{i(\omega_1+\omega_2)t}\varphi_3(x-ct)).
\end{equation}
Then $U$ satisfies the Schr\"odinger system (\ref{dnls2}). 
It is a solitary wave solution with two-frequency parameters to (\ref{dnls2}). 

For $c\in \R^d$, we define 
the operator $\Lambda_c$ as
\begin{equation}\label{op_lambdac_def}
(\Lambda_c\Psi)(x):=(e^{i\frac{\sigma_1}{2}c\cdot x}\psi_1(x),e^{i\frac{\sigma_2}{2}c\cdot x}\psi_2(x),e^{i\frac{\sigma_3}{2}c\cdot x}\psi_3(x)). 
\end{equation}
We introduce the transformation $\Phi=\Lambda_c\Psi$. 
Namely, $\varphi_j(x)=e^{i\frac{\sigma_j}{2}c\cdot x}\psi_j(x)$\ $(j=1,2,3)$. 
If $\Phi=(\varphi_1,\varphi_2,\varphi_3)$ is a smooth solution to (\ref{ellip_sys_tra}), 
then $\Psi =(\psi_1,\psi_2,\psi_3)$ satisfies
\begin{equation}\label{ellip_sys2}
    -\frac{1}{\sigma_j}\Delta \psi_j+\left(\omega_j-\frac{\sigma_j}{4}|c|^2\right)\psi_j
    =-G_{j,c}(\Psi),\ x\in \R^d,\ j=1,2,3, 
\end{equation}
where $G_{j,c}(\Psi)=e^{-i\frac{\sigma_j}{2}c\cdot x}F_j(\Lambda_c\Psi)$. Namely, 
\[
G_{1,c}(\Psi)=-e^{-i\frac{\sigma_1+\sigma_2-\sigma_3}{2}c\cdot x}\overline{\psi_2}\psi_3,\ \ 
G_{2,c}(\Psi)=-e^{-i\frac{\sigma_1+\sigma_2-\sigma_3}{2}c\cdot x}\overline{\psi_1}\psi_3,\ \ 
G_{3,c}(\Psi)=-e^{i\frac{\sigma_1+\sigma_2-\sigma_3}{2}c\cdot x}\psi_1\psi_2. 
\]
Moreover, we define the functional $\widetilde{S}_{\omega_1,\omega_2,c}$ by
\[
\widetilde{S}_{\omega_1,\omega_2,c}(\Psi):=S_{\omega_1,\omega_2,c}(\Phi)
=S_{\omega_1,\omega_2,c}(\Lambda_c \Psi). 
\]
Then, we have
\begin{equation}\label{S_phi_psi}
\begin{split}
\widetilde{S}_{\omega_1,\omega_2,c}(\Psi)
&=\frac{1}{2}\sum_{j=1}^3\frac{1}{\sigma_j}\|\nabla \psi_j\|_{L^2(\R^d)}^2
+\frac{1}{2}\sum_{j=1}^3\left(\omega_j-\frac{\sigma_j}{4}|c|^2\right)\|\psi_j\|_{L^2(\R^d)}^2+N_c(\Psi),
\end{split}
\end{equation}
where
\[
N_c(\Psi)=-{\rm Re}\left(e^{i\frac{\sigma_1+\sigma_2-\sigma_3}{2}c\cdot x}\psi_1\psi_2,\psi_3\right)_{L^2(\R^d)}.
\]
We note that if
\[
\sigma_j>0\ {\rm and}\ 
\omega_j-\frac{\sigma_j}{4}|c|^2\ge 0,\ j=1,2,3
\]
holds, 
then the summation of quadratic terms with respect to $\psi_j$ and $\nabla \psi_j$\ $(j=1,2,3)$  
in the right-hand side of (\ref{S_phi_psi}) is non-negative definite. 
In particular, when $\omega_j=\frac{\sigma_j}{4}|c|^2$, 
then the second term of the left-hand-side of (\ref{ellip_sys2}) and 
the term of $L^2$-norm of $\psi_j$ in the right-hand side of (\ref{S_phi_psi}) 
disappear. 
In such a case, $\psi_j\in H^1(\R^d)$ is not guaranteed 
because we cannot control the $L^2$-norm of $\psi_j$ 
by using $\widetilde{S}_{\omega_1,\omega_2,c}(\Psi)$. 
This situation is called the ``zero mass'' case. 
Handling the zero mass cases is difficult 
because we have to use the Sobolev embedding for $\dot{H}^1(\R^d)$ 
instead of for $H^1(\R^d)$. 
We will also treat the zero mass cases. 
However, not all cases are covered (see, Theorem~\ref{ex_gs_1} below).

By the relation (\ref{S_phi_psi}), 
we consider the action functional $\widetilde{S}_{\omega_1,\omega_2,c}$ 
on the class
\[
\widetilde{X}_{\omega_1,\omega_2,c}
=\widetilde{X}_1\times \widetilde{X}_2\times \widetilde{X}_3
\]
equipped with the norm
\[
\|\Psi\|_{\widetilde{X}_{\omega_1,\omega_2,c}}^2
:=\sum_{j=1}^3\frac{1}{\sigma_j}\|\nabla \psi_j\|_{L^2(\R^d)}^2+\sum_{j=1}^3\left(\omega_j-\frac{\sigma_j}{4}|c|^2\right)\|\psi_j\|_{L^2(\R^d)}^2,
\]
where $\widetilde{X}_j=\widetilde{X}_j(\omega_1,\omega_2,c)$ is defined by
\[
\widetilde{X}_j:=
\begin{dcases}
    H^1(\R^d)&{\rm if}\ \omega_j-\frac{\sigma_j}{4}|c|^2>0,\\
    \dot{H}^1(\R^d)&{\rm if}\ \omega_j-\frac{\sigma_j}{4}|c|^2=0. 
\end{dcases}
\]
\begin{rem}\label{rel_X_tilX}
We note that $X_{\omega_1,\omega_2,c}=\Lambda_c\widetilde{X}_{\omega_1,\omega_2,c}$ holds
and $\Psi \in \widetilde{X}_{\omega_1,\omega_2,c}$ is equivalent to 
$\Phi =\Lambda_c\Psi\in X_{\omega_1,\omega_2,c}$. 
We define the norm $\|\cdot\|_{X_{\omega_1,\omega_2,c}}$ by
$\|\Phi\|_{X_{\omega_1,\omega_2,c}}:=\|\Lambda_{-c}\Phi\|_{\widetilde{X}_{\omega_1,\omega_2,c}}$.
\end{rem}
\begin{prop}\label{norm_X_H1_equi}
    If $\omega_j>\frac{\sigma_j}{4}|c|^2$\ $(j=1,2,3)$, then it holds 
    \[
    \|\Phi\|_{X_{\omega_1,\omega_2,c}}^2\ge 
    \max\left\{\sum_{j=1}^3\frac{1}{\omega_j\sigma_j}\left(\omega_j-\frac{\sigma_j}{4}|c|^2\right)\|\nabla \varphi_j\|_{L^2}^2,\ \ 
    \sum_{j=1}^3\left(\omega_j-\frac{\sigma_j}{4}|c|^2\right)\|\varphi_j\|_{L^2}^2\right\}
    \]
    for any $\Phi=(\varphi_1,\varphi_2,\varphi_3)\in X_{\omega_1,\omega_2,c}$. 
    In particular, there exists $C=C(\omega_1,\omega_2)>0$ such that 
    $\|\Phi\|_{X_{\omega_1,\omega_2,c}}^2\ge C\min_{1\le j\le 3}(\omega_j-\frac{\sigma_j}{4}|c|^2)\|\Phi\|_{\mathcal{H}^1}^2$. 
\end{prop}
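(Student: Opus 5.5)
The plan is to work entirely with the conjugated variables. Set $\Psi=\Lambda_{-c}\Phi$, i.e. $\psi_j=e^{-i\frac{\sigma_j}{2}c\cdot x}\varphi_j$, and put $a_j:=\omega_j-\frac{\sigma_j}{4}|c|^2>0$. By Remark~\ref{rel_X_tilX},
\[
\|\Phi\|_{X_{\omega_1,\omega_2,c}}^2=\sum_{j=1}^3\frac{1}{\sigma_j}\|\nabla\psi_j\|_{L^2}^2+\sum_{j=1}^3 a_j\|\psi_j\|_{L^2}^2 .
\]
I assume, as in the paper's setting for these norms, that $\sigma_j>0$, so that the $\widetilde{X}$-norm is positive definite. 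The second lower bound is then immediate: $|\psi_j|=|\varphi_j|$ pointwise and the gradient terms are nonnegative.

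For the first lower bound I will argue component by component. Since $\nabla\varphi_j=e^{i\frac{\sigma_j}{2}c\cdot x}\bigl(\nabla\psi_j+i\frac{\sigma_j}{2}c\,\psi_j\bigr)$, we have
\[
\|\nabla\varphi_j\|_{L^2}\le A+B,\qquad A:=\|\nabla\psi_j\|_{L^2},\quad B:=\tfrac{\sigma_j|c|}{2}\|\psi_j\|_{L^2}.
\]
If $c=0$ then $a_j=\omega_j$ and the claim is trivial, so assume $c\neq 0$. Apply $(A+B)^2\le(1+t)A^2+(1+t^{-1})B^2$ with the choice $t=\frac{\sigma_j|c|^2}{4a_j}$. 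This choice makes $1+t=\frac{\omega_j}{a_j}$ and $1+t^{-1}=\frac{4\omega_j}{\sigma_j|c|^2}$. Multiplying by $\frac{a_j}{\omega_j\sigma_j}$ gives
\[
\frac{a_j}{\omega_j\sigma_j}\|\nabla\varphi_j\|_{L^2}^2\le\frac{1}{\sigma_j}\|\nabla\psi_j\|_{L^2}^2+a_j\|\psi_j\|_{L^2}^2 .
\]
The coefficient of $\|\psi_j\|^2$ is exactly $\frac{a_j}{\omega_j\sigma_j}\cdot\frac{4\omega_j}{\sigma_j|c|^2}\cdot\frac{\sigma_j^2|c|^2}{4}=a_j$. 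Summing over $j$ yields the first bound. The only real step is finding this $t$, which is chosen so that both coefficients match exactly; I do not expect any other obstacle.

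For the final claim, bound the maximum below by half the sum of the two expressions. This gives $\|\Phi\|_X^2\ge\frac12\min_j a_j\,\min\bigl\{1,\min_j(\omega_j\sigma_j)^{-1}\bigr\}\,\|\Phi\|_{\mathcal H^1}^2$. The constant $C=\frac12\min\{1,\min_j(\omega_j\sigma_j)^{-1}\}$ depends only on $\omega_1,\omega_2$ (the $\sigma_j$ being fixed). Since $\omega_3=\omega_1+\omega_2$, it does not depend on $c$.
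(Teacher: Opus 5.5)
Your proof is correct and is essentially the paper's argument. The paper bounds $\|\nabla\varphi_j-i\frac{\sigma_j}{2}c\varphi_j\|_{L^2}^2$ from below, using the reverse triangle inequality and a Young inequality with weights chosen so the $\|\varphi_j\|_{L^2}^2$ terms cancel exactly. That is the mirror image of your triangle-inequality upper bound on $\|\nabla\varphi_j\|_{L^2}$ with $t=\frac{\sigma_j|c|^2}{4a_j}$. Your separate handling of $c=0$ and your explicit constant $C$ for the final claim, which the paper leaves implicit, are both fine.
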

\begin{proof}
    Because $\nabla (e^{-i\frac{\sigma_j}{2}c\cdot x}\varphi_j)
    =e^{-i\frac{\sigma_j}{2}c\cdot x}(\nabla \varphi_j-i\frac{\sigma_j}{2}c\varphi_j)$, 
    we have
    \begin{equation}\label{norm_X_form}
    \begin{split}
    \|\Phi\|_{X_{\omega_1,\omega_2,c}}^2
    &=\sum_{j=1}^3\frac{1}{\sigma_j}\left\|\nabla \varphi_j-i\frac{\sigma_j}{2}c\varphi_j\right\|_{L^2}^2
    +\sum_{j=1}^3\left(\omega_j-\frac{\sigma_j}{4}|c|^2\right)\|\varphi_j\|_{L^2}^2
    \end{split}
    \end{equation}
    by the definitions of $\|\cdot\|_{\widetilde{X}_{\omega_1,\omega_2,c}}$ and $\|\cdot\|_{X_{\omega_1,\omega_2,c}}$. 
    Therefore, it suffices to show that
    \[
    \|\Phi\|_{X_{\omega_1,\omega_2,c}}^2\ge \sum_{j=1}^3\frac{1}{\omega_j\sigma_j}\left(\omega_j-\frac{\sigma_j}{4}|c|^2\right)\|\nabla \varphi_j\|_{L^2}^2. 
    \]
    It follows from
    \[
    \begin{split}
    \left\|\nabla \varphi_j-i\frac{\sigma_j}{2}c\varphi_j\right\|_{L^2}^2
    &\ge \|\nabla \varphi_j\|_{L^2}^2-\sigma_j|c|\|\nabla \varphi_j\|_{L^2}\|\varphi_j\|_{L^2}+\frac{\sigma_j^2}{4}|c|^2\|\varphi_j\|_{L^2}^2\\
    &\ge \frac{1}{\omega_j}\left(\omega_j-\frac{\sigma_j}{4}|c|^2\right)\|\nabla \varphi_j\|_{L^2}^2
    -\sigma_j\left(\omega_j-\frac{\sigma_j}{4}|c|^2\right)\|\varphi_j\|_{L^2}^2, 
    \end{split}
    \]
    where we used the Young inequality in the second inequality. 
\end{proof}
\begin{rem}\label{norm_eq_X_H_rem}
    It is clear that $\|\Phi\|_{X_{\omega_1,\omega_2,c}}^2\le C\|\Phi\|_{\mathcal{H}^1}^2$ 
    holds for some $C>0$ by (\ref{norm_X_form}). 
    Therefore, the norms $\|\cdot\|_{X_{\omega_1,\omega_2,c}}$ and $\|\cdot \|_{\mathcal{H}^1}$ 
    are equivalent when $\omega_j-\frac{\sigma_j}{4}|c|^2>0$\ $(j=1,2,3)$. 
\end{rem} 
\begin{rem}\label{X_norm_rep_LQP}
We can see that
\[
\|\Phi\|_{X_{\omega_1,\omega_2,c}}^2
=2L(\Phi)+2\omega_1Q_1(\Phi)+2\omega_2Q_2(\Phi)+2c\cdot P(\Phi)
\]
by (\ref{norm_X_form}) 
and the definitions of $L$, $Q_1$, $Q_2$, and $P$. 
\end{rem}
\begin{defn}
    We say that $\Psi=(\psi_1,\psi_2,\psi_3)\in \widetilde{X}_{\omega_1,\omega_2,c}$ 
    is a weak solution to (\ref{ellip_sys2}) if 
    \[
    \frac{1}{\sigma_j}\left(\nabla \psi_j,\nabla \theta_j\right)_{L^2(\R^d)}
    +\left(\left(\omega_j -\frac{\sigma_j|c|^2}{4}\right)\psi_j+G_{j,c}(\Psi),\theta_j\right)_{L^2(\R^d)}
    =0
    \]
    holds for any $\Theta=(\theta_1,\theta_2,\theta_3) \in \widetilde{X}_{\omega_1,\omega_2,c}$, 
    and then we write $\widetilde{S}_{\omega_1,\omega_2,c}'(\Psi)=0$. 
\end{defn}
\begin{rem}
    If $\Phi=\Lambda_c\Psi$, 
    then $\widetilde{S}_{\omega_1,\omega_2,c}'(\Psi)=0$ 
    is equivalent to $S_{\omega_1,\omega_2,c}'(\Phi)=0$.
\end{rem}

Next, we define the sets of weak solutions to (\ref{ellip_sys_tra}) 
and (\ref{ellip_sys2}) by
\[
\begin{split}
\mathcal{E}_{\omega_1,\omega_2,c}
&:=\left\{\Phi\in X_{\omega_1,\omega_2,c}
\left|
\ \Phi \ne (0,0,0),\ 
S_{\omega_1,\omega_2,c}'(\Phi)=0
\right.\right\},\\
\widetilde{\mathcal{E}}_{\omega_1,\omega_2,c}
&:=\left\{\Psi\in \widetilde{X}_{\omega_1,\omega_2,c}
\left|
\ \Psi \ne (0,0,0),\ 
\widetilde{S}_{\omega_1,\omega_2,c}'(\Psi)=0
\right.\right\}
\end{split}
\]
respectively. We also put 
\[
\begin{split}
\mathcal{G}_{\omega_1,\omega_2,c}
&:=
\left\{\Phi\in \mathcal{E}_{\omega_1,\omega_2,c}
\left|\ S_{\omega_1,\omega_2,c}(\Phi)\le S_{\omega_1,\omega_2,c}(\Theta)\ 
{\rm for\ any}\ \Theta \in \mathcal{E}_{\omega_1,\omega_2,c}
\right.\right\},\\
\widetilde{\mathcal{G}}_{\omega_1,\omega_2,c}
&:=
\left\{\Psi\in \widetilde{\mathcal{E}}_{\omega_1,\omega_2,c}
\left|\ \widetilde{S}_{\omega_1,\omega_2,c}(\Psi)\le \widetilde{S}_{\omega_1,\omega_2,c}(\Theta)\ 
{\rm for\ any}\ \Theta \in \widetilde{\mathcal{E}}_{\omega_1,\omega_2,c}
\right.\right\}. 
\end{split}
\]
We call an element $\Phi \in \mathcal{G}_{\omega_1,\omega_2,c}$ 
and an element $\Psi \in \widetilde{\mathcal{G}}_{\omega_1,\omega_2,c}$
a ``ground state'' solution to (\ref{ellip_sys_tra}) 
and (\ref{ellip_sys2}) respectively. 
The weak solutions to (\ref{ellip_sys_tra}) and (\ref{ellip_sys2}) 
can be characterized by Nehari functionals 
$K_{\omega_1,\omega_2,c}$ and $\widetilde{K}_{\omega_1,\omega_2,c}$ respectively
defined by
\begin{equation}\label{def_nehari}
\begin{split}
   K_{\omega_1,\omega_2,c}(\Phi)
   &:=\partial_{\lambda}S_{\omega_1,\omega_2,c}(\lambda \Phi)|_{\lambda=1}
   =2L(\Phi)+3N(\Phi)+2\omega_1Q_1(\Phi)+2\omega_2Q_2(\Phi)+2c\cdot P(\Phi),\\
   \widetilde{K}_{\omega_1,\omega_2,c}(\Psi)
   &:=\partial_{\lambda}\widetilde{S}_{\omega_1,\omega_2,c}(\lambda \Psi)|_{\lambda=1}
   =\|\Psi\|_{\widetilde{X}_{\omega_1,\omega_2,c}}^2+3N_c(\Psi).
\end{split}
\end{equation}
We note that $\widetilde{K}_{\omega_1,\omega_2,c}(\Psi)=K_{\omega_1,\omega_2,c}(\Lambda_c\Psi)$ and
\begin{equation}\label{S_K_N_rel}
\widetilde{S}_{\omega_1,\omega_2,c}(\Psi)=\frac{1}{2}\|\Psi\|_{\widetilde{X}_{\omega_1,\omega_2,c}}^2+N_c(\Psi)
=\frac{1}{3}\widetilde{K}_{\omega_1,\omega_2,c}(\Psi)+\frac{1}{6}\|\Psi\|_{\widetilde{X}_{\omega_1,\omega_2,c}}^2
\end{equation}
hold. 
\begin{rem}\label{nehari_weak_sol}
By the definition of the Nehari functionals, 
$\Phi\in \mathcal{E}_{\omega_1,\omega_2,c}$ 
and $\Psi\in \widetilde{\mathcal{E}}_{\omega_1,\omega_2,c}$ 
satisfy 
$K_{\omega_1,\omega_2,c}(\Phi)=0$ and $\widetilde{K}_{\omega_1,\omega_2,c}(\Psi)=0$ 
respectively. 
\end{rem}
To state our results, we consider the minimizing problems
\[
\begin{split}
    \mathfrak{m}_{\omega_1,\omega_2,c}
    &:=\inf\left\{S_{\omega_1,\omega_2,c}(\Phi)\left|\ \Phi \in X_{\omega_1,\omega_2,c},\ 
    \Phi\ne (0,0,0),\ K_{\omega_1,\omega_2,c}(\Phi)=0
    \right.\right\},\\
    \widetilde{\mathfrak{m}}_{\omega_1,\omega_2,c}
    &:=\inf\left\{\widetilde{S}_{\omega_1,\omega_2,c}(\Psi)\left|\ \Psi\in \widetilde{X}_{\omega_1,\omega_2,c},\ 
    \Psi\ne (0,0,0),\ \widetilde{K}_{\omega_1,\omega_2,c}(\Psi)=0
    \right.\right\},
\end{split}
\]
and define the sets of minimizers by
\[
\begin{split}
    \mathcal{M}_{\omega_1,\omega_2,c}
    &:=\left\{\Phi \in X_{\omega_1,\omega_2,c}\left|\ 
    \Phi \ne (0,0,0),\ S_{\omega_1,\omega_2,c}(\Phi)=\mathfrak{m}_{\omega_1,\omega_2,c},\ 
    K_{\omega_1,\omega_2,c}(\Phi)=0\right.\right\},\\
    \widetilde{\mathcal{M}}_{\omega_1,\omega_2,c}
    &:=\left\{\Psi \in \widetilde{X}_{\omega_1,\omega_2,c}\left|\ 
    \Psi \ne (0,0,0),\ \widetilde{S}_{\omega_1,\omega_2,c}(\Psi)=\widetilde{\mathfrak{m}}_{\omega_1,\omega_2,c},\ 
    \widetilde{K}_{\omega_1,\omega_2,c}(\Psi)=0\right.\right\}. 
\end{split}
\]
\begin{rem}\label{rel_m_tilm}
We note that $\mathfrak{m}_{\omega_1,\omega_2,c}=\widetilde{\mathfrak{m}}_{\omega_1,\omega_2,c}$ 
holds and $\Phi \in \mathcal{M}_{\omega_1,\omega_2,c}$ is 
equivalent to $\Psi \in \widetilde{\mathcal{M}}_{\omega_1,\omega_2,c}$ 
if $\Phi =\Lambda_c\Psi$. 
In particular, when $c=0$, we have $\mathcal{M}_{\omega_1,\omega_2,0}=\widetilde{\mathcal{M}}_{\omega_1,\omega_2,0}$. 
We also obtain
\[
\mathfrak{m}_{\omega_1,\omega_2,c}
=\widetilde{\mathfrak{m}}_{\omega_1,\omega_2,c}
=\frac{1}{6}\|\Lambda_{-c}\Phi\|_{\widetilde{X}_{\omega_1,\omega_2,c}}^2
=\frac{1}{6}\|\Phi\|_{X_{\omega_1,\omega_2,c}}^2
\]
for any $\Phi\in \mathcal{M}_{\omega_1,\omega_2,c}$ by (\ref{S_K_N_rel}). 
\end{rem}

We now state the main results of the present paper. 
For $c\in \R^d$, we put
    \[
    \Omega_c:=\left\{(\omega_1,\omega_2)\ \left|\ \omega_1>\frac{\sigma_1}{4}|c|^2,\ 
    \omega_2>\frac{\sigma_2}{4}|c|^2,\ \omega_3\ (=\omega_1+\omega_2)>\frac{\sigma_3}{4}|c|^2\right.\right\}
    \]
    and denote the boundary of $\Omega_c$ by $\partial \Omega_c$. For the shape of $\Omega_c$, see Figure~\ref{dom_omega} below. 
\begin{thm}[existence of ground states]\label{ex_gs_1}
    Let $\sigma_1,\sigma_2, \sigma_3>0$, and $c\in \R^d$.     
    Assume one of the following conditions holds:
    \begin{itemize}
        \item[{\rm (A)}] $1\le d\le 5$, $(\omega_1,\omega_2)\in \Omega_c$. 
        \item[{\rm (B)}] $3\le d\le 5$, $(\omega_1,\omega_2)\in \partial \Omega_c$, 
         and $(\omega_i,\omega_j)\ne (\frac{\sigma_i}{4}|c|^2,\frac{\sigma_j}{4}|c|^2)$ for any $1\le i<j\le 3$. 
         \item[{\rm (C)}] $d=5$, $c\ne 0$, $(\omega_1,\omega_2)\in \partial \Omega_c$, $(\omega_i,\omega_j)=(\frac{\sigma_i}{4}|c|^2,\frac{\sigma_j}{4}|c|^2)$ for some $1\le i<j\le 3$, 
         and 
         \[
         (\omega_1,\omega_2,\omega_3)\ne \left(\frac{\sigma_1}{4}|c|^2,\frac{\sigma_2}{4}|c|^2,\frac{\sigma_3}{4}|c|^2\right).
         \]
    \end{itemize}
    Then $\widetilde{\mathcal{G}}_{\omega_1,\omega_2,c}=\widetilde{\mathcal{M}}_{\omega_1,\omega_2,c}\ne \emptyset$\  
    $($and also $\mathcal{G}_{\omega_1,\omega_2,c}=\mathcal{M}_{\omega_1,\omega_2,c}\ne \emptyset )$. 
    Namely, there exists at least one ground-state solution to {\rm (\ref{ellip_sys2})}\ $($and also {\rm (\ref{ellip_sys_tra})}$)$. 
\end{thm}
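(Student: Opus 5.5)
We work with the transformed functional $\widetilde{S}_{\omega_1,\omega_2,c}$ on $\widetilde{X}_{\omega_1,\omega_2,c}$. By Remark~\ref{rel_m_tilm}, the statement for $\mathcal{G},\mathcal{M}$ follows from the one for $\widetilde{\mathcal{G}},\widetilde{\mathcal{M}}$ via $\Lambda_c$. We write $\mu_j:=\omega_j-\frac{\sigma_j}{4}|c|^2\ge 0$ and $\theta:=\frac{\sigma_1+\sigma_2-\sigma_3}{2}c$, so that $N_c(\Psi)=-{\rm Re}\int e^{i\theta\cdot x}\psi_1\psi_2\overline{\psi_3}\,dx$. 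The proof has three steps: (1) $\widetilde{\mathfrak m}>0$, (2) the infimum is attained, (3) minimizers coincide with ground states.

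\textbf{Step 1.} We bound the cubic term by H\"older,
\[
|N_c(\Psi)|\le \|\psi_1\|_{L^{p_1}}\|\psi_2\|_{L^{p_2}}\|\psi_3\|_{L^{p_3}},\qquad \sum_j 1/p_j=1 .
\]
The exponents are chosen so that each factor is controlled by $\|\Psi\|_{\widetilde X}$:
\begin{itemize}
\item for $\mu_j>0$ we may take any $p_j\in[2,2^*]$, with $2^*=\infty$ for $d\le 2$ (with the usual caveat at $d=2$);
\item for $\mu_j=0$ we must take $p_j=2^*=\frac{2d}{d-2}$, which requires $d\ge3$.
\end{itemize}
In case (A) we take $p_j=3$, valid for $d\le 5$ since $3\le 2^*$ exactly when $d\le 6$. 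In case (B) exactly one $\mu_j=0$; we take $p_j=2^*$ and split the remaining $1-\frac{d-2}{2d}=\frac{d+2}{2d}$ between the other two exponents inside $[2,2^*]$, which is possible for $3\le d\le5$. In case (C) two $\mu$'s vanish; we need $2\cdot\frac{d-2}{2d}+\frac1p=1$, i.e.\ $p=d/2$. This must lie in $[2,2^*)$, and with $d=5$ we get $p=5/2\in[2,10/3)$. Thus $|N_c(\Psi)|\le C\|\Psi\|_{\widetilde X}^3$, and on the Nehari manifold $\|\Psi\|^2_{\widetilde X}=3|N_c|\le C\|\Psi\|^3_{\widetilde X}$. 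This gives a lower bound on $\|\Psi\|_{\widetilde X}$, hence $\widetilde{\mathfrak m}=\frac16\inf\|\Psi\|^2_{\widetilde X}>0$ by (\ref{S_K_N_rel}). The standard reformulation $\widetilde{\mathfrak m}=\inf\{\frac16\|\Psi\|^2_{\widetilde X}: \widetilde K\le 0,\ \Psi\neq0\}$ will be used for compactness.

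\textbf{Step 2 (main obstacle).} We take a minimizing sequence $\Psi_n$; it is bounded in $\widetilde X$. We must rule out vanishing and then recover a minimizer.

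The difficulty is that for $c\neq0$ and non-mass-resonance, $N_c$ carries the oscillating factor $e^{i\theta\cdot x}$. Consequently the problem is not translation invariant: translating $\Psi_n(\cdot+y_n)$ multiplies $N_c$ by $e^{i\theta\cdot y_n}$. I would first try to absorb this phase into one component, replacing $\psi_{3,n}$ by $e^{-i\theta\cdot y_n}\psi_{3,n}(\cdot+y_n)$ and leaving the others only translated. All quadratic terms are invariant under translation and constant phase, so this recovers translation invariance of $\widetilde S$ and $\widetilde K$.

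Non-vanishing then follows from a Lions-type lemma. In case (A) this is Lions' lemma in $H^1$. In the zero-mass components it is the $\dot H^1$ version: if $\sup_y\int_{B(y,1)}|\psi_{j,n}|^{2}\to0$ for all $j$, then the $L^{p}$ norms with $p$ strictly between $2$ and $2^*$ tend to $0$. This is why case (C) needs one exponent strictly subcritical, and why the triple-zero case is excluded: then all $p_j=3$ would be needed, which forces $d=6$, and only critical $\dot H^1$ norms are available, so concentration is not controlled. Since at least one factor in our H\"older splitting is subcritical (in (B) the two non-critical exponents can be taken subcritical), vanishing forces $N_c(\Psi_n)\to0$, contradicting $\|\Psi_n\|^2_{\widetilde X}=3|N_c(\Psi_n)|\ge\delta$.

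After translation we extract a nonzero weak limit $\Psi$ with $\Psi_n\rightharpoonup\Psi$ and local $L^p$ convergence. A Br\'ezis--Lieb splitting for $\|\cdot\|^2_{\widetilde X}$ and $N_c$ gives $\widetilde K(\Psi_n)=\widetilde K(\Psi)+\widetilde K(\Psi_n-\Psi)+o(1)$. If $\widetilde K(\Psi)>0$, then $\widetilde K(\Psi_n-\Psi)<0$ eventually, and the reformulated infimum contradicts weak lower semicontinuity. Hence $\widetilde K(\Psi)\le0$, and lower semicontinuity of the norm yields $\frac16\|\Psi\|^2_{\widetilde X}\le\widetilde{\mathfrak m}$. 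Rescaling $\lambda\Psi$ with $\lambda\le1$ onto $\widetilde K=0$ gives a minimizer, which forces $\lambda=1$.

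\textbf{Step 3.} A minimizer satisfies $\widetilde S'(\Psi)=\eta\widetilde K'(\Psi)$ for a Lagrange multiplier $\eta$. Testing with $\Psi$ gives $0=\eta\langle\widetilde K'(\Psi),\Psi\rangle=\eta(2\|\Psi\|^2-9|N_c|)=-\eta\|\Psi\|^2$, so $\eta=0$ and $\Psi\in\widetilde{\mathcal E}$. Conversely, every element of $\widetilde{\mathcal E}$ lies on the Nehari manifold by Remark~\ref{nehari_weak_sol}, so its action is at least $\widetilde{\mathfrak m}$. These two facts give $\widetilde{\mathcal G}=\widetilde{\mathcal M}\neq\emptyset$.
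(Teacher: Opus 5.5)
Your proposal is correct and follows essentially the paper's route. The shared ingredients are:
\begin{itemize}
\item the H\"older--Sobolev lower bound on the Nehari manifold (Proposition~\ref{pos_pr_Neh});
\item phase-corrected translations, with non-vanishing extracted from the $H^1$ component that carries the subcritical exponent $3$, $\frac{4d}{d+2}$ or $\frac d2$;
\item the Br\'ezis--Lieb splitting of $\widetilde K$, used to force $\widetilde K(\Psi)\le 0$;
\item the Lagrange multiplier argument of Proposition~\ref{GM_subset_rel}.
\end{itemize}
The differences are cosmetic: you use Lions' lemma instead of Lieb's compactness theorem, you put the phase $e^{-i\theta\cdot y_n}$ on $\psi_3$ alone rather than applying $\Lambda(-c\cdot x_n)$ to all components, and you skip strong convergence, which the theorem does not need.

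Two small corrections. First, drop the claimed $\dot H^1$ version of Lions' lemma. A bounded sequence in $\dot H^1$ need not even lie in $L^p$ for $2<p<2^*$, and the lemma is not needed, since non-vanishing only has to be run on an $H^1$ component. Second, in the case $\widetilde K(\Psi)>0$ the contradiction does not come from weak lower semicontinuity. It comes from the splitting $\|\Psi_n\|_{\widetilde X}^2=\|\Psi\|_{\widetilde X}^2+\|\Psi_n-\Psi\|_{\widetilde X}^2+o(1)$ together with $\|\Psi_n-\Psi\|_{\widetilde X}^2\ge 6\widetilde{\mathfrak m}$, which forces $\Psi=0$.
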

\begin{rem}
\begin{itemize}
    \item[{\rm (i)}] The cases {\rm (B)} and {\rm (C)} in Theorem~\ref{ex_gs_1} 
correspond to zero-mass cases. 
\item[{\rm (ii)}] If the mass resonance condition {\rm (\ref{massres})} does not hold, then the simultaneous equality
\[
(\omega_1,\omega_2,\omega_3)= 
\left(\frac{\sigma_1}{4}|c|^2,\frac{\sigma_2}{4}|c|^2,\frac{\sigma_3}{4}|c|^2\right)
\]
is incompatible with $\omega_3=\omega_1+\omega_2$.
\item[{\rm (iii)}] We can also obtain that there is no nontrivial solution to the system 
{\rm (\ref{ellip_sys2})} when $1\le d\le 5$, $\sigma_1+\sigma_2=\sigma_3$, 
    and 
    \[
    (\omega_1,\omega_2,\omega_3)= \left(\frac{\sigma_1}{4}|c|^2,\frac{\sigma_2}{4}|c|^2,\frac{\sigma_3}{4}|c|^2\right). 
    \]
    For the proof, see \cite{Lipre}. 
\end{itemize}
\end{rem}
\begin{rem}
    Theorem~\ref{ex_gs_1} with $\omega_1=\omega_2$ contains the 
    results for the existence of standing wave solutions in \cite{NP21, NP22} 
    and the existence of traveling wave solutions in \cite{Lipre} as above. 
    By treating the frequency parameters $\omega_1$ and $\omega_2$ independently, 
    we can find more solitary wave solutions. 
\end{rem}
\begin{rem}
We can treat $c=0$ for the case {\rm (A)}. 
Furthermore, when $3\le d\le 5$, we can treat $c=0$ also for the case {\rm (B)}. 
Namely, we have $\mathcal{G}_{\omega_1,\omega_2,0}=\mathcal{M}_{\omega_1,\omega_2,0}\ne \emptyset$ if $\max\{\omega_1,\omega_2\}>0$ and $\min\{\omega_1,\omega_2\}\ge 0$ are satisfied. 
Moreover, 
\[
U(t,x)=(e^{i\omega_1t}\varphi_1,e^{i\omega_2t}\varphi_2,e^{i(\omega_1+\omega_2)t}\varphi_3)
\]
for $\Phi=(\varphi_1,\varphi_2,\varphi_3)\in \mathcal{M}_{\omega_1,\omega_2,0}$ 
becomes a standing wave solution with two-frequency parameters to {\rm (\ref{dnls2})}. 
In particular, the case $\min\{\omega_1,\omega_2\}=0$ 
is not treated in \cite{NP21, NP22}. 
A ground state $\Phi \in \mathcal{M}_{\omega_1,\omega_2,0}$ with $\min\{\omega_1,\omega_2\}=0$ 
plays an important role to improve the results for 
global well-posedness (see, Theorem~\ref{GWP_1} below). 
\end{rem}
\begin{rem}\label{pos_Q_1_Q_2_rem}
    For $\Psi =(\psi_1,\psi_2,\psi_3)\in \widetilde{\mathcal{M}}_{\omega_1,\omega_2,c}$, 
    we obtain $\psi_1\ne 0$, $\psi_2\ne 0$, and $\psi_3\ne 0$. 
    Indeed, if $\psi_1=0$, then $\psi_2$ and $\psi_3$ satisfy
    \[
    -\frac{1}{\sigma_j}\Delta \psi_j+\left(\omega_j-\frac{\sigma_j}{4}|c|^2\right)\psi_j=0,\ \ j=2,3
    \]
    because $\Psi$ is a solution to {\rm (\ref{ellip_sys2})}. 
    This implies
    \[
    \frac{1}{\sigma_j}\|\nabla \psi_j\|_{L^2}^2+\left(\omega_j-\frac{\sigma_j}{4}|c|^2\right)\|\psi_j\|_{L^2}^2=0,\ j=2,3.
    \]
    Namely, $\psi_2=\psi_3=0$. 
    This contradicts $\Psi \ne 0$. 
    Therefore, $\psi_1\ne 0$. 
    For the similar reason, we have $\psi_2\ne 0$ and $\psi_3\ne 0$. 
    We also obtain $\varphi_1\ne 0$, $\varphi_2\ne 0$, and $\varphi_3\ne 0$ 
    for $\Phi =(\varphi_1,\varphi_2,\varphi_3)\in \mathcal{M}_{\omega_1,\omega_2,c}$. 
    In particular, $Q_1(\Phi)>0$ and $Q_2(\Phi)>0$ hold. 
\end{rem}
For the case $\sigma_1+\sigma_2\ge \sigma_3$, the condition
\[
\omega_3-\frac{\sigma_3}{4}|c|^2>0
\]
follows from the condition
\begin{equation}\label{omega_range}
\omega_1> \frac{\sigma_1}{4}|c|^2\ \ {\rm and}\ \ 
\omega_2> \frac{\sigma_2}{4}|c|^2.
\end{equation}
Indeed, it holds
\[
\omega_3-\frac{\sigma_3}{4}|c|^2
=\left(\omega_1-\frac{\sigma_1}{4}|c|^2\right)
+\left(\omega_2-\frac{\sigma_2}{4}|c|^2\right)
+\frac{\sigma_1+\sigma_2-\sigma_3}{4}|c|^2. 
\]
Therefore, we can obtain the existence of solitary wave solution with two-frequency parameters of the form (\ref{2req_para_sol}) under the condition (\ref{omega_range}).
On the other hand, for the case $\sigma_1+\sigma_2<\sigma_3$, 
we need to impose an additional condition
\[
    \omega_1+\omega_2 > \frac{\sigma_3}{4}|c|^2. 
\]
Namely, the shape of the set $\Omega_c$ 
depends on the sign of $\mu:=\sigma_1+\sigma_2-\sigma_3$. 
(See, Figure~\ref{dom_omega} below.)
\begin{figure}[H]
\begin{center}
\begin{tikzpicture}[scale=0.6]
\draw[->,>=stealth,semithick](-0.5,0)--(5,0)node[below]{$\omega_1$};
\draw[->,>=stealth,semithick](0,-0.5)--(0,5)node[left]{$\omega_2$};
\draw(0,0)node[below left]{O};
\draw[dashed](1,0)--(1,2);
\draw[black, very thick](1,1)--(1,5);
\draw[dashed](0,1)--(2,1);
\draw[black, very thick](1,1)--(5,1);
\draw(0,1)node[left]{$\frac{\sigma_2}{4}|c|^2$};
\draw(1,0)node[below]{$\frac{\sigma_1}{4}|c|^2$};
\draw(2.25,-1.8)node[below]{};
\draw(2,6.5)node[below]{\small ${\rm When}\ \mu\ge 0$};
\fill (1,1) circle (4pt) coordinate (A);
\path [pattern=north east lines, pattern color=black] (1,1) rectangle (5, 5);

\draw[->,>=stealth,semithick](-0.5+10,0)--(5+10,0)node[below]{$\omega_1$};
\draw[->,>=stealth,semithick](0+10,-0.5)--(0+10,5)node[left]{$\omega_2$};
\draw(0+10,0)node[below left]{O};
\draw[black,very thick,samples=100,domain=1:2]plot(\x+10,3-\x);
\draw[black,dashed,samples=100,domain=0:1]plot(\x+10,3-\x);
\draw[black,dashed,samples=100,domain=2:3]plot(\x+10,3-\x);
\draw[dashed](1+10,0)--(1+10,2);
\draw[dashed](2+10,-1)--(2+10,1);
\draw[black, very thick](1+10,2)--(1+10,5);
\draw[dashed](0+10,1)--(2+10,1);
\draw[dashed](0+10,2)--(1+10,2);
\draw[black, very thick](2+10,1)--(5+10,1);
\draw(0+10,3)node[left]{$\frac{\sigma_3}{4}|c|^2$};
\draw(0+10,2)node[left]{$\frac{\sigma_3-\sigma_1}{4}|c|^2$};
\draw(0+10,1)node[left]{$\frac{\sigma_2}{4}|c|^2$};
\draw(1+10,0)node[below]{$\frac{\sigma_1}{4}|c|^2$};
\draw(2.25+10,-1)node[below]{$\frac{\sigma_3-\sigma_2}{4}|c|^2$};
\draw(3+10,0)node[below]{$\frac{\sigma_3}{4}|c|^2$};
\fill (1+10,2) circle (4pt) coordinate (A);
\fill (2+10,1) circle (4pt) coordinate (B);
\draw(1.8+10,6.5)node[below]{\small ${\rm When}\ \mu <0$};
\fill [pattern=north east lines] plot [smooth,samples=100,domain=1:2] (\x+10,{3-\x}) -- (5+10,1) -- (5+10,5) -- (1+10,5);
\end{tikzpicture}
\caption{The region $\Omega_c$ and boundary $\partial \Omega_c$ in the $(\omega_1,\omega_2)$-plane}\label{dom_omega}
\end{center}
\end{figure}
\begin{rem}
When $\mu \ge 0$, 
the two half lines from the point $(\frac{\sigma_1}{4}|c|^2,\frac{\sigma_2}{4}|c|^2)$
in Figure~\ref{dom_omega} correspond to zero-mass cases. 
When $\mu <0$, 
the two half lines from the points 
$(\frac{\sigma_1}{4}|c|^2,\frac{\sigma_3-\sigma_1}{4}|c|^2)$, 
$(\frac{\sigma_3-\sigma_2}{4}|c|^2,\frac{\sigma_2}{4}|c|^2)$, 
and the line segment connecting these points in Figure~\ref{dom_omega}
correspond to zero-mass cases. 
See also Figure~\ref{dom_omega_zeromass} below. 
In particular, the point $(\frac{\sigma_1}{4}|c|^2,\frac{\sigma_2}{4}|c|^2)$ 
when $\mu>0$ and the points $(\frac{\sigma_1}{4}|c|^2,\frac{\sigma_3-\sigma_1}{4}|c|^2)$, 
$(\frac{\sigma_3-\sigma_2}{4}|c|^2,\frac{\sigma_2}{4}|c|^2)$ when $\mu <0$
correspond to the case {\rm (C)} in Theorem~\ref{ex_gs_1}. 
\end{rem}
\begin{figure}[H]
\begin{center}
\begin{tikzpicture}[scale=0.6]
\draw[->,>=stealth,semithick](-0.5,0)--(5,0)node[below]{$\omega_1$};
\draw[->,>=stealth,semithick](0,-0.5)--(0,5)node[left]{$\omega_2$};
\draw(0,0)node[below left]{O};
\draw[dashed](1,0)--(1,2);
\draw[black, very thick](1,1)--(1,5);
\draw[dashed](0,1)--(2,1);
\draw[black, very thick](1,1)--(5,1);
\draw(0,1)node[left]{$\frac{\sigma_2}{4}|c|^2$};
\draw(1,0)node[below]{$\frac{\sigma_1}{4}|c|^2$};
\draw(2.25,-1.8)node[below]{};
\draw(2,6.5)node[below]{\small ${\rm When}\ \mu\ge 0$};
\fill (1,1) circle (5pt) coordinate (A);
\fill[white] (1,1) circle (4pt) coordinate (A);

\draw[->,>=stealth,semithick](-0.5+10,0)--(5+10,0)node[below]{$\omega_1$};
\draw[->,>=stealth,semithick](0+10,-0.5)--(0+10,5)node[left]{$\omega_2$};
\draw(0+10,0)node[below left]{O};
\draw[black,very thick,samples=100,domain=1:2]plot(\x+10,3-\x);
\draw[black,dashed,samples=100,domain=0:1]plot(\x+10,3-\x);
\draw[black,dashed,samples=100,domain=2:3]plot(\x+10,3-\x);
\draw[dashed](1+10,0)--(1+10,2);
\draw[dashed](2+10,-1)--(2+10,1);
\draw[black, very thick](1+10,2)--(1+10,5);
\draw[dashed](0+10,1)--(2+10,1);
\draw[dashed](0+10,2)--(1+10,2);
\draw[black, very thick](2+10,1)--(5+10,1);
\draw(0+10,3)node[left]{$\frac{\sigma_3}{4}|c|^2$};
\draw(0+10,2)node[left]{$\frac{\sigma_3-\sigma_1}{4}|c|^2$};
\draw(0+10,1)node[left]{$\frac{\sigma_2}{4}|c|^2$};
\draw(1+10,0)node[below]{$\frac{\sigma_1}{4}|c|^2$};
\draw(2.25+10,-1)node[below]{$\frac{\sigma_3-\sigma_2}{4}|c|^2$};
\draw(3+10,0)node[below]{$\frac{\sigma_3}{4}|c|^2$};
\fill (1+10,2) circle (5pt) coordinate (A);
\fill (2+10,1) circle (5pt) coordinate (B);
\fill[white] (1+10,2) circle (4pt) coordinate (A);
\fill[white] (2+10,1) circle (4pt) coordinate (B);
\draw(1.8+10,6.5)node[below]{\small ${\rm When}\ \mu <0$};
\end{tikzpicture}
\caption{The bold lines correspond to the case {\rm (B)} in the $(\omega_1,\omega_2)$-plane}\label{dom_omega_zeromass}
\end{center}
\end{figure}
\begin{rem}
In (\ref{dnls2}), we put $f_j:=e^{-it\frac{\Delta}{\sigma_j}}u_j$ $(j=1,2,3)$. 
Then, the Fourier transform of the third equation of (\ref{dnls2}) can be written
\[
\partial_t\widehat{f_3}(t,\xi)
=i\int_{\R^d}e^{-itR(\xi,\xi_1)}\widehat{f_1}(t,\xi_1)\widehat{f_2}(t,\xi-\xi_1)d\xi_1, 
\]
where
\[
R(\xi,\xi_1)=\frac{|\xi_1|^2}{\sigma_1}+\frac{|\xi-\xi_1|^2}{\sigma_2}-\frac{|\xi|^2}{\sigma_3}. 
\]
Therefore, the oscillation disappears when $R(\xi,\xi_1)=0$. 
We say ``resonance occurs'' if $R(\xi,\xi_1)=0$ holds for some $(\xi,\xi_1)\ne (0,0)$. 
The sign of $\mu$ $(=\sigma_1+\sigma_2-\sigma_3)$ determines whether or not resonance occurs. 
    In particular, resonance occurs when $\mu \ge 0$, 
    and $\mu =0$ is equivalent to the mass resonance condition (\ref{massres}). 
    Such resonance structure affects the well-posedness for the Cauchy problem. 
    Indeed, for the system (\ref{dNLS_sys}), 
    it is known that 
    the condition for the regularity of initial data 
    which admits the well-posedness depends on the sign of $\mu$ 
    (\cite{H14}, \cite{HK19}, \cite{HKO21}). 
    Because Theorem~\ref{ex_gs_1} is proved by using the variational method, 
    we can see that the resonance structure of {\rm (\ref{dnls2})} affects 
    the variational structure of the stationary problem {\rm (\ref{ellip_sys_tra})}.  
\end{rem}
\begin{rem}
    We note that the condition {\rm (A)} in 
    Theorem~\ref{ex_gs_1} is not a zero mass case. 
    Then, we have
    \[
    X_{\omega_1,\omega_2,c}=\widetilde{X}_{\omega_1,\omega_2,c}=\mathcal{H}^1(\R^d). 
    \]
    On the other hand, the conditions {\rm (B)} and {\rm (C)} in Theorem~\ref{ex_gs_1} are zero-mass cases. 
    Under the condition {\rm (B)}, we have
    \[
        \widetilde{X}_{\omega_1,\omega_2,c}
        =
        \begin{cases}
            H^1(\R^d)\times H^1(\R^d)\times \dot{H}^1(\R^d)&{\rm if}\ \omega_1>\frac{\sigma_1}{4}|c|^2,\ \omega_2>\frac{\sigma_2}{4}|c|^2,\ \omega_3=\frac{\sigma_3}{4}|c|^2,\\
            \dot{H}^1(\R^d)\times H^1(\R^d)\times H^1(\R^d)&{\rm if}\ \omega_1=\frac{\sigma_1}{4}|c|^2,\ \omega_2>\frac{\sigma_2}{4}|c|^2,\ \omega_3>\frac{\sigma_3}{4}|c|^2,\\
            H^1(\R^d)\times \dot{H}^1(\R^d)\times H^1(\R^d)&{\rm if}\ \omega_1>\frac{\sigma_1}{4}|c|^2,\ \omega_2=\frac{\sigma_2}{4}|c|^2,\ \omega_3>\frac{\sigma_3}{4}|c|^2.
        \end{cases}
    \]
    Under the condition {\rm (C)}, we have
    \[
        \widetilde{X}_{\omega_1,\omega_2,c}
        =
        \begin{cases}
            \dot{H}^1(\R^d)\times \dot{H}^1(\R^d)\times H^1(\R^d)&{\rm if}\ (\omega_1,\omega_2)= (\frac{\sigma_1}{4}|c|^2,\frac{\sigma_2}{4}|c|^2),\ \omega_3>\frac{\sigma_3}{4}|c|^2,\\
            H^1(\R^d)\times \dot{H}^1(\R^d)\times \dot{H}^1(\R^d)&{\rm if}\ (\omega_2,\omega_3)= (\frac{\sigma_2}{4}|c|^2,\frac{\sigma_3}{4}|c|^2),\ \omega_1>\frac{\sigma_1}{4}|c|^2,\\
            \dot{H}^1(\R^d)\times H^1(\R^d)\times \dot{H}^1(\R^d)&{\rm if}\ (\omega_1,\omega_3)= (\frac{\sigma_1}{4}|c|^2,\frac{\sigma_3}{4}|c|^2),\ \omega_2>\frac{\sigma_2}{4}|c|^2.
        \end{cases}
    \]
\end{rem}
We also obtain the following global well-posedness results. 
\begin{thm}[global well-posedness I]\label{GWP_1}
Let $\sigma_1,\sigma_2,\sigma_3>0$.
\begin{itemize} 
    \item[{\rm (i)}] Assume $d=4$ and 
    $U_0\in \mathcal{H}^1(\R^4)$ satisfies at least one of 
    the 
    following two conditions {\rm (a)} and {\rm (b)}$:$
    \[
    \begin{split}
    &{\rm (a)}\ \ Q_1(U_0)<\sup_{\Phi \in \mathcal{M}_{1,0,0}}Q_1(\Phi).\\
    &{\rm (b)}\ \ Q_2(U_0)<\sup_{\Phi \in \mathcal{M}_{0,1,0}}Q_2(\Phi).
    \end{split}\]
    Then, the local solution to {\rm (\ref{dnls2})} 
    constructed in {\rm Theorem~\ref{Theorem_NP}\ (i)}\ 
    can be extended globally in time.
    \item[{\rm (ii)}] Assume $d=5$ and 
    $U_0\in \mathcal{H}^1(\R^5)$ satisfies at least one of 
    the 
    following two conditions {\rm (c)} and {\rm (d)}$:$
    \[
    \begin{split}
    &{\rm (c)}\ \ Q_1(U_0)E(U_0)<\sup_{\Phi \in \mathcal{M}_{1,0,0}}Q_1(\Phi)E(\Phi)\ \ \ 
    {\rm and}\ \ \ Q_1(U_0)L(U_0)<\sup_{\Phi \in \mathcal{M}_{1,0,0}}Q_1(\Phi)L(\Phi).\\
    &{\rm (d)}\ \ Q_2(U_0)E(U_0)<\sup_{\Phi \in \mathcal{M}_{0,1,0}}Q_2(\Phi)E(\Phi)\ \ \ 
    {\rm and}\ \ \ Q_2(U_0)L(U_0)<\sup_{\Phi \in \mathcal{M}_{0,1,0}}Q_2(\Phi)L(\Phi).
    \end{split}
    \]
    Then, the local solution to {\rm (\ref{dnls2})}
    constructed in {\rm Theorem~\ref{Theorem_NP}\ (i)}\ 
    can be extended globally in time. 
\end{itemize}
\end{thm}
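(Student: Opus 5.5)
The plan is to follow the standard Weinstein-type argument: prove a sharp Gagliardo--Nirenberg-type inequality adapted to a single conserved mass, with the extremal given by ground states of $\mathcal{M}_{1,0,0}$ (resp. $\mathcal{M}_{0,1,0}$), and combine it with conservation of $Q_1$, $Q_2$, $E$ and the blow-up alternative of Theorem~\ref{Theorem_NP}\ (i). By the symmetry $u_1\leftrightarrow u_2$, $\sigma_1\leftrightarrow\sigma_2$ it suffices to treat (a) and (c). Since $\sigma_j>0$, $L(U)\simeq\|\nabla U\|_{L^2}^2$, so it is enough to bound $L(U(t))$ uniformly on the maximal existence interval.

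The key step is the inequality
\[
|N(U)|\le C_*\,Q_1(U)^{\frac{6-d}{4}}L(U)^{\frac{d}{4}},\qquad U\in\mathcal{H}^1(\R^d),\ d=4,5,
\]
with the best constant $C_*$ expressed through $\Phi\in\mathcal{M}_{1,0,0}$. First I would check that the inequality holds with some finite constant: by Hölder, $|N(U)|\le\|u_1\|_{L^2}\|u_2\|_{L^{4}}\|u_3\|_{L^4}$ when $d=4$, and a similar splitting with $\|u_1\|_{L^2}$ or $\|u_3\|_{L^2}$ plus Sobolev for $d=5$, so only the mass of $u_1$ or $u_3$ appears, which is controlled by $Q_1$. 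For the sharp constant I would use the Nehari characterization. For $\omega_1=1,\omega_2=0,c=0$ the action is $S=L+N+Q_1$, and $\mathfrak{m}_{1,0,0}$ is attained by Theorem~\ref{ex_gs_1} (case (B) when $d=5$, and case (B)/(A) when $d=4$, noting $\omega_3=1>0$; this is the zero-mass case in $u_2$). Maximizing the Weinstein functional $J(U)=-N(U)/(Q_1^{(6-d)/4}L^{d/4})$ is equivalent, through the two-parameter scaling $U\mapsto \alpha U(\beta\cdot)$, to minimizing $S$ on the Nehari manifold combined with the Pohozaev identity. For a minimizer $\Phi$ the Pohozaev identity $2L(\Phi)+\frac d2N(\Phi)=0$ holds together with $K=0$, i.e. $2L+3N+2Q_1=0$, and this gives explicit relations $L(\Phi)=\frac{d}{6-d}Q_1(\Phi)$, $N(\Phi)=-\frac{4}{d}L(\Phi)$. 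Hence $C_*=J(\Phi)$, and consequently the relevant thresholds are $\sup_{\mathcal{M}_{1,0,0}}Q_1(\Phi)$ for $d=4$ and $Q_1(\Phi)E(\Phi)$, $Q_1(\Phi)L(\Phi)$ for $d=5$. I would also need the fact that $J$ takes the same value on all of $\mathcal{M}_{1,0,0}$, so the supremum in the statement is consistent with that value.

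Given the sharp inequality, the rest is routine. For $d=4$: $E(U)\ge L(U)\bigl(1-C_*Q_1(U)\bigr)$, and $C_*Q_1(\Phi)=1$, so under (a) we get $L(U(t))\le E(U_0)/(1-Q_1(U_0)/Q_1(\Phi))$. For $d=5$: setting $y(t)=Q_1L(U(t))$, we have $Q_1E\ge y-C_*Q_1^{3/4}\cdot Q_1^{1/4}\cdots$, which reduces to $Q_1E\ge f(y):=y-Ay^{5/4}$ with $f$ maximized exactly at $y=Q_1(\Phi)L(\Phi)$ with value $Q_1(\Phi)E(\Phi)$. Continuity and the two conditions in (c) then trap $y(t)$ below $Q_1(\Phi)L(\Phi)$ for all time.

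The main obstacle I expect is identifying the sharp constant with $\mathcal{M}_{1,0,0}$. The issue is the zero mass in the $u_2$-component: $\Phi\in\dot H^1$ only in $\varphi_2$, so one has to check that the Weinstein maximization over $\mathcal{H}^1$ attains the same supremum, using density of $H^1$ in $\dot H^1$. One also has to justify the Pohozaev identity for weak solutions in $\widetilde X_{1,0,0}$. For the Pohozaev identity, my first approach would be to avoid it altogether by computing $J$ directly along the one-parameter family $\lambda\mapsto\lambda^{a}\Phi(\lambda\cdot)$ and using minimality of $\Phi$ among the Nehari-rescaled competitors.
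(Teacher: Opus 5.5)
Your proposal is correct and follows the paper's proof. Lemma~\ref{SJ_eq_lemm} and Proposition~\ref{gs_op_gag} rescale $U\mapsto aU(b\,\cdot)$ so that $L$ and $Q_j$ match those of $\Phi$, then compare with the Nehari projection, to show that every $\Phi\in\mathcal{M}_j$ minimizes $J_j=Q_j^{\frac32-\frac d4}L^{\frac d4}/|N|$ over $\mathcal{H}^1_j\supset\mathcal{H}^1(\R^d)$. This makes your density step unnecessary. Proposition~\ref{Pohoz_id}~(ii) together with $K_j(\Phi)=0$ then gives $L(\Phi)=\frac{d}{6-d}Q_j(\Phi)$ and $N(\Phi)=-\frac{4}{6-d}Q_j(\Phi)$, and the $d=4$ and $d=5$ bounds close as you describe (B\'egout's lemma is your continuity trapping).

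Two small fixes. For $d=4$ the inequality is $|N|\le C_*Q_1^{1/2}L$ with $C_*Q_1(\Phi)^{1/2}=1$, so the bound is $L(U(t))\le E(U_0)/\bigl(1-(Q_1(U_0)/Q_1(\Phi))^{1/2}\bigr)$. Also, $J$ is invariant under $\lambda^{a}\Phi(\lambda\,\cdot)$, so computing $J$ along that family gives nothing. A Pohozaev-free derivation must instead differentiate the Nehari-projected action $\frac{4}{27}(L+Q_1)^3/N^2$ along that family at $\lambda=1$, which does yield $(6-d)L(\Phi)=dQ_1(\Phi)$.
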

\begin{rem}
As mentioned in Remark~\ref{pos_Q_1_Q_2_rem}, 
$Q_1(\Phi)>0$ and $Q_2(\Phi)>0$ 
hold for $\Phi \in \mathcal{M}_{\omega_1,\omega_2,0}$. 
Therefore, there exists $U_0\in \mathcal{H}^1(\R^4)$ 
satisfying either condition {\rm (a)} or {\rm (b)} in Theorem~\ref{GWP_1}. 
When $d=5$, we can also obtain $E(\Phi)=\frac{1}{5}L(\Phi)>0$ 
for $\Phi \in \mathcal{M}_{\omega_1,\omega_2,0}$ by Proposition~\ref{Pohoz_id} below. 
Therefore, there exists $U_0\in \mathcal{H}^1(\R^5)$ 
satisfying the either condition {\rm (c)} or {\rm (d)} in Theorem~\ref{GWP_1}. 
\end{rem}
\begin{rem}\label{GWP1_rem_refine}
{\rm Theorem}~\ref{GWP_1} says that 
    the smallness of $u_{1,0}$ is not needed for the cases {\rm (b)} and {\rm (d)} 
    to obtain the global well-posedness. 
    Similarly, the smallness of $u_{2,0}$ is not needed for the cases {\rm (a)} and {\rm (c)}. 
    These are the refinement of the results in \cite{NP21, NP22} 
    (Theorem~\ref{Theorem_NP} {\rm (iii)}, {\rm (iv)} above). 
\end{rem}
\begin{thm}[global well-posedness II]\label{GWP_2}
    Let $d=4$, $\sigma_1,\sigma_2,\sigma_3>0$, and $V_0=(v_{1,0},v_{2,0},v_{3,0})\in \mathcal{H}^1(\R^4)$. 
    \begin{itemize}
        \item[{\rm (i)}] Assume $\sigma_1+\sigma_2>\sigma_3$. 
        There exists $A_0>0$ such that if $\|v_{3,0}\|_{L^2}^2<A_0$ 
        and $|c|$ is large enough depending on $V_0$, 
        then the local solution to {\rm (\ref{dnls2})} 
        with $U_0=\Lambda_cV_0$ 
        constructed in {\rm Theorem~\ref{Theorem_NP}\ (i)}\ 
        can be extended globally in time.
        \item[{\rm (ii)}] Assume $\sigma_1+\sigma_2<\sigma_3$. 
        There exist $B_0>0$ and $C_0>0$ such that if either $\|v_{1,0}\|_{L^2}^2<B_0$ or $\|v_{2,0}\|_{L^2}^2<C_0$ holds
        and $c$ is large enough depending on $V_0$, 
        then the local solution to {\rm (\ref{dnls2})} 
        with $U_0=\Lambda_cV_0$ 
        constructed in {\rm Theorem~\ref{Theorem_NP}\ (i)}\ 
        can be extended globally in time.
    \end{itemize} 
\end{thm}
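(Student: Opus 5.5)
The approach is a Galilean-type reduction followed by a virial/energy argument for the system written in the moving frame. Set $U=\Lambda_c V$ in a time-dependent sense: $u_j(t,x)=e^{i\frac{\sigma_j}{2}c\cdot x}e^{-i\frac{\sigma_j}{4}|c|^2 t}v_j(t,x-ct)$. Then $V$ solves
\[
i\partial_t v_j+\tfrac{1}{\sigma_j}\Delta v_j = G_{j}(t,V),
\]
where the nonlinearity carries the phase $e^{\pm i\frac{\mu}{2}c\cdot x}$ (with $\mu=\sigma_1+\sigma_2-\sigma_3$), together with a time-dependent phase $e^{\pm i\frac{\mu}{4}|c|^2t}$ coming from the frequencies. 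When $\mu\ne 0$, this oscillation is what we exploit. Global existence in $\mathcal{H}^1(\R^4)$ follows once $\|\nabla U(t)\|_{L^2}$ stays bounded on the maximal interval, by the blow-up alternative of Theorem~\ref{Theorem_NP}(i).

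The key tool is the conserved quantity $S_{\omega_1,\omega_2,c}(U)=E+\omega_1Q_1+\omega_2Q_2+c\cdot P$. By \eqref{S_phi_psi} it equals $\widetilde S_{\omega_1,\omega_2,c}(V)$, which has quadratic part $\frac12\|V\|^2_{\widetilde X}$ and cubic part $N_c(V)$. The frequencies $(\omega_1,\omega_2)$ are free, which is the whole point of the two-parameter setup. In case (i), $\mu>0$, choose $(\omega_1,\omega_2)=(\frac{\sigma_1}{4}|c|^2,\frac{\sigma_2}{4}|c|^2)$. This makes the masses of $v_1$ and $v_2$ vanish, while $\omega_3-\frac{\sigma_3}{4}|c|^2=\frac{\mu}{4}|c|^2>0$, so that $\|v_3\|_{L^2}$ appears with a large positive weight. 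Then $S$ controls
\[
\tfrac12\sum_j\tfrac1{\sigma_j}\|\nabla v_j\|^2+\tfrac{\mu}{8}|c|^2\|v_3\|^2 .
\]
We bound $|N_c(V)|\le \|v_1\|_{L^4}\|v_2\|_{L^4}\|v_3\|_{L^2}\lesssim \|\nabla v_1\|\|\nabla v_2\|\|v_3\|_{L^2}$ using Sobolev in $d=4$ ($\dot H^1\subset L^4$). The mass $\|v_3\|_{L^2}$ is not conserved alone, but $Q_1,Q_2$ are, so we need a sharp constant instead. The point is that the sharp constant of this Gagliardo–Nirenberg type inequality is given by the ground state of the zero-mass problem in case (C) with $d=4$. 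That case is not covered by Theorem~\ref{ex_gs_1}, so instead we use smallness: if $\|v_3(t)\|_{L^2}^2<A_0$ with $A_0$ below the sharp constant, then $S\ge \delta\sum\|\nabla v_j\|^2$, and the kinetic energy is bounded by $S(U_0)/\delta$.

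The remaining issue is controlling $\|v_3(t)\|_{L^2}$ for all time. Here we use a continuity argument. Since $Q_1=\frac12(\|v_1\|^2+\|v_3\|^2)$ is conserved, we need an alternative route: $S$ itself bounds $\frac{\mu}{8}|c|^2\|v_3(t)\|^2\le S(U_0)+|N_c(V(t))|$. By Riemann–Lebesgue-type decay in $|c|$ of the oscillatory integral at $t=0$, $S(U_0)=\frac12\|\nabla V_0\|^2+\frac{\mu}{8}|c|^2\|v_{3,0}\|^2+o(1)$ as $|c|\to\infty$. Dividing by $|c|^2$ gives $\|v_3(t)\|^2\le \|v_{3,0}\|^2+O(|c|^{-2})(\|\nabla V_0\|^2+\text{cubic})$ as long as the kinetic bound holds. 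Bootstrapping, $\|v_3(t)\|^2<A_0$ persists for $|c|$ large depending on $V_0$, closing the argument.

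Case (ii), $\mu<0$, is handled the same way. Choose $\omega_3=\frac{\sigma_3}{4}|c|^2$ and $\omega_2=\frac{\sigma_2}{4}|c|^2$, so $\omega_1-\frac{\sigma_1}{4}|c|^2=\frac{-\mu}{4}|c|^2>0$; this gives a large weight on $\|v_1\|^2$ and needs smallness of $v_1$ ($B_0$). Symmetrically, the choice $\omega_1=\frac{\sigma_1}{4}|c|^2$ gives $B_0$ replaced by $C_0$ for $v_2$. The Hölder placement in the estimate on $N_c$ puts the $L^2$ norm on the heavily weighted component.

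The main obstacle is the step showing $S(U_0)-\frac{\mu}{8}|c|^2\|v_{3,0}\|^2$ is $o(|c|^2)$, together with the uniform smallness of the $L^2$ weight. This requires $N_c(V_0)$ to be bounded, which holds trivially, so in fact only boundedness is needed, and the bootstrap then closes cleanly.
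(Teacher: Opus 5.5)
Your route is sound and genuinely different from the paper's. It uses the same frequency choice, conservation of $S_{\omega_1,\omega_2,c}$, coercivity under smallness of the heavily weighted $L^2$ norm, and continuity in time. However, the bootstrap step as you wrote it does not close.

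You use $\frac{\mu}{8}|c|^2\|v_3(t)\|_{L^2}^2\le S(U_0)+|N_c(V(t))|$ and control $|N_c(V(t))|$ through the kinetic bound $\sum_j\|\nabla v_j(t)\|_{L^2}^2\lesssim S(U_0)/\delta$. But $S(U_0)=\frac{\mu}{8}|c|^2\|v_{3,0}\|_{L^2}^2+O_{V_0}(1)$ is of order $|c|^2$. So this only gives $|N_c(V(t))|=O(|c|^2)$, and after dividing by $|c|^2$ the error is of order one, not $O(|c|^{-2})$. It therefore cannot be absorbed when $\|v_{3,0}\|_{L^2}^2$ is close to $A_0$.

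Nothing in the conservation laws rules this out. Only $Q_1,Q_2$ are conserved, and $\frac12\sum_j\frac1{\sigma_j}\|\nabla v_j\|_{L^2}^2+N_c=S(U_0)-\frac{\mu}{8}|c|^2\|v_3(t)\|_{L^2}^2$. Hence a drop of $\|v_3\|_{L^2}^2$ by an $O(1)$ amount frees energy of order $|c|^2$ for the kinetic part.

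The repair uses what you already have. Let $C_*$ be the constant in $|N_c(V)|\le C_*\|\nabla v_1\|_{L^2}\|\nabla v_2\|_{L^2}\|v_3\|_{L^2}$ and set $A_0:=(4C_*^2\sigma_1\sigma_2)^{-1}$. On the times where $\|v_3(t)\|_{L^2}^2<A_0$, one has $\frac12\sum_j\frac1{\sigma_j}\|\nabla v_j\|_{L^2}^2+N_c(V)\ge\frac14\sum_j\frac1{\sigma_j}\|\nabla v_j\|_{L^2}^2\ge0$. Do not split off $N_c$; conservation of $S$ then gives directly
\[
\frac{\mu}{8}|c|^2\|v_3(t)\|_{L^2}^2\le S(U_0),\qquad\text{i.e.}\qquad \|v_3(t)\|_{L^2}^2\le\|v_{3,0}\|_{L^2}^2+O_{V_0}(|c|^{-2}).
\]
For $|c|$ large this stays below $A_0$ by a fixed margin, so the bootstrap closes by continuity of $t\mapsto\|u_3(t)\|_{L^2}$.

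Then $\sum_j\frac1{\sigma_j}\|\nabla v_j(t)\|_{L^2}^2\le 4S(U_0)$, together with conservation of $Q_1,Q_2$, bounds $\|U(t)\|_{\mathcal{H}^1}$ uniformly in time. The constant depends on $c$, which is harmless. It is cleanest to take $V(t)=\Lambda_{-c}U(t)$, so that $S(U(t))=\widetilde{S}_{\omega_1,\omega_2,c}(V(t))$ exactly; in your moving frame the cubic term only acquires a unimodular phase. Case (ii) is identical with the weighted component changed.

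With this repair, your proof is a direct coercivity-plus-bootstrap argument. The paper, with the same frequency choices, instead runs a potential-well argument:
\begin{itemize}
\item It uses the scaling identity $\mathfrak{m}_{\frac{\sigma_1}{4}|c|^2,\frac{\sigma_2}{4}|c|^2,c}=|c|^2\mathfrak{m}_{\frac{\sigma_1}{4},\frac{\sigma_2}{4},e}$ (Lemma~\ref{S_sca_rot}) to show that $U_0\in\mathcal{A}^{+}_{\omega_1,\omega_2,c}$, i.e.\ $S(U_0)\le\mathfrak{m}$ and $K(U_0)\ge0$, for large $|c|$.
\item It then applies flow invariance of $\mathcal{A}^{+}_{\omega_1,\omega_2,c}$ (Propositions~\ref{Apm_inv_flow} and \ref{Ap_Gwp}).
\item That invariance rests on positivity of the zero-mass least action under ${\rm (C)'}$ (Proposition~\ref{pos_pr_Neh}), the identification $\mathcal{M}=\mathcal{G}$ via Lagrange multipliers, and uniqueness of solutions.
\end{itemize}
The paper's threshold $A_0=\frac{8}{\mu}\mathfrak{m}_{\frac{\sigma_1}{4},\frac{\sigma_2}{4},e}$ is tied to the ground-state level and fits the framework reused for stability. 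Yours needs nothing beyond the H\"older--Sobolev bound on $N_c$ and the conservation laws, at the price of a non-variational threshold given by a Sobolev constant. Since the theorem only asserts the existence of some $A_0$ (and $B_0,C_0$), either suffices.
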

\begin{rem}
    Theorem~\ref{GWP_2} says that 
    the smallness assumption is needed 
    for only one of $v_{1,0}$, $v_{2,0}$, and $v_{3,0}$ 
    to obtain the global well-posedness for oscillating initial data. 
    This is the refinement of the results in \cite{Lipre} 
    (Theorem~\ref{Theorem_Li} {\rm (i)}, {\rm (ii)}, {\rm (iii)} above). 
\end{rem}
Finally, we mention the stability of the ground state sets. 
\begin{defn}
    We say that the set $\mathcal{M}\subset \mathcal{M}_{\omega_1,\omega_2,c}$ 
    is orbitally stable if the following property holds:

    For any $\epsilon >0$, there exists $\delta>0$ such that
    if $U_0\in \mathcal{H}^1(\R^d)$ satisfies
    \[
    \inf_{\Phi \in \mathcal{M}}\|U_0-\Phi\|_{X_{\omega_1,\omega_2,c}}<\delta,
    \]
    then the solution $U(t)$ to {\rm (\ref{dnls2})} exists globally in time and satisfies
    \[
    \inf_{\Phi\in \mathcal{M}}\|U(t)-\Phi\|_{X_{\omega_1,\omega_2, c}}<\epsilon
    \]
    for any $t>0$. 
\end{defn}
\begin{rem}
    When $(\omega_1,\omega_2)\in \Omega_c$, 
    we can replace $X_{\omega_1,\omega_2,c}$ by $\mathcal{H}^1(\R^d)$ 
    (see, Proposition~\ref{norm_X_H1_equi} and Remark~\ref{norm_eq_X_H_rem}). 
\end{rem}
\begin{thm}[orbital stability]\label{stab_thm}
    Let $\sigma_1,\sigma_2,\sigma_3>0$. 
    \begin{enumerate}
    \item[{\rm (i)}] Assume $1\le d\le 3$. For any $\omega_1,\omega_2>0$, 
    there exists $c_0=c_0(\omega_1,\omega_2)>0$  with 
    $c_0<\min\left\{\sqrt{\frac{4\omega_1}{\sigma_1}},\sqrt{\frac{4\omega_2}{\sigma_2}},\sqrt{\frac{4(\omega_1+\omega_2)}{\sigma_3}}\right\}$
    such that if $|c|\le c_0$, then $\mathcal{M}_{\omega_1,\omega_2,c}$ is orbitally stable. 
    \item[{\rm (ii)}] Assume $d=3$. For any $\omega_1,\omega_2>0$, the sets 
    $\mathcal{M}_{\omega_1,0,0}$ and $\mathcal{M}_{0,\omega_2,0}$ are orbitally stable. 
    \end{enumerate}
\end{thm}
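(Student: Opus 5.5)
The plan is the Cazenave--Lions argument. Orbital stability of the whole minimizer set $\mathcal{M}_{\omega_1,\omega_2,c}$ follows from a variational characterization by conserved quantities together with precompactness of minimizing sequences modulo translations. First, for $1\le d\le 3$ the Cauchy problem is globally well-posed in $\mathcal{H}^1$ (Theorem~\ref{Theorem_NP} (ii)), with $E$, $Q_1$, $Q_2$, $P$ conserved. So global existence is automatic in (i), and the task reduces to proving that the solution stays close to the set.

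For (i), fix $\omega_1,\omega_2>0$ and take $|c|$ small, so that $(\omega_1,\omega_2)\in\Omega_c$ and $X_{\omega_1,\omega_2,c}$ has a norm equivalent to the $\mathcal{H}^1$ norm (Proposition~\ref{norm_X_H1_equi}). I would characterize the minimizers through the $L^2$-constrained problem
\[
d(a_1,a_2):=\inf\{E(U)+c\cdot P(U)\,|\,U\in\mathcal{H}^1,\ Q_1(U)=a_1,\ Q_2(U)=a_2\},
\]
and show that $\mathcal{M}_{\omega_1,\omega_2,c}$ coincides with (or contains as a union of orbits) the set of minimizers of $d(Q_1(\Phi),Q_2(\Phi))$ for $\Phi\in\mathcal{M}_{\omega_1,\omega_2,c}$. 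The cleaner alternative, which avoids identifying Nehari and mass-constrained minimizers, is the standard trick: argue by contradiction. Take initial data $U_{0,n}$ approaching $\mathcal{M}$ and times $t_n$ with $\mathrm{dist}(U_n(t_n),\mathcal{M})\ge\epsilon$. By conservation, $S_{\omega_1,\omega_2,c}(U_n(t_n))\to\mathfrak{m}_{\omega_1,\omega_2,c}$ and $Q_k(U_n(t_n))\to Q_k(\Phi_\ast)$, which is uniform over $\mathcal{M}$ only if $Q_k$ is constant on $\mathcal{M}$. Since that constancy is unclear, I would instead use that, for $d\le 3$, $K$ is not scale-critical, so nearby data satisfy $K_{\omega_1,\omega_2,c}(U_n(t_n))\to0$. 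The justification: since $\frac16\|U\|_X^2=S-\frac13K$, if $K(U_n(t_n))$ stayed bounded away from $0$ we could rescale $\lambda_nU_n(t_n)$ onto the Nehari manifold and contradict minimality. This requires showing that the sign of $K$ is preserved near $\mathcal{M}$, which is the usual invariance argument applied to the sets $\{S<\mathfrak{m}+\eta,\ K>0\}$ and its complement.

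So $V_n:=\lambda_nU_n(t_n)$ with $\lambda_n\to1$ is a minimizing sequence for $\mathfrak{m}_{\omega_1,\omega_2,c}$. The key step is then compactness: every minimizing sequence of $\widetilde{\mathfrak m}$ (after applying $\Lambda_{-c}$) is precompact in $\mathcal{H}^1$ up to translation. I would reuse the concentration-compactness or profile decomposition argument from the proof of Theorem~\ref{ex_gs_1} (A). The subtlety is that $N_c$ contains the oscillating factor $e^{i\frac{\mu}{2}c\cdot x}$, which is not translation invariant but is only modulated by a phase under translation. Under the gauge $\psi_3\mapsto e^{i\theta}\psi_3$ this is absorbed, and the limit is a minimizer in $\widetilde{\mathcal M}$ after a translation. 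The smallness $|c|\le c_0$ is presumably also needed to guarantee coercivity uniformly, and possibly to rule out dichotomy. I expect exactly this step to be the main obstacle: showing that the time-evolved sequence remains on the ``good side'' of Nehari and that the vanishing and dichotomy alternatives are excluded with the $c$-dependent phase. I would first try a Lieb-type translation lemma to get a nonzero weak limit, then use Brezis--Lieb splitting of $S$ and $K$ to show that the weak limit has $K\le0$ and $S\le\mathfrak m$, forcing strong convergence. Once $V_n\to\Phi\in\mathcal M$ up to translation, translation invariance of $\mathcal M$ contradicts $\mathrm{dist}\ge\epsilon$.

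For (ii), with $d=3$ and $(\omega_1,0)$ (zero mass in $\psi_2$), $X=H^1\times\dot H^1\times H^1$. Here global existence for $\mathcal{H}^1$ data still holds, but the distance is measured in the weaker $X$-norm. The same contradiction argument applies, but the compactness must be done in $\dot H^1$ for the second component, using Sobolev $\dot H^1\subset L^6$ and the trilinear estimate
\[
|N(\Phi)|\le\|\varphi_1\|_{L^{12/5}}\|\varphi_2\|_{L^6}\|\varphi_3\|_{L^{12/5}}\lesssim\|\varphi_1\|_{H^1}\|\varphi_2\|_{\dot H^1}\|\varphi_3\|_{H^1}.
\]
Compactness modulo translations then follows as in case (B) of Theorem~\ref{ex_gs_1}. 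Since $c=0$, no phase issue arises, so this part should be routine given the existence proof.
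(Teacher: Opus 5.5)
\paragraph{Verdict: there is a gap.} Your compactness step is sound; it is exactly Proposition~\ref{mini_seq_conv}, where the phase is handled by $\Lambda(-c\cdot x_n)$, and it needs no smallness of $c$. The gap is in the step that is supposed to make $U_n(t_n)$ a minimizing sequence. Write $S,K,\mathfrak{m}$ for $S_{\omega_1,\omega_2,c},K_{\omega_1,\omega_2,c},\mathfrak{m}_{\omega_1,\omega_2,c}$.

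\paragraph{Why the step to $K\to 0$ fails.} Conservation gives you $S(U_n(t_n))\to\mathfrak{m}$. You also need $K(U_n(t_n))\to 0$, or equivalently $N(U_n(t_n))\to-2\mathfrak{m}$, since $N=K-2S$. Your justification for this does not work, because $S\approx\mathfrak{m}$ together with $K$ bounded away from $0$ contradicts nothing.
\begin{itemize}
\item If $K<0$, Proposition~\ref{pos_pr_Neh}~(ii) only gives $\|U\|_{X_{\omega_1,\omega_2,c}}^2>6\mathfrak{m}$. This is consistent with the identity $\|U\|_{X_{\omega_1,\omega_2,c}}^2=6S-2K$.
\item If $K>0$, the Nehari rescaling factor exceeds $1$ and yields no bound. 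Concretely, any $U=(u_1,0,0)$ with $\|U\|_{X_{\omega_1,\omega_2,c}}^2=2\mathfrak{m}$ has $S=\mathfrak{m}$ and $K=2\mathfrak{m}$.
\item The sets $\{S<\mathfrak{m}+\eta,\ K>0\}$ are not flow-invariant. Only $\{S\le\mathfrak{m},\ K\ge0\}$ and $\{S\le\mathfrak{m},\ K<0\}$ are (Proposition~\ref{Apm_inv_flow}), and data near $\mathcal{M}$ generally have $S$ slightly above $\mathfrak{m}$.
\item Even if the sign of $K$ were preserved, that would not force $K\to0$.
\end{itemize}
A useful sanity check: $d\le 3$ plays no real role in your argument. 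Yet the Nehari characterization alone cannot yield stability, since Nehari ground states are typically unstable in mass-supercritical settings. Some additional structural input is therefore unavoidable.

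\paragraph{The missing idea: trapping by neighbouring parameters.} This is the paper's two-sided argument.
\begin{enumerate}
\item By Lemma~\ref{S_sca_rot}, $h(\tau):=\mathfrak{m}\bigl((\sqrt{\omega_1}-\tau)^2,\tfrac{\omega_2}{\omega_1}(\sqrt{\omega_1}-\tau)^2,\tfrac{c}{\sqrt{\omega_1}}(\sqrt{\omega_1}-\tau)\bigr)=(1-\tau/\sqrt{\omega_1})^{6-d}\mathfrak{m}$.
\item Expand $S_{\omega_{1\pm},\omega_{2\pm},c_\pm}(U_0)$ in $\tau_0$ and compare with the Taylor expansion of $h(\mp\tau_0)$. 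This gives $S_{\omega_{1\pm},\omega_{2\pm},c_\pm}(U_0)<\mathfrak{m}(\omega_{1\pm},\omega_{2\pm},c_\pm)$ for $U_0$ near $\Phi\in\mathcal{M}$, provided $\tfrac{h''(0)}{2}-Q_1(\Phi)-\tfrac{\omega_2}{\omega_1}Q_2(\Phi)>0$.
\item By Proposition~\ref{Pohoz_id} and $K(\Phi)=0$, this quantity equals $\tfrac{1}{2\omega_1}\{(8-2d)(\omega_1Q_1(\Phi)+\omega_2Q_2(\Phi))+(5-d)c\cdot P(\Phi)\}$.
\item Combined with $N(\Phi)=-2h(0)$ and the monotonicity of $h$, this places $U_0$ in $\mathcal{B}^+_{\omega_{1+},\omega_{2+},c_+}\cap\mathcal{B}^-_{\omega_{1-},\omega_{2-},c_-}$. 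That set is flow-invariant (Propositions~\ref{Apm_inv_flow} and \ref{rel_A_B_set}), so $-2h(-\tau_0)\le N(U(t))<-2h(\tau_0)$ for all $t$.
\item Letting $\tau_0\to0$ forces $N(U_n(t_n))\to-2\mathfrak{m}$, hence $K(U_n(t_n))\to0$.
\end{enumerate}

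\paragraph{Where the hypotheses enter.} This is exactly where $d\le3$ and $|c|\le c_0$ are used. Smallness of $|c|$ is not needed for coercivity or for excluding dichotomy, as you guessed. It gives a uniform bound on $|P|$ and a uniform lower bound on $\omega_1Q_1+\omega_2Q_2$ over $\mathcal{M}$, so the bracket above is $\ge\eta>0$ on all of $\mathcal{M}$. For (ii), with $c=0$ and $d=3$, the bracket is $2\omega_1Q_1$, which is a positive constant on $\mathcal{M}_{\omega_1,0,0}$ by (\ref{m_identi}).

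\paragraph{Your abandoned mass-constrained route.} It could replace the trapping argument, but only with two proofs you do not supply. You would need that every element of $\mathcal{M}_{\omega_1,\omega_2,c}$ minimizes $E+c\cdot P$ at its own masses. You would also need that all the corresponding constrained minimizers lie in $\mathcal{M}$.
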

\begin{rem}
    In the proof of Theorem~\ref{stab_thm}, 
    we will prove the set $\mathcal{M}_{\omega_1,\omega_2,c}^*(\eta)$ defined by 
    \[
    \mathcal{M}_{\omega_1,\omega_2,c}^*(\eta)
    :=\{\Phi \in \mathcal{M}_{\omega_1,\omega_2,c}|\ (8-2d)(\omega_1Q_1(\Phi)+\omega_2Q_2(\Phi))+(5-d)c\cdot P(\Phi)\ge \eta\}
    \]
    is orbitally stable for any $\eta>0$. 
    The proof of Theorem~\ref{stab_thm} will be completed by showing that
    $\mathcal{M}_{\omega_1,\omega_2,c}$
    coincides with the set $\mathcal{M}_{\omega_1,\omega_2,c}^*(\eta)$ for some $\eta >0$ if $c$ is small enough 
    when $1\le d\le 3$. 
    Next, we mention about $d=4$ and $5$. 
    Clearly, $\mathcal{M}_{\omega_1,\omega_2,c}^*(\eta)=\emptyset$ for any $\eta >0$ when $d=5$. 
    It also holds that $\mathcal{M}_{\omega_1,\omega_2,c}^*(\eta)=\emptyset$ when $d=4$ 
    because $c\cdot P(\Phi)\le 0$ holds for any $(\omega_1,\omega_2)\in \Omega_c$ (see, Remark~\ref{cp_neg_rem} below). 
      
\end{rem}
We give a notation table (Table~\ref{tab1} below) for the reader's convenience. 
\begin{table}[H]
\caption{Notation table}\label{tab1}
\vspace{.3cm}
\renewcommand{\arraystretch}{1.1}
 \begin{tabularx}{\linewidth}{|c|X|}
\hline
$\mathcal{H}^1(\R^d)$& Sobolev space $H^1(\R^d)\times H^1(\R^d)\times H^1(\R^d)$\\
$Q_1(U)$ & Charge $\frac{1}{2}\|u_1\|_{L^2(\R^d)}^2+\frac{1}{2}\|u_3\|_{L^2(\R^d)}^2$\\
$Q_2(U)$ & Charge $\frac{1}{2}\|u_2\|_{L^2(\R^d)}^2+\frac{1}{2}\|u_3\|_{L^2(\R^d)}^2$\\
$L(U)$ & Kinetic energy $\frac{1}{2\sigma_1}\|\nabla u_1\|_{L^2(\R^d)}^2+\frac{1}{2\sigma_2}\|\nabla u_2\|_{L^2(\R^d)}^2
    +\frac{1}{2\sigma_3}\|\nabla u_3\|_{L^2(\R^d)}^2$\\
$N(U)$ & Potential energy $-{\rm Re}\left(u_1u_2,u_3\right)_{L^2(\R^d)}$\\
$P(U)$ & Momentum vector $(P_1(U),\cdots,P_d(U))$\\
$P_k(U)$ & $k$-th component of the momentum 
$\frac{1}{2}\sum_{j=1}^3{\rm Re}(i\partial_ku_j,u_j)_{L^2(\R^d)}$\\
$\omega_1$, $\omega_2$ & Independent frequency parameters; $\omega_3:=\omega_1+\omega_2$\\
$S_{\omega_1,\omega_2,c}(U)$ & Action $E(U)+\omega_1 Q_1(U)+\omega_2Q_2(U)+c\cdot P(U)$\\
$K_{\omega_1,\omega_2,c}(U)$ & Nehari functional $\partial_{\lambda}S_{\omega_1,\omega_2,c}(\lambda U)\big|_{\lambda=1}$\\
$X_{\omega_1,\omega_2,c}$ & Function space $X_1\times X_2\times X_3$\\ 
$X_j$ & Set of all $f$ such that $\nabla (e^{-i\frac{\sigma_j}{2}c\cdot(\cdot )}f)\in L^2(\R^d)$ and  
$(\omega_j-\frac{\sigma_j}{4}|c|^2)^{\frac{1}{2}}f\in L^2(\R^d)$\\
$\mathcal{E}_{\omega_1,\omega_2,c}$ & Set of the critical points of action $\{\Phi \in X_{\omega_1,\omega_2,c}\ |\ \Phi \ne 0,\ 
    S_{\omega_1,\omega_2,c}'(\Phi)=0\}$\\
$\mathcal{G}_{\omega_1,\omega_2,c}$ & Set of ground states $\{\Phi \in \mathcal{E}_{\omega_1,\omega_2,c}\ |\ \ 
    S_{\omega_1,\omega_2,c}(\Phi)\le S_{\omega_1,\omega_2,c}(\Theta)\ \text{for\ any}\ 
    \Theta \in \mathcal{E}_{\omega_1,\omega_2,c}\}$\\
$\mathfrak{m}_{\omega_1,\omega_2,c}$& Least action level $\inf \{S_{\omega_1,\omega_2,c}(\Phi)\ |\ \Phi \in X_{\omega_1,\omega_2,c},\ \Phi \ne 0,\ 
K_{\omega_1,\omega_2,c}(\Phi )=0\}$\\
$\mathcal{M}_{\omega_1,\omega_2,c}$ & Set of minimizers $\{\Phi \in X_{\omega_1,\omega_2,c}\ |\ \Phi \ne 0,\ 
S_{\omega_1,\omega_2,c}(\Phi)=\mathfrak{m}_{\omega_1,\omega_2,c},\ 
K_{\omega_1,\omega_2,c}(\Phi)=0\}$\\
\hline
 \end{tabularx}
 \end{table}
\section{Properties of weak solutions to the stationary problem}
In this section, we prove the properties of 
weak solutions to {\rm (\ref{ellip_sys_tra})} and {\rm (\ref{ellip_sys2})}. 
We first give the regularity property. 
\begin{prop}\label{regu_prop_ws}
    Let $1\le d\le 5$, $\sigma_1,\sigma_2,\sigma_3>0$, and $c\in \R^d$. 
    Assume one of the conditions {\rm (A)} or {\rm (B)} in Theorem~\ref{ex_gs_1} holds. 
     Then, any weak solution $\Psi =(\psi_1,\psi_2,\psi_3) \in \widetilde{X}_{\omega_1,\omega_2,c}$ to {\rm (\ref{ellip_sys2})}
    satisfies $\psi_j\in \bigcap_{m=0}^{\infty}H^m(\R^d)$ if $\omega_j-\frac{\sigma_j}{4}|c|^2>0$ and $\nabla\psi_j\in \bigcap_{m=0}^{\infty}H^m(\R^d)$ if $\omega_j-\frac{\sigma_j}{4}|c|^2=0$ for $j=1,2,3$.  
\end{prop}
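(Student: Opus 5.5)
We argue by elliptic bootstrap on the equations (\ref{ellip_sys2}), using the weak formulation to identify each $\psi_j$ with a distributional solution of
\[
-\frac{1}{\sigma_j}\Delta \psi_j+a_j\psi_j=-G_{j,c}(\Psi),\qquad a_j:=\omega_j-\frac{\sigma_j}{4}|c|^2\ge 0 .
\]
The phase factor $e^{\pm i\frac{\mu}{2}c\cdot x}$ in $G_{j,c}$ is smooth with all derivatives bounded, so it is harmless: it only produces lower-order terms of the same product form. The basic integrability comes from Sobolev embedding. Each $\psi_j$ lies in $H^1$ or $\dot H^1$, hence in $L^{2^*}$ when $d\ge3$; if $a_j>0$ it also lies in $L^2$. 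Under (B), at most one component is in $\dot H^1$ only, since (B) excludes two vanishing $a$'s. Under (A) all three are in $H^1$, and for $d\le 2$ only case (A) occurs, so all products are handled by the $H^1$ embeddings.

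\textbf{Step 1 (gain of integrability).} Show that $G_{j,c}(\Psi)\in L^2$, or at least bootstrap up to it. For $d\le 3$, $H^1\cdot H^1\subset L^2$ directly. For $d=4,5$, products of two $L^{2^*}$ functions lie in $L^{d/(d-2)}$. I would then use the resolvent $(-\frac1{\sigma_j}\Delta+a_j)^{-1}$ when $a_j>0$, or $(-\Delta)^{-1}$ acting on $\nabla$-type estimates when $a_j=0$, together with Calderón–Zygmund/Sobolev. This gives $\psi_j\in W^{2,d/(d-2)}$, which embeds into $L^{q}$ with $1/q=(d-2)/d-2/d=(d-4)/d$.

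For $d=5$ this gives $q=5$, an improvement over $2^*=10/3$. Iterating, the exponent $1/q$ drops by a fixed amount $2/d$ each round (the problem is energy-subcritical for $d\le5$), so finitely many iterations put all $\psi_j\in L^{p}$ for every $p\in[2^*,\infty]$. For the $H^1$ components, which are also in $L^2$, this means every $p\in[2,\infty]$. Then $G_{j,c}\in L^2$ because at least one factor is in $L^2\cap L^\infty$ or both are in $L^4$. In case (B), $G_{j,c}$ for the zero-mass index contains only $H^1$ components, so it lies in $L^2\cap L^{r}$ as needed.

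\textbf{Step 2 (higher regularity).} For $a_j>0$, use $G_{j,c}\in L^2$ and $(-\frac1{\sigma_j}\Delta+a_j)^{-1}:H^m\to H^{m+2}$ to get $\psi_j\in H^2$. For $a_j=0$, the equation $-\Delta\psi_j=-\sigma_jG_{j,c}\in L^2$ together with $\nabla\psi_j\in L^2$ gives $\nabla\psi_j\in H^1$, because $\|\partial_k\partial_l\psi_j\|_{L^2}=\|\Delta\psi_j\|_{L^2}$ via the Fourier multiplier $\xi_k\xi_l/|\xi|^2$. Then induct on $m$. Assume $\psi_j\in H^{m}$ when $a_j>0$, and $\nabla\psi_j\in H^{m-1}$ with $\psi_j\in L^{2^*}\cap L^\infty$ when $a_j=0$. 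By Leibniz, $\partial^\alpha G_{j,c}$ with $|\alpha|\le m-1$ is a sum of terms of the form (smooth bounded phase derivative) times $\partial^\beta\psi_k\,\partial^\gamma\psi_l$. These are placed in $L^2$ by Hölder and Gagliardo–Nirenberg. The key point is that undifferentiated factors are in $L^\infty$ by Step 1, and derivatives are handled by $L^2$ bounds, so the zero-mass component never needs an $L^2$ bound on itself. Hence $G_{j,c}\in H^{m-1}$ and elliptic regularity gives the claim at level $m+1$.

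\textbf{Main obstacle.} The hardest step is Step 1 in dimension $d=5$ under (B). There the zero-mass component is only in $\dot H^1\subset L^{10/3}$, so the bootstrap must use homogeneous estimates for $(-\Delta)^{-1}$ on $L^{p}$. I would first use Hardy–Littlewood–Sobolev, $\|(-\Delta)^{-1}f\|_{L^{q}}\lesssim\|f\|_{L^{p}}$ with $1/q=1/p-2/5$, to raise the integrability of $\psi_j$. In parallel I would use the $L^2$-based components, which have the full range $L^2\cap L^{10/3}$, to raise the integrability of the product. The integrability must also be high enough that the term $\psi_1\psi_2$, or $\overline{\psi_k}\psi_3$, ends up in $L^2$. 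I expect checking the exponent bookkeeping there, together with the fact that the zero-mass component never belongs to $L^2$ itself, to be the delicate point.
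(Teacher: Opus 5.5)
Your bootstrap is correct, but it is organized differently from the paper's.

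The paper treats only (B) in detail (calling (A) standard) and never passes through $L^\infty$. With, say, $\gamma_1=0$, it fixes one exponent $p_0\in[\frac d3,\frac d2)\cap[\frac{2d}{d+4},\frac{d}{d-2}]$, a set that is nonempty for $3\le d\le 5$. A single Calder\'on--Zygmund step then gives $\psi_1\in\dot W^{2,p_0}\hookrightarrow L^{a_0}$, where $\frac12=\frac1{a_0}+\frac1{b_0}$ and $H^1\hookrightarrow L^{b_0}$. This puts $\overline{\psi_1}\psi_3$ and $\psi_1\psi_2$ in $L^2$ at once, hence $\psi_2,\psi_3\in H^2$, and then $\psi_1\in\dot H^2$. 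The induction is closed with Sobolev embeddings only ($\dot H^1\hookrightarrow L^{\frac{2d}{d-2}}$, $H^2\hookrightarrow L^d$, and $\dot H^2\hookrightarrow L^{10}$ for $d=5$).

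In particular, the case you single out as the main obstacle closes after one round. Indeed $\dot W^{2,5/3}(\R^5)\hookrightarrow L^5$ and $L^5\cdot L^{10/3}\subset L^2$, which is precisely the paper's choice $p_0=\frac53$, $a_0=5$, $b_0=\frac{10}{3}$.

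Your route iterates Hardy--Littlewood--Sobolev and Calder\'on--Zygmund until every component, including the zero-mass one, is bounded. It then uses Moser-type $L^\infty\times\dot H^k$ product estimates via Gagliardo--Nirenberg. This is more generic and makes the role of energy-subcriticality $d<6$ transparent, at the price of more rounds and endpoint care. The paper's argument is shorter and tailored to the exponents at hand.

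A few statements in your sketch need correcting, though none breaks the argument:

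1. One round turns $\frac1q$ into $\frac2q-\frac2d$, so the gain per round is $\frac2d-\frac1q$, not a fixed $\frac2d$. It is positive and increasing because $\frac{d-2}{2d}<\frac2d$ exactly when $d<6$.

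2. For $d=4$ your formula gives $\frac1q=0$, but $W^{2,2}(\R^4)$ does not embed in $L^\infty$. You need one more round with products in $L^p$, $p>\frac d2$. The same endpoint appears at the second round when $d=5$, where products land in $L^{5/2}=L^{d/2}$.

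3. With $\gamma_1=0$ and $d=5$, the zero-mass component is actually in $L^2$. Indeed $G_{1,c}\in L^1\cap L^{5/3}\subset L^{10/9}$, and $(-\Delta)^{-1}$ maps $L^{10/9}(\R^5)$ into $L^2$ by Hardy--Littlewood--Sobolev. You never use the opposite claim, so this is harmless.
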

\begin{proof}
    We put $\gamma_j:=\omega_j-\frac{\sigma_j}{4}|c|^2\ (j=1, 2, 3)$ and $\mu:=\sigma_1+\sigma_2-\sigma_3$. 
    We only consider the case {\rm (B)} because 
    the case {\rm (A)}, where all $\gamma_j>0$, follows by the standard elliptic bootstrap in $H^1(\mathbb{R}^d)$.
    
    We assume $\gamma_1=0$ and $\gamma_2,\gamma_3>0$. 
    Then $\psi_1\in \dot{H}^1(\R^d)$ and $\psi_2,\psi_3\in H^1(\R^d)$. 
    Let $p_0\in [1,\frac{d}{d-2}]$. 
    By the first equation of {\rm (\ref{ellip_sys2})}, 
    the H\"older inequality, and the Sobolev embedding $H^1(\R^d)\hookrightarrow L^{2p_0}(\R^d)$, we have
    \[
    \begin{split}
    \|\psi_1\|_{\dot{W}^{2,p_0}}
    &=\left\|\left(-\frac{1}{\sigma_1}\Delta\right)^{-1}(e^{-i\frac{\mu}{2}c\cdot x}\overline{\psi_2}\psi_3)\right\|_{\dot{W}^{2,p_0}}\\
    &\lesssim \|\psi_2\psi_3\|_{L^{p_0}}
    \le \|\psi_2\|_{L^{2p_0}}\|\psi_3\|_{L^{2p_0}}\lesssim \|\psi_2\|_{H^1}\|\psi_3\|_{H^1}. 
    \end{split}
    \]
    Therefore, we get $\psi_1\in \dot{W}^{2,p_0}(\R^d)$. 
    Additionally assume $p_0\in [\frac{2d}{d+4},\frac{d}{2})$. Let $a_0,b_0\ge 2$ satisfy
    \[
    2=\frac{d}{p_0}-\frac{d}{a_0},\ \ \frac{1}{2}=\frac{1}{a_0}+\frac{1}{b_0}
    \]
    Then, by the second equation of {\rm (\ref{ellip_sys2})}, 
    the H\"older inequality, and the Sobolev embedding $\dot{W}^{2,p_0}(\R^d)\hookrightarrow L^{a_0}(\R^d)$, we have
    \[
    \begin{split}
    \|\psi_2\|_{H^2}
    &=\left\|\left(-\frac{1}{\sigma_2}\Delta +\gamma_2\right)^{-1}(e^{-i\frac{\mu}{2}c\cdot x}\overline{\psi_1}\psi_3)\right\|_{H^2}\\
    &\le \|\psi_1\psi_3\|_{L^{2}}
    \le \|\psi_1\|_{L^{a_0}}\|\psi_3\|_{L^{b_0}}\lesssim \|\psi_1\|_{\dot{W}^{2,p_0}}\|\psi_3\|_{L^{b_0}}. 
    \end{split}
    \]
    If we choose $p_0$ as $p_0\ge \frac{d}{3}$, then it holds
    \[
    \|\psi_3\|_{L^{b_0}}\lesssim \|\psi_3\|_{H^1}
    \]
    by the Sobolev embedding $H^1(\R^d)\hookrightarrow L^{b_0}(\R^d)$. 
    Note that $[\frac{d}{3},\infty)\cap [1,\frac{d}{d-2}]\cap [\frac{2d}{d+4},\frac{d}{2})\ne \emptyset$ when $3\le d\le 5$. 
    Therefore, we obtain $\psi_2\in H^2(\R^d)$. 
    By the same argument, we have $\psi_3\in H^2(\R^d)$. 
    Therefore, by the H\"older inequality and the Sobolev embedding $H^2(\R^d)\hookrightarrow L^{4}(\R^d)$, we have
    \[
    \|\psi_1\|_{\dot{H}^2}
    \lesssim \|\psi_2\psi_3\|_{L^2}\le \|\psi_2\|_{L^4}\|\psi_3\|_{L^4}\lesssim \|\psi_2\|_{H^2}\|\psi_3\|_{H^2},
    \]
    and it implies $\psi_1\in \dot{H}^2(\R^d)$. Namely, $\nabla \psi_1\in H^1(\R^d)$. 
    
    For $m\ge 2$, we also have
    \begin{equation}\label{p2_m1_est}
    \begin{split}
    \|\psi_2\|_{\dot{H}^{m+1}}
    &=\left\|\left(-\frac{1}{\sigma_2}\Delta +\gamma_2\right)^{-1}(e^{-i\frac{\mu}{2}c\cdot x}\overline{\psi_1}\psi_3)\right\|_{\dot{H}^{m+1}}\\
    &\lesssim \|\psi_1\psi_3\|_{H^{m-1}}
    \lesssim \|\psi_1\psi_3\|_{L^2}+\|(|\nabla|^{m-1}\psi_1)\psi_3\|_{L^{2}}
    +\|\psi_1\langle \nabla \rangle^{m-1}\psi_3\|_{L^{2}}. 
    \end{split}
    \end{equation}
    By the H\"older inequality and the Sobolev embeddings $\dot{H}^1(\R^d)\hookrightarrow L^{\frac{2d}{d-2}}(\R^d)$, 
    $H^2(\R^d)\hookrightarrow L^{d}(\R^d)$, we obtain
    \[
    \|\psi_1\psi_3\|_{L^2}\le \|\psi_1\|_{L^{\frac{2d}{d-2}}}\|\psi_3\|_{L^{d}}
    \lesssim \|\psi_1\|_{\dot{H}^1}\|\psi_3\|_{H^2}
    \]
    and
    \[
    \|(|\nabla|^{m-1}\psi_1)\psi_3\|_{L^2}\le \||\nabla|^{m-1}\psi_1\|_{L^{\frac{2d}{d-2}}}\|\psi_3\|_{L^{d}}
    \lesssim \|\psi_1\|_{\dot{H}^m}\|\psi_3\|_{H^2}
    \]
    holds for $3\le d\le 5$. 
    For the third term of the right-hand side of (\ref{p2_m1_est}), 
    we split into the cases $3\le d\le 4$ and $d=5$. 
    We first assume the case $3\le d\le 4$. 
    Then, by the H\"older inequality and the Sobolev embeddings $\dot{H}^1(\R^d)\hookrightarrow L^{\frac{2d}{d-2}}(\R^d)$, 
    $H^1(\R^d)\hookrightarrow L^{d}(\R^d)$, we obtain
    \[
    \|\psi_1\langle \nabla \rangle^{m-1}\psi_3\|_{L^2}\le \|\psi_1\|_{L^{\frac{2d}{d-2}}}\|\langle \nabla \rangle^{m-1}\psi_3\|_{L^{d}}
    \lesssim \|\psi_1\|_{\dot{H}^1}\|\psi_3\|_{H^m}. 
    \]
    Next, we assume the case $d=5$. 
    Then, by the H\"older inequality and the Sobolev embeddings $\dot{H}^2(\R^5)\hookrightarrow L^{10}(\R^5)$, 
    $H^1(\R^5)\hookrightarrow L^{\frac{5}{2}}(\R^5)$, we obtain
    \[
    \|\psi_1\langle \nabla \rangle^{m-1}\psi_3\|_{L^2}\le \|\psi_1\|_{L^{10}}\|\langle \nabla \rangle^{m-1}\psi_3\|_{L^{\frac{5}{2}}}
    \lesssim \|\psi_1\|_{\dot{H}^2}\|\psi_3\|_{H^m}. 
    \]
    As a result, we get 
    \[
    \|\psi_2\|_{\dot{H}^{m+1}}\lesssim \|\nabla \psi_1\|_{H^1}\|\psi_3\|_{H^m}.
    \]
    By a similar argument, we also obtain
    \[
    \|\psi_1\|_{\dot{H}^{m+1}}\lesssim \|\psi_2\|_{H^m}\|\psi_3\|_{H^m},\ \ \ \ 
    \|\psi_3\|_{\dot{H}^{m+1}}\lesssim \|\nabla \psi_1\|_{H^1}\|\psi_2\|_{H^m}.
    \]
    Therefore, by induction, 
    we have $\nabla \psi_1,\psi_2,\psi_3\in \bigcap_{m=1}^{\infty}H^m(\R^d)$. 
\end{proof}
\begin{rem}\label{reg_sol_rem}
\begin{itemize}
    \item[{\rm (i)}] Proposition~\ref{regu_prop_ws} says that, 
    if $\omega_j-\frac{\sigma_j}{4}|c|^2>0$, 
    then we have $\psi_j\in L^{\infty}(\R^d)$ 
    because the Sobolev embedding $H^m(\R^d)\hookrightarrow L^{\infty}(\R^d)$ 
     holds for $m>\frac{d}{2}$. We also have 
    $\varphi_j=e^{i\frac{\sigma_j}{2}c\cdot x}\psi_j\in L^{\infty}(\R^d)$ since $\|\psi_j\|_{L^{\infty}}=\|\varphi_j\|_{L^{\infty}}$. 
    We will use this fact to prove Lemma~\ref{lap_decay_est} below. 
    \item[{\rm (ii)}] When $c=0$, the same properties as in Proposition~\ref{regu_prop_ws} 
    hold for any weak solution $\Phi \in X_{\omega_1,\omega_2,0}$ to {\rm (\ref{ellip_sys_tra})} 
    because $\Phi =\Lambda_0\Psi =\Psi$. 
    \item[{\rm (iii)}] The same properties in Proposition~\ref{regu_prop_ws} also hold 
    for the case {\rm $(C)'$} in {\rm Proposition~\ref{pos_pr_Neh}} below. 
    We will give the proof in Appendix. 
    \end{itemize}
\end{rem}

Next, we give the decay property. 
\begin{prop}\label{ws_decay}
Let $\sigma_1,\sigma_2,\sigma_3>0$, and $c\in \R^d$.
\begin{itemize} 
\item[{\rm (i)}] Assume $1\le d\le 5$. 
    If $(\omega_1,\omega_2)\in \Omega_c$, then there exists $p>0$ such that any weak solution $\Psi =(\psi_1,\psi_2,\psi_3) \in \widetilde{X}_{\omega_1,\omega_2,c}$ to {\rm (\ref{ellip_sys2})}
    satisfies
    \begin{equation}\label{sum_decay_est}
    \sum_{j=1}^3\int_{\R^d}e^{2p|x|}(|\psi_j(x)|^2+|\nabla \psi_j(x)|^2)dx<\infty. 
    \end{equation}
    \item[{\rm (ii)}] Assume $3\le d\le 5$ and $1\le k<l\le 3$. 
     If $(\omega_1,\omega_2)\in \partial\Omega_c$ 
     satisfies
     $\omega_{k}-\frac{\sigma_{k}}{4}|c|^2>0$ and $\omega_{l}-\frac{\sigma_{l}}{4}|c|^2>0$, then there exists $p>0$ such that any weak solution $\Psi =(\psi_1,\psi_2,\psi_3) \in \widetilde{X}_{\omega_1,\omega_2,c}$ to {\rm (\ref{ellip_sys2})}
    satisfies
    \begin{equation}\label{decay_est_kl}
    \int_{\R^d}e^{2p|x|}(|\psi_k(x)|^2+|\nabla \psi_k(x)|^2)dx<\infty,\ \ \ \ 
    \int_{\R^d}e^{2p|x|}(|\psi_l(x)|^2+|\nabla \psi_l(x)|^2)dx<\infty. 
    \end{equation}
\end{itemize}
\end{prop}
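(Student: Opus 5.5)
We plan to use the standard exponential-weight (Agmon-type) argument. Write $\gamma_j:=\omega_j-\frac{\sigma_j}{4}|c|^2$, and for $\epsilon>0$ and $p>0$ set the bounded weight $\chi_\epsilon(x):=e^{2p|x|/(1+\epsilon|x|)}$. This weight is Lipschitz and satisfies $|\nabla\chi_\epsilon|\le 2p\chi_\epsilon$, uniformly in $\epsilon$. For each index $j$ with $\gamma_j>0$ we have $\psi_j\in H^1$, so $\chi_\epsilon\psi_j\in H^1$ is an admissible test function, placed in the $j$-th slot of the weak formulation. Taking real parts gives
\[
\frac{1}{\sigma_j}\int\chi_\epsilon|\nabla\psi_j|^2+\gamma_j\int\chi_\epsilon|\psi_j|^2
=-\frac{1}{\sigma_j}{\rm Re}\int(\nabla\psi_j\cdot\nabla\chi_\epsilon)\overline{\psi_j}-{\rm Re}\int G_{j,c}(\Psi)\chi_\epsilon\overline{\psi_j}.
\]
By Young's inequality the first term on the right is bounded by $\frac{p}{\sigma_j}\int\chi_\epsilon(|\nabla\psi_j|^2+|\psi_j|^2)$. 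If $p$ is small relative to $\min_j\gamma_j$ and $\min_j\sigma_j^{-1}$, this term is absorbed into the left side.

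The main obstacle is the nonlinear term. We split $\R^d$ into $\{|x|\le R\}$ and $\{|x|>R\}$. On the ball, $\chi_\epsilon\le e^{2pR}$, so the contribution is bounded by a constant $C_R$ independent of $\epsilon$. On the exterior we use smallness of the other components at infinity. The quantity to handle is $|G_{j,c}(\Psi)\overline{\psi_j}|\le |\psi_k||\psi_l||\psi_j|$ with $\{j,k,l\}=\{1,2,3\}$.

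In case (i), all components are smooth and in $H^m$ by Proposition~\ref{regu_prop_ws}, hence bounded and decaying to $0$ at infinity, since $H^m$ functions with $m>d/2$ are continuous and vanish at infinity. We bound $|\psi_k\psi_l\psi_j|\le \frac12\sup_{|x|>R}(|\psi_k|+|\psi_l|)\,(|\psi_k|^2+|\psi_l|^2+|\psi_j|^2)$ pointwise. Summing the identities over $j=1,2,3$, the exterior part is bounded by $\delta_R\sum_j\int\chi_\epsilon|\psi_j|^2$ with $\delta_R\to0$. Choosing $R$ so that $\delta_R\le\frac12\min\gamma_j$ absorbs it. Letting $\epsilon\to0$ and applying monotone convergence then gives (\ref{sum_decay_est}).

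In case (ii), let $m$ denote the third index, with $\gamma_m=0$. Here $\psi_m\in\dot H^1$, and $\nabla\psi_m\in H^{m'}$ for all $m'$ by Proposition~\ref{regu_prop_ws}. We sum the identities only over $j\in\{k,l\}$. Each nonlinear term in these two equations contains exactly one factor of $\psi_m$ and one factor of $\psi_k$ or $\psi_l$ paired with $\overline{\psi_j}$. Up to the phase, the two terms are the same cubic quantity ${\rm Re}\int e^{\pm i\frac{\mu}{2}c\cdot x}\psi_k\psi_l\overline{\psi_m}$-type, each multiplied by $\chi_\epsilon$. We therefore bound them by $\int\chi_\epsilon|\psi_m|(|\psi_k|^2+|\psi_l|^2)$. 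What we need is $\sup_{|x|>R}|\psi_m|\to0$. This follows because $\psi_m\in L^{2d/(d-2)}$ and $\nabla\psi_m\in H^{m'}$ for large $m'$, so $\psi_m$ is uniformly continuous and in an $L^q$ space with $q<\infty$, hence tends to zero at infinity; alternatively one can use $\psi_m\in\dot W^{1,q}\cap L^{2d/(d-2)}$ for some $q>d$ together with Morrey's inequality. The absorption argument then goes through exactly as before. This yields (\ref{decay_est_kl}).

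The delicate points are twofold: justifying that $\chi_\epsilon\psi_j$ is admissible, including differentiating the truncated weight, and establishing uniform decay of $\psi_m$ in the zero-mass component. The latter relies entirely on the regularity from Proposition~\ref{regu_prop_ws}, which is why $3\le d\le5$ is assumed in (ii).
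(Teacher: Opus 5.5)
Your proof is correct, but it handles the nonlinear term differently from the paper.

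The paper works purely at the energy level. It never uses pointwise information about $\Psi$. Using density of $C_c(\R^d)$ in $L^{\frac{2d}{d-2}}(\R^d)$ (or in $L^3(\R^d)$ for case (i)), it writes the third component as a compactly supported $\eta_\epsilon$ plus a remainder of norm less than $\epsilon$:
\begin{itemize}
\item the $\eta_\epsilon$ part is bounded by $e^{2pR}$ times energy norms;
\item the remainder is absorbed using H\"older, Gagliardo--Nirenberg applied to the weighted functions $h_{p,\delta}\psi_j$, and Young's inequality.
\end{itemize}
This route does not need Proposition~\ref{regu_prop_ws}. In case (ii), $3\le d\le 5$ enters only through $\dot H^1\hookrightarrow L^{\frac{2d}{d-2}}$ and positivity of the Gagliardo--Nirenberg exponent $\frac{6-d}{4}$.

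You instead import Proposition~\ref{regu_prop_ws} to get continuity and vanishing at infinity.
\begin{itemize}
\item In case (i), all components lie in $H^m$ with $m>d/2$.
\item In case (ii), the zero-mass component $\psi_m$ is Lipschitz, because $\nabla\psi_m\in L^\infty$, and lies in $L^{\frac{2d}{d-2}}$, hence tends to $0$ at infinity.
\end{itemize}
After that, absorbing the exterior contribution is a one-line $L^\infty$-smallness estimate on $\{|x|>R\}$. Your observation that every nonlinear term in the $k$- and $l$-equations contains $\psi_m$ is exactly what makes summing over only those two equations work.

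The trade-off is this. Your absorption step is more elementary, but it rests on the bootstrap regularity, which is where your dimension restriction comes from. The paper's argument is self-contained and would still apply where only $H^1/\dot H^1$ information is available.

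In both routes $p$ depends only on $\gamma_j$ and $\sigma_j$ (you need roughly $p<1$ and $p<\sigma_j\gamma_j$). The radius $R$, which depends on the solution, only affects the finite constant. So the uniformity of $p$ over all weak solutions claimed in the statement is preserved.
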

\begin{proof}We first prove {\rm (ii)}.  
    Put $\gamma_j:= \omega_j-\frac{\sigma_j}{4}|c|^2$\ $(j=1,2,3)$. 
    Then, {\rm (\ref{ellip_sys2})} can be written
    \begin{equation}\label{sp_gamma}
    -\frac{1}{\sigma_j}\Delta \psi_j+\gamma_j\psi_j=-G_{j,c}(\Psi),\ x\in \R^d,\ \ j=1,2,3. 
    \end{equation}
    We only consider the case $k=1$, $l=2$. Namely, we assume $\gamma_1,\gamma_2>0$, and $\gamma_3\ge 0$. 
    Let $\Psi=(\psi_1,\psi_2,\psi_3)\in \widetilde{X}_{\omega_1,\omega_2,c}$ be a solution to {\rm (\ref{sp_gamma})}. For $p>0$, which will be chosen later and any $\delta >0$, we put $h_{p,\delta}(x):=\exp \left(\frac{p|x|}{1+\delta |x|}\right)$. 
    Note that $h_{p,\delta}\in W^{1,\infty}(\R^d)$, which implies $h_{p,\delta}^2\psi_1, h_{p,\delta}^2\psi_2 \in H^1(\R^d)$, and 
    \[
    |\nabla h_{p,\delta}(x)|\le ph_{p,\delta}(x)
    \]
    hold. 
    Then, for $j=1,2$, we have
    \begin{equation}\label{sp_hp_psi}
    \begin{split}
    0&=\left(-\frac{1}{\sigma_j}\Delta \psi_j+\gamma_j\psi_j+G_{j,c}(\Psi),h_{p,\delta}^2\psi_j\right)_{L^2}\\
    &=\frac{1}{\sigma_j}\left(\|h_{p,\delta}\nabla \psi_j\|_{L^2}^2+(\nabla \psi_j,(\nabla (h_{p,\delta})^2)\psi_j)_{L^2}\right)+\gamma_j\|h_{p,\delta}\psi_j\|_{L^2}^2+(G_{j,c}(\Psi),h_{p,\delta}^2\psi_j)_{L^2}. 
    \end{split}
    \end{equation}
    By the Cauchy-Schwarz inequality and the Young inequality, we obtain
    \[
    \begin{split}
    |(\nabla \psi_j,(\nabla (h_{p,\delta})^2)\psi_j)_{L^2}|
    &\le 2p\int_{\R^d}h_{p,\delta}(x)^2|\nabla \psi_j(x)||\psi_j(x)|dx
    \le 2p\|h_{p,\delta}\nabla \psi_j\|_{L^2}\|h_{p,\delta}\psi_j\|_{L^2}\\
    &\le \frac{1}{2}\|h_{p,\delta}\nabla \psi_j\|_{L^2}^2+2p^2\|h_{p,\delta}\psi_j\|_{L^2}^2. 
    \end{split}
    \]
    On the other hand, we have
    \begin{equation}\label{nonl_int_est}
    \left|(G_{j,c}(\Psi),h_{p,\delta}^2\psi_j)_{L^2}\right|
    \le \int_{\R^d}h_{p,\delta}(x)^2|\psi_1(x)||\psi_2(x)||\psi_3(x)|dx. 
    \end{equation}
Since $\psi_3\in H^1(\R^d)$ if $\gamma_3>0$ and $\psi_3\in \dot H^1(\R^d)$ if $\gamma_3=0$, in either case $\psi_3\in \dot H^1(\R^d)$. Thus $\psi_3$ belongs to $L^{\frac{2d}{d-2}}(\R^d)$ by the Sobolev embedding $\dot{H}^1(\R^d)\hookrightarrow L^{\frac{2d}{d-2}}(\R^d)$. 
    Due to the density of $C_c(\R^d)$ in $L^{\frac{2d}{d-2}}(\R^d)$, 
    for any $\epsilon>0$, there exists $\eta_{\epsilon}\in C_c(\R^d)$ 
    with ${\rm supp}\ \eta_{\epsilon}\subset B_R(\R^d):=\{x\in \R^d\ |\ |x|\le R\}$ for some $R>0$ such that
    \[
    \|\psi_3-\eta_{\epsilon}\|_{L^{\frac{2d}{d-2}}}<\epsilon
    \]
    holds. We split the integral in the right-hand side of {\rm (\ref{nonl_int_est})} into
    \begin{equation}\label{int_split}
    \begin{split}
       &\int_{\R^d}h_{p,\delta}(x)^2|\psi_1(x)||\psi_2(x)||\psi_3(x)|dx\\
        &\le \int_{|x|\le R}h_{p,\delta}(x)^2|\psi_1(x)||\psi_2(x)||\eta_{\epsilon}(x)|dx
        +\int_{\R^d}h_{p,\delta}(x)^2|\psi_1(x)||\psi_2(x)||\psi_3(x)-\eta_{\epsilon}(x)|dx\\
        &=:I+II. 
    \end{split}
    \end{equation}
    
    \noindent \emph{Estimate of $I$.}
    
    By the H\"older inequality, the Sobolev embeddings $H^1(\R^d)\hookrightarrow L^{\frac{4d}{d+2}}(\R^d)$ and $\dot{H}^1(\R^d)\hookrightarrow L^{\frac{2d}{d-2}}(\R^d)$, 
    and choosing $\epsilon>0$ as $\epsilon <\|\psi_3\|_{L^{\frac{2d}{d-2}}}$, 
    we obtain
    \begin{equation}\label{split_I_est}
    I\le e^{2pR}\|\psi_1\|_{L^{\frac{4d}{d+2}}}\|\psi_2\|_{L^{\frac{4d}{d+2}}}\|\eta_{\epsilon}\|_{L^{\frac{2d}{d-2}}}
    \le Ce^{2pR}\|\psi_1\|_{H^1}\|\psi_2\|_{H^1}\|\psi_3\|_{\dot{H}^1}.
    \end{equation}

    \noindent \emph{Estimate of $II$.}

    By the H\"older inequality and the Gagliardo--Nirenberg inequality, 
    we get
    \[
    \begin{split}
    II&\le \|h_{p,\delta}\psi_1\|_{L^{\frac{4d}{d+2}}}\|h_{p,\delta}\psi_2\|_{L^{\frac{4d}{d+2}}}\|\psi_3-\eta_{\epsilon}\|_{L^{\frac{2d}{d-2}}}\\
    &\le \epsilon \|\nabla (h_{p,\delta}\psi_1)\|_{L^{2}}^{\frac{d-2}{4}}\|\nabla (h_{p,\delta}\psi_2)\|_{L^{2}}^{\frac{d-2}{4}}
    \|h_{p,\delta}\psi_1\|_{L^{2}}^{\frac{6-d}{4}}\|h_{p,\delta}\psi_2\|_{L^{2}}^{\frac{6-d}{4}}. 
    \end{split}
    \]
    Because
    \[
    |\nabla (h_{p,\delta}(x)\psi_j(x))|
    \le ph_{p,\delta}(x)|\psi_j(x)|+h_{p,\delta}(x)|\nabla \psi_j(x)|
    \]
    holds, we obtain
    \[
    \begin{split}
    II&\le \epsilon \left(p^{\frac{d-2}{4}}\|h_{p,\delta}\psi_1\|_{L^2}^{\frac{d-2}{4}}+\|h_{p,\delta}\nabla \psi_1\|_{L^2}^{\frac{d-2}{4}}\right)\left(p^{\frac{d-2}{4}}\|h_{p,\delta}\psi_2\|_{L^2}^{\frac{d-2}{4}}+\|h_{p,\delta}\nabla \psi_2\|_{L^2}^{\frac{d-2}{4}}\right)\\
    &\hspace{55ex}\times\|h_{p,\delta}\psi_1\|_{L^{2}}^{\frac{6-d}{4}}\|h_{p,\delta}\psi_2\|_{L^{2}}^{\frac{6-d}{4}}. 
    \end{split}
    \]
    Furthermore, by using the Young inequality, there exists $C>0$ such that
    \begin{equation}\label{split_II_est}
    II\le \frac{1}{8\sigma_1}\|h_{p,\delta}\nabla \psi_1\|_{L^2}^2+\frac{1}{8\sigma_2}\|h_{p,\delta}\nabla \psi_2\|_{L^2}^2
    +C\epsilon \left(\|h_{p,\delta}\psi_1\|_{L^2}^2+\|h_{p,\delta}\psi_2\|_{L^2}^2\right)
    \end{equation}
    holds. \\

    By {\rm (\ref{sp_hp_psi})}, {\rm (\ref{nonl_int_est})}, {\rm (\ref{int_split})}, {\rm (\ref{split_I_est})}, and {\rm (\ref{split_II_est})}, we have
    \[
    \begin{split}
        &\sum_{j=1}^2\left(\frac{1}{\sigma_j}\|h_{p,\delta}\nabla \psi_j\|_{L^2}^2+\gamma_j\|h_{p,\delta}\psi_j\|_{L^2}^2\right)\\
        &\le Ce^{2pR}\|\psi_1\|_{H^1}\|\psi_2\|_{H^1}\|\psi_3\|_{\dot{H}^1}+\sum_{j=1}^2\frac{1}{\sigma_j}\left(\frac{1}{2}\|h_{p,\delta}\nabla \psi_j\|_{L^2}^2+2p^2\|h_{p,\delta}\psi_j\|_{L^2}^2\right)\\
        &\ \ \ \ \ \ \ \ +\frac{1}{4\sigma_1}\|h_{p,\delta}\nabla \psi_1\|_{L^2}^2+\frac{1}{4\sigma_2}\|h_{p,\delta}\nabla \psi_2\|_{L^2}^2
    +2C\epsilon \left(\|h_{p,\delta}\psi_1\|_{L^2}^2+\|h_{p,\delta}\psi_2\|_{L^2}^2\right)\\
    &=Ce^{2pR}\|\psi_1\|_{H^1}\|\psi_2\|_{H^1}\|\psi_3\|_{\dot{H}^1}+\sum_{j=1}^2\left(\frac{3}{4\sigma_j}\|h_{p,\delta}\nabla \psi_j\|_{L^2}^2+2\left(\frac{p^2}{\sigma_j}+C\epsilon\right)\|h_{p,\delta}\psi_j\|_{L^2}^2\right). 
    \end{split}
    \]
    Putting 
    \[
    p=\min_{1\le j\le 2}\sqrt{\frac{\sigma_j\gamma_j}{4}}
    \]
    and choosing $\epsilon >0$ as $4C\epsilon <\min \{\gamma_1,\gamma_2\}$, it holds that
    \[
    \sum_{j=1}^2\left(\frac{1}{4\sigma_j}\|h_{p,\delta}\nabla \psi_j\|_{L^2}^2+\frac{\gamma_j}{2}\|h_{p,\delta}\psi_j\|_{L^2}^2\right)
    \le Ce^{2pR}\|\psi_1\|_{H^1}\|\psi_2\|_{H^1}\|\psi_3\|_{\dot{H}^1}<\infty. 
    \]
    Letting $\delta \rightarrow +0$ and applying the Lebesgue's monotone convergence theorem, 
    we obtain (\ref{decay_est_kl}). 

    Next, we prove {\rm (i)}. In the proof of {\rm (ii)}, we use the density of $C_c(\R^d)$ in $L^3(\R^d)$ instead of $L^{\frac{2d}{d-2}}(\R^d)$. 
    Therefore, in the estimates for $I$ and $II$ as above, 
    we use the H\"older inequality $L^3(\R^d)\times L^3(\R^d)\times L^3(\R^d)\rightarrow L^1(\R^d)$ 
    instead of $L^{\frac{4d}{d+2}}(\R^d)\times L^{\frac{4d}{d+2}}(\R^d)\times L^{\frac{2d}{d-2}}(\R^d)\rightarrow L^1(\R^d)$. 
    Otherwise, it is the same as the proof of {\rm (ii)}. 
    
\end{proof}
\begin{cor}\label{ws_decay_2}
Let $\sigma_1,\sigma_2,\sigma_3>0$, and $c\in \R^d$. 
\begin{itemize}
\item[{\rm (i)}]
    Assume $1\le d\le 5$. If $(\omega_1,\omega_2)\in \Omega_c$ then there exists $p>0$ such that any weak solution $\Phi =(\varphi_1,\varphi_2,\varphi_3) \in X_{\omega_1,\omega_2,c}$ to {\rm (\ref{ellip_sys_tra})}
    satisfies
    \[
    \sum_{j=1}^3\int_{\R^d}e^{2p|x|}(|\varphi_j(x)|^2+|\nabla \varphi_j(x)|^2)dx<\infty. 
    \]
\item[{\rm (ii)}] Assume $3\le d\le 5$ and $1\le k<l\le 3$. 
     If $(\omega_1,\omega_2)\in \partial\Omega_c$ 
     satisfies
     $\omega_{k}-\frac{\sigma_{k}}{4}|c|^2>0$ and $\omega_{l}-\frac{\sigma_{l}}{4}|c|^2>0$, then there exists $p>0$ such that any weak solution $\Phi =(\varphi_1,\varphi_2,\varphi_3) \in X_{\omega_1,\omega_2,c}$ to {\rm (\ref{ellip_sys_tra})}
    satisfies
    \[
    \int_{\R^d}e^{2p|x|}(|\varphi_k(x)|^2+|\nabla \varphi_k(x)|^2)dx<\infty,\ \ \ \ 
    \int_{\R^d}e^{2p|x|}(|\varphi_l(x)|^2+|\nabla \varphi_l(x)|^2)dx<\infty. 
    \]
    \end{itemize}
\end{cor}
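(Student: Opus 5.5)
The plan is to transfer Proposition~\ref{ws_decay} from $\Psi$ to $\Phi$ through the gauge map $\Lambda_c$. Let $\Phi=(\varphi_1,\varphi_2,\varphi_3)\in X_{\omega_1,\omega_2,c}$ be a weak solution to (\ref{ellip_sys_tra}), and put $\Psi:=\Lambda_{-c}\Phi$, so that $\psi_j(x)=e^{-i\frac{\sigma_j}{2}c\cdot x}\varphi_j(x)$. By Remark~\ref{rel_X_tilX} we have $\Psi\in\widetilde{X}_{\omega_1,\omega_2,c}$. By the remark following the definition of weak solutions to (\ref{ellip_sys2}), $\widetilde{S}_{\omega_1,\omega_2,c}'(\Psi)=0$, so $\Psi$ is a weak solution to (\ref{ellip_sys2}). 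Proposition~\ref{ws_decay}~(i) in case (i), or (ii) in case (ii), then gives some $p>0$, depending only on $\omega_1,\omega_2,c,\sigma_j$, such that the weighted bounds (\ref{sum_decay_est}) and (\ref{decay_est_kl}) hold for the relevant components $\psi_j$.

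Next we convert these bounds back to $\varphi_j=e^{i\frac{\sigma_j}{2}c\cdot x}\psi_j$. Since the phase has modulus one, $|\varphi_j|=|\psi_j|$ pointwise. For the gradient we use
\[
\nabla\varphi_j=e^{i\frac{\sigma_j}{2}c\cdot x}\left(\nabla\psi_j+i\frac{\sigma_j}{2}c\,\psi_j\right),
\]
which gives
\[
|\nabla\varphi_j|^2\le 2|\nabla\psi_j|^2+\frac{\sigma_j^2}{2}|c|^2|\psi_j|^2 .
\]
Multiplying by $e^{2p|x|}$ and integrating, we get
\[
\int_{\R^d}e^{2p|x|}(|\varphi_j|^2+|\nabla\varphi_j|^2)\,dx\le C(\sigma_j,c)\int_{\R^d}e^{2p|x|}(|\psi_j|^2+|\nabla\psi_j|^2)\,dx<\infty
\]
for each $j$ covered by the proposition. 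In case (i) this holds for $j=1,2,3$, and in case (ii) for $j=k,l$. The exponent $p$ is the same as in Proposition~\ref{ws_decay}, so it is uniform over all weak solutions, as the statement requires.

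There is no real obstacle here. The one point to check is that the correspondence between weak solutions of (\ref{ellip_sys_tra}) and of (\ref{ellip_sys2}) holds in the weak, zero-mass setting of case (ii) and not only for smooth solutions. This follows directly from the definitions. The test functions correspond via $\Theta\mapsto\Lambda_{-c}\Theta$, and
\[
e^{-i\frac{\sigma_j}{2}c\cdot x}F_j(\Lambda_c\Psi)=G_{j,c}(\Psi),
\]
so the two weak formulations are identical term by term.
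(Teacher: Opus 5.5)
Your proposal is correct and follows the paper's proof: you pass to $\Psi=\Lambda_{-c}\Phi$, apply Proposition~\ref{ws_decay}, and bound $|\nabla\varphi_j|$ pointwise by $|\nabla\psi_j|$ and $|c||\psi_j|$ using the gradient formula for the gauge factor. Your term-by-term check that the two weak formulations correspond (including in the zero-mass case) is a detail the paper leaves to its remark on the equivalence $\widetilde{S}_{\omega_1,\omega_2,c}'(\Psi)=0 \Leftrightarrow S_{\omega_1,\omega_2,c}'(\Phi)=0$, but the argument is the same.
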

\begin{proof}
    Because $\varphi_j(x)=e^{i\frac{\sigma_j}{2}c\cdot x}\psi_j(x)$, we have
    \[
    \nabla \varphi_j(x)=e^{i\frac{\sigma_j}{2}c\cdot x}\left(i\frac{\sigma_j}{2}c\psi_j(x)+\nabla \psi_j(x)\right). 
    \]
    This and Proposition~\ref{ws_decay} yield the conclusion. 
\end{proof}
Pohozaev's identity plays an important role 
in the proof of Theorems~\ref{GWP_1} and ~\ref{stab_thm}. 
\begin{prop}[Pohozaev's identity]\label{Pohoz_id}
Let $\sigma_1,\sigma_2,\sigma_3>0$. 
\begin{itemize}
 \item[{\rm (i)}] Assume $1\le d\le 5$ and $c\in \R^d$. If $(\omega_1,\omega_2)\in \Omega_c$,  
then any weak solution $\Phi\in X_{\omega_1,\omega_2,c}$ to {\rm (\ref{ellip_sys_tra})}
    satisfies
    \[
    2L(\Phi)+\frac{d}{2}N(\Phi)+c\cdot P(\Phi)=0.
    \]
    \item[{\rm (ii)}] Assume $3\le d\le 5$ and $c=0$. 
    If $(\omega_1,\omega_2)\in \partial \Omega_0 \setminus \{(0,0)\}$, 
    namely $\max\{\omega_1,\omega_2\}>0$ and $\min \{\omega_1,\omega_2\}=0$, 
    then any weak solution $\Phi\in X_{\omega_1,\omega_2,0}$ to {\rm (\ref{ellip_sys_tra})}
    satisfies
    \[
    2L(\Phi)+\frac{d}{2}N(\Phi)=0.
    \]
\end{itemize}
\end{prop}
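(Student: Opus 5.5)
The idea is to derive the identity from the dilation invariance of the action, applied to the transformed system (\ref{ellip_sys2}).

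\emph{Scaling and its derivative.} For $\Psi=\Lambda_{-c}\Phi$ we set $\Psi^\lambda(x):=\Psi(x/\lambda)$, $\lambda>0$. Each of $\|\nabla\psi_j\|_{L^2}^2$, $\|\psi_j\|_{L^2}^2$ and $N_c$ scales homogeneously, except that the phase in $N_c$ becomes $e^{i\frac{\mu}{2}\lambda c\cdot y}$ after the change of variables. To avoid this, we work directly with $\Phi$ and $S_{\omega_1,\omega_2,c}$. Setting $\Phi^\lambda(x)=\Phi(x/\lambda)$, we have
\[
L(\Phi^\lambda)=\lambda^{d-2}L(\Phi),\quad N(\Phi^\lambda)=\lambda^dN(\Phi),\quad Q_k(\Phi^\lambda)=\lambda^dQ_k(\Phi),\quad P(\Phi^\lambda)=\lambda^{d-1}P(\Phi).
\]
Formally, $\frac{d}{d\lambda}S_{\omega_1,\omega_2,c}(\Phi^\lambda)|_{\lambda=1}=0$ gives
\[
(d-2)L+dN+d(\omega_1Q_1+\omega_2Q_2)+(d-1)c\cdot P=0.
\]
The Nehari identity $K_{\omega_1,\omega_2,c}(\Phi)=0$ (Remark~\ref{nehari_weak_sol}) reads $2L+3N+2\omega_1Q_1+2\omega_2Q_2+2c\cdot P=0$. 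Multiplying it by $d/2$ and subtracting the scaling identity eliminates $Q_1,Q_2$ and leaves
\[
2L+\tfrac{d}{2}N+c\cdot P=0,
\]
which is (i). For (ii), $c=0$, so the same computation with $P$ absent gives $2L+\frac d2N=0$.

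\emph{Justification.} The derivative in $\lambda$ is only formal, since $\Phi^\lambda-\Phi$ divided by $\lambda-1$ converges to $x\cdot\nabla\Phi$, which need not lie in $X_{\omega_1,\omega_2,c}$. The rigorous route is to test the weak equation against $\chi_R(x)\,x\cdot\nabla\overline{\varphi_j}$ with a cutoff $\chi_R(x)=\chi(x/R)$, sum over $j$, take real parts and let $R\to\infty$. In case (i), Proposition~\ref{regu_prop_ws} gives $\varphi_j\in H^m$ for all $m$, and Corollary~\ref{ws_decay_2}(i) gives exponential decay of $\varphi_j,\nabla\varphi_j$. Hence $x\cdot\nabla\varphi_j\in H^1$, so this function is itself an admissible test function and no cutoff is even needed. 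The standard integrations by parts then hold with no boundary terms:
\[
\mathrm{Re}\bigl(-\Delta\varphi,x\cdot\nabla\varphi\bigr)=\tfrac{2-d}{2}\|\nabla\varphi\|^2,\qquad \mathrm{Re}\bigl(\varphi,x\cdot\nabla\varphi\bigr)=-\tfrac d2\|\varphi\|^2,
\]
\[
\mathrm{Re}\bigl(i c\cdot\nabla\varphi,x\cdot\nabla\varphi\bigr)=\tfrac{1-d}{2}\,\mathrm{Re}(ic\cdot\nabla\varphi,\varphi).
\]
For the cubic term, $\sum_j\mathrm{Re}(F_j(\Phi),x\cdot\nabla\varphi_j)$ recombines into $x\cdot\nabla$ applied to $-\mathrm{Re}(\varphi_1\varphi_2\overline{\varphi_3})$, which integrates to $-dN$. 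This recombination holds because $F_j=\partial N/\partial\overline{\varphi_j}$ in the appropriate sense. These identities reproduce the scaling identity above.

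\emph{Main obstacle.} The hard step is (ii), the zero-mass case $c=0$ with, say, $\omega_1=0$ (or $\omega_2=0$). There $\varphi_1\in\dot H^1$ only, and $\varphi_3$ may also be massless. Regularity (Proposition~\ref{regu_prop_ws}) gives $\nabla\varphi_1\in H^m$, but there is no decay for the massless component, so $x\cdot\nabla\varphi_1$ need not belong to $\dot H^1$. We will therefore keep the cutoff $\chi_R$. The error terms all carry a factor $\nabla\chi_R=O(1/R)$ supported on $|x|\sim R$, multiplied by $|x|\sim R$, so they are bounded by integrals of $|\nabla\varphi_j|^2$, $\omega_j|\varphi_j|^2$ and $|\varphi_1\varphi_2\varphi_3|$ over the annulus $R\le|x|\le2R$. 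These tend to zero because the integrands are in $L^1$: for the cubic term, use $\dot H^1\subset L^{2d/(d-2)}$ and $H^1\subset L^{4d/(d+2)}$ as in Proposition~\ref{ws_decay}. The massless mass term is simply absent. Exponential decay of the two massive components from Corollary~\ref{ws_decay_2}(ii) makes the cubic term easier still, since at least two of its factors decay, though it is not strictly needed. Letting $R\to\infty$ yields the Pohozaev identity in case (ii). Combining it with $K_{\omega_1,\omega_2,0}(\Phi)=0$, which holds in $X_{\omega_1,\omega_2,0}$ by testing with $\Phi$ itself, eliminates the $Q$ terms exactly as in the scaling step and gives $2L+\frac d2N=0$.
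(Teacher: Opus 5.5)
Your proof is correct, but it takes a different route from the paper's.

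\textbf{The paper's route.} The paper uses the $L^2$-preserving scaling $\Phi_\lambda=\lambda^{d/2}\Phi(\lambda x)$. Since $Q_1$ and $Q_2$ are invariant under it, $\partial_\lambda S_{\omega_1,\omega_2,c}(\Phi_\lambda)|_{\lambda=1}=2L+\frac d2N+c\cdot P$ comes out in one step, without invoking $K_{\omega_1,\omega_2,c}(\Phi)=0$. The price is showing that the generator $\frac d2\Phi+(x\cdot\nabla)\Phi$ lies in $X_{\omega_1,\omega_2,c}$, so that it can be inserted into $S'_{\omega_1,\omega_2,c}(\Phi)=0$. This is Lemma~\ref{lap_decay_est}. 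It bounds $x_n\partial_n\partial_m\varphi_j$ via Plancherel by $\|\nabla\varphi_j\|_{L^2}+\||x|\Delta\varphi_j\|_{L^2}$. It then controls $|x|\Delta\varphi_j$ through the equation, using the exponential decay of the massive components (Corollary~\ref{ws_decay_2}) and $L^\infty$ bounds. In particular, the paper does get $x\cdot\nabla\varphi_2\in\dot H^1$ for the massless component in case (ii) (say $\omega_2=0$), because $\Delta\varphi_2=-\sigma_2\overline{\varphi_1}\varphi_3$ is a product of decaying factors. So your worry that this "need not" hold is unfounded, though harmless.

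\textbf{Your route.} You use the classical multiplier $x\cdot\nabla\overline{\varphi_j}$, which gives $(d-2)L+dN+d(\omega_1Q_1+\omega_2Q_2)+(d-1)c\cdot P=0$. You then eliminate the charges with $\frac d2K=0$; the algebra and the individual integration-by-parts identities check out. Justified with the cutoff $\chi_R$, this needs no weighted estimates. It uses only local $H^2$ regularity from Proposition~\ref{regu_prop_ws} and the $L^1$-integrability of the densities $|\nabla\varphi_j|^2$, $\omega_j|\varphi_j|^2$, $|c||\nabla\varphi_j||\varphi_j|$ and $|\varphi_1\varphi_2\varphi_3|$. So it is the more elementary and robust justification, and it handles case (i) equally well, without Corollary~\ref{ws_decay_2}.

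\textbf{Two small slips.}
\begin{itemize}
\item In (ii), with $c=0$ you have $\omega_3=\omega_1+\omega_2=\max\{\omega_1,\omega_2\}>0$. So $\varphi_3$ is never massless; exactly one component lies only in $\dot H^1$.
\item In (i), ``hence $x\cdot\nabla\varphi_j\in H^1$'' does not follow directly from Corollary~\ref{ws_decay_2}, which weights only $\varphi_j$ and $\nabla\varphi_j$. You also need $|x|\nabla^2\varphi_j\in L^2$, which is exactly what Lemma~\ref{lap_decay_est} supplies. Since your cutoff argument works in (i) anyway, this is inessential.
\end{itemize}
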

\begin{rem}\label{cp_neg_rem}
As we will see when $d=4$ and $(\omega_1,\omega_2)\in \Omega_c$, $E(\Phi)\ (=L(\Phi)+N(\Phi))\ge 0$ holds 
for any weak solution $\Phi \in X_{\omega_1,\omega_2,c}$ 
to {\rm (\ref{ellip_sys_tra})}.  
This and the Pohozaev's identity imply $c\cdot P(\Phi)=-2E(\Phi)\le 0$. 

     In general for $d=4$, it is known that 
    if $U_0\in \mathcal{H}^1(\R^4)$ satisfies $|x|U_0\in L^2(\R^4)^3$ and  
    the solution $U(t)$ to {\rm (\ref{dnls2})} exists globally in time, 
    then $E(U_0)\ge 0$ holds (see, {\rm Proposition~5.10} in \cite{NP21}). 
    Note that a weak solution $\Phi \in X_{\omega_1,\omega_2,c}$ 
to {\rm (\ref{ellip_sys_tra})} with $(\omega_1,\omega_2)\in \Omega_c$ satisfies 
    $\Phi \in \mathcal{H}^1(\R^4)$ and $|x|\Phi\in L^2(\R^4)^3$ when $(\omega_1,\omega_2)\in \Omega_c$ 
    by Corollary~\ref{ws_decay_2}. 
    In particular, the global solution to {\rm (\ref{dnls2})} with $U_0=\Phi$ exists, which 
    is given by {\rm (\ref{2req_para_sol})}. 
    Therefore, $E(\Phi)\ge 0$ holds. 
\end{rem}
To prove Proposition~\ref{Pohoz_id}, 
we give the following lemma. 
\begin{lemm}\label{lap_decay_est}
Assume that either condition {\rm (i)} or {\rm (ii)} in Proposition~\ref{Pohoz_id} holds. Then any weak solution $\Phi=(\varphi_1,\varphi_2,\varphi_3)\in X_{\omega_1,\omega_2,c}$ to {\rm (\ref{ellip_sys_tra})} satisfies $(x\cdot \nabla)\Phi\in X_{\omega_1,\omega_2,c}$. 
\end{lemm}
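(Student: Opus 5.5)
We must show $(x\cdot\nabla)\varphi_j\in X_j$ for each $j$. By Remark~\ref{rel_X_tilX} this is equivalent to $e^{-i\frac{\sigma_j}{2}c\cdot x}(x\cdot\nabla)\varphi_j\in\widetilde X_j$. Writing $\varphi_j=e^{i\frac{\sigma_j}{2}c\cdot x}\psi_j$, we have
\[
e^{-i\frac{\sigma_j}{2}c\cdot x}(x\cdot\nabla)\varphi_j=(x\cdot\nabla)\psi_j+i\frac{\sigma_j}{2}(c\cdot x)\psi_j .
\]
So we need, for every $j$, that $\nabla$ of this quantity lies in $L^2$, and, when $\gamma_j:=\omega_j-\frac{\sigma_j}{4}|c|^2>0$, that the quantity itself lies in $L^2$.

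Case (i) is $(\omega_1,\omega_2)\in\Omega_c$, so all $\gamma_j>0$.
\begin{itemize}
\item[{\rm 1.}] By Proposition~\ref{regu_prop_ws}, $\psi_j\in H^m$ for every $m$, so $\psi_j,\nabla\psi_j,\nabla^2\psi_j$ are in $L^2$ and bounded.
\item[{\rm 2.}] By Proposition~\ref{ws_decay}(i), $e^{p|x|}\psi_j$ and $e^{p|x|}\nabla\psi_j$ are in $L^2$. This already gives $x\psi_j,\ |x|\,|\nabla\psi_j|\in L^2$, hence $(x\cdot\nabla)\psi_j$ and $(c\cdot x)\psi_j$ are in $L^2$.
\item[{\rm 3.}] For the gradient, note
\[
\nabla[(x\cdot\nabla)\psi_j]=\nabla\psi_j+(x\cdot\nabla)\nabla\psi_j .
\]
So the only new requirement is $|x|\,\nabla^2\psi_j\in L^2$ (the term $\nabla((c\cdot x)\psi_j)$ needs only $|x|\nabla\psi_j\in L^2$, already known).
\item[{\rm 4.}] To get weighted second-derivative decay, differentiate (\ref{ellip_sys2}) in $x_k$. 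Then $\partial_k\psi_j$ solves $-\frac1{\sigma_j}\Delta\partial_k\psi_j+\gamma_j\partial_k\psi_j=-\partial_kG_{j,c}(\Psi)$.
\item[{\rm 5.}] Rerun the energy argument of Proposition~\ref{ws_decay}: test against $h_{p,\delta}^2\partial_k\psi_j$. The weighted nonlinear term is bounded by
\[
\int h_{p,\delta}^2\,\bigl(|\nabla\psi|\,|\psi|+|c|\,|\psi|^2\bigr)\,|\partial_k\psi_j|\,dx .
\]
Since $\psi\in L^\infty$ (Remark~\ref{reg_sol_rem}), this is controlled by already-known weighted $L^2$ quantities of $\psi$ and $\nabla\psi$. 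The factor $e^{i\mu c\cdot x/2}$ contributes the harmless extra $|c|$ term.
\item[{\rm 6.}] The result is $e^{p'|x|}\nabla^2\psi_j\in L^2$ for a smaller $p'>0$. In particular $|x|\nabla^2\psi_j\in L^2$, which completes case (i).
\end{itemize}

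Case (ii) has $c=0$ and some $\gamma_j=0$. Here there is no phase factor, and we need only $\nabla[(x\cdot\nabla)\varphi_j]\in L^2$ for the zero-mass component, together with the full $H^1$ condition for the other two.
\begin{itemize}
\item[{\rm 1.}] The two massive components have exponential decay by Proposition~\ref{ws_decay}(ii). Handle them exactly as in case (i), now using the boundedness of all components. Boundedness of the zero-mass component holds because $\nabla\varphi_j\in H^m$ for all $m$, and $\varphi_j$ lies in $L^{2d/(d-2)}$, so $\varphi_j\in L^\infty$.
\item[{\rm 2.}] For the zero-mass component, say $\varphi_1$ with $\omega_1=0$, the equation reads $-\frac1{\sigma_1}\Delta\varphi_1=\overline{\varphi_2}\varphi_3$. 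The right-hand side has exponential decay in every weighted Sobolev norm, by the decay of $\varphi_2,\varphi_3$ and their derivatives.
\item[{\rm 3.}] Commute the dilation generator with the Laplacian:
\[
\Delta\bigl((x\cdot\nabla)\varphi_1\bigr)=(x\cdot\nabla+2)\Delta\varphi_1 .
\]
Thus $w:=(x\cdot\nabla)\varphi_1$ satisfies $-\Delta w=\sigma_1 g$ with $g:=(x\cdot\nabla+2)(\overline{\varphi_2}\varphi_3)$, and $g\in L^2\cap L^{2d/(d+2)}$ because of the exponential weights.
\item[{\rm 4.}] By Hardy--Littlewood--Sobolev (equivalently, the duality of the embedding $\dot H^1\hookrightarrow L^{2d/(d-2)}$), the unique $\dot H^1$ solution $(-\Delta)^{-1}g$ lies in $\dot H^1$.
\item[{\rm 5.}] We must check that $w$ equals this solution, i.e.\ that no harmonic polynomial has been added. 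Since $\nabla\varphi_1\in L^2$ and $w$ is a priori at least a tempered distribution, Liouville's theorem applies. A cutoff argument is cleaner: approximate $w$ by $\chi(x/R)w$ and pass to the limit.
\end{itemize}

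The main obstacle is case (ii) for the zero-mass component. There $\varphi_1$ itself has no exponential decay (it is only in $\dot H^1$, typically decaying like $|x|^{2-d}$), so $|x|\nabla^2\varphi_1\in L^2$ cannot come from weights. It must come from the commutator identity together with uniqueness in $\dot H^1$. To justify the Liouville step, I would first show $|x|\nabla^2\varphi_1\in L^2_{\rm loc}$ with polynomial growth bounds, so that $w$ is tempered; any harmonic difference with gradient in $L^2$ must then vanish.
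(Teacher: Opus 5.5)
Your proposal is correct in outline, but it takes a different route from the paper.

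\textbf{How the paper argues.} The paper never runs a second weighted energy estimate and never uses a commutator or uniqueness argument. It proves one Plancherel-based bound,
\[
\|x_n\partial_n\partial_m\varphi_j\|_{L^2}\lesssim \|\nabla\varphi_j\|_{L^2}+\||x|\Delta\varphi_j\|_{L^2},
\]
which reduces everything to showing $|x|\Delta\varphi_j\in L^2$. That quantity is read off the equation.
\begin{itemize}
\item[(a)] In case (i) it follows from the terms $\omega_j|x|\varphi_j$, $|c||x|\nabla\varphi_j$ and $|x|F_j(\Phi)$, using Corollary~\ref{ws_decay_2} and the $L^\infty$ bounds.
\item[(b)] In case (ii), with $\omega_2=0$ in the paper's labelling, the zero-mass component satisfies $\||x|\Delta\varphi_2\|_{L^2}\lesssim\||x|\varphi_1\|_{L^2}\|\varphi_3\|_{L^\infty}$. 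This is a product of the two massive, exponentially decaying components.
\item[(c)] The massive components in case (ii) are handled by H\"older and Sobolev, with the zero-mass factor placed in $L^{\frac{2d}{d-2}}$, or in $L^{10}$ via $\dot H^2$ when $d=5$.
\end{itemize}

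\textbf{Comparison with your route.} Your claim that $|x|\nabla^2\varphi_1\in L^2$ for the zero-mass component ``cannot come from weights'' is not accurate. The weight lands on $\Delta\varphi_1$ through the equation, and an elliptic estimate transfers it to the full Hessian. Your dilation-commutator step with $(-\Delta)^{-1}:\dot H^{-1}\to\dot H^1$ is really the same mechanism in another form.

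Your route costs two extra steps. The first is a weighted energy estimate for the differentiated system. It does work: the differentiated equation holds in $L^2$, $h_{p,\delta}^2\partial_k\psi_j\in H^1$, and in case (ii) the nonlinear term is controlled by $\varphi_1,\nabla\varphi_1\in L^\infty$ together with the weighted norms of the massive components. The second is a uniqueness step.

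In return you get more. You obtain the stronger conclusions $e^{p'|x|}\nabla^2\psi_j\in L^2$ and $w\in L^{\frac{2d}{d-2}}$. You also avoid manipulating $\partial_\xi\widehat{\varphi}$ for a function that lies only in $\dot H^1$, a point the paper leaves implicit.

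\textbf{The Liouville step needs tightening.} Your sentence ``any harmonic difference with gradient in $L^2$ must then vanish'' is circular, because $\nabla w\in L^2$ is exactly what you are trying to prove. You also do not need a separate temperedness step. Use the growth bound $|w|\le |x||\nabla\varphi_1|$ instead. With $u:=w-v$ harmonic and $v\in L^{\frac{2d}{d-2}}$,
\[
\frac{1}{|B_R|}\int_{B_R(x_0)}|u|\,dx\lesssim (|x_0|+R)R^{-\frac d2}\|\nabla\varphi_1\|_{L^2}+R^{-\frac{d-2}{2}}\|v\|_{L^{\frac{2d}{d-2}}}\rightarrow 0\quad (R\to\infty),
\]
so $u\equiv 0$ by the mean value property.

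Alternatively, your cutoff idea works directly, without introducing $v$. Test $-\Delta w=\sigma_1 g$ against $\chi(x/R)^2\overline{w}$. On $R\le|x|\le 2R$ you have $|\nabla\chi(x/R)||w|\lesssim|\nabla\varphi_1|$, so the boundary term is bounded by $\int_{R\le|x|\le 2R}|\nabla\varphi_1|^2$. The source term is bounded by $\||x|g\|_{L^2}\|\nabla\varphi_1\|_{L^2}$. This gives $\nabla w\in L^2$ uniformly in $R$.
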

\begin{proof}
We note that
\[
\partial_m((x\cdot \nabla)\varphi_j)
=\partial_m\varphi_j+\sum_{n=1}^dx_n\partial_n\partial_m\varphi_j
\]
and
\[
\begin{split}
\|x_n\partial_n\partial_m\varphi_j\|_{L^2}^2
&\sim \|\partial_{\xi_n}(\xi_n\xi_m\widehat{\varphi_j}(\xi))\|_{L^2_{\xi}}^2
\lesssim \|\xi_m\widehat{\varphi_j}(\xi)\|_{L^2_{\xi}}^2+\|\xi_n^2\partial_{\xi_n}\widehat{\varphi_j}(\xi)\|_{L^2_{\xi}}^2
+\|\xi_m^2\partial_{\xi_n}\widehat{\varphi_j}(\xi)\|_{L^2_{\xi}}^2\\
&\lesssim \|\xi\widehat{\varphi_j}(\xi)\|_{L^2_{\xi}}^2+\||\xi|^2\partial_{\xi_n}\widehat{\varphi_j}(\xi)\|_{L^2_{\xi}}^2
\lesssim \|\xi\widehat{\varphi_j}(\xi)\|_{L^2_{\xi}}^2+\|\partial_{\xi_n}(|\xi|^2\widehat{\varphi_j}(\xi))\|_{L^2_{\xi}}^2\\
&\lesssim \|\nabla \varphi_j\|_{L^2}^2+\||x|\Delta \varphi_j\|_{L^2}^2
\end{split}
\]
by the Plancherel theorem. 

We first consider the case {\rm (i)}. 
Then $X_{\omega_1,\omega_2,c}=\mathcal{H}^1(\R^d)$ 
    and any weak solution $\Phi =(\varphi_1,\varphi_2,\varphi_3)\in X_{\omega_1,\omega_2,c}$
    to {\rm (\ref{ellip_sys_tra})}
    satisfies 
    \[
    \||x|\Delta \varphi_j\|_{L^2}\le \sigma_j\left(\omega_j\||x|\varphi_j\|_{L^2}
    +|c|\||x|\nabla \varphi_j\|_{L^2}+\||x|F_j(\Phi)\|_{L^2}\right). 
    \]
    Because $\||x|\varphi_k\varphi_l\|_{L^2}\le \||x|\varphi_k\|_{L^2}\|\varphi_l\|_{L^{\infty}}$ for $1\le k,l\le 3$, 
    it holds
    $|x|\Delta \varphi_j\in L^2(\R^d)$ for $j=1,2,3$ by 
    Corollary~\ref{ws_decay_2} (see, also {\rm Remark~\ref{reg_sol_rem}}). 
    Therefore, we have $\partial_m(x\cdot \nabla)\varphi_j \in L^2(\R^d)$ for $j=1,2,3$. 
    
    Next, we consider the case {\rm (ii)}. 
    By the symmetry, we only have to consider the case $\omega_1>0$ and $\omega_2=0$. 
    Then, $\omega_3=\omega_1+\omega_2=\omega_1>0$. 
    Therefore, it holds $\varphi_3\in L^{\infty}(\R^d)$ (see, {\rm Remark~\ref{reg_sol_rem}}), and we obtain
    \[
    \||x|\Delta \varphi_2\|_{L^2(\R^d)}=\||x|\varphi_1\varphi_3\|_{L^2(\R^d)}
    \le \||x|\varphi_1\|_{L^2(\R^d)}\|\varphi_3\|_{L^{\infty}(\R^d)}
    \]
    by the second equation of (\ref{ellip_sys_tra}). 
    On the other hand, by the first equation of (\ref{ellip_sys_tra}) and the H\"older inequality, we have
    \[
    \||x|\Delta \varphi_1\|_{L^2}
    \lesssim \||x|\varphi_1\|_{L^2}+\||x|\varphi_2\varphi_3\|_{L^2}
    \le \||x|\varphi_1\|_{L^2}+\|\varphi_2\|_{L^p}\||x|\varphi_3\|_{L^q}
    \]
    for $p,q\ge 2$ with $\frac{1}{2}=\frac{1}{p}+\frac{1}{q}$. 
    When $3\le d\le 4$, we choose $p=\frac{2d}{d-2}$, $q=d$. 
    Then, we have
    \[
    \|\varphi_2\|_{L^p}\||x|\varphi_3\|_{L^q}=\|\varphi_2\|_{L^{\frac{2d}{d-2}}}\||x|\varphi_3\|_{L^d}
    \lesssim \|\varphi_2\|_{\dot{H}^1}\||x|\varphi_3\|_{H^1}
    \]
    by the Sobolev embeddings $\dot{H}^1(\R^d)\hookrightarrow L^{\frac{2d}{d-2}}(\R^d)$ and 
    $H^1(\R^d)\hookrightarrow L^{d}(\R^d)$. 
    When $d=5$, we choose $p=10$, $q=\frac{10}{3}$. 
    Then, we have
    \[
    \|\varphi_2\|_{L^p}\||x|\varphi_3\|_{L^q}=\|\varphi_2\|_{L^{10}}\||x|\varphi_3\|_{L^{\frac{10}{3}}}
    \lesssim \|\varphi_2\|_{\dot{H}^2}\||x|\varphi_3\|_{H^1}
    \]
    by the Sobolev embeddings $\dot{H}^2(\R^5)\hookrightarrow L^{10}(\R^5)$ and 
   $H^1(\R^5)\hookrightarrow L^{\frac{10}{3}}(\R^5)$. 
    As a result, we obtain $|x|\Delta\varphi_j\in L^2(\R^d)$ for $j=1,2$ by Proposition~\ref{regu_prop_ws} and Corollary~\ref{ws_decay_2}. 
    We can also get $|x|\Delta \varphi_3\in L^2(\R^d)$ by the same argument for $j=1$. 
    Therefore, we have $\partial_m(x\cdot \nabla)\varphi_j\in L^2(\R^d)$ for $j=1,2,3$. 
\end{proof}
\begin{proof}[Proof of Proposition~\ref{Pohoz_id}]
    Let $\Phi=(\varphi_1,\varphi_2,\varphi_3) \in X_{\omega_1,\omega_2,c}$ be a weak solution. For $\lambda >0$, we put
    \[
    \Phi_{\lambda}(x)=\lambda^{\frac{d}{2}} \Phi (\lambda x).
    \]
    Note that
    \[
    L(\Phi_{\lambda})=\lambda^{2}L(\Phi),\ 
    N(\Phi_{\lambda})=\lambda^{\frac{d}{2}}N(\Phi),\ 
    Q_{1}(\Phi_{\lambda})=Q_1(\Phi),\ Q_2(\Phi_{\lambda})=Q_2(\Phi),\ 
    P(\Phi_{\lambda})=\lambda P(\Phi).
    \]
    Therefore, by the definition of $S_{\omega_1,\omega_2,c}$, we have
    \[
    \begin{split}
        \partial_{\lambda}S_{\omega_1,\omega_2,c}(\Phi_{\lambda})\bigl|_{\lambda=1}
        &=\partial_{\lambda}\left(L(\Phi_{\lambda})+N(\Phi_{\lambda})
        +\omega_1Q_1(\Phi_{\lambda})+\omega_2Q_2(\Phi_{\lambda})+c\cdot P(\Phi_{\lambda})\right)\bigl| _{\lambda=1}\\
        &=2L(\Phi)+\frac{d}{2}N(\Phi)+c\cdot P(\Phi). 
    \end{split}
    \]
    On the other hand, by using the chain rule, we have
    \[
    \partial_{\lambda}S_{\omega_1,\omega_2,c}(\Phi_{\lambda})\bigl|_{\lambda=1}
    =\sum_{j=1}^3\langle D_j S_{\omega_1,\omega_2,c}(\Phi_{\lambda}),\partial_{\lambda }\Phi_{\lambda}\rangle \bigl|_{\lambda=1}
    =\sum_{j=1}^3\langle D_j S_{\omega_1,\omega_2,c}(\Phi),\frac{d}{2}\varphi_j+(x\cdot \nabla)\varphi_j\rangle. 
    \]
    Because $\frac{d}{2}\Phi+(x\cdot \nabla)\Phi\in X_{\omega_1,\omega_2,c}$ by 
    Lemma~\ref{lap_decay_est} and $S_{\omega_1,\omega_2,c}'(\Phi)=0$, we obtain
    \[
    \partial_{\lambda}S_{\omega_1,\omega_2,c}(\Phi_{\lambda})\bigl|_{\lambda=1}=0.
    \]
    
\end{proof}
\section{Existence of ground states}
In this section, we give the proof of Theorem~\ref{ex_gs_1}, 
which is based on \cite{FHI24} and \cite{Lipre} (see, also \cite{HI24}). 
We first prove the positivity 
of the minimal action and the property of Nehari functional. 
\begin{prop}\label{pos_pr_Neh}
Let $1\le d\le 5$, $\sigma_1,\sigma_2,\sigma_3>0$, and $c\in \R^d$. 
Assume one of the conditions {\rm (A)}, {\rm (B)} in Theorem~\ref{ex_gs_1}, and
\[
\begin{split}
{\rm (C)'}\ \ d=4\ {\rm or}\ 5,\ c\ne 0,\ &(\omega_1,\omega_2)\in \partial \Omega_c,\ (\omega_i,\omega_j)=\left(\frac{\sigma_i}{4}|c|^2,\frac{\sigma_j}{4}|c|^2\right)\ {\rm for\ some}\  1\le i<j\le 3,\\
         &\ {\rm and}\ (\omega_1,\omega_2,\omega_3)\ne \left(\frac{\sigma_1}{4}|c|^2,\frac{\sigma_2}{4}|c|^2,\frac{\sigma_3}{4}|c|^2\right) 
\end{split}
\]
holds. Then, the following properties hold:
\begin{itemize}
    \item[{\rm (i)}]\ If $\Psi \in \widetilde{X}_{\omega_1,\omega_2,c}\setminus \{(0,0,0)\}$ 
    satisfies $\widetilde{K}_{\omega_1,\omega_2,c}(\Psi)=0$, 
    then $\|\Psi\|_{\widetilde{X}_{\omega_1,\omega_2,c}}\gtrsim 1$, 
    where the implicit constant does not depend on $\omega_1,\omega_2,c$. 
    In particular, $\widetilde{\mathfrak{m}}_{\omega_1,\omega_2,c}>0$. 
    \item[{\rm (ii)}]\ If $V\in \widetilde{X}_{\omega_1,\omega_2,c}\setminus\{(0,0,0)\}$ satisfies
    $\widetilde{K}_{\omega_1,\omega_2,c}(V)<0$ (resp. $\le 0$), 
    then $\|V\|_{\widetilde{X}_{\omega_1,\omega_2,c}}^2>6\widetilde{\mathfrak{m}}_{\omega_1,\omega_2,c}$ (resp. $\ge 6\widetilde{\mathfrak{m}}_{\omega_1,\omega_2,c}$). 
\end{itemize}
\end{prop}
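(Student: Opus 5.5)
The plan is to get both parts from the algebraic identities (\ref{def_nehari}) and (\ref{S_K_N_rel}) together with one trilinear estimate for $N_c$. Throughout, write $\gamma_j:=\omega_j-\frac{\sigma_j}{4}|c|^2\ge 0$ and $\|\cdot\|:=\|\cdot\|_{\widetilde{X}_{\omega_1,\omega_2,c}}$. The uniformity claim in (i) comes down to whether the constant in that trilinear estimate can be taken independent of $(\omega_1,\omega_2,c)$, and I do not yet see how to guarantee this in every regime.

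\emph{Part (i).} Since $|e^{i\frac{\mu}{2}c\cdot x}|=1$, the phase plays no role and
\[
|N_c(\Psi)|\le \int_{\R^d}|\psi_1||\psi_2||\psi_3|\,dx .
\]
The key step is a trilinear bound
\[
\int_{\R^d}|\psi_1\psi_2\psi_3|\,dx\le C_*\|\Psi\|^3 .
\]
I would prove it case by case, using only the pieces $\sigma_j^{-1/2}\|\nabla\psi_j\|_{L^2}$ and $\gamma_j^{1/2}\|\psi_j\|_{L^2}$ of the norm.
\begin{itemize}
\item[(A)] All three factors lie in $H^1$. Use H\"older $L^3\times L^3\times L^3\to L^1$ and Gagliardo--Nirenberg for each $\|\psi_j\|_{L^3}$.
\item[(B)] Exactly one factor, say $\psi_k$, lies only in $\dot H^1$ ($3\le d\le 5$). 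Put $\psi_k$ in $L^{2d/(d-2)}$ and the other two in $L^{4d/(d+2)}$, which interpolates between $L^2$ and $L^{2d/(d-2)}$.
\item[(C)$'$] Two factors lie only in $\dot H^1$ ($d=4,5$). Put both in $L^{2d/(d-2)}$ and the remaining $H^1$ factor in $L^{d/2}$, which lies in $[2,\frac{2d}{d-2}]$ exactly when $d\ge4$.
\end{itemize}

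Given the trilinear bound, take $\Psi\ne0$ with $\widetilde{K}(\Psi)=0$. Then
\[
\|\Psi\|^2=-3N_c(\Psi)\le 3C_*\|\Psi\|^3,
\]
so $\|\Psi\|\ge (3C_*)^{-1}$. By (\ref{S_K_N_rel}) this gives $\widetilde{S}(\Psi)=\frac16\|\Psi\|^2\ge \frac{1}{54C_*^2}$, and hence $\widetilde{\mathfrak{m}}_{\omega_1,\omega_2,c}>0$.

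\emph{Main obstacle.} The difficulty is making $C_*$ independent of $(\omega_1,\omega_2,c)$.
\begin{itemize}
\item The homogeneous Sobolev factors contribute constants depending only on $d$ and $\sigma_j$.
\item Gagliardo--Nirenberg converts $\|\psi_j\|_{L^2}$ into $\gamma_j^{-1/2}\cdot\gamma_j^{1/2}\|\psi_j\|_{L^2}$, which introduces a power of $\gamma_j^{-1}$.
\end{itemize}
I would first try to choose exponents, in each case, that land purely in homogeneous spaces, so that no inhomogeneous $L^2$ factor is needed. Where that is impossible (for instance (A) with $d\le 2$, where no $\dot H^1$ Lebesgue embedding is available), I would record the explicit dependence on $\min_j\gamma_j$ and check whether it can be absorbed by scaling. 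I am not sure this fully delivers the uniformity claimed in (i). For the second claim of (i) and for part (ii) below, only $C_*<\infty$ at fixed $(\omega_1,\omega_2,c)$ is needed.

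\emph{Part (ii).} Let $V\ne0$ with $\widetilde{K}(V)\le0$. Then $3N_c(V)\le-\|V\|^2<0$, so $N_c(V)<0$. For $\lambda>0$,
\[
\widetilde{K}(\lambda V)=\lambda^2\|V\|^2+3\lambda^3N_c(V),
\]
which vanishes at
\[
\lambda_0:=-\frac{\|V\|^2}{3N_c(V)}\in(0,1].
\]
Moreover $\lambda_0<1$ exactly when $\widetilde{K}(V)<0$. Since $\lambda_0V\ne0$ lies on the Nehari manifold, (\ref{S_K_N_rel}) gives
\[
\widetilde{\mathfrak{m}}\le\widetilde{S}(\lambda_0V)=\frac{\lambda_0^2}{6}\|V\|^2\le\frac16\|V\|^2 .
\]
The last inequality is strict when $\lambda_0<1$. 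This yields $\|V\|^2>6\widetilde{\mathfrak{m}}$ when $\widetilde{K}(V)<0$, and $\|V\|^2\ge 6\widetilde{\mathfrak{m}}$ when $\widetilde{K}(V)\le 0$.
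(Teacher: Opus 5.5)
Your proof is the paper's proof. It uses the same H\"older--Sobolev splittings: $L^3\times L^3\times L^3$ in (A), $L^{\frac{2d}{d-2}}\times L^{\frac{4d}{d+2}}\times L^{\frac{4d}{d+2}}$ in (B), and $L^{\frac{2d}{d-2}}\times L^{\frac{2d}{d-2}}\times L^{\frac d2}$ in (C)$'$. These give $\|\Psi\|^2=-3N_c(\Psi)\lesssim\|\Psi\|^3$, and (ii) uses the same rescaling $\lambda_0V$ onto the Nehari manifold. Everything you actually prove is correct.

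Your doubt about uniformity is justified: that clause cannot be proved, because it is false. The paper's proof simply asserts that the implicit constant is independent of $(\omega_1,\omega_2,c)$, at exactly the Gagliardo--Nirenberg step you flagged, where the nonzero $\gamma_j$ enter through the $L^2$ pieces. To see that no uniform bound holds, set $\Psi_\lambda(x):=\lambda^2\Psi(\lambda x)$ for $\lambda>0$. Under $(\omega_1,\omega_2,c)\mapsto(\lambda^2\omega_1,\lambda^2\omega_2,\lambda c)$ each $\gamma_j$ becomes $\lambda^2\gamma_j$, and a change of variables gives
\[
\|\Psi_\lambda\|_{\widetilde{X}_{\lambda^2\omega_1,\lambda^2\omega_2,\lambda c}}^2=\lambda^{6-d}\|\Psi\|_{\widetilde{X}_{\omega_1,\omega_2,c}}^2,\qquad N_{\lambda c}(\Psi_\lambda)=\lambda^{6-d}N_c(\Psi).
\]
Consequently:
\begin{itemize}
\item $\Psi_\lambda$ lies on the Nehari manifold for the rescaled parameters whenever $\Psi$ lies on the one for $(\omega_1,\omega_2,c)$.
\item The rescaled parameters satisfy the same condition (A), (B) or (C)$'$.
\item $\|\Psi_\lambda\|\to0$ as $\lambda\to0$, since $d\le5$. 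Equivalently, $\widetilde{\mathfrak{m}}$ scales like $\lambda^{6-d}$, consistent with Lemma~\ref{S_sca_rot}.
\end{itemize}
The same scaling rules out your first idea of landing purely in homogeneous norms: an estimate $\int|\psi_1\psi_2\psi_3|\lesssim\prod_j\|\nabla\psi_j\|_{L^2}$ is scaling-consistent only for $d=6$.

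The correct statement is what your computation gives. The constant depends only on $d$, the $\sigma_j$, and a positive lower bound for the nonzero $\gamma_j$. This local uniformity is all the paper uses later. In the proof of Theorem~\ref{stab_thm}, $\omega_1,\omega_2$ are fixed and $|c|^2<\min_j 2\omega_j/\sigma_j$, so $\gamma_j>\omega_j/2$.
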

\begin{proof}
{\rm (i)}\ Assume $\Psi \in \widetilde{X}_{\omega_1,\omega_2,c}\setminus \{(0,0,0)\}$ satisfies $\widetilde{K}_{\omega_1,\omega_2,c}(\Psi)=0$. 
Then, by the definition of the Nehari functional, we obtain
\begin{equation}\label{norm_nonl_int}
\|\Psi\|_{\widetilde{X}_{\omega_1,\omega_2,c}}^2=\widetilde{K}_{\omega_1,\omega_2,c}(\Psi)-3N_c(\Psi)
=-3N_c(\Psi)\le 3\int_{\R^d}|\psi_1\psi_2\psi_3|dx. 
\end{equation}
We use the H\"older inequality and the Sobolev embedding as follows:
\[
\begin{cases}
    H^1(\R^d)\times H^1(\R^d)\times H^1(\R^d)\hookrightarrow L^3(\R^d)\times L^3(\R^d)\times L^3(\R^d)
     \ \ {\rm when\ (A)\ holds}, \\
     \dot{H}^1(\R^d)\times H^1(\R^d)\times H^1(\R^d)\hookrightarrow L^{\frac{2d}{d-2}}(\R^d)\times L^{\frac{4d}{d+2}}(\R^d)\times L^{\frac{4d}{d+2}}(\R^d)
     \ \ {\rm when\ (B)\ holds},\\
     \dot{H}^1(\R^d)\times \dot{H}^1(\R^d)\times H^1(\R^d)\hookrightarrow L^{\frac{2d}{d-2}}(\R^d)\times L^{\frac{2d}{d-2}}(\R^d)\times L^{\frac{d}{2}}(\R^d)
     \ \ {\rm when\ (C)'\ holds}. 
\end{cases}
\]
As a result, we have
\[
\int_{\R^d}|\psi_1\psi_2\psi_3|dx\lesssim \|\Psi\|_{\widetilde{X}_{\omega_1,\omega_2,c}}^3, 
\]
where the implicit constant does not depend on $\omega_1,\omega_2,c$. 
This and {\rm (\ref{norm_nonl_int})} imply $\|\Psi\|_{\widetilde{X}_{\omega_1,\omega_2,c}}\gtrsim 1$. 
Therefore, by {\rm (\ref{S_K_N_rel})}, it holds
\[
\widetilde{S}_{\omega_1,\omega_2,c}(\Psi)=\frac{1}{6}\|\Psi\|_{\widetilde{X}_{\omega_1,\omega_2,c}}^2\gtrsim 1
\]
under the conditions $\Psi \ne 0$ and $\widetilde{K}_{\omega_1,\omega_2,c}(\Psi)=0$. 
Namely $\widetilde{\mathfrak{m}}_{\omega_1,\omega_2,c}>0$. \\

{\rm (ii)} Let $V\in \widetilde{X}_{\omega_1,\omega_2,c}\setminus \{(0,0,0)\}$ 
and assume $\widetilde{K}_{\omega_1,\omega_2,c}(V)\le 0$. 
For $\lambda >0$, we have
\[
\widetilde{K}_{\omega_1,\omega_2,c}(\lambda V)=\|\lambda V\|_{\widetilde{X}_{\omega_1,\omega_2,c}}^2+3N_c(\lambda V)
=\lambda^2\|V\|_{\widetilde{X}_{\omega_1,\omega_2,c}}^2+3\lambda^3N_c(V). 
\]
Note that $N_c(V)\ne 0$ because
\[
3N_c(V)=\widetilde{K}_{\omega_1,\omega_2,c}(V)-\|V\|_{\widetilde{X}_{\omega_1,\omega_2,c}}^2
\le -\|V\|_{\widetilde{X}_{\omega_1,\omega_2,c}}^2<0. 
\]
Therefore, by choosing
\[
\lambda =\frac{\|V\|_{\widetilde{X}_{\omega_1,\omega_2,c}}^2}{-3N_c(V)}, 
\]
we have $\widetilde{K}_{\omega_1,\omega_2,c}(\lambda V)=0$. 
Therefore, by {\rm (\ref{S_K_N_rel})}, we obtain
\[
\widetilde{\mathfrak{m}}_{\omega_1,\omega_2,c}\le 
\widetilde{S}_{\omega_1,\omega_2,c}(\lambda V)=\frac{1}{6}\|\lambda V\|_{\widetilde{X}_{\omega_1,\omega_2,c}}^2
=\frac{\lambda^2}{6}\|V\|_{\widetilde{X}_{\omega_1,\omega_2,c}}^2. 
\]
Since $\lambda \le 1$ holds when $\widetilde{K}_{\omega_1,\omega_2,c}(V)\le 0$ 
and $\lambda < 1$ holds when $\widetilde{K}_{\omega_1,\omega_2,c}(V)< 0$, 
we have the conclusion. 
\end{proof}
The following remark plays an important role to prove Theorem~\ref{stab_thm}. 
\begin{rem}\label{K_m_rel_U}
    When $U=\Lambda_cV$, it holds that 
    $\widetilde{K}_{\omega_1,\omega_2,c}(V)=K_{\omega_1,\omega_2,c}(U)$ and
$\|V\|_{\widetilde{X}_{\omega_1,\omega_2,c}}^2=\|U\|_{X_{\omega_1,\omega_2,c}}^2$.
    Therefore, by {\rm Proposition~\ref{pos_pr_Neh} (i)} and Remark~\ref{X_norm_rep_LQP}, 
    there exists $C>0$ such that
    \[2L(\Phi)+2\omega_1Q_1(\Phi)+2\omega_2Q_2(\Phi)+2c\cdot P(\Phi)\ge C
    \]
    holds 
    for any $\Phi\in X_{\omega_1,\omega_2,c}\setminus \{(0,0,0)\}$ with $K_{\omega_1,\omega_2,c}(\Phi)=0$. 
    Furthermore, {\rm Proposition~\ref{pos_pr_Neh} (ii)} 
    is equivalent to the following:

    If $U\in X_{\omega_1,\omega_2,c}\setminus\{(0,0,0)\}$ satisfies
    $K_{\omega_1,\omega_2,c}(U)<0$ (resp. $\le 0$), 
    then $L(U)+\omega_1Q_1(U)+\omega_2Q_2(U)+c\cdot P(U)>3\mathfrak{m}_{\omega_1,\omega_2,c}$ (resp. $\ge 3\mathfrak{m}_{\omega_1,\omega_2,c}$). 
\end{rem}
\begin{prop}\label{GM_subset_rel}
      Let $1\le d\le 5$, $\sigma_1,\sigma_2,\sigma_3>0$, and $c\in \R^d$. 
    Assume one of the conditions {\rm (A)}, {\rm (B)} in Theorem~\ref{ex_gs_1}, and ${\rm (C)'}$ in Proposition~\ref{pos_pr_Neh} holds. 
    If $\widetilde{\mathcal{M}}_{\omega_1,\omega_2,c}\ne \emptyset$, then we have $\widetilde{\mathcal{M}}_{\omega_1,\omega_2,c}=\widetilde{\mathcal{G}}_{\omega_1,\omega_2,c}$. 
\end{prop}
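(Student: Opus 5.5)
We prove the two inclusions $\widetilde{\mathcal{G}}_{\omega_1,\omega_2,c}\subset\widetilde{\mathcal{M}}_{\omega_1,\omega_2,c}$ and $\widetilde{\mathcal{M}}_{\omega_1,\omega_2,c}\subset\widetilde{\mathcal{G}}_{\omega_1,\omega_2,c}$ separately. Two facts from earlier in the paper drive the argument. By Remark~\ref{nehari_weak_sol}, every $\Psi\in\widetilde{\mathcal{E}}_{\omega_1,\omega_2,c}$ satisfies $\widetilde{K}_{\omega_1,\omega_2,c}(\Psi)=0$ and $\Psi\neq 0$. Hence $\widetilde{\mathcal{E}}_{\omega_1,\omega_2,c}$ is contained in the admissible set of the Nehari minimization, which gives $\widetilde{\mathfrak{m}}_{\omega_1,\omega_2,c}\le\widetilde{S}_{\omega_1,\omega_2,c}(\Theta)$ for every $\Theta\in\widetilde{\mathcal{E}}_{\omega_1,\omega_2,c}$.

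\textbf{Step 1: $\widetilde{\mathcal{M}}\subset\widetilde{\mathcal{E}}$.} This is the main point. Let $\Psi\in\widetilde{\mathcal{M}}_{\omega_1,\omega_2,c}$. The functionals $\widetilde{S}_{\omega_1,\omega_2,c}$ and $\widetilde{K}_{\omega_1,\omega_2,c}$ are $C^1$ on $\widetilde{X}_{\omega_1,\omega_2,c}$. For $\widetilde{K}$, the cubic term $N_c$ is bounded and differentiable by the same H\"older--Sobolev embeddings used in Proposition~\ref{pos_pr_Neh}(i), in each of the cases (A), (B) and (C)$'$.

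We use the Lagrange multiplier rule. We have
\[
\langle\widetilde{K}'(\Psi),\Psi\rangle=2\|\Psi\|_{\widetilde{X}}^2+9N_c(\Psi)=-\|\Psi\|_{\widetilde{X}}^2<0,
\]
where we used $3N_c(\Psi)=-\|\Psi\|^2_{\widetilde{X}}$. Thus $\widetilde{K}'(\Psi)\neq0$, and there is $\eta\in\mathbb{R}$ with $\widetilde{S}'(\Psi)=\eta\widetilde{K}'(\Psi)$.

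Testing this identity against $\Psi$ gives
\[
\langle\widetilde{S}'(\Psi),\Psi\rangle=\widetilde{K}(\Psi)=0,
\]
so $0=\eta\langle\widetilde{K}'(\Psi),\Psi\rangle=-\eta\|\Psi\|^2_{\widetilde{X}}$. Since $\Psi\neq 0$, this forces $\eta=0$. Hence $\widetilde{S}'(\Psi)=0$, that is, $\Psi$ is a weak solution and $\Psi\in\widetilde{\mathcal{E}}_{\omega_1,\omega_2,c}$.

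As an alternative route, one could avoid the multiplier rule. For a test direction $\Theta$, consider $\Psi+\epsilon\Theta$ rescaled onto the Nehari manifold by $\lambda(\epsilon)$, using the explicit $\lambda$ formula from Proposition~\ref{pos_pr_Neh}(ii). Then differentiate $\widetilde{S}(\lambda(\epsilon)(\Psi+\epsilon\Theta))$ at $\epsilon=0$, using minimality. Either way the computation is routine.

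\textbf{Step 2: $\widetilde{\mathcal{M}}\subset\widetilde{\mathcal{G}}$.} By Step 1 and the preliminary remark, for $\Psi\in\widetilde{\mathcal{M}}$ and any $\Theta\in\widetilde{\mathcal{E}}$ we have
\[
\widetilde{S}(\Psi)=\widetilde{\mathfrak{m}}\le\widetilde{S}(\Theta),
\]
so $\Psi\in\widetilde{\mathcal{G}}$.

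\textbf{Step 3: $\widetilde{\mathcal{G}}\subset\widetilde{\mathcal{M}}$.} Let $\Psi\in\widetilde{\mathcal{G}}$. Then $\Psi\neq 0$ and $\widetilde{K}(\Psi)=0$, so $\widetilde{S}(\Psi)\ge\widetilde{\mathfrak{m}}$. Now use the hypothesis $\widetilde{\mathcal{M}}\neq\emptyset$: pick $\Psi_0\in\widetilde{\mathcal{M}}$. By Step 1, $\Psi_0\in\widetilde{\mathcal{E}}$, so the ground state property of $\Psi$ gives $\widetilde{S}(\Psi)\le\widetilde{S}(\Psi_0)=\widetilde{\mathfrak{m}}$. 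Hence $\widetilde{S}(\Psi)=\widetilde{\mathfrak{m}}$ and $\Psi\in\widetilde{\mathcal{M}}$.

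The only delicate point is the $C^1$ regularity of $N_c$ on $\widetilde{X}$ in the zero-mass cases (B) and (C)$'$, where some components lie only in $\dot{H}^1$. There we rely on the specific trilinear H\"older exponents listed in the proof of Proposition~\ref{pos_pr_Neh}(i), which give the bound $|N_c'(\Psi)\Theta|\lesssim\|\Psi\|_{\widetilde{X}}^2\|\Theta\|_{\widetilde{X}}$.
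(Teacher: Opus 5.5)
Your proposal is correct and follows essentially the same route as the paper. The paper also applies the Lagrange multiplier rule, computes $\partial_\lambda\widetilde{K}_{\omega_1,\omega_2,c}(\lambda\Psi)|_{\lambda=1}=-\|\Psi\|_{\widetilde{X}_{\omega_1,\omega_2,c}}^2<0$ to force $\eta=0$, and then obtains the two inclusions from Remark~\ref{nehari_weak_sol} and the nonemptiness of $\widetilde{\mathcal{M}}_{\omega_1,\omega_2,c}$, exactly as you do.
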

\begin{proof}
    First, we prove $\widetilde{\mathcal{M}}_{\omega_1,\omega_2,c}\subset \widetilde{\mathcal{G}}_{\omega_1,\omega_2,c}$. 
    By the assumption $\widetilde{\mathcal{M}}_{\omega_1,\omega_2,c}\ \ne \emptyset$, we can take $\Psi \in \widetilde{\mathcal{M}}_{\omega_1,\omega_2,c}\ $. 
    Then, we have $\widetilde{S}_{\omega_1,\omega_2,c}(\Psi)=\widetilde{\mathfrak{m}}_{\omega_1,\omega_2,c}$ and 
    $\widetilde{K}_{\omega_1,\omega_2,c}(\Psi)=0$. By the Lagrange multiplier theorem there exists $\eta \in \R$ such that
    \[
    D_j\widetilde{S}_{\omega_1,\omega_2,c}(\Psi)=\eta D_j \widetilde{K}_{\omega_1,\omega_2,c}(\Psi)\ \ (j=1,2,3). 
    \]
    Therefore, by the definition of $\widetilde{K}_{\omega_1,\omega_2,c}$, we have
    \[
    0=\widetilde{K}_{\omega_1,\omega_2,c}(\Psi)=\sum_{j=1}^3\langle D_j\widetilde{S}_{\omega_1,\omega_2,c}(\Psi),\Psi\rangle 
    =\eta \sum_{j=1}^3\langle D_j\widetilde{K}_{\omega_1,\omega_2,c}(\Psi),\Psi\rangle
    =\eta \partial_{\lambda}\widetilde{K}_{\omega_1,\omega_2,c}(\lambda \Psi)|_{\lambda =1} 
    \]
    and
    \[
    \partial_{\lambda}\widetilde{K}_{\omega_1,\omega_2,c}(\lambda \Psi)|_{\lambda =1}
    =2\|\Psi\|_{\widetilde{X}_{\omega_1,\omega_2,c}}^2+9N_c(\Psi)
    =3\widetilde{K}_{\omega_1,\omega_2,c}(\Psi)-\|\Psi\|_{\widetilde{X}_{\omega_1,\omega_2,c}}^2
    =-\|\Psi\|_{\widetilde{X}_{\omega_1,\omega_2,c}}^2<0. 
    \]
    Hence $\eta =0$, and we obtain $\widetilde{S}_{\omega_1,\omega_2,c}'(\Psi)=0$. 
    Because $\Psi \ne 0$, it implies $\Psi \in \widetilde{\mathcal{E}}_{\omega_1,\omega_2,c}$ 
    and also we have $\Psi \in \widetilde{\mathcal{G}}_{\omega_1,\omega_2,c}$. 
    Indeed, it holds
    \[
    \widetilde{S}_{\omega_1,\omega_2,c}(\Psi)=\widetilde{\mathfrak{m}}_{\omega_1,\omega_2,c}\le \widetilde{S}_{\omega_1,\omega_2,c}(\Theta)
    \]
    for any $\Theta \in \widetilde{\mathcal{E}}_{\omega_1,\omega_2,c}$ 
    because $\widetilde{K}_{\omega_1,\omega_2,c}(\Theta)=0$ is satisfied by Remark~\ref{nehari_weak_sol}.  

    Next, we prove $\widetilde{\mathcal{G}}_{\omega_1,\omega_2,c}\subset \widetilde{\mathcal{M}}_{\omega_1,\omega_2,c}$. 
    Note that $\widetilde{\mathcal{G}}_{\omega_1,\omega_2,c}\ne \emptyset$ holds 
    by $\emptyset \ne \widetilde{\mathcal{M}}_{\omega_1,\omega_2,c}\subset \widetilde{\mathcal{G}}_{\omega_1,\omega_2,c}$. 
    Let $\Psi \in \widetilde{\mathcal{G}}_{\omega_1,\omega_2,c}$. 
    Then $\widetilde{K}_{\omega_1,\omega_2,c}(\Psi)=0$ holds (see, Remark~\ref{nehari_weak_sol}). 
    Therefore, we have 
    \[
    \widetilde{\mathfrak{m}}_{\omega_1,\omega_2,c}\le \widetilde{S}_{\omega_1,\omega_2,c}(\Psi)
    \]
    by the definition of $\widetilde{\mathfrak{m}}_{\omega_1,\omega_2,c}$. 
    On the other hand, there exists $\Psi_0\in \widetilde{\mathcal{M}}_{\omega_1,\omega_2,c}$ 
    because $\widetilde{\mathcal{M}}_{\omega_1,\omega_2,c}\ne \emptyset$,  
    and we obtain $\Psi_0\in \widetilde{\mathcal{G}}_{\omega_1,\omega_2,c}$ 
    by $\widetilde{\mathcal{M}}_{\omega_1,\omega_2,c}\subset \widetilde{\mathcal{G}}_{\omega_1,\omega_2,c}$. Hence, it holds that
    \[
    \widetilde{S}_{\omega_1,\omega_2,c}(\Psi)\le \widetilde{S}_{\omega_1,\omega_2,c}(\Psi_0)=\widetilde{\mathfrak{m}}_{\omega_1,\omega_2,c}. 
    \]
    Thus $\widetilde{S}_{\omega_1,\omega_2,c}(\Psi)=\widetilde{\mathfrak{m}}_{\omega_1,\omega_2,c}$, which implies $\Psi\in \widetilde{\mathcal{M}}_{\omega_1,\omega_2,c}$.    
\end{proof}
\begin{rem}\label{GM_subset_rel_rem}
    We also obtain $\mathcal{M}_{\omega_1,\omega_2,c}=\mathcal{G}_{\omega_1,\omega_2,c}$ 
    because $\Phi\in \mathcal{M}_{\omega_1,\omega_2,c}$ is equivalent to $\Psi\in \widetilde{\mathcal{M}}_{\omega_1,\omega_2,c}$ 
    and $\Phi\in \mathcal{G}_{\omega_1,\omega_2,c}$ is equivalent to $\Psi\in \widetilde{\mathcal{G}}_{\omega_1,\omega_2,c}$ 
    when $\Phi =\Lambda_c\Psi$. 
\end{rem}
To construct a ground state solution, 
we use the following compactness result. 
\begin{lemm}[Lieb's compactness theorem {\rm \cite{Lieb83}}]\label{lieb_cpt_lemm}
Let $\{F_n\}$ be a bounded sequence in $H^1(\R^d)$, 
and suppose that $\limsup_{n\rightarrow \infty}\|F_n\|_{L^p}>0$ 
for some $p\in (2,2^*)$, where $2^*=\infty$ if $d=1$ or $2$, and $2^*=\frac{2d}{d-2}$ if $d\ge 3$. 
Then, there exist $\{x_n\}\subset \R^d$ and $F\in H^1(\R^d)\setminus \{0\}$ 
such that $\{F_n(\cdot -x_n)\}$ has a subsequence that converges to $F$ 
weakly in $H^1(\R^d)$. 
\end{lemm}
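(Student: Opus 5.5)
The plan is to prove the lemma by contraposition through a vanishing lemma of P.-L. Lions type, followed by translation and local compactness. Let $\{Q_k\}_{k\in\mathbb{Z}^d}$ be the unit cubes $k+[0,1)^d$, which tile $\R^d$. The key intermediate claim is the following: if $\{F_n\}$ is bounded in $H^1(\R^d)$ and
\[
\delta_n:=\sup_{k\in\mathbb{Z}^d}\|F_n\|_{L^2(Q_k)}\rightarrow 0,
\]
then $\|F_n\|_{L^p}\rightarrow 0$ for every $p\in(2,2^*)$. Given this claim, the hypothesis $\limsup_{n\rightarrow\infty}\|F_n\|_{L^p}>0$ yields a subsequence (not relabeled), a constant $\delta>0$ and points $k_n\in\mathbb{Z}^d$ such that $\|F_n\|_{L^2(Q_{k_n})}\ge \delta$ for all $n$.

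To prove the vanishing claim, I will first treat the exponent $p_0=2+\frac{4}{d}$. On a single cube $Q$, the Gagliardo--Nirenberg inequality gives
\[
\|f\|_{L^{p_0}(Q)}\le C\|f\|_{L^2(Q)}^{1-\theta}\|f\|_{H^1(Q)}^{\theta}
\]
with $\theta p_0=2$. The constant $C$ is uniform over cubes, since all cubes are translates of one another; the inequality itself follows from an extension operator for the unit cube together with the whole-space Gagliardo--Nirenberg inequality. Raising this to the power $p_0$ gives $\|f\|_{L^{p_0}(Q_k)}^{p_0}\le C\|f\|_{L^2(Q_k)}^{(1-\theta)p_0}\|f\|_{H^1(Q_k)}^2$. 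Summing over $k$ then gives
\[
\|F_n\|_{L^{p_0}}^{p_0}\le C\delta_n^{(1-\theta)p_0}\|F_n\|_{H^1}^2\rightarrow 0,
\]
where boundedness in $H^1$ is what is used. For a general $p\in(2,2^*)$, I will interpolate between $L^{p_0}$ and either $L^2$ or some $L^q$ with $q<2^*$ (any $q<\infty$ when $d=1,2$). Since $\{F_n\}$ is bounded in both $L^2$ and $L^q$ by the Sobolev embedding, this gives $\|F_n\|_{L^p}\rightarrow 0$.

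Next, set $G_n:=F_n(\cdot+k_n)$, so that $G_n=F_n(\cdot-x_n)$ with $x_n:=-k_n$. The sequence $\{G_n\}$ is still bounded in $H^1(\R^d)$ and satisfies $\|G_n\|_{L^2(Q_0)}\ge\delta$. By weak compactness of bounded sets in $H^1(\R^d)$, a further subsequence converges weakly in $H^1(\R^d)$ to some $F$. By the Rellich--Kondrachov theorem on the bounded cube $Q_0$ (applied to the restrictions $G_n|_{Q_0}$, which are bounded in $H^1(Q_0)$), we have $G_n\rightarrow F$ strongly in $L^2(Q_0)$. Hence $\|F\|_{L^2(Q_0)}\ge\delta>0$, so $F\ne 0$, which is the conclusion of the lemma.

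The main obstacle is the vanishing claim, specifically obtaining the summable local estimate. The exponent must be chosen so that $\|f\|_{H^1(Q_k)}$ appears exactly squared, since $\sum_k\|f\|_{H^1(Q_k)}^2=\|f\|_{H^1}^2$ while other powers do not sum to a controlled quantity. The uniformity of the Gagliardo--Nirenberg constant across cubes also needs to be justified. The remaining steps are standard compactness arguments.
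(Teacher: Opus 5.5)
Your proof is correct. The paper gives no proof of this lemma: it is quoted from \cite{Lieb83} and used as a black box in Step~1 of Proposition~\ref{mini_seq_conv}. The relevant comparison is therefore with Lieb's own argument.

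What you give is the P.-L.~Lions route. You prove a vanishing lemma on the lattice of unit cubes, choosing $p_0=2+\frac{4}{d}$ so that $\theta p_0=2$ and the local quantities $\|F_n\|_{H^1(Q_k)}^2$ sum to $\|F_n\|_{H^1}^2$. You then interpolate to reach every $p\in(2,2^*)$. Finally, lattice translations plus Rellich on $Q_0$ show that the weak limit keeps $L^2(Q_0)$-mass at least $\delta$. All of this checks out, including the preliminary passage to a subsequence on which $\|F_n\|_{L^p}$ stays bounded below. For the local Gagliardo--Nirenberg inequality, the extension operator must be bounded simultaneously on $L^2$ and $H^1$; reflection works.

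Lieb instead states his lemma under the level-set hypothesis $|\{|F_n|>\epsilon\}|\ge\delta$. The $L^p$ form quoted in the paper follows by splitting $\int|F_n|^p$ over $\{|F_n|\le\epsilon\}$ and its complement and using the $L^2$ and $L^q$ bounds. He finds a good translate by averaging over translations: his theorem gives $\lambda(A\cap(B+x))<\lambda(A)+\lambda(B)+\varepsilon$ for some $x$. Combined with a Faber--Krahn lower bound, this bounds below the measure of the intersection of a superlevel set with a translated ball. Local compactness then finishes exactly as in your proof.

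Your version is more elementary and quantitative: it gives an explicit lower bound on the local $L^2$-mass in terms of $\limsup_n\|F_n\|_{L^p}$ and $\sup_n\|F_n\|_{H^1}$. Lieb's level-set formulation isolates what the argument really needs, so it adapts more readily to $W^{1,p}$ and homogeneous-space settings where summing inhomogeneous norms over cubes is not available. For the uses in this paper (sequences bounded in $H^1$, with $p=3$, $\frac{4d}{d+2}$ or $\frac{d}{2}$), either proof suffices.
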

Now we prove the convergence of the minimizing sequence. 
For $\tau\in \mathbb{R}$, we define the operator $\Lambda (\tau)$ on $\widetilde{X}_{\omega_1,\omega_2,c}$ by
\begin{equation}\label{phase_tr_def_2}
\Lambda (\tau)V:=(e^{i\frac{\sigma_1}{2}\tau}v_1,e^{i\frac{\sigma_2}{2}\tau}v_2,e^{i\frac{\sigma_3}{2}\tau}v_3)
\end{equation}
for $V=(v_1,v_2,v_3)$. Note that $(\Lambda_cV)(x)=\Lambda (c\cdot x)V(x)$, 
where $\Lambda_c$ is defined in (\ref{op_lambdac_def}). 

Thanks to Proposition~\ref{GM_subset_rel}, 
it suffices to show the following proposition to obtain Theorem~\ref{ex_gs_1}.
\begin{prop}\label{mini_seq_conv}
   Let $1\le d\le 5$, $\sigma_1,\sigma_2,\sigma_3>0$, and $c\in \R^d$. 
    Assume one of the conditions {\rm (A)}, {\rm (B)}, and {\rm (C)} in Theorem~\ref{ex_gs_1} holds. 
    If $\{V_n\}\subset \widetilde{X}_{\omega_1,\omega_2,c}$ satisfies
    \begin{equation}\label{minim_seq_cond}
    \lim_{n\rightarrow \infty}\widetilde{S}_{\omega_1,\omega_2,c}(V_n)=\widetilde{\mathfrak{m}}_{\omega_1,\omega_2,c},\ \ \lim_{n\rightarrow \infty}\widetilde{K}_{\omega_1,\omega_2,c}(V_n)=0,
    \end{equation}
    then there exist $\{x_n\}\subset \R^d$ and $W\in \widetilde{X}_{\omega_1,\omega_2,c}\setminus \{(0,0,0)\}$
    such that $\{\Lambda (-c\cdot x_n)V_n(\cdot -x_n)\}$ 
    has a subsequence that converges to $W$ strongly in $\widetilde{X}_{\omega_1,\omega_2,c}$. 
    Furthermore, $W\in \widetilde{\mathcal{M}}_{\omega_1,\omega_2,c}$ holds. 
\end{prop}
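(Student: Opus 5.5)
We follow the standard concentration–compactness route built on the Nehari characterization of Proposition~\ref{pos_pr_Neh}, in the spirit of \cite{FHI24,Lipre}. By (\ref{S_K_N_rel}), $\widetilde{S}_{\omega_1,\omega_2,c}(V_n)=\frac13\widetilde{K}_{\omega_1,\omega_2,c}(V_n)+\frac16\|V_n\|_{\widetilde{X}_{\omega_1,\omega_2,c}}^2$. Hence (\ref{minim_seq_cond}) gives
\[
\|V_n\|_{\widetilde{X}_{\omega_1,\omega_2,c}}^2\to 6\widetilde{\mathfrak{m}}_{\omega_1,\omega_2,c}>0
\quad\text{and}\quad
-3N_c(V_n)\to 6\widetilde{\mathfrak{m}}_{\omega_1,\omega_2,c}.
\]
So $\{V_n\}$ is bounded and the trilinear term $\int|v_{1,n}v_{2,n}v_{3,n}|\,dx$ stays bounded below.

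\textbf{Step 1: nonvanishing.} We need a nonvanishing profile and a translation. Under (A), all components are bounded in $H^1$. Nonvanishing of the trilinear term gives $\limsup\|v_{j,n}\|_{L^3}>0$ for some $j$. Lemma~\ref{lieb_cpt_lemm} then yields $x_n$ and a weakly convergent, nonzero subsequence of $V_n(\cdot-x_n)$. Under (B) and (C), the massless components lie only in $\dot H^1$. Here we apply Lieb's lemma to a massive component. In case (B) the two massive components $v_{k,n},v_{l,n}$ are $H^1$-bounded, and the H\"older pairing $L^{\frac{4d}{d+2}}\times L^{\frac{4d}{d+2}}\times L^{\frac{2d}{d-2}}$ forces $\limsup\|v_{k,n}\|_{L^{\frac{4d}{d+2}}}>0$; note $\frac{4d}{d+2}\in(2,2^*)$ for $3\le d\le5$. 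In case (C) only one component is massive, and we use the pairing $L^{\frac{2d}{d-2}}\times L^{\frac{2d}{d-2}}\times L^{d/2}$ with $d/2\in(2,\frac{2d}{d-2})$ when $d=5$. This is exactly why $d=5$ is required in (C). We then set $\widetilde V_n:=\Lambda(-c\cdot x_n)V_n(\cdot-x_n)$. This is the natural translation because $\widetilde{X}$-norms are translation invariant. Moreover, $N_c(\widetilde V_n)=N_c(V_n)$: the phase factor $e^{i\frac{\mu}{2}c\cdot x}$ picks up $e^{-i\frac{\mu}{2}c\cdot x_n}$, which cancels against the phase $\Lambda(-c\cdot x_n)$ since $\sigma_1+\sigma_2-\sigma_3=\mu$. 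Thus $\widetilde V_n$ is still a minimizing sequence. Up to a subsequence, $\widetilde V_n\rightharpoonup W$ weakly in $\widetilde{X}$ with $W\neq0$; the massless components also converge weakly in $\dot H^1$ and a.e. after extraction, by Rellich on balls.

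\textbf{Step 2: Brezis--Lieb splitting.} Write $R_n:=\widetilde V_n-W$. Then
\[
\|\widetilde V_n\|^2=\|W\|^2+\|R_n\|^2+o(1),\qquad N_c(\widetilde V_n)=N_c(W)+N_c(R_n)+o(1),
\]
so that $\widetilde K(\widetilde V_n)=\widetilde K(W)+\widetilde K(R_n)+o(1)$. The norm splitting is the Hilbert-space identity. The trilinear splitting is the Brezis--Lieb-type lemma, using a.e. convergence and boundedness in the Lebesgue spaces of Step~1. I expect this nonlinear splitting in the zero-mass cases to be the main technical obstacle, since only $\dot H^1$ bounds are available for some components. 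I would prove it by local strong convergence in $L^q_{loc}$ together with uniform tail smallness from H\"older's inequality against the weakly null remainder.

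\textbf{Step 3: conclusion.} Suppose $\widetilde K(W)>0$. Then $\widetilde K(R_n)<0$ for large $n$, so Proposition~\ref{pos_pr_Neh}(ii) gives $\|R_n\|^2>6\widetilde{\mathfrak{m}}$. This implies $\|W\|^2\le 6\widetilde{\mathfrak m}-6\widetilde{\mathfrak m}+o(1)$, so $W=0$, a contradiction. Hence $\widetilde K(W)\le0$. Proposition~\ref{pos_pr_Neh}(ii) then yields $\|W\|^2\ge 6\widetilde{\mathfrak m}$. On the other hand, weak lower semicontinuity gives $\|W\|^2\le\liminf\|\widetilde V_n\|^2=6\widetilde{\mathfrak m}$. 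So $\|W\|^2=6\widetilde{\mathfrak m}$ and $\|R_n\|\to0$, which is the strong convergence in $\widetilde{X}_{\omega_1,\omega_2,c}$. Passing to the limit, $\widetilde K(W)=0$ and $\widetilde S(W)=\widetilde{\mathfrak m}$, so $W\in\widetilde{\mathcal M}_{\omega_1,\omega_2,c}$.

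Two points need checking. First, the strict inequality in Proposition~\ref{pos_pr_Neh}(ii) requires $R_n\ne0$; this holds automatically when $\widetilde K(R_n)<0$. Second, for condition (C) with $d=5$, Proposition~\ref{pos_pr_Neh} is stated under (C)$'$, which covers this case.
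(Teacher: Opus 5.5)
Your proposal is correct and follows essentially the same route as the paper. The common steps are: boundedness from (\ref{S_K_N_rel}); Lieb's lemma applied to a massive component with $p=3$, $\frac{4d}{d+2}$, or $\frac{d}{2}$ under (A), (B), (C); the splitting of the $\widetilde{X}_{\omega_1,\omega_2,c}$-norm and of the trilinear term $N_c$ along $W_n-W$; and the Nehari dichotomy via Proposition~\ref{pos_pr_Neh}~(ii). The differences are cosmetic: you justify the vanishing of the cross terms by local compactness and tail estimates, where the paper directly asserts weak convergence to zero of products of the weakly null remainders. You also obtain $\widetilde{K}_{\omega_1,\omega_2,c}(W)=0$ by continuity under strong convergence, rather than by the contrapositive of Proposition~\ref{pos_pr_Neh}~(ii).
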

\begin{proof}
Write $\widetilde{X}_{\omega_1,\omega_2,c}=\widetilde{X}_1\times \widetilde{X}_2\times \widetilde{X}_3$. 
 Under the conditions {\rm (A)}, {\rm (B)}, or {\rm (C)}, 
 at least one of $\widetilde{X}_j$ is equal to $H^1(\R^d)$. 
 We only assume the case $\widetilde{X}_{3}=H^1(\R^d)$ because the other cases are the same. 
 Let $V_n=(v_{1n},v_{2n},v_{3n})$ satisfy {\rm (\ref{minim_seq_cond})}. \\

\noindent {\bf Step\ 1}\ (Applying Lieb's compactness theorem)
 
    By {\rm (\ref{S_K_N_rel})}, we have
    \[
    \|V_n\|_{\widetilde{X}_{\omega_1,\omega_2,c}}^2=6\widetilde{S}_{\omega_1,\omega_2,c}(V_n)-2\widetilde{K}_{\omega_1,\omega_2,c}(V_n)\ \rightarrow\ 6\widetilde{\mathfrak{m}}_{\omega_1,\omega_2,c}\ \ (n\rightarrow \infty). 
    \]
    Therefore, $\{V_n\}$ is a bounded sequence in $\widetilde{X}_{\omega_1,\omega_2,c}$. 
    In particular, $\{v_{3n}\}$ is a bounded sequence in $H^1(\R^d)$. 
    Furthermore, by the H\"older inequality, the Sobolev embedding
    \[
    {\rm (SE)}\ 
\begin{cases}
    H^1(\R^d)\times H^1(\R^d)\times H^1(\R^d)\hookrightarrow L^3(\R^d)\times L^3(\R^d)\times L^3(\R^d)
     \ \ {\rm when\ (A)\ holds}, \\
     \dot{H}^1(\R^d)\times H^1(\R^d)\times H^1(\R^d)\hookrightarrow L^{\frac{2d}{d-2}}(\R^d)\times L^{\frac{4d}{d+2}}(\R^d)\times L^{\frac{4d}{d+2}}(\R^d)
     \ \ {\rm when\ (B)\ holds},\\
     \dot{H}^1(\R^d)\times \dot{H}^1(\R^d)\times H^1(\R^d)\hookrightarrow L^{\frac{2d}{d-2}}(\R^d)\times L^{\frac{2d}{d-2}}(\R^d)\times L^{\frac{d}{2}}(\R^d)
     \ \ {\rm when\ (C)\ holds}, 
\end{cases}
    \]
    and the boundedness of $\{V_n\}$, we have
    \[
    -3N_c(V_n)\lesssim \|V_n\|_{\widetilde{X}_{\omega_1,\omega_2,c}}^2\|v_{3n}\|_{L^p}
    \lesssim \|v_{3n}\|_{L^p}
    \]
    for $p=3$ when {\rm (A)} holds, $p=\frac{4d}{d+2}$ when {\rm (B)} holds, 
    and $p=\frac{d}{2}$ when {\rm (C)} holds. 
    Therefore, by the definition of $\widetilde{K}_{\omega_1,\omega_2,c}$, {\rm (\ref{S_K_N_rel})}, and Proposition~\ref{pos_pr_Neh}, we obtain
\[
\|v_{3n}\|_{L^p}\gtrsim 6\widetilde{S}_{\omega_1,\omega_2,c}(V_n)-3\widetilde{K}_{\omega_1,\omega_2,c}(V_n)\ \rightarrow\ 6\widetilde{\mathfrak{m}}_{\omega_1,\omega_2,c}>0\ \ (n\rightarrow \infty). 
\]
Because $p\in (2,2^*)$ holds under the conditions {\rm (A)}, {\rm (B)}, or {\rm (C)}, 
we can apply Lemma~\ref{lieb_cpt_lemm}. 
(Note that if $d=4$, then $\frac{d}{2}=2$. 
Therefore, we cannot apply Lemma~\ref{lieb_cpt_lemm} under the condition ${\rm (C)'}$.)
Namely, there exist $\{x_n\}\subset \R^d$ and $w_3\in H^1(\R^d)\setminus \{0\}$ 
such that after passing to a subsequence, 
\[
w_{3n}(x):=e^{-i\frac{\sigma_3}{2}c\cdot x_n}v_{3n}(x-x_n)\ \rightharpoonup\ w_3(x)\ \ {\rm in}\ \ H^1(\R^d). 
\]
Here ``$\rightharpoonup$'' denotes the weak convergence. 
For the same $x_n$, we define
\[
w_{jn}(x):=e^{-i\frac{\sigma_j}{2}c\cdot x_n}v_{jn}(x-x_n)\ \ \ (j=1,2). 
\]
Since $\|w_{jn}\|_{\widetilde{X}_j}=\|v_{jn}\|_{\widetilde{X}_j}$ holds 
for both cases $\widetilde{X}_j=H^1(\R^d)$ and $\widetilde{X}_j=\dot{H}^1(\R^d)$, 
$\{w_{jn}\}$ is a bounded sequence in $\widetilde{X}_{j}$.  
Therefore, there exists $w_j\in \widetilde{X}_j$ such that up to a subsequence, 
$w_{jn}$ converges to $w_j$ weakly in $\widetilde{X}_j$. 
We put $W:=(w_1,w_2,w_3)$, which is not zero in $\widetilde{X}_{\omega_1,\omega_2,c}$ 
because $w_3\ne 0$. 
Then, after passing to a subsequence, $W_n:=(w_{1n},w_{2n},w_{3n})$ (we also write the subsequence as ``$W_n$'') 
converges to $W$ weakly in $\widetilde{X}_{\omega_1,\omega_2,c}$. \\

\noindent {\bf Step\ 2}\ (Convergence of functionals)

Now, we prove
\begin{equation}\label{nonlin_func_conv}
N_c(W_n)-N_c(W_n-W)-N_c(W)\ \rightarrow\ 0\ (n\rightarrow \infty). 
\end{equation}
We put $r_{jn}:=w_{jn}-w_j$\ $(j=1,2,3)$. 
Then, $r_{jn}$ converges to $0$ weakly in $\widetilde{X}_j$. 
By the definition of $N_c$, we have
\begin{equation}\label{nonlin_expa_calc}
\begin{split}
&N_c(W_n)-N_c(W_n-W)-N_c(W)\\
&=-\left\{{\rm Re}\left(e^{i\frac{\mu}{2}c\cdot x}w_{1n}w_{2n},w_{3n}\right)_{L^2}
-{\rm Re}\left(e^{i\frac{\mu}{2}c\cdot x}r_{1n}r_{2n},r_{3n}\right)_{L^2}
-{\rm Re}\left(e^{i\frac{\mu}{2}c\cdot x}w_{1}w_{2},w_{3}\right)_{L^2}\right\}\\
&= -{\rm Re}\left\{\left(r_{1n}r_{2n},e^{-i\frac{\mu}{2}c\cdot x}w_3\right)_{L^2}+\left(r_{1n}\overline{r_{3n}},e^{-i\frac{\mu}{2}c\cdot x}\overline{w_{2}}\right)_{L^2}+\left(r_{2n}\overline{r_{3n}},e^{-i\frac{\mu}{2}c\cdot x}\overline{w_{1}}\right)_{L^2}\right.\\
&\ \ \ \ \ \ \ \ \ \ \ \ \ \ \ \ \ \ \ \ \ \ \ \ \ \ \ \ \ \ \ \ \left.+\left(r_{1n},e^{-i\frac{\mu}{2}c\cdot x}\overline{w_2}w_3\right)_{L^2}+\left(r_{2n},e^{-i\frac{\mu}{2}c\cdot x}\overline{w_1}w_3\right)_{L^2}+\left(\overline{r_{3n}},e^{-i\frac{\mu}{2}c\cdot x}\overline{w_1}\overline{w_2}\right)_{L^2}\right\}. 
\end{split}
\end{equation}
We recall that $\mu=\sigma_1+\sigma_2-\sigma_3$. 
By the Sobolev embedding {\rm (SE)} as above, we have
the weak convergence
\[
(r_{1n},r_{2n},r_{3n})\ \rightharpoonup\ (0,0,0)\ {\rm in}\ 
\begin{cases}
L^3(\R^d)\times L^3(\R^d)\times L^3(\R^d)
     \ \ {\rm when\ (A)\ holds}, \\
L^{\frac{2d}{d-2}}(\R^d)\times L^{\frac{4d}{d+2}}(\R^d)\times L^{\frac{4d}{d+2}}(\R^d)
     \ \ {\rm when\ (B)\ holds},\\
L^{\frac{2d}{d-2}}(\R^d)\times L^{\frac{2d}{d-2}}(\R^d)\times L^{\frac{d}{2}}(\R^d)
     \ \ {\rm when\ (C)\ holds}
\end{cases}
\]
and the weak convergence
\[
(r_{1n}r_{2n},r_{1n}\overline{r_{3n}},r_{2n}\overline{r_{3n}})\ \rightharpoonup\ (0,0,0)\ {\rm in}\ 
\begin{cases}
L^{\frac{3}{2}}(\R^d)\times L^{\frac{3}{2}}(\R^d)\times L^{\frac{3}{2}}(\R^d)
     \ \ {\rm when\ (A)\ holds}, \\
L^{\frac{4d}{3d-2}}(\R^d)\times L^{\frac{4d}{3d-2}}(\R^d)\times L^{\frac{2d}{d+2}}(\R^d)
     \ \ {\rm when\ (B)\ holds},\\
L^{\frac{d}{d-2}}(\R^d)\times L^{\frac{2d}{d+2}}(\R^d)\times L^{\frac{2d}{d+2}}(\R^d)
     \ \ {\rm when\ (C)\ holds}.  
\end{cases}
\]
Therefore, all the inner products in the right-hand side of {\rm (\ref{nonlin_expa_calc})} 
converge to $0$. 

On the other hand, by the definition of the norm $\|\cdot\|_{\widetilde{X}_{\omega_1,\omega_2,c}}$, 
we obtain
\[
\begin{split}
    &\|W_n\|_{\widetilde{X}_{\omega_1,\omega_2,c}}^2
    -\|W_n-W\|_{\widetilde{X}_{\omega_1,\omega_2,c}}^2
    -\|W\|_{\widetilde{X}_{\omega_1,\omega_2,c}}^2\\
    &=\sum_{j=1}^3\frac{2}{\sigma_j}{\rm Re}\left(\nabla r_{jn},\nabla w_j\right)_{L^2}
    +\sum_{j=1}^3\left(2\omega_j-\frac{\sigma_j}{2}|c|^2\right){\rm Re}(r_{jn},w_j)_{L^2}. 
\end{split}
\]
Therefore, the weak convergence of $r_{jn}$ implies
\begin{equation}\label{norm_func_conv}
\|W_n\|_{\widetilde{X}_{\omega_1,\omega_2,c}}^2
    -\|W_n-W\|_{\widetilde{X}_{\omega_1,\omega_2,c}}^2
    -\|W\|_{\widetilde{X}_{\omega_1,\omega_2,c}}^2\ \rightarrow\ 0\ (n\rightarrow \infty). 
\end{equation}
By {\rm (\ref{nonlin_func_conv})}, {\rm (\ref{norm_func_conv})}, 
and the definition of $\widetilde{K}_{\omega_1,\omega_2,c}$, we also obtain
\begin{equation}\label{nehari_func_conv}
\widetilde{K}_{\omega_1,\omega_2,c}(W_n)-\widetilde{K}_{\omega_1,\omega_2,c}(W_n-W)-\widetilde{K}_{\omega_1,\omega_2,c}(W)\ \rightarrow\ 0\ (n\rightarrow \infty). 
\end{equation}
\\

\noindent {\bf Step\ 3}\ (Minimality of $W$ and strong convergence of $W_n$)

We first note that
\[
\|W_n\|_{\widetilde{X}_{\omega_1,\omega_2,c}}=\|V_n\|_{\widetilde{X}_{\omega_1,\omega_2,c}},\ \ N_c(W_n)=N_c(V_n)
\]
by the definition of $w_{jn}$. Therefore, we also have
\[
\widetilde{K}_{\omega_1,\omega_2,c}(W_n)=\widetilde{K}_{\omega_1,\omega_2,c}(V_n)
\]
by the definition of $\widetilde{K}_{\omega_1,\omega_2,c}$. 
Now, we assume $\widetilde{K}_{\omega_1,\omega_2,c}(W)>0$. 
Then, by {\rm (\ref{nehari_func_conv})} and $\widetilde{K}_{\omega_1,\omega_2,c}(V_n)\rightarrow 0$, 
we obtain
\[
\begin{split}
\widetilde{K}_{\omega_1,\omega_2,c}(W_n-W)
&=\widetilde{K}_{\omega_1,\omega_2,c}(W_n)-\widetilde{K}_{\omega_1,\omega_2,c}(W)
-(\widetilde{K}_{\omega_1,\omega_2,c}(W_n)-\widetilde{K}_{\omega_1,\omega_2,c}(W_n-W)-\widetilde{K}_{\omega_1,\omega_2,c}(W))\\
&\rightarrow\ -\widetilde{K}_{\omega_1,\omega_2,c}(W)<0. 
\end{split}
\]
Therefore, for large $n$, $W_n-W\ne 0$ holds and
Proposition~\ref{pos_pr_Neh} {\rm (ii)} shows that 
$\|W_n-W\|_{\widetilde{X}_{\omega_1,\omega_2,c}}^2>6\widetilde{\mathfrak{m}}_{\omega_1,\omega_2,c}$. 
This and {\rm (\ref{norm_func_conv})} imply
\[
\begin{split}
    \|W\|_{\widetilde{X}_{\omega_1,\omega_2,c}}^2
    &=\|W_n\|_{\widetilde{X}_{\omega_1,\omega_2,c}}^2-\|W_n-W\|_{\widetilde{X}_{\omega_1,\omega_2,c}}^2
    -\left(\|W_n\|_{\widetilde{X}_{\omega_1,\omega_2,c}}^2
    -\|W_n-W\|_{\widetilde{X}_{\omega_1,\omega_2,c}}^2
    -\|W\|_{\widetilde{X}_{\omega_1,\omega_2,c}}^2\right)\\
    &<\|W_n\|_{\widetilde{X}_{\omega_1,\omega_2,c}}^2-6\widetilde{\mathfrak{m}}_{\omega_1,\omega_2,c}
    -\left(\|W_n\|_{\widetilde{X}_{\omega_1,\omega_2,c}}^2
    -\|W_n-W\|_{\widetilde{X}_{\omega_1,\omega_2,c}}^2
    -\|W\|_{\widetilde{X}_{\omega_1,\omega_2,c}}^2\right)\\
    &\rightarrow 0. 
\end{split}
\]
This contradicts $W\ne 0$. 
Thus, we obtain $\widetilde{K}_{\omega_1,\omega_2,c}(W)\le 0$. 
Because $W\ne 0$, we also have $\|W\|_{\widetilde{X}_{\omega_1,\omega_2,c}}^2\ge 6\widetilde{\mathfrak{m}}_{\omega_1,\omega_2,c}$ by Proposition~\ref{pos_pr_Neh} {\rm (ii)}. 
On the other hand, the weak convergence $W_n\ \rightarrow\ W$ in $\widetilde{X}_{\omega_1,\omega_2,c}$ implies 
\[
\|W\|_{\widetilde{X}_{\omega_1,\omega_2,c}}^2\le \liminf_{n\rightarrow \infty}\|W_n\|_{\widetilde{X}_{\omega_1,\omega_2,c}}^2=6\widetilde{\mathfrak{m}}_{\omega_1,\omega_2,c}. 
\]
The contraposition of Proposition~\ref{pos_pr_Neh} {\rm (ii)} shows that $\widetilde{K}_{\omega_1,\omega_2,c}(W)\ge 0$. 
Consequently, we obtain $\|W\|_{\widetilde{X}_{\omega_1,\omega_2,c}}^2=6\widetilde{\mathfrak{m}}_{\omega_1,\omega_2,c}$
, $\widetilde{K}_{\omega_1,\omega_2,c}(W)=0$, 
and also $\widetilde{S}_{\omega_1,\omega_2,c}(W)=\widetilde{\mathfrak{m}}_{\omega_1,\omega_2,c}$ 
by {\rm (\ref{S_K_N_rel})}. 
Finally, by using {\rm (\ref{norm_func_conv})}, 
we obtain the strong convergence $\|W_n-W\|_{\widetilde{X}_{\omega_1,\omega_2,c}}\rightarrow 0$. 
\end{proof}

\section{Global well-posedness}
In this section, we prove the global well-posedness results. 
The proof of Theorem~\ref{GWP_1} is based on \cite{NP21}. 
The proof of Theorem~\ref{GWP_2} is based on \cite{FHI24} and \cite{Lipre}. 
\subsection{Proof of Theorem~\ref{GWP_1}}
Throughout this subsection, we define
\[
\begin{split}
&S_{1}(U):=S_{1,0,0}(U),\ \ S_2(U):=S_{0,1,0}(U),\\
&K_{1}(U):=K_{1,0,0}(U),\ \ K_2(U):=K_{0,1,0}(U). 
\end{split}
\]
Namely, 
\[
S_j(U)=L(U)+N(U)+Q_j(U),\ \ K_j(U)=2L(U)+3N(U)+2Q_j(U). 
\]
We also denote the ground state sets 
$\mathcal{M}_{1,0,0}$ and $\mathcal{M}_{0,1,0}$ 
by $\mathcal{M}_1$ and $\mathcal{M}_2$ respectively. 
Namely, 
\[
\mathcal{M}_j:=\{\Phi \in \mathcal{H}^1_j|\ \Phi \ne 0,\ S_j(\Phi)=\mathfrak{m}_j,\ K_j(\Phi)=0\}, 
\]
where
\[
\begin{split}
&\mathcal{H}^1_1(\R^d):=H^1(\R^d)\times \dot{H}^1(\R^d)\times H^1(\R^d),\ \ 
\mathcal{H}^1_2(\R^d):=\dot{H}^1(\R^d)\times H^1(\R^d)\times H^1(\R^d),\\
&\mathfrak{m}_j:=\inf\{S_j(\Phi)|\ \Phi\in \mathcal{H}_j^1,\ \Phi\ne 0,\ K_j(\Phi)=0\}. 
\end{split}
\]
We first mention about the relation between 
the ground state and the Gagliardo--Nirenberg type inequality. 
Note that
\[
\begin{split}
\int_{\R^d}|u_1u_2u_3|dx
&\le \|u_1\|_{L^{\frac{4d}{d+2}}}\|u_2\|_{L^{\frac{2d}{d-2}}}\|u_3\|_{L^{\frac{4d}{d+2}}}\\
&\le 
C\|u_1\|_{L^2}^{\frac{3}{2}-\frac{d}{4}}\|\nabla u_1\|_{L^2}^{\frac{d}{4}-\frac{1}{2}}\|\nabla u_2\|_{L^2}\|u_3\|_{L^2}^{\frac{3}{2}-\frac{d}{4}}\|\nabla u_3\|_{L^2}^{\frac{d}{4}-\frac{1}{2}}
\end{split}
\]
holds by the H\"older inequality and the Gagliardo--Nirenberg inequality. 
This implies
\[
|N(U)|\le C Q_1(U)^{\frac{3}{2}-\frac{d}{4}}L(U)^{\frac{d}{4}}. 
\]
By the same argument, we also have
\[
|N(U)|\le C Q_2(U)^{\frac{3}{2}-\frac{d}{4}}L(U)^{\frac{d}{4}}. 
\]
We define the spaces $\mathcal{P}_j^{\pm}$ $(j=1,2)$ by
\[
\begin{split}
\mathcal{P}_j^{\pm}&:=\{U \in \mathcal{H}^1_j(\R^d)|\ \pm N(U)<0\},
\end{split}
\]
where the double-signs correspond. 
We also define the functional $J_j$ on $\mathcal{P}_j^{+}\cup \mathcal{P}_j^{-}$ by
\[
J_j(U):=\frac{Q_j(U)^{\frac{3}{2}-\frac{d}{4}}L(U)^{\frac{d}{4}}}{|N(U)|}.
\]
We consider the minimizing problem
\begin{equation}\label{op_gag_ni_eq}
C^{\rm{op}}_j:=\inf_{U\in \mathcal{P}_j^{+}\cup \mathcal{P}_j^{-}}J_j(U). 
\end{equation}
\begin{rem}
    Note that $Q_j(-U)=Q_j(U)$, $L(-U)=L(U)$, $N(-U)=-N(U)$, and $J_j(-U)=J_j(U)$. 
    Therefore, it holds that 
    \[
    C_j^{\rm op}=\displaystyle \inf_{U\in \mathcal{P}_j^+}J_j(U)
    \]
    and 
    we can assume $N(U)<0$ without loss of generality. 
\end{rem}
\begin{rem}
    In \cite{NP21}, the minimizing problem for
    \[
    J(U):=\frac{Q(U)^{\frac{3}{2}-\frac{d}{4}}L(U)^{\frac{d}{4}}}{|N(U)|}
    \]
    is considered, where $Q(U)=Q_1(U)+Q_2(U)$. 
    By considering the minimizing problems for 
    $J_1(U)$ and $J_2(U)$ instead of $J(U)$, 
    we can improve the results {\rm (iii)} and {\rm (iv)} in Theorem~\ref{Theorem_NP}. 
\end{rem}
\begin{prop}\label{gs_op_gag}
    Let $\sigma_1,\sigma_2,\sigma_3>0$ and $3\le d\le 5$. 
    For $j=1,2$, any ground state $\Phi\in \mathcal{M}_{j}$ 
    attains the minimum value of the problem {\rm (\ref{op_gag_ni_eq})}. 
    Namely, $J_j(\Phi)=C_j^{\rm op}$ holds. 
\end{prop}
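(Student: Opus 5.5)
The plan is to compare the scale-invariant functional $J_j$ with the Nehari minimization through a two-parameter family of rescalings, in the style of Weinstein. Fix $j=1$; the case $j=2$ is symmetric. For $U\in\mathcal{P}_1^+$ and $\lambda,\alpha>0$, set $U^{\lambda,\alpha}(x):=\alpha U(\lambda x)$. Then
\[
L(U^{\lambda,\alpha})=\alpha^2\lambda^{2-d}L(U),\quad N(U^{\lambda,\alpha})=\alpha^3\lambda^{-d}N(U),\quad Q_1(U^{\lambda,\alpha})=\alpha^2\lambda^{-d}Q_1(U).
\]
From these one checks that $J_1(U^{\lambda,\alpha})=J_1(U)$. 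So $J_1$ is invariant under this two-parameter family, and we may use it to normalize.

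First, for a ground state $\Phi\in\mathcal{M}_1$ we have two identities: $K_1(\Phi)=0$, and the Pohozaev identity $2L(\Phi)+\frac d2N(\Phi)=0$ from Proposition~\ref{Pohoz_id}~(ii) with $(\omega_1,\omega_2)=(1,0)$, $c=0$. The membership $\Phi\in X_{1,0,0}=\mathcal{H}^1_1$ holds since $\gamma_2=0$ and $\gamma_1,\gamma_3>0$. Combining the two identities expresses $Q_1(\Phi)$ and $N(\Phi)$ through $L(\Phi)$:
\[
N(\Phi)=-\frac4dL(\Phi),\qquad Q_1(\Phi)=\frac{6-d}{d}L(\Phi).
\]
Hence $\mathfrak m_1=S_1(\Phi)=\frac{2}{d}L(\Phi)$, and
\[
J_1(\Phi)=\frac d4\left(\frac{6-d}{d}\right)^{\frac32-\frac d4}L(\Phi)^{\frac12-\frac d4}.
\]
In particular $N(\Phi)<0$, so $\Phi\in\mathcal{P}_1^+$, and $J_1(\Phi)$ is an explicit function of $\mathfrak m_1$ alone. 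Note that $6-d>0$ for $d\le5$.

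Second, we show $J_1(U)\ge J_1(\Phi)$ for every $U\in\mathcal{P}_1^+$. Given such a $U$, we use $\lambda$ and $\alpha$ to produce $\tilde U=U^{\lambda,\alpha}$ satisfying both $K_1(\tilde U)=0$ and the Pohozaev-type relation $2L(\tilde U)+\frac d2N(\tilde U)=0$. The scalings give two monomial equations in $(\alpha,\lambda)$ with positive coefficients, since $N(U)<0$, $L(U)>0$ and $Q_1(U)>0$. The Pohozaev relation fixes $\alpha\lambda^{-2}$. Then $K_1=0$ becomes $\lambda^{-d}\alpha^2\big[(2-\tfrac{12}{d})L(U)\lambda^2+2Q_1(U)\big]=0$. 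Since $2-\tfrac{12}{d}<0$ for $d\le5$, this has a solution. We need $L(U)>0$, which holds because $N(U)\neq0$ forces $U\ne0$ with nonzero gradient parts.

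For this $\tilde U$ the same algebra as in the first step gives $S_1(\tilde U)=\frac2dL(\tilde U)$ and $J_1(\tilde U)=f(L(\tilde U))$, where $f(s)=\frac d4\bigl(\frac{6-d}{d}\bigr)^{\frac32-\frac d4}s^{\frac12-\frac d4}$. Since $\tilde U\ne0$ and $K_1(\tilde U)=0$, we get $L(\tilde U)=\frac d2S_1(\tilde U)\ge\frac d2\mathfrak m_1=L(\Phi)$. The function $f$ is decreasing because $\frac12-\frac d4<0$ for $d\ge3$. Therefore $J_1(U)=J_1(\tilde U)\le J_1(\Phi)$.

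This inequality points the wrong way, so I would reverse the normalization. Only the Nehari constraint should be imposed, since that is the only constraint the minimization $\mathfrak m_1$ uses. The Pohozaev relation is a consequence of criticality and should not be imposed. With only $K_1=0$, the $\lambda$-freedom remains. I would rewrite $\mathfrak m_1$ as a minimization over the Nehari manifold, expressing $S_1=\frac16\|\cdot\|^2$, with the family $\lambda\mapsto U^{\lambda,\alpha(\lambda)}$ staying on $K_1=0$. Minimizing $S_1$ along this family over $\lambda$ gives a lower bound of the form $\mathfrak m_1\le g(J_1(U))$ with $g$ increasing. Equality holds at $\Phi$, because the Pohozaev relation is exactly the stationarity condition in $\lambda$. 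Hence $J_1(U)\ge g^{-1}(\mathfrak m_1)=J_1(\Phi)$.

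The main obstacle is this optimization in $\lambda$. The task is to verify that the critical point in $\lambda$ is a minimum of $S_1$ along the Nehari curve, which requires the sign constraints $3\le d\le5$, and to confirm that the resulting formula matches $J_1(\Phi)$ from the first step. A further point to check is that the rescaled function lies in $\mathcal{H}^1_1$: the $u_2$ component only needs to be in $\dot H^1$, and this is preserved by the scaling.
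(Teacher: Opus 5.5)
Your first argument is already a complete proof; what derails it is an exponent slip. From $Q_1(\Phi)=\frac{6-d}{d}L(\Phi)$ and $|N(\Phi)|=\frac4dL(\Phi)$ you get
\[
J_1(\Phi)=\frac{Q_1(\Phi)^{\frac32-\frac d4}L(\Phi)^{\frac d4}}{|N(\Phi)|}=\frac d4\Bigl(\frac{6-d}{d}\Bigr)^{\frac32-\frac d4}L(\Phi)^{\frac12},
\]
not $L(\Phi)^{\frac12-\frac d4}$. You dropped the factor $L^{\frac d4}$ in the numerator. The corrected formula agrees with the paper's $J_j(\Phi)=\frac14d^{\frac d4}(6-d)^{1-\frac d4}Q_j(\Phi)^{\frac12}$.

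So $f(s)=\frac d4(\frac{6-d}{d})^{\frac32-\frac d4}s^{\frac12}$ is \emph{increasing}. Your second step, unchanged, then gives $J_1(U)=J_1(\tilde U)=f(L(\tilde U))\ge f(L(\Phi))=J_1(\Phi)$, using $L(\tilde U)=\frac d2S_1(\tilde U)\ge\frac d2\mathfrak m_1=L(\Phi)$. As written, however, the proposal derives the false inequality $J_1(U)\le J_1(\Phi)$ and then misdiagnoses the cause. Imposing the Pohozaev relation on $\tilde U$ is legitimate: you achieve it by choosing the scaling parameters. The comparison with $\mathfrak m_1$ needs only $\tilde U\neq0$ and $K_1(\tilde U)=0$.

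Your replacement plan also works, and its ``main obstacle'' is elementary. On the Nehari curve, $\alpha(\lambda)=\frac{2(\lambda^2L(U)+Q_1(U))}{3|N(U)|}$ and $S_1=\frac12|N|$, so
\[
S_1\bigl(U^{\lambda,\alpha(\lambda)}\bigr)=\frac{4}{27}\,\frac{\bigl(\lambda^2L(U)+Q_1(U)\bigr)^3}{\lambda^{d}\,|N(U)|^2}.
\]
For $d<6$ this tends to $+\infty$ as $\lambda\to0$ and as $\lambda\to\infty$. It has a single critical point $\lambda_*^2=\frac{dQ_1(U)}{(6-d)L(U)}$, where it equals $\kappa_dJ_1(U)^2$ with $\kappa_d=32\,d^{-\frac d2}(6-d)^{\frac d2-3}$. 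Hence $\mathfrak m_1\le\kappa_dJ_1(U)^2$ on $\mathcal P_1^+$.

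The curve built from $\Phi$ passes through $\Phi$ at $\lambda=1$, because $K_1(\Phi)=0$ gives $\alpha(1)=1$. Therefore $\mathfrak m_1\le\kappa_dJ_1(\Phi)^2\le S_1(\Phi)=\mathfrak m_1$. Equality at $\Phi$ thus follows from minimality alone, without the Pohozaev identity. Note also that $U^{\lambda_*,\alpha(\lambda_*)}$ satisfies $Q_1=\frac{6-d}{d}L$, so it is exactly your Pohozaev-normalized $\tilde U$. Your two plans are the same argument.

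The paper proceeds differently. It rescales $U$ so that $L$ and $Q_j$ match those of $\Phi$. It then uses Lemma~\ref{SJ_eq_lemm}, a radial projection onto the Nehari manifold, to get $|N(\tilde U)|\le|N(\Phi)|$, also without Pohozaev. Your corrected route additionally yields the explicit identity $\mathfrak m_1=\kappa_d\,(C_1^{\rm op})^2$.
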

To prove Proposition~\ref{gs_op_gag}, we give the following lemma. 
\begin{lemm}\label{SJ_eq_lemm}
Let $j\in \{1,2\}$ and $\Phi \in \mathcal{M}_j$. 
If $U\in \mathcal{P}_j^+$ satisfies $L(U)+Q_j(U)=L(\Phi)+Q_j(\Phi)$, 
then we have
\[
|N(U)|\le |N(\Phi)|. 
\]
\end{lemm}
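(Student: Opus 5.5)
We argue by contradiction, using the scaling in the Nehari direction together with Proposition~\ref{pos_pr_Neh}~(ii) (in the form of Remark~\ref{K_m_rel_U}) applied with $(\omega_1,\omega_2,c)=(1,0,0)$ or $(0,1,0)$. These parameters fall under condition (B), where $\widetilde{X}=\mathcal{H}^1_j$. Fix $j$, $\Phi\in\mathcal{M}_j$ and $U\in\mathcal{P}_j^+$ with $L(U)+Q_j(U)=L(\Phi)+Q_j(\Phi)$.

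First record the facts about $\Phi$. Since $K_j(\Phi)=0$, we have $2L(\Phi)+2Q_j(\Phi)=-3N(\Phi)$. Remark~\ref{X_norm_rep_LQP} together with Remark~\ref{rel_m_tilm} gives
\[
\mathfrak{m}_j=\tfrac16\|\Phi\|^2=\tfrac13\bigl(L(\Phi)+Q_j(\Phi)\bigr).
\]
Hence $|N(\Phi)|=-N(\Phi)=\tfrac23(L(\Phi)+Q_j(\Phi))=2\mathfrak{m}_j$, and in particular $\Phi\in\mathcal{P}_j^+$.

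Now suppose, for contradiction, that $|N(U)|>|N(\Phi)|$, that is, $-N(U)>-N(\Phi)$. Then
\[
K_j(U)=2\bigl(L(U)+Q_j(U)\bigr)+3N(U)<2\bigl(L(\Phi)+Q_j(\Phi)\bigr)+3N(\Phi)=K_j(\Phi)=0.
\]
Since $U\neq0$ and $U\in\mathcal{H}^1_j$, Remark~\ref{K_m_rel_U} (the strict version of Proposition~\ref{pos_pr_Neh}~(ii)) yields $L(U)+Q_j(U)>3\mathfrak{m}_j$. But $L(U)+Q_j(U)=L(\Phi)+Q_j(\Phi)=3\mathfrak{m}_j$, a contradiction. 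Therefore $|N(U)|\le|N(\Phi)|$.

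The only point needing care is checking that Proposition~\ref{pos_pr_Neh} indeed applies with $(\omega_1,\omega_2,c)=(1,0,0)$ in the zero-mass space $\mathcal{H}^1_1=H^1\times\dot H^1\times H^1$ for $3\le d\le5$. Here $\omega_2=0$ while $\omega_1,\omega_3>0$ and $c=0$, so this is exactly case (B), and the identification $\|\Phi\|_{X}^2=2L+2Q_j$ holds since the $\omega_2$ term vanishes. With that in place, the argument above is a direct comparison and involves no compactness.
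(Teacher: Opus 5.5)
Your proof is correct and is essentially the paper's argument: the paper rescales $U$ directly onto the Nehari manifold via $\lambda_0=|N(\Phi)|/|N(U)|$ and compares $S_j(\Phi)\le S_j(\lambda_0U)$. You instead route exactly this rescaling through Proposition~\ref{pos_pr_Neh}~(ii) (in the form of Remark~\ref{K_m_rel_U}) inside a contradiction argument; the only cost is that you must verify that proposition's hypotheses (case (B), hence $3\le d\le 5$), which the paper's direct computation does not need, and this is harmless since the lemma is only used for $3\le d\le 5$.
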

\begin{proof}
Note that 
\[
2L(\Phi)+2Q_j(\Phi)=3|N(\Phi)|
\]
holds by $K_j(\Phi)=0$. 
   For $\lambda >0$, we have
   \[
   K_j(\lambda U)=2\lambda^2L(U)+2\lambda^2Q_j(U)+3\lambda^3N(U). 
   \]
   Therefore, if we put
   \[
   \lambda_0:=\frac{2L(U)+2Q_j(U)}{3|N(U)|}=\frac{|N(\Phi)|}{|N(U)|}, 
   \]
   it holds that $K_j(\lambda_0U)=0$. 
   Because $\Phi$ is a minimizer, we obtain $S_j(\Phi)\le S_j(\lambda_0U)$. 
   On the other hand, by $K_j(\Phi)=0$ and $K_j(\lambda_0U)=0$, we have
   \[
   \begin{split}
   S_j(\Phi)&=\frac{1}{2}K_j(\Phi)-\frac{1}{3}N(\Phi)=\frac{1}{3}|N(\Phi)|,\\
   S_j(\lambda_0 U)&=\frac{1}{2}K_j(\lambda_0 U)-\frac{1}{3}N(\lambda_0 U)=\frac{\lambda_0^3}{3}|N(U)|. 
   \end{split}
   \]
   Consequently, we obtain $|N(\Phi)|\le \lambda_0^3|N(U)|$. 
   This is equivalent to $|N(U)|\le |N(\Phi)|$. 
\end{proof}
The following lemma will be used to treat the case $d=5$.
\begin{lemm}[{\rm Lemma~5.2} in {\rm \cite{Beg02}}, {\rm Lemma~3.1} in {\rm \cite{Pas15}}]\label{poly_cont_prop}
    Let $I\subset \R$ be an open interval containing $0$. 
    Let $q>1$, $b>0$, and $a\in \R$ be constants. 
    Put $\gamma =(bq)^{-\frac{1}{q-1}}$ and define a function $f$ 
    by $f(r)=a-r+br^q$ for $r\ge 0$. 
    Let $G$ be a continuous nonnegative function on $I$ 
    satisfying $f\circ G(t)\ge 0$ for any $t\in I$. 
    Assume $a<(1-\frac{1}{q})\gamma$. 
    If $G(0)<\gamma$, then $G(t)<\gamma$ holds for any $t\in I$. 
\end{lemm}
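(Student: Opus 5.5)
The plan is to use a continuity (connectedness) argument: first show that the value $\gamma$ is forbidden for $G$, then use the intermediate value theorem on the interval $I$.

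\emph{Step 1: $\gamma$ is the minimum point of $f$.} For $r>0$ we have $f'(r)=-1+bqr^{q-1}$. Since $q>1$ and $b>0$, the map $r\mapsto bqr^{q-1}$ is strictly increasing and equals $1$ exactly at $r=(bq)^{-\frac{1}{q-1}}=\gamma$. Hence $f$ is strictly decreasing on $[0,\gamma]$, strictly increasing on $[\gamma,\infty)$, and attains its minimum over $[0,\infty)$ at $r=\gamma$.

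\emph{Step 2: compute the minimum value.} From $\gamma^{q-1}=(bq)^{-1}$ we get $b\gamma^q=\gamma/q$, and therefore
\[
f(\gamma)=a-\gamma+\frac{\gamma}{q}=a-\Bigl(1-\frac{1}{q}\Bigr)\gamma<0
\]
by the assumption $a<(1-\frac{1}{q})\gamma$. By continuity of $f$, the set $\{r\ge 0\,|\,f(r)<0\}$ is an open neighbourhood of $\gamma$ in $[0,\infty)$. The only fact we need from it is that $f(\gamma)<0$.

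\emph{Step 3: connectedness.} The hypothesis $f(G(t))\ge 0$ for all $t\in I$, together with $f(\gamma)<0$, gives $G(t)\neq\gamma$ for every $t\in I$. Suppose, for contradiction, that $G(t_1)\ge\gamma$ for some $t_1\in I$. Since $G(t_1)\ne\gamma$, in fact $G(t_1)>\gamma$. Because $I$ is an interval containing $0$ and $t_1$, and $G$ is continuous with $G(0)<\gamma<G(t_1)$, the intermediate value theorem yields some $t_*$ between $0$ and $t_1$ with $G(t_*)=\gamma$. This contradicts Step 3's first observation, so $G(t)<\gamma$ on all of $I$.

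There is no real obstacle here. The only step requiring care is the algebraic identity $b\gamma^q=\gamma/q$, which shows that the threshold condition on $a$ is exactly the condition $f(\gamma)<0$. The nonnegativity of $G$ only ensures that $f\circ G$ is defined.
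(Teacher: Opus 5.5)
Your proof is correct: $b\gamma^q=\gamma/q$ gives $f(\gamma)=a-\bigl(1-\frac{1}{q}\bigr)\gamma<0$, so $G$ can never equal $\gamma$, and the intermediate value theorem on the segment between $0$ and any $t\in I$ then rules out $G(t)>\gamma$. The paper gives no proof of its own and only cites \cite{Beg02, Pas15}; your forbidden-value-plus-continuity argument is the standard proof of this fact, and your Step~1 on the monotonicity of $f$ is not needed.
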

\begin{proof}[Proof of Proposition~\ref{gs_op_gag}]
Let $\Phi \in \mathcal{M}_j$. 
    For any $U \in \mathcal{P}_j^+$, 
    we put
    \[
    \widetilde{U}(x):=aU(bx),\ \ 
    a:=\left(\frac{Q_j(U)}{Q_j(\Phi)}\right)^{\frac{d-2}{4}}\left(\frac{L(\Phi)}{L(U)}\right)^{\frac{d}{4}},\ \ b:=\left(\frac{Q_j(U)L(\Phi)}{Q_j(\Phi)L(U)}\right)^{\frac{1}{2}}. 
    \]
    Then, we have
    \begin{equation}\label{J_inv_til}
    L(\widetilde{U})=a^2b^{2-d}L(U)=L(\Phi),\ \ Q_j(\widetilde{U})=a^2b^{-d}Q_j(U)=Q_j(\Phi)
    \end{equation}
    and
    \[
    N(\widetilde{U})=a^3b^{-d}N(U)=\left(\frac{Q_j(\Phi)}{Q_j(U)}\right)^{\frac{6-d}{4}}\left(\frac{L(\Phi)}{L(U)}\right)^{\frac{d}{4}}N(U). 
    \]
    Therefore, $L(\widetilde{U})+Q_j(\widetilde{U})=L(\Phi)+Q_j(\Phi)$ holds by {\rm (\ref{J_inv_til})}, and we obtain
    \[
    |N(\widetilde{U})|\le |N(\Phi)|
    \]
    by Lemma~\ref{SJ_eq_lemm}. 
    It is equivalent to
    \[
    J_j(\Phi)\le J_j(U). 
    \]
\end{proof}
\begin{proof}[Proof of {\rm Theorem~\ref{GWP_1}}]
Let $j\in \{1,2\}$ and $\Phi \in \mathcal{M}_j$. 
We first note that
\[
L(\Phi)=\frac{d}{6-d}Q_j(\Phi),\ \ N(\Phi)=-\frac{4}{6-d}Q_j(\Phi). 
\]
These identities follow from Proposition~\ref{Pohoz_id}\ {\rm (ii)} and $K_j(\Phi)=0$. 
Therefore, we get
    \[
    J_j(\Phi)=\frac{1}{4}d^{\frac{d}{4}}(6-d)^{1-\frac{d}{4}}Q_j(\Phi)^{\frac{1}{2}}. 
    \]
Furthermore, by the definition of $C_j^{\rm op}$ and Proposition~\ref{gs_op_gag}, we have
\[
|N(U)|\le \frac{1}{C_j^{\rm op}}Q_j(U)^{\frac{3}{2}-\frac{d}{4}}L(U)^{\frac{d}{4}}
=\frac{4}{d^{\frac{d}{4}}(6-d)^{1-\frac{d}{4}}}\cdot \frac{Q_j(U)^{\frac{3}{2}-\frac{d}{4}}L(U)^{\frac{d}{4}}}{Q_j(\Phi)^{\frac{1}{2}}}. 
\]
Therefore, when $d=4$, we obtain
\[
L(U)=E(U)-N(U)\le E(U_0)+\frac{Q_j(U_0)^{\frac{1}{2}}}{Q_j(\Phi)^{\frac{1}{2}}}L(U).
\]
This implies the a priori bound
\[
L(U)\le \frac{Q_j(\Phi)^{\frac{1}{2}}}{Q_j(\Phi)^{\frac{1}{2}}-Q_j(U_0)^{\frac{1}{2}}}E(U_0)
\]
if $Q_j(U_0)<Q_j(\Phi)$ holds, and local solution 
to {\rm (\ref{dnls2})} can be extended globally in time. 

On the other hand, when $d=5$, we obtain
\[
L(U)\le E(U_0)+\frac{4}{5^{\frac{5}{4}}}\frac{Q_j(U_0)^{\frac{1}{4}}}{Q_j(\Phi)^{\frac{1}{2}}}L(U)^{\frac{5}{4}}. 
\]
We put
\[
a:=E(U_0),\ b:=\frac{4}{5^{\frac{5}{4}}}\frac{Q_j(U_0)^{\frac{1}{4}}}{Q_j(\Phi)^{\frac{1}{2}}},\ 
q:=\frac{5}{4},\ 
f(r):=a-r+br^{q},\ \ G(t):=L(U(t)). 
\]
Then, $f\circ G(t)\ge 0$ and $G(t)\ge 0$ holds for any $t\in I_{\max}$. 
Furthermore, if we put $\gamma :=(bq)^{-\frac{1}{q-1}}$, then
$Q_j(U_0)E(U_0)<Q_j(\Phi)E(\Phi)$ is equivalent to $a<(1-\frac{1}{q})\gamma$, 
and $Q_j(U_0)L(U_0)<Q_j(\Phi)L(\Phi)$ is equivalent to $G(0)<\gamma$ 
because $5E(\Phi)=L(\Phi)$ and $L(\Phi)=5Q_j(\Phi)$ hold when $d=5$. 
Therefore, we obtain the a priori bound $L(U)<\gamma$ by Lemma~\ref{poly_cont_prop}, 
and the local solution 
to {\rm (\ref{dnls2})} can be extended globally in time. 
\end{proof}
\subsection{Proof of Theorem~\ref{GWP_2}}
We define the spaces $\mathcal{A}^{\pm}_{\omega_1,\omega_2,c}$ by
\[
\begin{split}
\mathcal{A}^{+}_{\omega_1,\omega_2,c}
&:=\{U \in \mathcal{H}^1(\R^d)|\ U \ne (0,0,0),\ S_{\omega_1,\omega_2,c}(U)\le \mathfrak{m}_{\omega_1,\omega_2,c},\ K_{\omega_1,\omega_2,c}(U)\ge 0\},\\
\mathcal{A}^{-}_{\omega_1,\omega_2,c}
&:=\{U \in \mathcal{H}^1(\R^d)|\ U \ne (0,0,0),\ S_{\omega_1,\omega_2,c}(U)\le \mathfrak{m}_{\omega_1,\omega_2,c},\ K_{\omega_1,\omega_2,c}(U)<0\}.
\end{split}
\]
\begin{prop}\label{Apm_inv_flow}
    Let $1\le d\le 5$, $\sigma_1,\sigma_2,\sigma_3>0$, $c\in \R^d$, and $T>0$. 
    Assume one of the conditions {\rm (A)}, {\rm (B)} in Theorem~\ref{ex_gs_1}, and ${\rm (C)'}$ in {\rm Proposition~\ref{pos_pr_Neh}} holds. 
    Then, $\mathcal{A}^{\pm}_{\omega_1,\omega_2,c}$ are invariant under the flow of {\rm (\ref{dnls2})}. 
    More precisely, if $U_0\in \mathcal{A}_{\omega_1,\omega_2,c}^{+}$\ (resp. $\in \mathcal{A}_{\omega_1,\omega_2,c}^{-}$) holds, 
    then the solution $U(t)$ to {\rm (\ref{dnls2})} with $U(0)=U_0$ on $[0,T]$, which is constructed in {\rm Theorem~\ref{Theorem_NP}\ (i)}, satisfies 
    $U(t)\in \mathcal{A}_{\omega_1,\omega_2,c}^{+}$\ (resp. $\in \mathcal{A}_{\omega_1,\omega_2,c}^{-}$) for all $t\in [0,T]$.  
\end{prop}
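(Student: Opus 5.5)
The plan is the standard invariance argument based on the conservation laws and the continuity of $t\mapsto K_{\omega_1,\omega_2,c}(U(t))$. First, since $Q_1$, $Q_2$, $E$ and $P$ are conserved along the flow constructed in Theorem~\ref{Theorem_NP}~(i) (see the remark following that theorem; conservation of $P$ is obtained in the same way), the action
\[
S_{\omega_1,\omega_2,c}(U(t))=E(U(t))+\omega_1Q_1(U(t))+\omega_2Q_2(U(t))+c\cdot P(U(t))
\]
is constant in $t$. Hence $S_{\omega_1,\omega_2,c}(U(t))\le \mathfrak{m}_{\omega_1,\omega_2,c}$ for all $t\in[0,T]$. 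Moreover $U(t)\ne(0,0,0)$ for every $t$: if $U(t_0)=0$, uniqueness would give $U\equiv 0$, contradicting $U_0\neq 0$. Alternatively, $Q_1+Q_2$ is conserved and positive.

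It therefore suffices to show that the sign of $K_{\omega_1,\omega_2,c}(U(t))$ cannot change. Since $U\in C([0,T];\mathcal{H}^1(\R^d))$ and $\|\cdot\|_{X_{\omega_1,\omega_2,c}}$ is dominated by $\|\cdot\|_{\mathcal{H}^1}$ (Remark~\ref{norm_eq_X_H_rem}), the map $t\mapsto \|U(t)\|_{X_{\omega_1,\omega_2,c}}^2$ is continuous. The nonlinear term $N$ is also continuous on $\mathcal{H}^1$ for $d\le 5$ by H\"older and Sobolev ($H^1\hookrightarrow L^3$). Hence $t\mapsto K_{\omega_1,\omega_2,c}(U(t))$ is continuous. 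Also $\mathcal{H}^1\subset X_{\omega_1,\omega_2,c}$ in all the cases (A), (B), (C)$'$.

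\textbf{The case $\mathcal{A}^-$.} Suppose $K_{\omega_1,\omega_2,c}(U_0)<0$ and that $K_{\omega_1,\omega_2,c}(U(t))$ changes sign. By continuity there is a first time $t_0\in(0,T]$ with $K_{\omega_1,\omega_2,c}(U(t_0))=0$, and $U(t_0)\ne 0$. The definition of $\mathfrak{m}_{\omega_1,\omega_2,c}$ then gives $S_{\omega_1,\omega_2,c}(U(t_0))\ge\mathfrak{m}_{\omega_1,\omega_2,c}$. Combined with conservation, this forces
\[
S_{\omega_1,\omega_2,c}(U(t_0))=S_{\omega_1,\omega_2,c}(U_0)=\mathfrak{m}_{\omega_1,\omega_2,c}.
\]
On the other hand, Remark~\ref{K_m_rel_U}, i.e.\ Proposition~\ref{pos_pr_Neh}~(ii), applied to $U_0$ with $K_{\omega_1,\omega_2,c}(U_0)<0$ gives $\|U_0\|_{X_{\omega_1,\omega_2,c}}^2>6\mathfrak{m}_{\omega_1,\omega_2,c}$. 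Using (\ref{S_K_N_rel}) in the form $S=\frac13K+\frac16\|\cdot\|_X^2$, we get
\[
\mathfrak{m}_{\omega_1,\omega_2,c}\ge S_{\omega_1,\omega_2,c}(U_0)=\tfrac13K_{\omega_1,\omega_2,c}(U_0)+\tfrac16\|U_0\|_{X_{\omega_1,\omega_2,c}}^2 .
\]
This inequality alone is not immediately contradictory, because the $K$ term is negative. The contradiction has to come from the continuity argument at $t_0$. At $t_0$ we have $\|U(t_0)\|_X^2=6S(U(t_0))=6\mathfrak{m}_{\omega_1,\omega_2,c}$. For $t<t_0$ close to $t_0$ we have $K_{\omega_1,\omega_2,c}(U(t))<0$, hence $\|U(t)\|_X^2>6\mathfrak{m}_{\omega_1,\omega_2,c}$, and this is consistent in the limit. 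So I expect this step to be the main obstacle. The cleanest route I would try is a scaling perturbation. The Nehari equality $K_{\omega_1,\omega_2,c}(U(t_0))=0$ together with $S_{\omega_1,\omega_2,c}(U(t_0))=\mathfrak{m}_{\omega_1,\omega_2,c}$ means $U(t_0)\in\mathcal{M}_{\omega_1,\omega_2,c}$, so $U(t_0)$ is a critical point by Proposition~\ref{GM_subset_rel}, i.e.\ a weak solution of (\ref{ellip_sys_tra}). Then the solution through $U(t_0)$ is the explicit solitary wave (\ref{2req_para_sol}), on which $K\equiv0$ for all times. Uniqueness, run backward in time, then gives $K_{\omega_1,\omega_2,c}(U_0)=0$, contradicting $K_{\omega_1,\omega_2,c}(U_0)<0$. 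Here one should check that the profile lies in $\mathcal{H}^1$ so that the uniqueness of Theorem~\ref{Theorem_NP}~(i) applies; this holds since $U(t_0)\in\mathcal{H}^1$.

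\textbf{The case $\mathcal{A}^+$.} The same argument works. If $K_{\omega_1,\omega_2,c}(U(t))$ becomes negative at some time, take the last time $t_0$ before that with $K_{\omega_1,\omega_2,c}(U(t_0))=0$. If $U(t_0)$ attains $\mathfrak{m}_{\omega_1,\omega_2,c}$, the solitary-wave/uniqueness argument applies exactly as above; note that the nonzero condition on $U(t_0)$ holds automatically. The solitary wave keeps $K=0$ for all times, so $K$ cannot become negative afterwards, which contradicts the choice of $t_0$. This shows that $K_{\omega_1,\omega_2,c}(U(t))\ge0$ for all $t\in[0,T]$, so $U(t)\in\mathcal{A}^+_{\omega_1,\omega_2,c}$.
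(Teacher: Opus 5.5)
Your argument is correct and is essentially the paper's: conservation of $S_{\omega_1,\omega_2,c}$, continuity of $t\mapsto K_{\omega_1,\omega_2,c}(U(t))$ to find $t_0$ with $K_{\omega_1,\omega_2,c}(U(t_0))=0$, minimality forcing $U(t_0)\in\mathcal{M}_{\omega_1,\omega_2,c}=\mathcal{G}_{\omega_1,\omega_2,c}$, and uniqueness against the explicit solitary wave (\ref{2req_para_sol}), along which $K_{\omega_1,\omega_2,c}\equiv 0$. The differences are minor: in the $\mathcal{A}^-_{\omega_1,\omega_2,c}$ case you run uniqueness backward from $t_0$, which is what that case actually needs (the paper writes out only the forward argument for $\mathcal{A}^+_{\omega_1,\omega_2,c}$), and your aside about a ``scaling perturbation'' is never used and can be deleted.
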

\begin{proof}
    We only prove the invariance for $\mathcal{A}_{\omega_1,\omega_2,c}^{+}$ 
    because we can treat $\mathcal{A}_{\omega_1,\omega_2,c}^{-}$ by the same way. 
    Since $S_{\omega_1,\omega_2,c}$ is a conserved quantity, 
    we have 
    \begin{equation}\label{S_cons_sol_m}
    S_{\omega_1,\omega_2,c}(U(t))=S_{\omega_1,\omega_2,c}(U_0)\le \mathfrak{m}_{\omega_1,\omega_2,c}
    \end{equation}
    for any $t\in [0,T]$. 
    Therefore, it is enough to show that $K_{\omega_1,\omega_2,c}(U(t))\ge 0$ 
    for any $t\in [0,T]$. 
    Assume that $K_{\omega_1,\omega_2,c}(U(\tau))< 0$ for some $\tau \in [0,T]$. 
    Then, there exists $\tau_0\in [0,\tau]$ such that $K_{\omega_1,\omega_2,c}(U(\tau_0))=0$ by the continuity of $K\circ U$ with respect to $t$. 
    This implies $\mathfrak{m}_{\omega_1,\omega_2,c}\le S_{\omega_1,\omega_2,c}(U(\tau_0))$ by the definition of $\mathfrak{m}_{\omega_1,\omega_2,c}$. 
    Since {\rm (\ref{S_cons_sol_m})} holds for $t=\tau_0$, 
    we obtain $S_{\omega_1,\omega_2,c}(U(\tau_0))=\mathfrak{m}_{\omega_1,\omega_2,c}$. 
    Namely, $U(\tau_0)\in \mathcal{M}_{\omega_1,\omega_2,c}=\mathcal{G}_{\omega_1,\omega_2,c}$ 
    by Proposition~\ref{GM_subset_rel} (see, also Remark~\ref{GM_subset_rel_rem}). 
    Write  $U(\tau_0)=(u_1,u_2,u_3)$ and define
    \[
    W(t,x):=(e^{i\omega_1(t-\tau_0)}u_1,e^{i\omega_2(t-\tau_0)}u_2,e^{i(\omega_1+\omega_2)(t-\tau_0)}u_3)
    (t,x-c(t-\tau_0)).
    \]
    Since $U(\tau_0)$ is a weak solution to the stationary problem, then $W$ is a solution to {\rm (\ref{dnls2})} with $W(\tau_0)=U(\tau_0)$. 
    By the uniqueness of the solution, 
    we have $W(t)=U(t)$ for any $t\in [\tau_0,T]$. 
    In particular, it holds that $U(\tau)=W(\tau)\in \mathcal{M}_{\omega_1,\omega_2,c}$. 
    This contradicts $K_{\omega_1,\omega_2,c}(U(\tau))<0$. 
\end{proof}
\begin{prop}\label{Ap_Gwp}
    Let $1\le d\le 5$, $\sigma_1,\sigma_2,\sigma_3>0$, and $c\in \R^d$. 
    Assume one of the conditions {\rm (A)}, {\rm (B)} in {\rm Theorem~\ref{ex_gs_1}}, and ${\rm (C)'}$ in {\rm Proposition~\ref{pos_pr_Neh}} holds. 
    If $U_0\in \mathcal{A}_{\omega_1,\omega_2,c}^{+}$ holds, 
    then the local solution $U\in C([0,T_{\max});\mathcal{H}^1(\R^d))$ to {\rm (\ref{dnls2})} with $U(0)=U_0$, 
    which is constructed in {\rm Theorem~\ref{Theorem_NP}\ (i)}, 
    can be extended globally in time, 
    where $T_{\max}$ is the maximal existence time of local solution.  
\end{prop}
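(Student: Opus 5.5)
By Proposition~\ref{Apm_inv_flow}, $U(t)\in \mathcal{A}^{+}_{\omega_1,\omega_2,c}$ for every $t\in[0,T_{\max})$. Hence $K_{\omega_1,\omega_2,c}(U(t))\ge 0$ and $S_{\omega_1,\omega_2,c}(U(t))=S_{\omega_1,\omega_2,c}(U_0)\le \mathfrak{m}_{\omega_1,\omega_2,c}$. The plan is to turn these two facts into a uniform bound on $\|U(t)\|_{\mathcal{H}^1}$. The blow-up alternative of the local theory in Theorem~\ref{Theorem_NP}~(i) then forces $T_{\max}=\infty$.

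First we bound the $X$-norm. By (\ref{S_K_N_rel}) and Remark~\ref{K_m_rel_U},
\[
\frac{1}{6}\|U(t)\|_{X_{\omega_1,\omega_2,c}}^2=S_{\omega_1,\omega_2,c}(U(t))-\frac{1}{3}K_{\omega_1,\omega_2,c}(U(t))\le \mathfrak{m}_{\omega_1,\omega_2,c}.
\]
So $2L(U)+2\omega_1Q_1(U)+2\omega_2Q_2(U)+2c\cdot P(U)\le 6\mathfrak{m}_{\omega_1,\omega_2,c}$ uniformly in $t$, using Remark~\ref{X_norm_rep_LQP}.

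Next we pass from the $X$-norm to the $\mathcal{H}^1$-norm, and this is the main obstacle. Under (A) all $\omega_j-\frac{\sigma_j}{4}|c|^2>0$, so Proposition~\ref{norm_X_H1_equi} gives $\|U(t)\|_{\mathcal{H}^1}^2\lesssim \|U(t)\|_{X}^2\le 6\mathfrak{m}$ directly. Under (B) or (C)$'$ some $\gamma_j=\omega_j-\frac{\sigma_j}{4}|c|^2$ vanishes, and the $X$-norm controls only $\|\nabla(e^{-i\frac{\sigma_j}{2}c\cdot x}u_j)\|_{L^2}$, not $\|u_j\|_{L^2}$. To recover $L^2$ control I will use the conservation laws $Q_1(U(t))=Q_1(U_0)$ and $Q_2(U(t))=Q_2(U_0)$, which bound $\|u_j(t)\|_{L^2}$ for all $j$ since $U_0\in\mathcal{H}^1$. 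Then $\|\nabla u_j\|_{L^2}\le \|\nabla(e^{-i\frac{\sigma_j}{2}c\cdot x}u_j)\|_{L^2}+\frac{\sigma_j}{2}|c|\|u_j\|_{L^2}$ bounds every gradient. Combining these gives
\[
\sup_{t\in[0,T_{\max})}\|U(t)\|_{\mathcal{H}^1}^2\le C\bigl(\mathfrak{m}_{\omega_1,\omega_2,c}+Q_1(U_0)+Q_2(U_0)\bigr),
\]
with $C$ depending only on $\sigma_j$, $c$, $\omega_j$.

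Finally we conclude. The local existence time in Theorem~\ref{Theorem_NP}~(i) depends only on an upper bound for the $\mathcal{H}^1$-norm of the data when $d\le 5$ (the subcritical case). The bound above therefore lets us restart the solution repeatedly with a uniform time step. The invariance from Proposition~\ref{Apm_inv_flow} holds on each $[0,T]$ with $T<T_{\max}$, which is all the estimate needs. Hence $T_{\max}=\infty$. The only delicate point is the zero-mass control, and it is handled by mass conservation rather than by the functional itself.
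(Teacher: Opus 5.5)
Your proposal is correct and follows essentially the same route as the paper: invariance of $\mathcal{A}^{+}_{\omega_1,\omega_2,c}$ gives $\frac{1}{6}\|\Lambda_{-c}U(t)\|_{\widetilde{X}_{\omega_1,\omega_2,c}}^2\le \mathfrak{m}_{\omega_1,\omega_2,c}$, and conservation of $Q_1$ and $Q_2$ then turns this into a uniform bound on $\|\nabla u_j(t)\|_{L^2}$. The only cosmetic difference is that the paper treats all cases at once via $\|\nabla u_j-i\frac{\sigma_j}{2}cu_j\|_{L^2}^2\ge \frac{1}{2}\|\nabla u_j\|_{L^2}^2-\frac{\sigma_j^2}{4}|c|^2\|u_j\|_{L^2}^2$, whereas you separate case (A) from the zero-mass cases and use the triangle inequality.
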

\begin{proof}
    Let $U_0\in \mathcal{A}^+_{\omega_1,\omega_2,c}$. 
    Then, the solution $U(t)$ to {\rm (\ref{dnls2})} with $U(0)=U_0$ satisfies $U(t)\in \mathcal{A}^+_{\omega_1,\omega_2,c}$ for any $t\in [0,T_{\max})$ 
    by Proposition~\ref{Apm_inv_flow}. 
    Therefore, we have $S_{\omega_1,\omega_2,c}(U(t))\le \mathfrak{m}_{\omega_1,\omega_2,c}$,  
    $K_{\omega_1,\omega_2,c}(U(t))\ge 0$. This implies
    \[
    \mathfrak{m}_{\omega_1,\omega_2,c}\ge S_{\omega_1,\omega_2,c}(U(t))
    =\widetilde{S}_{\omega_1,\omega_2,c}(\Lambda_{-c}U(t))
    =\frac{1}{3}\widetilde{K}_{\omega_1,\omega_2,c}(\Lambda_{-c}U(t))+\frac{1}{6}\|\Lambda_{-c}U(t)\|_{\widetilde{X}_{\omega_1,\omega_2,c}}^2
    \]
    Because $\widetilde{K}_{\omega_1,\omega_2,c}(\Lambda_{-c}U(t))=K_{\omega_1,\omega_2,c}(U(t))\ge 0$ 
    and
    \[
    \begin{split}
    \|\Lambda_{-c}U(t)\|_{\widetilde{X}_{\omega_1,\omega_2,c}}^2
    &=\sum_{j=1}^3\frac{1}{\sigma_j}\left\|\nabla u_j(t)-i\frac{\sigma_j}{2}cu_j(t)\right\|^2_{L^2}
    +\sum_{j=1}^3\left(\omega_j-\frac{\sigma_j}{4}|c|^2\right)\|u_j(t)\|_{L^2}^2\\
    &\ge \sum_{j=1}^3\frac{1}{2\sigma_j}\|\nabla u_j(t)\|_{L^2}^2+\sum_{j=1}^3\left(\omega_j-\frac{\sigma_j}{2}|c|^2\right)\|u_j(t)\|_{L^2}^2
    \end{split}
    \]
    hold, we obtain the a priori bound
    \[
    \sum_{j=1}^3\frac{1}{\sigma_j}\|\nabla u_j(t)\|_{L^2}^2
    \lesssim \mathfrak{m}_{\omega_1,\omega_2,c}+Q_1(U_0)+Q_2(U_0)
    \]
    by the conservation law for $Q_1$ and $Q_2$. 
\end{proof}
\begin{lemm}\label{S_sca_rot}
Let $1\le d\le 5$, $\sigma_1,\sigma_2,\sigma_3>0$, and $c\in \R^d\setminus \{0\}$. 
    Assume one of the conditions {\rm (A)}, {\rm (B)} in {\rm Theorem~\ref{ex_gs_1}}, and ${\rm (C)'}$ in {\rm Proposition~\ref{pos_pr_Neh}} holds. 
    For any $e\in \R^d$ with $|e|=1$, we have
    \begin{equation}\label{act_sc_rot}
    \mathfrak{m}_{\omega_1,\omega_2,c}=|c|^{6-d}\mathfrak{m}_{\frac{\omega_1}{|c|^2},\frac{\omega_2}{|c|^2},\frac{c}{|c|}}=|c|^{6-d}\mathfrak{m}_{\frac{\omega_1}{|c|^2},\frac{\omega_2}{|c|^2},e}. 
    \end{equation}
\end{lemm}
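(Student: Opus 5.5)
The plan is to prove both equalities by exhibiting explicit bijections of the constraint sets that rescale the action by a fixed factor; taking infima then gives the identities. I will work with $\widetilde{S}$, $\widetilde{K}$, $\widetilde{X}$ and use $\mathfrak{m}=\widetilde{\mathfrak{m}}$ from Remark~\ref{rel_m_tilm}.

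\emph{First equality (scaling).} For $\Psi\in\widetilde{X}_{\omega_1,\omega_2,c}$ and $\lambda>0$, set $\Psi^\lambda(x):=\lambda^{2}\Psi(\lambda x)$. A change of variables in (\ref{S_phi_psi}) gives:
\[
\|\nabla\psi_j^\lambda\|_{L^2}^2=\lambda^{6-d}\|\nabla\psi_j\|_{L^2}^2,\qquad \|\psi^\lambda_j\|_{L^2}^2=\lambda^{4-d}\|\psi_j\|_{L^2}^2 .
\]
For the cubic term, with the phase written out,
\[
N_c(\Psi^\lambda)=-\lambda^{6-d}{\rm Re}\int e^{i\frac{\mu}{2}\frac{c}{\lambda}\cdot y}\psi_1\psi_2\overline{\psi_3}\,dy .
\]
Choosing $\lambda=|c|$, the combination $\omega_j-\frac{\sigma_j}{4}|c|^2$ multiplied by $\lambda^{4-d}$ equals $|c|^{6-d}\bigl(\frac{\omega_j}{|c|^2}-\frac{\sigma_j}{4}\bigr)$. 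The phase becomes $e^{i\frac{\mu}{2}\frac{c}{|c|}\cdot y}$, which is exactly the one in $N_{c/|c|}$. Hence
\[
\widetilde{S}_{\omega_1,\omega_2,c}(\Psi^{|c|})=|c|^{6-d}\,\widetilde{S}_{\omega_1/|c|^2,\omega_2/|c|^2,c/|c|}(\Psi),
\]
and likewise for $\widetilde{K}$. The map $\Psi\mapsto\Psi^{|c|}$ is a bijection from $\widetilde{X}_{\omega_1/|c|^2,\omega_2/|c|^2,c/|c|}$ onto $\widetilde{X}_{\omega_1,\omega_2,c}$. This holds because it preserves $H^1$ and $\dot H^1$ membership, and the zero/nonzero pattern of the masses $\gamma_j$ is unchanged. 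It also preserves $\Psi\ne0$ and the constraint $\widetilde{K}=0$. Taking infima gives the first equality.

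\emph{Second equality (rotation).} Let $R$ be an orthogonal matrix with $R\frac{c}{|c|}=e$, and set $\Psi_R(x):=\Psi(R^{-1}x)$. The quadratic parts of $\widetilde{S}$ are rotation invariant. In $N_e(\Psi_R)$, the substitution $x=Ry$ gives $e\cdot Ry=R^{T}e\cdot y=\frac{c}{|c|}\cdot y$, so $N_e(\Psi_R)=N_{c/|c|}(\Psi)$. The masses $\frac{\omega_j}{|c|^2}-\frac{\sigma_j}{4}|e|^2$ depend only on $|e|=1$, so the two spaces coincide and the map is a bijection preserving the Nehari constraint. This gives the second equality.

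The argument is essentially routine. The only point needing care is the exponent bookkeeping in the first step. One must check that the single choice $\lambda^{2}$ for the amplitude makes the gradient, mass and cubic terms all scale by the same power $|c|^{6-d}$, once the frequencies are divided by $|c|^2$. This is the place I would compute carefully.
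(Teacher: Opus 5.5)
Your proof is correct and follows essentially the paper's argument: the same rescaling $\Psi\mapsto |c|^{2}\Psi(|c|\,\cdot)$ together with a rotation carrying $c/|c|$ to $e$. The only differences are cosmetic. You work with $\widetilde{S},\widetilde{K}$ on $\widetilde{X}$ rather than with $S,K$ on $X$. You also phrase the argument as a bijection of the Nehari constraint sets, which gives both inequalities at once, whereas the paper transports minimizing sequences and obtains the reverse inequality ``by the similar argument''.
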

\begin{proof}
    Let $\{\Phi_n\}\subset X_{\frac{\omega_1}{|c|^2},\frac{\omega_2}{|c|^2},\frac{c}{|c|}}$ be a minimizing sequence of $\mathfrak{m}_{\frac{\omega_1}{|c|^2},\frac{\omega_2}{|c|^2},\frac{c}{|c|}}$ with $K_{\frac{\omega_1}{|c|^2},\frac{\omega_2}{|c|^2},\frac{c}{|c|}}(\Phi_n)=0$. 
    
    To prove the first equality in {\rm (\ref{act_sc_rot})}, we put
    \[
    \Phi_{n,c}(x):=|c|^2\Phi_n(|c|x). 
    \]
    Then, we have 
    \begin{equation}\label{phi_c_sc}
    K_{\omega_1,\omega_2,c}(\Phi_{n,c})=|c|^{6-d}K_{\frac{\omega_1}{|c|^2},\frac{\omega_2}{|c|^2},\frac{c}{|c|}}(\Phi_n)=0,\ \ 
    S_{\omega_1,\omega_2,c}(\Phi_{n,c})=|c|^{6-d}S_{\frac{\omega_1}{|c|^2},\frac{\omega_2}{|c|^2},\frac{c}{|c|}}(\Phi_n). 
    \end{equation}
    Therefore, we obtain
    \[
    \mathfrak{m}_{\omega_1,\omega_2,c}\le |c|^{6-d}S_{\frac{\omega_1}{|c|^2},\frac{\omega_2}{|c|^2},\frac{c}{|c|}}(\Phi_n). 
    \]
    Because $\{\Phi_n\}$ is a minimizing sequence, 
    as $n\rightarrow \infty$, we get
    \[
    \mathfrak{m}_{\omega_1,\omega_2,c}
    \le |c|^{6-d}\mathfrak{m}_{\frac{\omega_1}{|c|^2},\frac{\omega_2}{|c|^2},\frac{c}{|c|}}. 
    \]
    By the similar argument, we also obtain
    \[
    |c|^{6-d}\mathfrak{m}_{\frac{\omega_1}{|c|^2},\frac{\omega_2}{|c|^2},\frac{c}{|c|}}
    \le \mathfrak{m}_{\omega_1,\omega_2,c}. 
    \]

    Next, we put
    \[
    \Phi_{n}^T(x):=\Phi_n(Tx), 
    \]
    where $T$ is a $d\times d$ orthogonal matrix such that $Te=\frac{c}{|c|}$.  
    Then, we have
    \[
    E(\Phi_n^T)=E(\Phi_n),\ \ Q_1(\Phi_n^T)=Q_1(\Phi_n),\ \ Q_2(\Phi_n^T)=Q_2(\Phi_n),\ \ 
    e\cdot P(\Phi_n^T)=\frac{c}{|c|}\cdot P(\Phi_n). 
    \]
    This implies
    \[
    K_{\frac{\omega_1}{|c|^2},\frac{\omega_2}{|c|^2},e}(\Phi_n^T)=K_{\frac{\omega_1}{|c|^2},\frac{\omega_2}{|c|^2},\frac{c}{|c|}}(\Phi_n)=0,\ \ 
    S_{\frac{\omega_1}{|c|^2},\frac{\omega_2}{|c|^2},e}(\Phi_n^T)=S_{\frac{\omega_1}{|c|^2},\frac{\omega_2}{|c|^2},\frac{c}{|c|}}(\Phi_n). 
    \]
    Therefore, we obtain the second equality in {\rm (\ref{act_sc_rot})} 
    by the same argument as above. 
\end{proof}
\begin{proof}[Proof of {\rm Theorem~\ref{GWP_2}}]
Let $d=4$, $\sigma_1,\sigma_2,\sigma_3>0$, $V_0\in \mathcal{H}^1(\R^4)$, 
and $U_0=\Lambda_cV_0$. 
We first consider the case $\mu=\sigma_1+\sigma_2-\sigma_3>0$. 
Then, $(\omega_1,\omega_2)=(\frac{\sigma_1}{4}|c|^2,\frac{\sigma_2}{4}|c|^2)$ 
satisfies the condition ${\rm (C)'}$ in Proposition~\ref{pos_pr_Neh}. 
For $c\in \R^4\setminus \{0\}$, we have
\[
\begin{split}
K_{\frac{\sigma_1}{4}|c|^2,\frac{\sigma_2}{4}|c|^2,c}(U_0)
&=\widetilde{K}_{\frac{\sigma_1}{4}|c|^2,\frac{\sigma_2}{4}|c|^2,c}(V_0)
=\sum_{j=1}^3\frac{1}{\sigma_j}\|\nabla v_{j,0}\|_{L^2}^2+\frac{\mu}{4}|c|^2\|v_{3,0}\|_{L^2}^2+3N_c(V_0),\\
S_{\frac{\sigma_1}{4}|c|^2,\frac{\sigma_2}{4}|c|^2,c}(U_0)
&=\widetilde{S}_{\frac{\sigma_1}{4}|c|^2,\frac{\sigma_2}{4}|c|^2,c}(V_0)
=\frac{1}{2}\sum_{j=1}^3\frac{1}{\sigma_j}\|\nabla v_{j,0}\|_{L^2}^2+\frac{\mu}{8}|c|^2\|v_{3,0}\|_{L^2}^2+N_c(V_0)
\end{split}
\]
by substituting $(\omega_1,\omega_2)=(\frac{\sigma_1}{4}|c|^2,\frac{\sigma_2}{4}|c|^2)$ 
in the definition of $\widetilde{K}_{\omega_1,\omega_2,c}$ and $\widetilde{S}_{\omega_1,\omega_2,c}$\ $(${\rm (\ref{S_phi_psi})} and {\rm (\ref{def_nehari})}$)$. 
By Lemma~\ref{S_sca_rot}, it holds that
\[
\mathfrak{m}_{\frac{\sigma_1}{4}|c|^2,\frac{\sigma_2}{4}|c|^2,c}-S_{\frac{\sigma_1}{4}|c|^2,\frac{\sigma_2}{4}|c|^2,c}(U_0)
=|c|^2\left(\mathfrak{m}_{\frac{\sigma_1}{4},\frac{\sigma_2}{4},e}-\frac{\mu}{8}\|v_{3,0}\|_{L^2}^2\right)
-\left(\frac{1}{2}\sum_{j=1}^3\frac{1}{\sigma_j}\|\nabla v_{j,0}\|_{L^2}^2+N_c(V_0)\right). 
\]
Because $\displaystyle \lim_{|c|\rightarrow \infty}N_c(V_0)=0$ 
by the Riemann--Lebesgue theorem, if 
\[
\|v_{3,0}\|_{L^2}^2<\frac{8}{\mu}\mathfrak{m}_{\frac{\sigma_1}{4},\frac{\sigma_2}{4},e}
\]
is satisfied, then we obtain
\[
K_{\frac{\sigma_1}{4}|c|^2,\frac{\sigma_2}{4}|c|^2,c}(U_0)\ge 0,\ \ 
S_{\frac{\sigma_1}{4}|c|^2,\frac{\sigma_2}{4}|c|^2,c}(U_0)\le \mathfrak{m}_{\frac{\sigma_1}{4}|c|^2,\frac{\sigma_2}{4}|c|^2,c}
\]
for sufficiently large $|c|$. 
Therefore by Proposition~\ref{Ap_Gwp}, 
the solution $U(t)$ to {\rm (\ref{dnls})} with $U(0)=U_0$ can be extended globally in time. 

Next, we consider the case $\mu=\sigma_1+\sigma_2-\sigma_3<0$. 
Then, $(\omega_1,\omega_2)=(\frac{\sigma_1}{4}|c|^2,\frac{\sigma_3-\sigma_1}{4}|c|^2)$ 
and $(\omega_1,\omega_2)=(\frac{\sigma_3-\sigma_2}{4}|c|^2,\frac{\sigma_2}{4}|c|^2)$
satisfy the condition ${\rm (C)'}$ in Proposition~\ref{pos_pr_Neh}. 
For $c\in \R^4\setminus \{0\}$, we have
\[
\begin{split}
&K_{\frac{\sigma_1}{4}|c|^2,\frac{\sigma_3-\sigma_1}{4}|c|^2,c}(U_0)
=\sum_{j=1}^3\frac{1}{\sigma_j}\|\nabla v_{j,0}\|_{L^2}^2+\frac{|\mu|}{4}|c|^2\|v_{2,0}\|_{L^2}^2+3N_c(V_0),\\
&\mathfrak{m}_{\frac{\sigma_1}{4}|c|^2,\frac{\sigma_3-\sigma_1}{4}|c|^2,c}-S_{\frac{\sigma_1}{4}|c|^2,\frac{\sigma_3-\sigma_1}{4}|c|^2,c}(U_0)\\
&=|c|^2\left(\mathfrak{m}_{\frac{\sigma_1}{4},\frac{\sigma_3-\sigma_1}{4},e}-\frac{|\mu|}{8}\|v_{2,0}\|_{L^2}^2\right)
-\left(\frac{1}{2}\sum_{j=1}^3\frac{1}{\sigma_j}\|\nabla v_{j,0}\|_{L^2}^2+N_c(V_0)\right)
\end{split}
\]
and
\[
\begin{split}
&K_{\frac{\sigma_3-\sigma_2}{4}|c|^2,\frac{\sigma_2}{4}|c|^2,c}(U_0)
=\sum_{j=1}^3\frac{1}{\sigma_j}\|\nabla v_{j,0}\|_{L^2}^2+\frac{|\mu|}{4}|c|^2\|v_{1,0}\|_{L^2}^2+3N_c(V_0),\\
&\mathfrak{m}_{\frac{\sigma_3-\sigma_2}{4}|c|^2,\frac{\sigma_2}{4}|c|^2,c}-S_{\frac{\sigma_3-\sigma_2}{4}|c|^2,\frac{\sigma_2}{4}|c|^2,c}(U_0)\\
&=|c|^2\left(\mathfrak{m}_{\frac{\sigma_3-\sigma_2}{4},\frac{\sigma_2}{4},e}-\frac{|\mu|}{8}\|v_{1,0}\|_{L^2}^2\right)
-\left(\frac{1}{2}\sum_{j=1}^3\frac{1}{\sigma_j}\|\nabla v_{j,0}\|_{L^2}^2+N_c(V_0)\right)
\end{split}
\]
by the same calculation as above. 
Therefore, if either 
\[
\|v_{1,0}\|_{L^2}^2<\frac{8}{|\mu|}\mathfrak{m}_{\frac{\sigma_3-\sigma_2}{4},\frac{\sigma_2}{4},e}\ \ {\rm or}\ \ \|v_{2,0}\|_{L^2}^2<\frac{8}{|\mu|}\mathfrak{m}_{\frac{\sigma_1}{4},\frac{\sigma_3-\sigma_1}{4},e}
\]
is satisfied, then
the solution $U(t)$ to {\rm (\ref{dnls})} with $U(0)=U_0$ 
for sufficiently large $|c|$ can be extended globally in time. 
\end{proof}

Finally in this subsection, 
we define the spaces $\mathcal{B}^{\pm}_{\omega_1,\omega_2,c}$ by
\[
\begin{split}
\mathcal{B}^{+}_{\omega_1,\omega_2,c}
&:=\{U \in \mathcal{H}^1(\R^d)|\ U \ne (0,0,0),\ S_{\omega_1,\omega_2,c}(U)\le \mathfrak{m}_{\omega_1,\omega_2,c},\ N(U)\ge -2\mathfrak{m}_{\omega_1,\omega_2,c}\},\\
\mathcal{B}^{-}_{\omega_1,\omega_2,c}
&:=\{U \in \mathcal{H}^1(\R^d)|\ U \ne (0,0,0),\ S_{\omega_1,\omega_2,c}(U)\le \mathfrak{m}_{\omega_1,\omega_2,c},\ N(U)<-2\mathfrak{m}_{\omega_1,\omega_2,c}\}.
\end{split}
\]
We recall that 
    \[
    \begin{split}
    S_{\omega_1,\omega_2,c}(U)&=L(U)+N(U)+\omega_1Q_1(U)+\omega_2Q_2(U)+c\cdot P(U),\\
    K_{\omega_1,\omega_2,c}(U)&=2L(U)+3N(U)+2\omega_1Q_1(U)+2\omega_2Q_2(U)+2c\cdot P(U)
    \end{split}
    \]
    and note that
    \begin{equation}\label{rel_N_K_S_eq}
    N(U)=K_{\omega_1,\omega_2,c}(U)-2S_{\omega_1,\omega_2,c}(U). 
    \end{equation}
The following property will be used in next section. 
\begin{prop}\label{rel_A_B_set}
     Let $1\le d\le 5$, $\sigma_1,\sigma_2,\sigma_3>0$, and $c\in \R^d$. 
    Assume one of the conditions {\rm (A)}, {\rm (B)} in {\rm Theorem~\ref{ex_gs_1}}, and ${\rm (C)'}$ in {\rm Proposition~\ref{pos_pr_Neh}} holds. 
    Then, we have $\mathcal{B}^{+}_{\omega_1,\omega_2,c}=\mathcal{A}^{+}_{\omega_1,\omega_2,c}$ and $\mathcal{B}^{-}_{\omega_1,\omega_2,c}=\mathcal{A}^{-}_{\omega_1,\omega_2,c}$. 
\end{prop}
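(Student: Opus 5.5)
Since $\mathcal{A}^{+}\cup\mathcal{A}^{-}$ and $\mathcal{B}^{+}\cup\mathcal{B}^{-}$ both equal $\{U\in\mathcal{H}^1(\R^d)\setminus\{0\}:\ S_{\omega_1,\omega_2,c}(U)\le\mathfrak{m}_{\omega_1,\omega_2,c}\}$, and each pair is disjoint, it suffices to show $\mathcal{A}^{+}_{\omega_1,\omega_2,c}\subset\mathcal{B}^{+}_{\omega_1,\omega_2,c}$ and $\mathcal{A}^{-}_{\omega_1,\omega_2,c}\subset\mathcal{B}^{-}_{\omega_1,\omega_2,c}$; the reverse inclusions then follow by complementation inside this common set. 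Throughout I write $\mathfrak{m}=\mathfrak{m}_{\omega_1,\omega_2,c}$, $S=S_{\omega_1,\omega_2,c}$, $K=K_{\omega_1,\omega_2,c}$, and I use the identity (\ref{rel_N_K_S_eq}), $N(U)=K(U)-2S(U)$.

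For $\mathcal{A}^{-}\subset\mathcal{B}^{-}$: let $U\in\mathcal{A}^{-}$, so $K(U)<0$ and $S(U)\le\mathfrak{m}$. Then (\ref{rel_N_K_S_eq}) does not immediately give the sign we want, so I instead use Remark~\ref{K_m_rel_U}. Set $V=\Lambda_{-c}U$; since $U\in\mathcal{H}^1(\R^d)\subset X_{\omega_1,\omega_2,c}$, Proposition~\ref{pos_pr_Neh}(ii) applies and gives $\|U\|_{X_{\omega_1,\omega_2,c}}^2>6\mathfrak{m}$. By Remark~\ref{X_norm_rep_LQP}, $S(U)=\frac12\|U\|_{X_{\omega_1,\omega_2,c}}^2+N(U)$, hence
\[
N(U)=S(U)-\tfrac12\|U\|_{X_{\omega_1,\omega_2,c}}^2<\mathfrak{m}-3\mathfrak{m}=-2\mathfrak{m},
\]
that is, $U\in\mathcal{B}^{-}$.

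For $\mathcal{A}^{+}\subset\mathcal{B}^{+}$: let $U\in\mathcal{A}^{+}$, so $K(U)\ge0$. I will use the decomposition $S=\frac13K+\frac16\|\cdot\|_X^2$ from (\ref{S_K_N_rel}), which gives $\|U\|_X^2=6S(U)-2K(U)$. Substituting into $N(U)=S(U)-\frac12\|U\|_X^2$ yields
\[
N(U)=K(U)-2S(U)\ge-2S(U)\ge-2\mathfrak{m},
\]
so $U\in\mathcal{B}^{+}$. Actually this direction is already immediate from (\ref{rel_N_K_S_eq}).

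The only step requiring some care is the $\mathcal{A}^{-}$ case. There one must make sure Proposition~\ref{pos_pr_Neh}(ii) is applicable under the hypotheses (A), (B), or (C)'. This requires $U\neq0$ and $U\in X_{\omega_1,\omega_2,c}$, both of which hold because $\mathcal{H}^1\subset X_{\omega_1,\omega_2,c}$. It also requires the strict inequality $K<0$, which is what produces the strict bound $N(U)<-2\mathfrak{m}$. Once the two inclusions are established, the complementation argument gives both set equalities.
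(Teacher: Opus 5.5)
Your proof is correct and uses the same two ingredients as the paper. The identity $N=K-2S$ gives $\mathcal{A}^{+}\subset\mathcal{B}^{+}$, and Proposition~\ref{pos_pr_Neh}(ii) (equivalently Remark~\ref{K_m_rel_U}) handles the nontrivial direction; the paper proves $\mathcal{B}^{+}\subset\mathcal{A}^{+}$ by contradiction, which is exactly the contrapositive of your $\mathcal{A}^{-}\subset\mathcal{B}^{-}$, and leaves the minus case as ``similar'', whereas your partition/complementation observation makes the remaining inclusions automatic.
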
 
\begin{proof}
    
    If $U\in \mathcal{A}^{+}_{\omega_1,\omega_2,c}$, 
    then we have
    \[
    N(U)\ge -2\mathfrak{m}_{\omega_1,\omega_2,c}
    \]
    by {\rm (\ref{rel_N_K_S_eq})}. 
    This says that $U\in \mathcal{B}^{+}_{\omega_1,\omega_2,c}$. 
    Conversely, if $U\in \mathcal{B}^{+}_{\omega_1,\omega_2,c}$, 
    then we obtain $K_{\omega_1,\omega_2,c}(U)\ge 0$. 
     Indeed, if we assume $K_{\omega_1,\omega_2,c}(U)<0$, 
    then Remark~\ref{K_m_rel_U} shows that
    \[
    \begin{split}
    S_{\omega_1,\omega_2,c}(U)&=(L(U)+\omega_1Q_1(U)+\omega_2Q_2(U)+c\cdot P(U))+N(U)\\
    &>3\mathfrak{m}_{\omega_1,\omega_2,c}-2\mathfrak{m}_{\omega_1,\omega_2,c}=\mathfrak{m}_{\omega_1,\omega_2,c},
    \end{split}
    \]
which is a contradiction. 
    Therefore, it holds that $U\in \mathcal{A}^{+}_{\omega_1,\omega_2,c}$ 
    and we get $\mathcal{B}^{+}_{\omega_1,\omega_2,c}=\mathcal{A}^{+}_{\omega_1,\omega_2,c}$. 
    By the similar argument, we also obtain $\mathcal{B}^{-}_{\omega_1,\omega_2,c}=\mathcal{A}^{-}_{\omega_1,\omega_2,c}$.
\end{proof}
\section{Stability of ground state sets}
In this section, we prove Theorem~\ref{stab_thm} based on \cite{HI24}. 
Let $c\in \R^d$. 
If $c\ne 0$, we fix $(\omega_1,\omega_2)\in \Omega_c$. 
If $c=0$, we fix $(\omega_1,\omega_2)\in (\Omega_0\cup \partial\Omega_0)\setminus \{(0,0)\}$. 
Note that at least one of $\omega_1>0$ and $\omega_2>0$ holds. 
We assume $\omega_1>0$ throughout in this section. 
The case $\omega_1=0$, $\omega_2>0$, $c=0$ can be 
treated by the same way to the case $\omega_1>0$, $\omega_2=0$, $c=0$.  
By Pohozaev's identity (Proposition~\ref{Pohoz_id}) 
and the Nehari condition $K_{\omega_1,\omega_2,c}(\Phi)=0$, 
we obtain
\begin{equation}\label{m_identi}
\mathfrak{m}_{\omega_1,\omega_2,c}=\frac{1}{6-d}\left\{2\omega_1Q_1(\Phi)+2\omega_2Q_2(\Phi)+c\cdot P(\Phi)\right\}
\end{equation}
for any $\Phi\in \mathcal{M}_{\omega_1,\omega_2,c}$. 
We write $\mathfrak{m}_{\omega_1,\omega_2,c}=\mathfrak{m}(\omega_1,\omega_2,c)$ 
and define the function $h$ on $(-\infty,\sqrt{\omega_1}]$ by
\[
h(\tau):=\mathfrak{m}\left((\sqrt{\omega_1}-\tau)^2,\frac{\omega_2}{\omega_1}(\sqrt{\omega_1}-\tau)^2,\frac{c}{\sqrt{\omega_1}}(\sqrt{\omega_1}-\tau)\right). 
\]
Then, by Lemma~\ref{S_sca_rot}, it holds that
\begin{equation}\label{h_del_oiden}
    \begin{split}
        &h(0)=\mathfrak{m}(\omega_1,\omega_2,c),\\
        &h'(0)=-(6-d)\omega_1^{\frac{5-d}{2}}\mathfrak{m}\left(1,\frac{\omega_2}{\omega_1},\frac{c}{\sqrt{\omega_1}}\right)=-\frac{6-d}{\sqrt{\omega_1}}\mathfrak{m}(\omega_1,\omega_2,c),\\
        &h''(0)=(6-d)(5-d)\omega_1^{\frac{4-d}{2}}\mathfrak{m}\left(1,\frac{\omega_2}{\omega_1},\frac{c}{\sqrt{\omega_1}}\right)=\frac{(6-d)(5-d)}{\omega_1}\mathfrak{m}(\omega_1,\omega_2,c). 
    \end{split}
\end{equation}
For $3\le d\le 5$, the identities (\ref{m_identi}), (\ref{h_del_oiden}) are also true when $(\omega_2,c)=(0,0)$. 

For $\eta>0$, we define the set $\mathcal{M}_{\omega_1,\omega_2,c}^*(\eta)$ and the function $F_{\eta}$ by
\[
\begin{split}
    &\mathcal{M}_{\omega_1,\omega_2,c}^*(\eta):=\{\Phi\in \mathcal{M}_{\omega_1,\omega_2,c}|\ 
    (8-2d)\omega_1Q_1(\Phi)+(8-2d)\omega_2Q_2(\Phi)+(5-d)c\cdot P(\Phi)\ge \eta\},\\
    &F_{\eta}(\tau):=\inf_{\Phi\in \mathcal{M}_{\omega_1,\omega_2,c}^*(\eta)}\left\{\frac{h''(\tau)}{2}-Q_1(\Phi)-\frac{\omega_2}{\omega_1}Q_2(\Phi)\right\}
    =\frac{h''(\tau)}{2}-\sup_{\Phi\in \mathcal{M}_{\omega_1,\omega_2,c}^*(\eta)}\left\{Q_1(\Phi)+\frac{\omega_2}{\omega_1}Q_2(\Phi)\right\}.  
\end{split}
\]
\begin{lemm}\label{F_low_b}
    Let $1\le d\le 3$, $\sigma_1,\sigma_2,\sigma_3>0$, and $c\in \R^d$. Assume 
    \[
    \begin{cases}
        (\omega_1,\omega_2)\in \Omega_c& {\rm if}\ c\ne 0,\\
        \omega_1>0,\ \omega_2\ge 0& {\rm if}\ c=0.
        \end{cases}
        \]
    Here, $\omega_2=0$ is allowed only for $d=3$. 
    For any $\eta>0$, there exists $\tau_*=\tau_*(\omega_1,\omega_2,c,\eta)>0$ such that 
    $F_{\eta}(\tau)\ge \frac{\eta}{4\omega_1}$ holds for any $\tau \in (-\tau_*,\tau_*)$. 
\end{lemm}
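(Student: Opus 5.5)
The plan is to compute $F_\eta(0)$ exactly by combining the explicit scaling of $\mathfrak{m}$ with the identity (\ref{m_identi}). A gap of size $\eta/(4\omega_1)$ should then survive for small $|\tau|$ by continuity of $h''$. If $\mathcal{M}^*_{\omega_1,\omega_2,c}(\eta)=\emptyset$, the supremum is $-\infty$ and there is nothing to prove, so assume it is nonempty.

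\emph{Step 1: explicit form of $h$.} The proof of Lemma~\ref{S_sca_rot}, using $\Phi\mapsto\lambda^2\Phi(\lambda x)$, gives for every $\lambda>0$
\[
\mathfrak{m}(\lambda^2\omega_1,\lambda^2\omega_2,\lambda c)=\lambda^{6-d}\,\mathfrak{m}(\omega_1,\omega_2,c).
\]
Taking $\lambda=(\sqrt{\omega_1}-\tau)/\sqrt{\omega_1}>0$ for $\tau<\sqrt{\omega_1}$ yields
\[
h(\tau)=\Bigl(1-\tfrac{\tau}{\sqrt{\omega_1}}\Bigr)^{6-d}\mathfrak{m}(\omega_1,\omega_2,c).
\]
Hence $h$ is smooth on $(-\infty,\sqrt{\omega_1})$, with
\[
h''(\tau)=\frac{(6-d)(5-d)}{\omega_1}\Bigl(1-\tfrac{\tau}{\sqrt{\omega_1}}\Bigr)^{4-d}\mathfrak{m}(\omega_1,\omega_2,c),
\]
which agrees with (\ref{h_del_oiden}) at $\tau=0$. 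The scaled parameters stay in the admissible range, since $\Omega_c$ and the case $c=0$, $\omega_2\ge0$ are invariant under this scaling. So the only $\tau$-dependence in $F_\eta$ comes through $h''$, which is continuous at $0$.

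\emph{Step 2: the value at $\tau=0$.} For any $\Phi\in\mathcal{M}_{\omega_1,\omega_2,c}$, apply (\ref{m_identi}), which rests on Proposition~\ref{Pohoz_id} and holds in the stated cases, including $(\omega_2,c)=(0,0)$ when $d=3$. It gives
\[
\frac{h''(0)}{2}=\frac{5-d}{2\omega_1}\bigl(2\omega_1Q_1(\Phi)+2\omega_2Q_2(\Phi)+c\cdot P(\Phi)\bigr).
\]
Therefore
\[
\frac{h''(0)}{2}-Q_1(\Phi)-\frac{\omega_2}{\omega_1}Q_2(\Phi)=\frac{1}{2\omega_1}\Bigl\{(8-2d)\bigl(\omega_1Q_1(\Phi)+\omega_2Q_2(\Phi)\bigr)+(5-d)c\cdot P(\Phi)\Bigr\}.
\]
For $\Phi\in\mathcal{M}^*_{\omega_1,\omega_2,c}(\eta)$ the right-hand side is at least $\eta/(2\omega_1)$. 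Taking the infimum gives $F_\eta(0)\ge \eta/(2\omega_1)$, and in particular the supremum in $F_\eta$ is finite.

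\emph{Step 3: perturbation in $\tau$.} Since the supremum term does not depend on $\tau$,
\[
F_\eta(\tau)=F_\eta(0)+\tfrac12\bigl(h''(\tau)-h''(0)\bigr).
\]
By continuity of $h''$ from Step 1, choose $\tau_*\in(0,\sqrt{\omega_1})$ such that $|h''(\tau)-h''(0)|\le \eta/(2\omega_1)$ on $(-\tau_*,\tau_*)$. The explicit formula lets one write $\tau_*$ down in terms of $\omega_1,\omega_2,c,\eta$. Then $F_\eta(\tau)\ge \eta/(2\omega_1)-\eta/(4\omega_1)=\eta/(4\omega_1)$ on that interval.

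The main obstacle is Step 1. One has to justify the two-sided scaling identity for $\mathfrak{m}$ at a general factor $\lambda$, not only at $\lambda=|c|^{-1}$, including the case $c=0$ where Lemma~\ref{S_sca_rot} is not literally stated. I would repeat its minimizing-sequence argument with $\Phi\mapsto\lambda^2\Phi(\lambda\,\cdot)$. One must also check that (\ref{m_identi}) is available in the boundary case $\omega_2=0$, $c=0$, which needs $d=3$ through Proposition~\ref{Pohoz_id}~(ii).
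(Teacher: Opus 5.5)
Your proposal is correct and follows the paper's argument. The paper likewise combines (\ref{m_identi}) with the value of $h''(0)$ from (\ref{h_del_oiden}) to obtain $F_\eta(0)\ge \frac{\eta}{2\omega_1}$, and then concludes by continuity of $F_\eta$, since $h$ is a polynomial. Your extra points make explicit what the paper's short proof leaves implicit: the explicit form of $h$, the fact that the supremum term does not depend on $\tau$ and is finite, and the scaling identity for $\mathfrak{m}$ when $c=0$.
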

\begin{proof}
    Because $h$ is a polynomial function, $F_{\eta}$ is continuous. 
    Furthermore, by (\ref{m_identi}) and (\ref{h_del_oiden}), we have
    \[
    \begin{split}
    &\frac{h''(0)}{2}-\left\{Q_1(\Phi)+\frac{\omega_2}{\omega_1}Q_2(\Phi)\right\}\\
    &=\frac{1}{2\omega_1}\left\{(8-2d)\omega_1Q_1(\Phi)+(8-2d)\omega_2Q_2(\Phi)+(5-d)c\cdot P(\Phi)\right\}
    \end{split}
    \]
    holds for any $\Phi\in \mathcal{M}_{\omega_1,\omega_2,c}$. 
    Therefore, by the definition of $\mathcal{M}_{\omega_1,\omega_2,c}^*(\eta)$ and $F_{\eta}$, 
    it holds that
    \[
    F_{\eta}(0)\ge \frac{\eta}{2\omega_1}
    \]
    and we obtain the conclusion from the continuity of $F_{\eta}$. 
\end{proof}

For $\tau_0>0$, we put
\[
\begin{split}
&\omega_{1\pm}=\omega_{1\pm}(\tau_0):=(\sqrt{\omega_1}\pm \tau_0)^2,\ \ 
\omega_{2\pm}=\omega_{2\pm}(\tau_0):=\frac{\omega_2}{\omega_1}(\sqrt{\omega_1}\pm \tau_0)^2,\\
&c_{\pm}=c_{\pm}(\tau_0):=\frac{c}{\sqrt{\omega_1}}(\sqrt{\omega_1}\pm \tau_0), 
\end{split}
\]
where the double-signs correspond. 
\begin{prop}\label{ini_set_B}
    Let $1\le d\le 3$, $\sigma_1,\sigma_2,\sigma_3>0$, and $c\in \R^d$. Assume 
    \[
    \begin{cases}
        (\omega_1,\omega_2)\in \Omega_c& {\rm if}\ c\ne 0,\\
        \omega_1>0,\ \omega_2\ge 0& {\rm if}\ c=0.
        \end{cases}
        \]
    Here, $\omega_2=0$ is allowed only for $d=3$. 
    Let $\eta >0$ and we also assume $\mathcal{M}_{\omega_1,\omega_2,c}^*(\eta)\ne \emptyset$. 
    Then for any $\tau_0\in (0,\tau_*)$, there exists $\delta =\delta(\tau_0,\omega_1,\omega_2,c,\eta)>0$ 
    such that if $U_0\in \mathcal{H}^1(\R^d)$ satisfies
    \begin{equation}\label{ini_gs_app}
    \inf_{\Phi\in \mathcal{M}_{\omega_1,\omega_2,c}^*(\eta)}\|U_0-\Phi\|_{X_{\omega_1,\omega_2,c}}<\delta,
    \end{equation}
    then it holds that $U_0\in \mathcal{B}_{\omega_{1+},\omega_{2+},c_{+}}^+\cap \mathcal{B}_{\omega_{1-},\omega_{2-},c_{-}}^-$, 
    where $\tau_*>0$ is given in Lemma~\ref{F_low_b}. 
    Furthermore, $\delta$ tends to $0$ as $\tau_0\rightarrow 0$. 
\end{prop}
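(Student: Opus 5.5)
The plan is to first prove that every $\Phi\in\mathcal{M}^*_{\omega_1,\omega_2,c}(\eta)$ lies in $\mathcal{B}^+_{\omega_{1+},\omega_{2+},c_+}\cap\mathcal{B}^-_{\omega_{1-},\omega_{2-},c_-}$ with a quantitative margin in each defining inequality. Then a uniform continuity argument on the bounded set $\mathcal{M}^*_{\omega_1,\omega_2,c}(\eta)$ transfers this membership to any $U_0$ satisfying (\ref{ini_gs_app}). Throughout I write $S_\pm:=S_{\omega_{1\pm},\omega_{2\pm},c_\pm}$ and $\mathfrak{m}_\pm:=\mathfrak{m}(\omega_{1\pm},\omega_{2\pm},c_\pm)$. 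By definition of $h$ we have $\mathfrak{m}_+=h(-\tau_0)$ and $\mathfrak{m}_-=h(\tau_0)$.

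\emph{Step 1 (action margin).} Fix $\Phi\in\mathcal{M}^*_{\omega_1,\omega_2,c}(\eta)$, so $S_{\omega_1,\omega_2,c}(\Phi)=\mathfrak{m}$ and $K_{\omega_1,\omega_2,c}(\Phi)=0$. Since $\omega_{1\pm}-\omega_1=\pm2\sqrt{\omega_1}\tau_0+\tau_0^2$, $\omega_{2\pm}-\omega_2=\frac{\omega_2}{\omega_1}(\pm2\sqrt{\omega_1}\tau_0+\tau_0^2)$ and $c_\pm-c=\pm\tau_0 c/\sqrt{\omega_1}$, we get
\[
S_\pm(\Phi)=\mathfrak{m}\pm\tau_0\Bigl(2\sqrt{\omega_1}\bigl(Q_1(\Phi)+\tfrac{\omega_2}{\omega_1}Q_2(\Phi)\bigr)+\tfrac{c\cdot P(\Phi)}{\sqrt{\omega_1}}\Bigr)+\tau_0^2\bigl(Q_1(\Phi)+\tfrac{\omega_2}{\omega_1}Q_2(\Phi)\bigr).
\]
On the other hand, Taylor's formula with Lagrange remainder gives $h(\mp\tau_0)=h(0)\mp h'(0)\tau_0+\frac{h''(\xi_\pm)}{2}\tau_0^2$ for some $\xi_\pm$ with $|\xi_\pm|<\tau_0<\tau_*$. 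By (\ref{h_del_oiden}) and (\ref{m_identi}),
\[
-h'(0)=\frac{6-d}{\sqrt{\omega_1}}\mathfrak{m}=2\sqrt{\omega_1}\bigl(Q_1+\tfrac{\omega_2}{\omega_1}Q_2\bigr)(\Phi)+\frac{c\cdot P(\Phi)}{\sqrt{\omega_1}},
\]
so the linear terms in $\tau_0$ cancel exactly. This leaves
\[
\mathfrak{m}_\pm-S_\pm(\Phi)=\tau_0^2\Bigl(\tfrac{h''(\xi_\pm)}{2}-Q_1(\Phi)-\tfrac{\omega_2}{\omega_1}Q_2(\Phi)\Bigr)\ge\tau_0^2F_\eta(\xi_\pm)\ge\frac{\eta\tau_0^2}{4\omega_1},
\]
where the last inequality is Lemma~\ref{F_low_b}.

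\emph{Step 2 (margin for $N$).} From (\ref{rel_N_K_S_eq}) at parameters $(\omega_1,\omega_2,c)$ we get $N(\Phi)=-2\mathfrak{m}$. Lemma~\ref{S_sca_rot} shows $h(\tau)=\bigl(\frac{\sqrt{\omega_1}-\tau}{\sqrt{\omega_1}}\bigr)^{6-d}\mathfrak{m}$, and Proposition~\ref{pos_pr_Neh} gives $\mathfrak{m}>0$. Since $d<6$, it follows that $\mathfrak{m}_+>\mathfrak{m}>\mathfrak{m}_-$, with explicit gaps $\kappa_\pm(\tau_0):=|\mathfrak{m}_\pm-\mathfrak{m}|>0$. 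Hence $N(\Phi)+2\mathfrak{m}_+=2\kappa_+>0$ and $N(\Phi)+2\mathfrak{m}_-=-2\kappa_-<0$. Together with Step 1 and $\Phi\neq0$, this gives $\Phi\in\mathcal{B}^+_{\omega_{1+},\omega_{2+},c_+}\cap\mathcal{B}^-_{\omega_{1-},\omega_{2-},c_-}$, uniformly in $\Phi$.

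\emph{Step 3 (perturbation).} Since $(\omega_1,\omega_2)\in\Omega_c$ (or $c=0$, $\omega_1>0$), Proposition~\ref{norm_X_H1_equi} and Remark~\ref{norm_eq_X_H_rem} show that $\|\cdot\|_{X_{\omega_1,\omega_2,c}}$ is equivalent to the $\mathcal{H}^1$ norm, or controls the relevant quantities in the boundary case $c=0$. Moreover, $\mathcal{M}^*$ is bounded because $\|\Phi\|_X^2=6\mathfrak{m}$ (Remark~\ref{rel_m_tilm}). The functionals $L,Q_1,Q_2,P$ are quadratic and $N$ is trilinear; for $d\le3$ the trilinear form is bounded via H\"older in $L^3$ and Sobolev. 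Hence $S_\pm$ and $N$ are Lipschitz on bounded sets, with constants that stay bounded for $\tau_0\le\tau_*$. I will choose $\delta$ so small that $C\delta<\min\{\eta\tau_0^2/(4\omega_1),\,2\kappa_+,\,2\kappa_-\}$ and $\delta<\sqrt{6\mathfrak{m}}$, which guarantees $U_0\neq0$. Then for $U_0$ within $\delta$ of some $\Phi$, we get $S_\pm(U_0)\le\mathfrak{m}_\pm$, $N(U_0)\ge-2\mathfrak{m}_+$ and $N(U_0)<-2\mathfrak{m}_-$, which is the claim. Since all the margins are $O(\tau_0)$ or $O(\tau_0^2)$, this choice can be made with $\delta\to0$ as $\tau_0\to0$.

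The main obstacle is Step 1: one has to see that the first-order terms cancel exactly through (\ref{m_identi}), which encodes Pohozaev plus Nehari, so that the sign is decided by the second-order quantity $F_\eta$. Once this is seen, the continuity step is routine. The only care needed there is in the case $c=0$, $\omega_2=0$, $d=3$, where the $X$-norm does not control $\|\varphi_2\|_{L^2}$. In that case I would check that $N$ and $S_\pm$ only involve $Q_1$ (because $\omega_{2\pm}=0$) together with $\dot H^1$-controlled terms, which are continuous in $X$ via $\dot H^1\hookrightarrow L^6$ and H\"older.
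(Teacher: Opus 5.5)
Your proposal is correct and follows essentially the same route as the paper. Both arguments expand $S_{\omega_{1\pm},\omega_{2\pm},c_\pm}$ and cancel the first-order term exactly via (\ref{m_identi}) and $h'(0)$. Both then apply Taylor's theorem with the bound $F_\eta\ge \eta/(4\omega_1)$ from Lemma~\ref{F_low_b}, use the monotonicity of $h$ (so $h(\tau_0)<h(0)<h(-\tau_0)$ with $N(\Phi)=-2h(0)$) for the $N$-conditions, and finish with a uniform Lipschitz perturbation from $\Phi$ to $U_0$. The differences are only organizational: you establish the margins at $\Phi$ before perturbing, and you check $U_0\neq 0$ and the case $(d,\omega_2,c)=(3,0,0)$ explicitly, whereas the paper handles that case in the remark following its proof.
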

\begin{rem}
    By Proposition~\ref{norm_X_H1_equi}, 
    we can assume $X_{\omega_1,\omega_2,c}=\mathcal{H}^1(\R^d)$ when $(\omega_1,\omega_2)\in \Omega_c$. 
    On the other hand, if $d=3$, $\omega_1>0$, $\omega_2=0$, and $c=0$, then $X_{\omega_1,0,0}=H^1(\R^3)\times \dot{H}^1(\R^3)\times H^1(\R^3)$.     
\end{rem}
\begin{proof}[Proof of Proposition~\ref{ini_set_B}]
    Assume $U_0\in \mathcal{H}^1(\R^d)$ satisfies (\ref{ini_gs_app}). 
    Then, there exists $\Phi_{\omega_1,\omega_2,c}\in \mathcal{M}_{\omega_1,\omega_2,c}^*(\eta)$ such that
    \begin{equation}\label{ini_gs_ap2}
    \|U_0-\Phi_{\omega_1,\omega_2,c}\|_{X_{\omega_1,\omega_2,c}}<2\delta. 
    \end{equation}
    This implies
    \begin{equation}\label{Qj_diff_est}
       |\omega_jQ_j(U_0)-\omega_jQ_j(\Phi_{\omega_1,\omega_2,c})|\
        \lesssim (\|U_0\|_{\mathcal{H}^1}+\|\Phi_{\omega_1,\omega_2,c}\|_{X_{\omega_1,\omega_2,c}})\|U_0-\Phi_{\omega_1,\omega_2,c}\|_{X_{\omega_1,\omega_2,c}}\lesssim \delta
    \end{equation}
    for $j=1, 2$ and
    \begin{equation}\label{P_diff_est}
        |c\cdot P(U_0)-c\cdot P(\Phi_{\omega_1,\omega_2,c})|\
        \lesssim (\|U_0\|_{\mathcal{H}^1}+\|\Phi_{\omega_1,\omega_2,c}\|_{X_{\omega_1,\omega_2,c}})\|U_0-\Phi_{\omega_1,\omega_2,c}\|_{X_{\omega_1,\omega_2,c}}\lesssim \delta,
    \end{equation}
    where the implicit constants depend only on 
    $\omega_1,\omega_2$, and $c$. 

    Now, we prove 
    \begin{equation}\label{S_m_ineq}
        S_{\omega_{1\pm},\omega_{2\pm},c_{\pm}}(U_0)<\mathfrak{m}(\omega_{1\pm},\omega_{2\pm},c_{\pm}). 
    \end{equation}
    Direct calculation shows that
    \[
    \begin{split}
        S_{\omega_{1\pm},\omega_{2\pm},c_{\pm}}(U_0)
    &=S_{\omega_1,\omega_2,c}(U_0)
    \pm \frac{\tau_0}{\sqrt{\omega_1}}\left(2\omega_1Q_1(U_0)+2\omega_2Q_2(U_0)+c\cdot P(U_0)\right)\\
    &\ \ \ \ \ \ \ \ \ \ \ \ \ \ \ \ \ \ \ \ \ \ \ \ \ +\frac{\tau_0^2}{\omega_1}\left(\omega_1Q_1(U_0)+\omega_2Q_2(U_0)\right). 
    \end{split}
    \]
    Therefore, by (\ref{Qj_diff_est}), (\ref{P_diff_est}), and (\ref{h_del_oiden}), we obtain
    \[
    \begin{split}
        S_{\omega_{1\pm},\omega_{2\pm},c_{\pm}}(U_0)
    &=S_{\omega_1,\omega_2,c}(\Phi_{\omega_1,\omega_2,c})
    \pm \frac{\tau_0}{\sqrt{\omega_1}}\left(2\omega_1Q_1(\Phi_{\omega_1,\omega_2,c})+2\omega_2Q_2(\Phi_{\omega_1,\omega_2,c})+c\cdot P(\Phi_{\omega_1,\omega_2,c})\right)\\
    &\ \ \ \ \ \ \ \ \ \ \ \ \ \ \ \ \ \ \ \ \ \ \ \ \ +\frac{\tau_0^2}{\omega_1}\left(\omega_1Q_1(\Phi_{\omega_1,\omega_2,c})+\omega_2Q_2(\Phi_{\omega_1,\omega_2,c})\right)+O(\delta)\\
    &=h(0)\mp \tau_0h'(0)+\frac{\tau_0^2}{\omega_1}\left(\omega_1Q_1(\Phi_{\omega_1,\omega_2,c})+\omega_2Q_2(\Phi_{\omega_1,\omega_2,c})\right)+O(\delta).
    \end{split}
    \]
    Because 
    \[
    h(\mp \tau_0)=h(0)\mp \tau_0h'(0)+\frac{\tau_0^2}{2}h''(\theta)
    \]
    holds for some $\theta\in (-\tau_0,\tau_0)\ (\subset (-\tau_*,\tau_*))$ by Taylor's theorem, 
    we have
    \[
    \begin{split}
    S_{\omega_{1\pm},\omega_{2\pm},c_{\pm}}(U_0)
    &=h(\mp \tau_0)-\tau_0^2\left\{\frac{h''(\theta)}{2}-Q_1(\Phi_{\omega_1,\omega_2,c})-\frac{\omega_2}{\omega_1}Q_2(\Phi_{\omega_1,\omega_2,c})\right\}+O(\delta). 
    \end{split}
    \]
    This implies
    \[
    S_{\omega_{1\pm},\omega_{2\pm},c_{\pm}}(U_0)\le h(\mp \tau_0)-\tau_0^2F_{\eta}(\theta)+O(\delta)
    \]
    since $\Phi_{\omega_1,\omega_2,c}\in \mathcal{M}_{\omega_1,\omega_2, c}^*(\eta)$, and we get
    \[
    S_{\omega_{1\pm},\omega_{2\pm},c_{\pm}}(U_0)\le h(\mp \tau_0)-\frac{\eta}{4\omega_1}\tau_0^2+O(\delta)
    \]
    by Lemma~\ref{F_low_b}. 
    Therefore, if we choose $\delta>0$ small enough as
    \[
    \delta \ll \frac{\eta}{4\omega_1}\tau_0^2, 
    \]
    it holds that
    \[
    S_{\omega_{1\pm},\omega_{2\pm},c_{\pm}}(U_0)<h(\mp \tau_0).
    \]
     This is equivalent to the desired estimate (\ref{S_m_ineq})
      because $h(\mp \tau_0)=\mathfrak{m}(\omega_{1\pm},\omega_{2\pm},c_{\pm})$. 

      Next, we prove
    \begin{equation}\label{N_m_ineq}
    -2\mathfrak{m}(\omega_{1+},\omega_{2+},c_{+})<N(U_0)<-2\mathfrak{m}(\omega_{1-},\omega_{2-},c_{-}).
    \end{equation}
    Because
    \[
    h(\tau)=(\sqrt{\omega_1}-\tau)^{6-d}\mathfrak{m}\left(1,\frac{\omega_2}{\omega_1},\frac{c}{\sqrt{\omega_1}}\right)
    \]
    is decreasing on $(-\infty,\sqrt{\omega_1})$, we have
    \[
    h(\tau)<h(0)<h(-\tau)
    \]
    for any $\tau \in (0,\tau_0)$. 
    Because
    \[
    h(0)=\mathfrak{m}_{\omega_1,\omega_2,c}
    =S_{\omega_1,\omega_2,c}(\Phi_{\omega_1,\omega_2,c})
    =\frac{1}{2}\left(K_{\omega_1,\omega_2,c}(\Phi_{\omega_1,\omega_2,c})-N(\Phi_{\omega_1,\omega_2,c})\right)
    =-\frac{1}{2}N(\Phi_{\omega_1,\omega_2,c})
    \]
    holds by $\Phi_{\omega_1,\omega_2,c}\in \mathcal{M}_{\omega_1,\omega_2,c}$ and the definition of $S_{\omega_1,\omega_2,c},K_{\omega_1,\omega_2,c},N$, 
    we obtain
    \[
    -2h(-\tau)<N(\Phi_{\omega_1,\omega_2,c})<-2h(\tau). 
    \]
    Note that
    \begin{equation}\label{N_diff_est}
    |N(U_0)-N(\Phi_{\omega_1,\omega_2,c})|\
        \lesssim (\|U_0\|_{\mathcal{H}^1}^2+\|\Phi_{\omega_1,\omega_2,c}\|_{X_{\omega_1,\omega_2,c}}^2)\|U_0-\Phi_{\omega_1,\omega_2,c}\|_{X_{\omega_1,\omega_2,c}}\lesssim \delta
    \end{equation}
    holds by (\ref{ini_gs_ap2}), 
    where the implicit constant depends only on $\omega_1,\omega_2$, and $c$. 
    Therefore, there exists $C=C(\omega_1,\omega_2,c)>0$ such that 
    \[
    -C\delta -2h(-\tau)<N(U_0)<C\delta-2h(\tau). 
    \]
    If we choose $\delta>0$ small enough as
    \[
    \delta<\frac{2}{C}\min\{h(\tau)-h(\tau_0),h(-\tau_0)-h(-\tau)\},
    \]
    then it holds that
    \[
    -2h(-\tau_0)<N(U_0)<-2h(\tau_0).
    \]
    This is equivalent to the desired estimate (\ref{N_m_ineq})
      because $h(\mp \tau_0)=\mathfrak{m}(\omega_{1\pm},\omega_{2\pm},c_{\pm})$. 
\end{proof}
\begin{rem}
When $\omega_2=0$, 
    we cannot obtain the estimate for $Q_2(U_0)$ from (\ref{Qj_diff_est})
    in the proof of {\rm Proposition~\ref{ini_set_B}}. 
    But $Q_2(U_0)$ does not appear in $S_{\omega_{1\pm},\omega_{2\pm},0}(U_0)$ when $\omega_2=0$, 
    and we do not need the estimate for $Q_2(U_0)$. 
    Therefore, the argument in the proof of {\rm Proposition~\ref{ini_set_B}} is also true 
    even if we assume the case $(d,\omega_2,c)=(3,0,0)$. 
    Note that (\ref{N_diff_est}) for $\omega_2=0$ can be obtained by using the estimate
    \[
    \int_{\R^d}fghdx\le \|f\|_{L^{\frac{4d}{d+2}}}\|g\|_{L^{\frac{2d}{d-2}}}\|h\|_{L^{\frac{4d}{d+2}}}
    \lesssim \|f\|_{H^1}\|g\|_{\dot{H}^1}\|h\|_{H^1}. 
    \]
\end{rem}
The following lemma is useful to prove 
the stability of $\mathcal{M}_{\omega_1,\omega_2,c}^*(\eta)$. 
\begin{lemm}\label{lim_func_low}
Let $X$ be a normed space and 
$G$ be a map from $X$ to $\R$ satisfying 
$|G(U)-G(U')|\lesssim \|U-U'\|_{X}$ for any $U,U'\in X$. 
Assume $\{U_n\}\subset X$ 
satisfies $\displaystyle \limsup_{n\rightarrow \infty}G(U_n)\ge \eta$ for $\eta>0$. 
If $U_n$ converges to $U\in X$ strongly in $X$ 
as $n\rightarrow \infty$, then it holds that $G(U)\ge \eta$. 
\end{lemm}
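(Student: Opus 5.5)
The plan is a direct argument combining the Lipschitz bound with the definition of $\limsup$; no earlier results from the paper are needed.

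First, pass to a subsequence realizing the limit superior. Since $\limsup_{n\rightarrow\infty}G(U_n)\ge\eta$, there is a subsequence $\{U_{n_k}\}$ with $\lim_{k\rightarrow\infty}G(U_{n_k})=\limsup_{n\rightarrow\infty}G(U_n)\ge\eta$. To avoid any issue with $+\infty$, we can simply note that for every $\epsilon>0$ there are infinitely many $n$ with $G(U_n)>\eta-\epsilon$. This is all that will be used.

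Second, use the Lipschitz-type assumption. There is a constant $C>0$ with $|G(U)-G(U')|\le C\|U-U'\|_X$. Fix $\epsilon>0$ and choose $N$ so that $\|U_n-U\|_X<\epsilon/C$ for all $n\ge N$; this is possible by strong convergence. Then pick some $n\ge N$ among the infinitely many indices with $G(U_n)>\eta-\epsilon$. This gives
\[
G(U)\ge G(U_n)-C\|U_n-U\|_X>\eta-2\epsilon .
\]

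Finally, since $\epsilon>0$ is arbitrary, $G(U)\ge\eta$. Equivalently, $G$ is continuous on $X$, so $G(U_n)\rightarrow G(U)$, and then $G(U)=\lim G(U_n)=\limsup G(U_n)\ge\eta$.

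There is no real obstacle here. The only point to handle carefully is that the hypothesis concerns the $\limsup$ rather than a limit, and the continuity of $G$, which follows from the Lipschitz bound, resolves this at once.
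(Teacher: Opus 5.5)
Your proposal is correct and follows essentially the paper's approach. Both arguments use the Lipschitz bound to get $G(U_n)\rightarrow G(U)$ and then pass the hypothesis on the $\limsup$ to $G(U)$; your $\epsilon$-argument is simply an unpacked version of the paper's one-line identity $G(U)=\limsup_{n\rightarrow\infty}\{G(U_n)-(G(U_n)-G(U))\}\ge\eta$.
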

\begin{proof}
    Because
    \[
    |G(U_n)-G(U)|\lesssim \|U_n-U\|_{X}\ \rightarrow\ 0, 
    \]
    we have
    \[
    G(U)=\limsup_{n\rightarrow \infty}\left\{G(U_n)-(G(U_n)-G(U))\right\}
    \ge \limsup_{n\rightarrow \infty}G(U_n)\ge \eta. 
    \]
\end{proof}
\begin{prop}\label{stab_M_star}
Let $1\le d\le 3$, $\sigma_1,\sigma_2,\sigma_3>0$, and $c\in \R^d$. Assume 
    \[
    \begin{cases}
        (\omega_1,\omega_2)\in \Omega_c& {\rm if}\ c\ne 0,\\
        \omega_1>0,\ \omega_2\ge 0& {\rm if}\ c=0.
        \end{cases}
        \]
    Here, $\omega_2=0$ is allowed only for $d=3$. 
For $\eta >0$,  if $\mathcal{M}_{\omega_1,\omega_2,c}^*(\eta)\ne \emptyset$, 
then $\mathcal{M}_{\omega_1,\omega_2,c}^*(\eta)$ is orbitally stable. 
\end{prop}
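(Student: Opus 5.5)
We argue by contradiction, following the standard Cazenave--Lions scheme combined with the invariant sets from Proposition~\ref{ini_set_B} and Proposition~\ref{Apm_inv_flow}. Suppose $\mathcal{M}^*:=\mathcal{M}_{\omega_1,\omega_2,c}^*(\eta)$ is not orbitally stable. Then there exist $\epsilon_0>0$, initial data $U_{0,n}\in\mathcal{H}^1(\R^d)$ with $\inf_{\Phi\in\mathcal{M}^*}\|U_{0,n}-\Phi\|_{X_{\omega_1,\omega_2,c}}\to0$, and times $t_n$ with $\inf_{\Phi\in\mathcal{M}^*}\|U_n(t_n)-\Phi\|_{X_{\omega_1,\omega_2,c}}\ge\epsilon_0$.

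\emph{Global existence.} Since $1\le d\le 3$, global existence is automatic by Theorem~\ref{Theorem_NP}\,(ii). Still, the sets of Proposition~\ref{ini_set_B} will be needed below. For fixed $\tau_0\in(0,\tau_*)$ and $n$ large, Proposition~\ref{ini_set_B} gives $U_{0,n}\in\mathcal{B}^+_{\omega_{1+},\omega_{2+},c_+}\cap\mathcal{B}^-_{\omega_{1-},\omega_{2-},c_-}$. By Proposition~\ref{rel_A_B_set} this set equals $\mathcal{A}^+_{+}\cap\mathcal{A}^-_{-}$, which is invariant under the flow by Proposition~\ref{Apm_inv_flow}. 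Hence
\[
-2\mathfrak{m}(\omega_{1+},\omega_{2+},c_+)\le N(U_n(t))<-2\mathfrak{m}(\omega_{1-},\omega_{2-},c_-)
\]
for all $t$. Take $\tau_0=\tau_0(n)\to0$ along a diagonal choice (allowed because $\delta(\tau_0)\to0$ as $\tau_0\to0$). Continuity of $h$ then gives $N(U_n(t_n))\to-2h(0)=-2\mathfrak{m}_{\omega_1,\omega_2,c}$.

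\emph{Minimizing sequence.} Set $V_n:=U_n(t_n)$. By conservation of $E,Q_1,Q_2,P$ and continuity of $S_{\omega_1,\omega_2,c}$ in $X_{\omega_1,\omega_2,c}$ (estimates (\ref{Qj_diff_est}), (\ref{P_diff_est}), (\ref{N_diff_est})),
\[
S_{\omega_1,\omega_2,c}(V_n)=S_{\omega_1,\omega_2,c}(U_{0,n})\to\mathfrak{m}_{\omega_1,\omega_2,c}.
\]
Then (\ref{rel_N_K_S_eq}) yields $K_{\omega_1,\omega_2,c}(V_n)=N(V_n)+2S(V_n)\to0$. Passing to $\widetilde V_n=\Lambda_{-c}V_n$, Proposition~\ref{mini_seq_conv} (case (A), or case (B) when $d=3$, $\omega_2=0$, $c=0$) provides $x_n$ and $W\in\widetilde{\mathcal{M}}$ with $\Lambda(-c\cdot x_n)\widetilde V_n(\cdot-x_n)\to W$ strongly along a subsequence. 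Undoing $\Lambda_c$, the phase/translation $\Phi_n:=(e^{-i\frac{\sigma_j}{2}c\cdot x_n}v_{jn}(\cdot-x_n))_j$ converges to $\Phi:=\Lambda_cW\in\mathcal{M}_{\omega_1,\omega_2,c}$ in $X_{\omega_1,\omega_2,c}$.

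\emph{Membership in $\mathcal{M}^*$.} This is the main obstacle: we must show that $\Phi$ in fact lies in $\mathcal{M}^*$ and not merely in $\mathcal{M}$. The plan is to use
\[
G(U):=(8-2d)(\omega_1Q_1+\omega_2Q_2)(U)+(5-d)c\cdot P(U).
\]
This functional is conserved by the flow. Moreover, thanks to the gauge factor $e^{i\frac{\sigma_j}{2}c\cdot x_n}$ combined with $\sigma_1+\sigma_2$ versus $\sigma_3$ phases, it is invariant under the above translation/phase operation; one should check this for $P$, where the constant phase rotations in each component leave $P$ unchanged. Finally, $G$ is locally Lipschitz on bounded sets of $X_{\omega_1,\omega_2,c}$. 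Choose $\Psi_n\in\mathcal{M}^*$ with $\|U_{0,n}-\Psi_n\|\to0$. Then $\liminf G(U_{0,n})\ge\eta$, hence $\limsup G(\Phi_n)\ge\eta$. Lemma~\ref{lim_func_low}, applied on the bounded set, gives $G(\Phi)\ge\eta$, that is, $\Phi\in\mathcal{M}^*$.

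\emph{Contradiction.} Since $\mathcal{M}^*$ is invariant under the symmetry group (translation and the phases $\Lambda(\theta)$), we get $\inf_{\Psi\in\mathcal{M}^*}\|V_n-\Psi\|\le\|\Phi_n-\Phi\|\to0$. This contradicts $\epsilon_0>0$. The delicate points to verify are the invariance of $\mathcal{M}^*$ and of the $X$-norm under these symmetries when $c\ne0$, and the uniformity of the Lipschitz constants on bounded sets.
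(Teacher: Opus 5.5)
Your outline follows the paper's proof. You argue by contradiction and use Proposition~\ref{ini_set_B} with the flow-invariance of $\mathcal{B}^{+}_{\omega_{1+},\omega_{2+},c_+}\cap\mathcal{B}^{-}_{\omega_{1-},\omega_{2-},c_-}$ to squeeze $N(U_n(t_n))\to-2\mathfrak{m}_{\omega_1,\omega_2,c}$. You then deduce $K_{\omega_1,\omega_2,c}(U_n(t_n))\to0$, apply Proposition~\ref{mini_seq_conv}, and finish with the conserved functional $G$ and Lemma~\ref{lim_func_low}. Invoking Theorem~\ref{Theorem_NP}\,(ii) for global existence is fine. The diagonal choice of $\tau_0(n)$ is possible because $\delta(\tau_0)>0$ for each fixed $\tau_0$, not because $\delta(\tau_0)\to0$; the paper avoids the issue by putting $\delta_n=\delta(1/n)$ directly into the negation of stability.

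There is, however, one false step in your symmetry bookkeeping (relevant when $c\neq0$). You assert that $\mathcal{M}^*_{\omega_1,\omega_2,c}(\eta)$ is invariant under the constant phases $\Lambda(\theta)$. This fails unless $\mu=\sigma_1+\sigma_2-\sigma_3=0$. The quantities $L,Q_1,Q_2,P$ and the $X_{\omega_1,\omega_2,c}$-norm are unchanged, but $N(\Lambda(\theta)U)=-{\rm Re}\,e^{i\frac{\mu}{2}\theta}(u_1u_2,u_3)_{L^2}$. Hence for $\Phi\in\mathcal{M}_{\omega_1,\omega_2,c}$ the map $\theta\mapsto K_{\omega_1,\omega_2,c}(\Lambda(\theta)\Phi)$ is a nonconstant sinusoid vanishing at $\theta=0$. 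So $\Lambda(\theta)\Phi\notin\mathcal{M}_{\omega_1,\omega_2,c}$ for generic $\theta$.

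Relatedly, your $\Phi_n$ carries a spurious phase. Fortunately none is needed: with $V_n=U_n(t_n)$,
\[
\Lambda(-c\cdot x_n)\bigl(\Lambda_{-c}V_n\bigr)(\cdot-x_n)=\Lambda_{-c}\bigl(V_n(\cdot-x_n)\bigr),
\]
since $e^{-i\frac{\sigma_j}{2}c\cdot x_n}e^{-i\frac{\sigma_j}{2}c\cdot(x-x_n)}=e^{-i\frac{\sigma_j}{2}c\cdot x}$. Undoing $\Lambda_c$ therefore gives pure translation convergence $U_n(t_n,\cdot-x_n)\to\Lambda_cW$ in $X_{\omega_1,\omega_2,c}$, which is (\ref{un_w_c_conv2}) in the paper. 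By contrast, your $\Phi_n=\Lambda(-c\cdot x_n)U_n(t_n,\cdot-x_n)$ (reading $v_{jn}$ as the components of $V_n$) differs from this by constant phases that need not tend to $1$. Its limit points are then phase-rotated copies of $\Lambda_cW$, which in general are not ground states.

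With this correction you need only translation invariance of the $X_{\omega_1,\omega_2,c}$-norm, of $G$, and of $\mathcal{M}^*_{\omega_1,\omega_2,c}(\eta)$. All of these hold, and the argument then closes exactly as in the paper.
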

\begin{proof}
For $n\in \N$, we put
\[
\omega_{1n\pm}:=\left(\sqrt{\omega_1}\pm\frac{1}{n}\right)^2,\ \ 
\omega_{2n\pm}:=\frac{\omega_2}{\omega_1}\left(\sqrt{\omega_1}\pm\frac{1}{n}\right)^2,\ \ 
c_{n\pm}:=\frac{c}{\sqrt{\omega_1}}\left(\sqrt{\omega_1}\pm\frac{1}{n}\right). 
\] 
    Let $\tau_*$ be given in Lemma~\ref{F_low_b}. 
    For any $n\in \N$ with $n>\frac{1}{\tau_*}$ (namely $\frac{1}{n}\in (0,\tau_*)$), 
    let $\delta_n:=\delta(\frac{1}{n},\omega_1,\omega_2,c,\eta)$ 
    satisfying $\delta_n\rightarrow 0$ as $n\rightarrow \infty$
    be given in Proposition~\ref{ini_set_B} as $\tau_0=\frac{1}{n}$.     
    We prove the orbital stability of $\mathcal{M}_{\omega_1,\omega_2,c}^*(\eta)$ by contradiction. 
   Note that if $U_0\in \mathcal{H}^1(\R^d)$ satisfies
\[
\inf_{\Phi\in \mathcal{M}_{\omega_1,\omega_2,c}^*(\eta)}\|U_0-\Phi\|_{X_{\omega_1,\omega_2,c}}<\delta_n, 
\]
then it holds that $U_0\in \mathcal{B}_{\omega_{1n+},\omega_{2n+},c_{n+}}^+\cap \mathcal{B}_{\omega_{1n-},\omega_{2n-},c_{n-}}^-$ by Proposition~\ref{ini_set_B}. 
Furthermore by Propositions~\ref{Ap_Gwp} 
and ~\ref{rel_A_B_set}, 
the solution $U(t)$ to {\rm (\ref{dnls2})} with $U(0)=U_0$ can be extended globally in time. 

Now, we assume that $\mathcal{M}_{\omega_1,\omega_2,c}^*(\eta)$ is unstable. 
Then, there exists $\epsilon>0$ such that the following property holds:

    For each $n\in \N$ with $n>\frac{1}{\tau_*}$, 
    there exists $U_{n,0}\in \mathcal{H}^1(\R^d)$ 
    such that
    \begin{equation}\label{U0n_phi_est}
\inf_{\Phi\in \mathcal{M}_{\omega_1,\omega_2,c}^*(\eta)}\|U_{n,0}-\Phi\|_{X_{\omega_1,\omega_2,c}}<\delta_n
\end{equation}
and the global solution $U_n(t)$ to {\rm (\ref{dnls2})} 
with $U_n(0)=U_{n,0}$ satisfies
    \begin{equation}\label{Un_phi_est}
    \inf_{\Phi\in \mathcal{M}_{\omega_1,\omega_2,c}^*(\eta)}\|U_n(t_n)-\Phi\|_{X_{\omega_1,\omega_2, c}}\ge \epsilon
    \end{equation}
    for some $t_n>0$. 
    
    Because $U_{n,0}\in \mathcal{B}_{\omega_{1n+},\omega_{2n+},c_{n+}}^+\cap \mathcal{B}_{\omega_{1n-},\omega_{2n-},c_{n-}}^-$ 
    and the sets $\mathcal{B}_{\omega_{1n\pm},\omega_{2n\pm},c_{n\pm}}^{\pm}$ 
    are invariant under the flow of {\rm (\ref{dnls2})}, 
    we obtain $U_n(t_n)\in \mathcal{B}_{\omega_{1n+},\omega_{2n+},c_{n+}}^+\cap \mathcal{B}_{\omega_{1n-},\omega_{2n-},c_{n-}}^-$. 
    Namely, it holds that
    \[
    -2h\left(-\frac{1}{n}\right)\le N(U_n(t_n))<-2h\left(\frac{1}{n}\right)
    \]
    and we obtain
    \[
    \lim_{n\rightarrow \infty}N(U_n(t_n))=-2h(0)=-2\mathfrak{m}_{\omega_1,\omega_2,c}
    \]
    by the continuity of $h$. 
    On the other hand, by (\ref{U0n_phi_est}), 
    there exists $\Phi_{n}\in \mathcal{M}_{\omega_1,\omega_2,c}^*(\eta)$
    such that
    \begin{equation}\label{uno_phin_app}
    \|U_{n,0}-\Phi_{n}\|_{X_{\omega_1,\omega_2,c}}<2\delta_n.
    \end{equation}
    Because $S_{\omega_1,\omega_2,c}$ is a conserved quantity 
    and $\Phi_{n}$ is a minimizer of $S_{\omega_1,\omega_2,c}$, 
    we obtain
    \[
    |S_{\omega_1,\omega_2,c}(U_n(t_n))-\mathfrak{m}_{\omega_1,\omega_2,c}|
    =|S_{\omega_1,\omega_2,c}(U_{n,0})-S_{\omega_1,\omega_2,c}(\Phi_{n})|
    \lesssim \delta_n\ \rightarrow\ 0\ (n\rightarrow \infty), 
    \]
    where we used the estimate such as 
    (\ref{Qj_diff_est}), (\ref{P_diff_est}), and (\ref{N_diff_est}).
    Therefore, we also have
    \[
    K_{\omega_1,\omega_2,c}(U_n(t_n))
    =2S_{\omega_1,\omega_2,c}(U_n(t_n))+N(U_n(t_n))
    \ \rightarrow\ 0\ (n\rightarrow \infty)
    \]
    and $\{U_n(t_n)\}$ becomes a minimizing sequence 
    for $\mathfrak{m}_{\omega_1,\omega_2,c}$. 
    This says that the sequence 
    $\{V_n(t_n)\}$ defined by $U_n(t_n)=\Lambda_cV_n(t_n)$ 
    becomes a minimizing sequence for $\widetilde{\mathfrak{m}}_{\omega_1,\omega_2,c}$, 
    where $\Lambda_c$ is defined in (\ref{op_lambdac_def}).
    Therefore by Proposition~\ref{mini_seq_conv}, 
    there exists $\{x_n\}\subset \R^d$ and $\widetilde{W}\in \widetilde{\mathcal{M}}_{\omega_1,\omega_2,c}$
    such that $\{\Lambda(-c\cdot x_n)V_n(t_n,\cdot -x_n)\}$ 
    has a subsequence that converges to $\widetilde{W}$ strongly in $\widetilde{X}_{\omega_1,\omega_2,c}$, 
    where, $\Lambda (-c\cdot x_n)$ is defined in (\ref{phase_tr_def_2}). 
    We put $W:=\Lambda_c\widetilde{W}$. Then $W\in X_{\omega_1,\omega_2,c}$, 
    and up to subsequence, it holds that
    \[
    \lim_{n\rightarrow \infty}\|U_n(t_n,\cdot -x_n)-W\|_{X_{\omega_1,\omega_2,c}}=0. 
    \]
    It is equivalent to 
    \begin{equation}\label{un_w_c_conv2}
    \lim_{n\rightarrow \infty}\|U_n(t_n)-W(\cdot +x_n)\|_{X_{\omega_1,\omega_2,c}}=0. 
    \end{equation}
    If $W(\cdot +x_n)\in \mathcal{M}_{\omega_1,\omega_2,c}^*(\eta)$, 
    then (\ref{un_w_c_conv2}) contradicts to (\ref{Un_phi_est}). 

    To prove $W(\cdot +x_n)\in \mathcal{M}_{\omega_1,\omega_2,c}^*(\eta)$, we define the functional $G$ by
    \[
    G(U):=(8-2d)\omega_1Q_1(U)+(8-2d)\omega_2Q_2(U)+(5-d)c\cdot P(U). 
    \]
    Clearly, $|G(U)-G(U')|\lesssim \|U-U'\|_{X_{\omega_1,\omega_2,c}}$ holds for any $U,U'\in X_{\omega_1,\omega_2,c}$. 
    Because $Q_1$, $Q_2$, and $P$ are conserved quantities, and
    invariant under the translation, we obtain
    \[
    G(U_n(t_n,\cdot -x_n))
    =G(U_n(t_n))=G(U_{n,0}).
    \]
    Therefore by (\ref{uno_phin_app}) and $\Phi_n\in \mathcal{M}_{\omega_1,\omega_2,c}^*(\eta)$, 
    we have
    \[
    \limsup_{n\rightarrow \infty} G(U_n(t_n,\cdot -x_n))
    =\limsup_{n\rightarrow \infty}\left(G(\Phi_n)+O(\delta_n)\right)\ge \eta. 
    \]
    By applying Lemma~\ref{lim_func_low}, $W\in \mathcal{M}_{\omega_1,\omega_2,c}^*(\eta)$ and
    also $W(\cdot +x_n)\in \mathcal{M}_{\omega_1,\omega_2,c}^*(\eta)$ are obtained
    since
    \[
    G(W)=G(W(\cdot +x_n)). 
    \]
\end{proof}
\begin{proof}[Proof of {\rm Theorem~\ref{stab_thm}}]
By Proposition~\ref{stab_M_star}, it suffices to show that 
there exists $c_0=c_0(\omega_1,\omega_2)>0$ such that
if $|c|\le c_0$, then 
$\mathcal{M}_{\omega_1,\omega_2,c}^*(\eta)=\mathcal{M}_{\omega_1,\omega_2,c}$ 
holds for some $\eta >0$. 
It is achieved by proving the following {\rm (i)} and {\rm (ii)}
\begin{itemize}
    \item[{\rm (i)}] For any $\omega_1,\omega_2>0$, there exists $c_1=c_1(\omega_1,\omega_2)$ such that 
     \[
     A_{\omega_1,\omega_2}:=\sup_{0<|c|\le c_1}\sup_{\Phi \in \mathcal{M}_{\omega_1,\omega_2,c}}|P(\Phi)|<\infty.
     \]
    \item[{\rm (ii)}] For any $\omega_1>0$ and $\omega_2\ge 0$, there exists $c_2=c_2(\omega_1,\omega_2)$ such that 
     \[
     B_{\omega_1,\omega_2}:=\inf_{0<|c|\le c_2}\inf_{\Phi \in \mathcal{M}_{\omega_1,\omega_2,c}}(\omega_1Q_1(\Phi)+\omega_2 Q_2(\Phi))>0.
     \]
\end{itemize}
Indeed, by putting 
\[
c_3=c_3(\omega_1,\omega_2):=\frac{(4-d)B_{\omega_1,\omega_2}}{(5-d)A_{\omega_1,\omega_2}}\in (0,\infty],\ \ 
\eta=\eta(\omega_1,\omega_2):=(4-d)B_{\omega_1,\omega_2}\in (0,\infty), 
\]
we have
\[
\begin{split}
&(8-2d)\omega_1Q_1(\Phi)+(8-2d)\omega_2Q_2(\Phi)+(5-d)c\cdot P(\Phi)\\
&\ge (8-2d)B_{\omega_1,\omega_2}-(5-d)|c|A_{\omega_1,\omega_2}\\
&\ge \eta
\end{split}
\]
for any $\Phi\in \mathcal{M}_{\omega_1,\omega_2,c}$ with $|c|\le \min\{c_1,c_2,c_3\}$. 
Note that $c_3, \eta>0$ when $1\le d\le 3$. 
As seen below, 
we can choose $C>0$ independently of $\omega_1,\omega_2$ 
such that $B_{\omega_1,\omega_2}\ge C$. 

Now, we prove {\rm (i)}. 
Let $\Phi_{\omega_1,\omega_2}\in \mathcal{M}_{\omega_1,\omega_2,0}$, 
which exists by Theorem~\ref{ex_gs_1} even if $\omega_2=0$. 
Because $K_{\omega_1,\omega_2,0}(\Phi_{\omega_1,\omega_2})=0$ 
and $S_{\omega_1,\omega_2,0}(\Phi_{\omega_1,\omega_2})=\mathfrak{m}_{\omega_1,\omega_2,0}$, 
we have $N(\Phi_{\omega_1,\omega_2})=-2\mathfrak{m}_{\omega_1,\omega_2,0}\ (<0)$ 
by (\ref{rel_N_K_S_eq}) with $c=0$. 
Therefore, by the definition of $K_{\omega_1,\omega_2,c}$, for $\lambda\in \R$, 
it holds that
\[
\begin{split}
    &K_{\omega_1,\omega_2,c}(\lambda \Phi_{\omega_1,\omega_2})\\
&=2\lambda^2L(\Phi_{\omega_1,\omega_2})+3\lambda^3N(\Phi_{\omega_1,\omega_2})+2\lambda^2\omega_1Q_1(\Phi_{\omega_1,\omega_2})+2\lambda^2\omega_2Q_2(\Phi_{\omega_1,\omega_2})+2\lambda^2c\cdot P(\Phi_{\omega_1,\omega_2})\\
    &=\lambda^2K_{\omega_1,\omega_2,0}(\Phi_{\omega_1,\omega_2})+3\lambda^2(\lambda-1)N(\Phi_{\omega_1,\omega_2})+2\lambda^2c\cdot P(\Phi_{\omega_1,\omega_2})\\
    &=2\lambda^2\{-3(\lambda -1)\mathfrak{m}_{\omega_1,\omega_2,0}+c\cdot P(\Phi_{\omega_1,\omega_2})\}. 
\end{split}
\]
We set
\[
\lambda_0:=1+\frac{1}{3\mathfrak{m}_{\omega_1,\omega_2,0}}c\cdot P(\Phi_{\omega_1,\omega_2}). 
\]
Because $|P(\Phi_{\omega_1,\omega_2})|\le \frac{3d}{2}\|\Phi_{\omega_1,\omega_2}\|_{\mathcal{H}^1}^2$ holds by the Cauchy-Schwarz inequality, 
we have $0<\lambda_0<2$ if $|c|<\frac{2\mathfrak{m}_{\omega_1,\omega_2,0}}{d}\|\Phi_{\omega_1,\omega_2}\|_{\mathcal{H}^1}^{-2}$. 
Furthermore, $K_{\omega_1,\omega_2,c}(\lambda_0\Phi_{\omega_1,\omega_2})=0$ holds and it implies
\[
\begin{split}
\mathfrak{m}_{\omega_1,\omega_2,c}
&\le S_{\omega_1,\omega_2,c}(\lambda_0\Phi_{\omega_1,\omega_2})\\
&=\lambda_0^2L(\Phi_{\omega_1,\omega_2})+\lambda_0^3N(\Phi_{\omega_1,\omega_2})+\lambda_0^2\omega_1Q_1(\Phi_{\omega_1,\omega_2})+\lambda_0^2\omega_2Q_2(\Phi_{\omega_1,\omega_2})+\lambda_0^2c\cdot P(\Phi_{\omega_1,\omega_2})\\
&=\lambda_0^3\mathfrak{m}_{\omega_1,\omega_2,0}<8\mathfrak{m}_{\omega_1,\omega_2,0}. 
\end{split}
\]
Therefore, by Remark~\ref{rel_m_tilm} and Proposition~\ref{norm_X_H1_equi}, we obtain
\[
|P(\Phi)|\lesssim \|\Phi\|_{\mathcal{H}^1}^2
\lesssim \left(\min_{j=1,2,3}\frac{\omega_j}{2}\right)^{-1}
\|\Phi\|_{X_{\omega_1,\omega_2,c}}^2\sim \mathfrak{m}_{\omega_1,\omega_2,c}
\lesssim \mathfrak{m}_{\omega_1,\omega_2,0}<\infty
\]
for any $\Phi\in \mathcal{M}_{\omega_1,\omega_2,c}$
if $0<|c|^2<{\displaystyle\min_{j=1,2,3}}\frac{2\omega_j}{\sigma_j}$, 
where the implicit constants depend only on $\omega_1,\omega_2$. 

Finally, we  prove {\rm (ii)}. 
We first consider the case $d=1$ or $2$. 
By Remark~\ref{K_m_rel_U}, at least one of
\[
2\omega_1Q_1(\Phi)+2\omega_2Q_2(\Phi)\ge C/2\ \ {\rm or}\ \ 2L(\Phi)+2c\cdot P(\Phi)\ge C/2
\]
holds for any $\Phi\in \mathcal{M}_{\omega_1,\omega_2,c}$. 
If the former estimate holds, then the proof is finished. 
Therefore, we assume the latter estimate holds. 
Then, by $K_{\omega_1,\omega_2,c}(\Phi)=0$ and Proposition~\ref{Pohoz_id}, we obtain
\[
\begin{split}
2\omega_1Q_1(\Phi)+2\omega_2Q_2(\Phi)&=\frac{2(6-d)}{d}L(\Phi)+\frac{2(3-d)}{d}c\cdot P(\Phi)\\
&\ge \frac{3-d}{d}(2L(\Phi)+2c\cdot P(\Phi))\ge \frac{3-d}{2d}C.
\end{split}
\]
Next, we consider the case $d=3$. 
By $K_{\omega_1,\omega_2,c}(\Phi)=0$ and Proposition~\ref{Pohoz_id}, we obtain
\[
2L(\Phi)=2\omega_1Q_1(\Phi)+2\omega_2Q_2(\Phi)
\]
for any $\Phi\in \mathcal{M}_{\omega_1,\omega_2,c}$. 
Therefore, we have
\[
\begin{split}
2\omega_1Q_1(\Phi)+2\omega_2Q_2(\Phi)
&=L(\Phi)+\omega_1Q_1(\Phi)+\omega_2Q_2(\Phi)\\
&=\frac{1}{2}\sum_{j=1}^3\frac{1}{\sigma_j}\|\nabla \varphi_j\|_{L^2}^2+\frac{1}{2}\sum_{j=1}^3\omega_j\|\varphi_j\|_{L^2}^2. 
\end{split}
\]
Furthermore, it holds that
\[
\begin{split}
    &\frac{1}{\sigma_j}\|\nabla \varphi_j\|_{L^2}^2+\omega_j\|\varphi_j\|_{L^2}^2\\
    &\gtrsim \frac{1}{\sigma_j}\left\|\nabla \varphi_j-i\frac{\sigma_j}{2}c\varphi_j\right\|_{L^2}^2+\left(\omega_j+\frac{\sigma_j}{4}|c|^2\right)\|\varphi_j\|_{L^2}^2
    -|c|\left\|\nabla\varphi_j-i\frac{\sigma_j}{2}c\varphi_j\right\|_{L^2}\|\varphi_j\|_{L^2}
\end{split}
\]
by the triangle inequality, and
\[
|c|\left\|\nabla\varphi_j-i\frac{\sigma_j}{2}c\varphi_j\right\|_{L^2}\|\varphi_j\|_{L^2}
\le \frac{1}{2\sigma_j}\left\|\nabla\varphi_j-i\frac{\sigma_j}{2}c\varphi_j\right\|_{L^2}^2+\frac{\sigma_j}{2}|c|^2\|\varphi_j\|_{L^2}^2
\]
by the Young inequality. 
As a result, we obtain
\[
\begin{split}
&2\omega_1Q_1(\Phi)+2\omega_2Q_2(\Phi)\ge \frac{1}{4}\|\Phi\|_{X_{\omega_1,\omega_2,c}}^2\ge C. 
\end{split}
\]
by Proposition~\ref{pos_pr_Neh} (see, also Remark~\ref{K_m_rel_U}).
\end{proof}
\appendix
\section{Regularity of solutions to the stationary problem under the condition {\rm $(C)'$}}
\begin{prop}
    Let $d=4$ or $5$, $\sigma_1,\sigma_2,\sigma_3>0$, and $c\in \R^d\setminus \{0\}$. 
    If $\omega_1=\frac{\sigma_1}{4}|c|^2$, $\omega_2=\frac{\sigma_2}{4}|c|^2$, 
    and $\omega_3> \frac{\sigma_3}{4}|c|^2$ hold, 
    then any weak solution $\Psi =(\psi_1,\psi_2,\psi_3) \in \dot{H}^1(\R^d)\times \dot{H}^1(\R^d)\times H^1(\R^d)$ to {\rm (\ref{ellip_sys2})}
    satisfies $\nabla\psi_1, \nabla \psi_2\in \bigcap_{m=0}^{\infty}H^m(\R^d)$ and $\psi_3\in \bigcap_{m=0}^{\infty}H^m(\R^d)$.  
\end{prop}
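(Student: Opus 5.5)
We will argue by an elliptic bootstrap, in the same spirit as the proof of Proposition~\ref{regu_prop_ws}. Put $\gamma_3:=\omega_3-\frac{\sigma_3}{4}|c|^2>0$ and $\mu=\sigma_1+\sigma_2-\sigma_3$. The system (\ref{ellip_sys2}) then reads
\[
-\frac{1}{\sigma_1}\Delta\psi_1=e^{-i\frac{\mu}{2}c\cdot x}\overline{\psi_2}\psi_3,\quad
-\frac{1}{\sigma_2}\Delta\psi_2=e^{-i\frac{\mu}{2}c\cdot x}\overline{\psi_1}\psi_3,\quad
\left(-\frac{1}{\sigma_3}\Delta+\gamma_3\right)\psi_3=e^{i\frac{\mu}{2}c\cdot x}\psi_1\psi_2 .
\]
We start from $\psi_1,\psi_2\in L^{\frac{2d}{d-2}}$ and $\psi_3\in H^1\subset L^2\cap L^{\frac{2d}{d-2}}$. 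The first aim is to place $\psi_3$ in $H^2$, and $\psi_1,\psi_2$ in $\dot H^2$.

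\textbf{Step 1: $\psi_3\in H^2$.} Here we need $\psi_1\psi_2\in L^2$. H\"older gives $\psi_1\psi_2\in L^{\frac{d}{d-2}}$. This is $L^2$ when $d=4$ and $L^{5/3}$ when $d=5$, so in general we do not reach $L^2$ directly and must gain integrability first.

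\textbf{Step 2: gain integrability of $\psi_3$.} Since $\psi_1\psi_2\in L^{\frac{d}{d-2}}$ and the Bessel potential $(-\frac{1}{\sigma_3}\Delta+\gamma_3)^{-1}$ maps $L^p$ to $W^{2,p}$, we get $\psi_3\in W^{2,\frac{d}{d-2}}\cap H^1$. For $d=4$ this means $\psi_3\in H^2$. For $d=5$ it gives $\psi_3\in W^{2,5/3}\hookrightarrow L^{q}$ for all $q\in[5/3,\infty)$, because $2\cdot\frac{5}{3}>5$ fails while $\frac{5}{3}<\frac52$, so in fact $L^q$ for $q\le 5$. Combined with $H^1$, this yields $\psi_3\in L^q$ for $2\le q\le 5$.

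\textbf{Step 3: gain integrability of $\psi_1,\psi_2$.} From $-\Delta\psi_1\sim\overline{\psi_2}\psi_3$ and Hardy--Littlewood--Sobolev we obtain $\nabla\psi_1=\nabla(-\Delta)^{-1}(\cdots)\in L^r$ whenever $\overline{\psi_2}\psi_3\in L^s$ with $\frac1r=\frac1s-\frac1d$, and $\psi_1\in\dot W^{2,s}$ for $1<s<\infty$. We take $\psi_3\in L^{d/2}\cap L^d$, which is available from Step 2 (for $d=5$, $L^{5/2}$ and $L^5$). Then $\overline{\psi_2}\psi_3\in L^{s}$ for a range of $s$ containing $\frac{2d}{d+2}$, which gives $\psi_1,\psi_2\in\dot H^2$. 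Sobolev then puts $\psi_1,\psi_2$ in $L^{\frac{2d}{d-4}}$ (any large $L^q$ when $d=4$, and $L^{10}$ when $d=5$). Feeding this back, $\psi_1\psi_2\in L^{\frac{2d}{d-2}}\cdot L^{\frac{2d}{d-4}}$-type products lie in $L^2$ (for $d=5$, $L^{10/3}\cdot L^{10}$ after interpolation $\subset L^{5/2}$; more simply $L^{10}\cdot L^{10/3}$ with $\psi_2\in L^{10/3}$ by interpolating $L^{10/3}$ between $L^{10/3}\subset$ range $[10/3,10]$). Hence $\psi_3\in H^2$. The bookkeeping of exponents is the main obstacle here: we must keep all indices inside the admissible HLS/Calder\'on--Zygmund ranges, since $\psi_1,\psi_2$ never lie in $L^2$.

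\textbf{Step 4: higher regularity.} We induct on $m\ge2$ exactly as in (\ref{p2_m1_est}). To bound $\|\psi_3\|_{H^{m+1}}$ we control $\|\psi_1\psi_2\|_{H^{m-1}}$ by the Leibniz rule, estimating each term $\|(\nabla^k\psi_1)(\nabla^{m-1-k}\psi_2)\|_{L^2}$ via H\"older with $\nabla^k\psi_j\in L^{\frac{2d}{d-2}}$ (from $\nabla\psi_j\in H^{m-1}$) and $L^{d}$ or $L^{10}$ bounds (from $\dot H^2$). To bound $\|\psi_1\|_{\dot H^{m+1}}\lesssim\|\psi_2\psi_3\|_{\dot H^{m-1}}$, we put derivatives on $\psi_3\in H^m$ in $L^{d}$ or $L^{5/2}$ and on $\psi_2$ in $L^{\frac{2d}{d-2}}$ or $L^{10}$, just as in the $d=5$ case of Proposition~\ref{regu_prop_ws}. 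The zeroth-order term $\psi_2\psi_3\in L^2$ only matters for $\psi_3$, and for $\psi_1,\psi_2$ we need only homogeneous norms of order $\ge1$. Induction then gives $\nabla\psi_1,\nabla\psi_2,\psi_3\in\bigcap_m H^m$. The symmetric cases of ${\rm (C)'}$, where the $\dot H^1$ pair is $(\psi_2,\psi_3)$ or $(\psi_1,\psi_3)$, are handled identically.
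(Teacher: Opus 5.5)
Your argument is correct in substance. It reaches the same intermediate target as the paper, $(\psi_1,\psi_2,\psi_3)\in\dot H^2\times\dot H^2\times H^2$, and from there your induction is the paper's. The base step, however, runs in the opposite order.

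\emph{The paper's order.} It first applies Calder\'on--Zygmund to the two massless equations. This gives $\psi_1,\psi_2\in\dot W^{2,p_0}$ for every $p_0\in[\frac{d}{d-1},\frac{d}{d-2}]$. It then gets $\psi_3\in H^2$ from $\dot W^{2,\frac{4d}{d+8}}\hookrightarrow L^4$. Only afterwards, for $d=5$, does it upgrade $\psi_1,\psi_2$ to $\dot H^2$, using $\dot W^{2,5/4}\hookrightarrow L^{5/2}$ and $H^2\hookrightarrow L^{10}$.

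\emph{Your order.} You bootstrap through the massive component first. The Bessel estimate $\psi_3\in W^{2,5/3}(\R^5)\hookrightarrow L^5$ gives $\psi_2\psi_3\in L^{10/3}\cdot L^5\subset L^2$. Hence $\psi_1,\psi_2\in\dot H^2$ by Plancherel alone, and only then $\psi_3\in H^2$.

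\emph{What each buys.} Your route needs one $L^p$ elliptic estimate, for the nondegenerate operator, and no $\dot W^{2,p}$ theory for the homogeneous Laplacian with $p\neq 2$. The paper's route produces the whole family $\dot W^{2,p_0}$, which it reuses in the induction via $\dot W^{2,d/3}\hookrightarrow L^d$; you obtain that $L^d$ bound from $\dot H^1\cap\dot H^2$ instead. For $d=4$ your route is immediate, since $\dot H^1(\R^4)$ and $H^1(\R^4)$ both embed into $L^4$, which puts both products in $L^2$ at once.

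Three slips in the write-up should be fixed.
\begin{itemize}
\item In Step 3, the exponent that gives $\dot H^2$ is $s=2$, coming from $\psi_3\in L^d$. The endpoint $s=\frac{2d}{d+2}$, coming from $\psi_3\in L^{d/2}$, only gives $\dot W^{2,\frac{2d}{d+2}}$. That recovers $\nabla\psi_1\in L^2$, i.e.\ nothing new.
\item For $d=5$, the pairing $L^{10/3}\cdot L^{10}$ lands in $L^{5/2}$, not $L^2$, as your own parenthetical indicates. Use $\psi_1\in L^{10/3}$ with $\psi_2\in L^5$, or $L^4\cdot L^4$. Both are available because $\dot H^1\cap\dot H^2(\R^5)\hookrightarrow L^q$ for $q\in[\frac{10}{3},10]$. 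Alternatively, interpolate the $L^{5/2}$ bound with the $L^{5/3}$ bound from Step 1.
\item In Step 4, the zeroth-order term that matters for $\psi_3$ is $\psi_1\psi_2$, not $\psi_2\psi_3$.
\end{itemize}
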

\begin{proof}
    Let $p_0\in \left[\frac{d}{d-1},\frac{d}{d-2}\right]$ and put
    \[
    q_0:=\frac{2dp_0}{2d-p_0(d-2)}. 
    \]
    Then, we have $q_0\ge 2$ and
    \[
    1\ge \frac{d}{2}-\frac{d}{q_0}. 
    \]
    Therefore, by the H\"older inequality and the Sobolev embeddings $H^1(\R^d)\hookrightarrow L^{q_0}(\R^d)$, 
    $\dot{H}^1(\R^d)\hookrightarrow L^{\frac{2d}{d-2}}(\R^d)$, it holds that
    \[
    \|\psi_1\|_{\dot{W}^{2,p_0}}=\left\|\left(-\frac{1}{\sigma_1}\Delta\right)^{-1}(e^{-i\frac{\mu}{2}c\cdot x}\overline{\psi_2}\psi_3)\right\|_{\dot{W}^{2,p_0}}
    \sim \|\psi_2\psi_3\|_{L^{p_0}}\le \|\psi_2\|_{L^{\frac{2d}{d-2}}}\|\psi_3\|_{L^{q_0}}
    \lesssim \|\psi_2\|_{\dot{H}^1}\|\psi_3\|_{H^1}. 
    \]
    It says that $\psi_1\in \dot{W}^{2,p_0}$ holds for any $p_0\in \left[\frac{d}{d-1},\frac{d}{d-2}\right]$. 
    Similarly, $\psi_2\in \dot{W}^{2,p_0}$ holds for any $p_0\in \left[\frac{d}{d-1},\frac{d}{d-2}\right]$. 
    In particular, we obtain $\psi_1,\psi_2\in \dot{H}^2(\R^4)$ 
    because $2\in \left[\frac{d}{d-1},\frac{d}{d-2}\right]$ holds when $d=4$. 

    Next, we put $r_0:=\frac{4d}{8+d}$. Note that
    \[
    2=\frac{d}{r_0}-\frac{d}{4}
    \]
    holds. By the H\"older inequality and the Sobolev embedding $\dot{W}^{2,r_0}(\R^d)\hookrightarrow L^{4}(\R^d)$, 
    we obtain
    \[
    \begin{split}
    \|\psi_3\|_{H^2}&=\left\|\left(-\frac{1}{\sigma_3}\Delta +\omega_3-\frac{\sigma_3}{4}|c|^2\right)^{-1}(e^{i\frac{\mu}{2}c\cdot x}\psi_1\psi_2)\right\|_{H^2}\\
    &\sim \|\psi_1\psi_2\|_{L^{2}}\le \|\psi_1\|_{L^{4}}\|\psi_2\|_{L^{4}}
    \lesssim \|\psi_1\|_{\dot{W}^{2,r_0}}\|\psi_2\|_{\dot{W}^{2,r_0}}. 
    \end{split}
    \]
    Because $r_0\in \left[\frac{d}{d-1},\frac{d}{d-2}\right]$ holds, 
    we have $\psi_3\in H^2(\R^d)$. 
    Furthermore, when $d=5$, 
    by the H\"older inequality and the Sobolev embeddings $H^2(\R^5)\hookrightarrow L^{10}(\R^5)$, 
    $\dot{W}^{2,\frac{5}{4}}(\R^5)\hookrightarrow L^{\frac{5}{2}}(\R^5)$, it holds that
    \[
    \begin{split}
    \|\psi_1\|_{\dot{H}^{2}}&=\left\|\left(-\frac{1}{\sigma_1}\Delta\right)^{-1}(e^{-i\frac{\mu}{2}c\cdot x}\overline{\psi_2}\psi_3)\right\|_{\dot{H}^{2}}\\
    &\sim \|\psi_2\psi_3\|_{L^{2}}\le \|\psi_2\|_{L^{\frac{5}{2}}}\|\psi_3\|_{L^{10}}
    \lesssim \|\psi_2\|_{\dot{W}^{2,\frac{5}{4}}}\|\psi_3\|_{H^2}. 
    \end{split}
    \]
    It says that $\psi_1\in \dot{H}^2(\R^5)$ because $\frac{5}{4}\in \left[\frac{d}{d-1},\frac{d}{d-2}\right]$ 
    holds when $d=5$. Similarly, $\psi_2\in \dot{H}^2(\R^5)$ holds. 

    From the above argument, we obtain $(\psi_1,\psi_2,\psi_3)\in \dot{H}^2(\R^d)\times \dot{H}^2(\R^d)\times H^2(\R^d)$
    for $d=4$ or $5$. By the induction such as in the proof of Proposition~\ref{regu_prop_ws}, 
    we have the conclusion. 
    Indeed, for $m\ge 2$, we can get $\psi_1,\psi_2\in \dot{H}^{m+1}(\R^d)$ 
    by the same argument in the proof of Proposition~\ref{regu_prop_ws}. 
    On the other hand, we have
    \[
    \begin{split}
    \|\psi_3\|_{\dot{H}^{m+1}}
    &=\left\|\left(-\frac{1}{\sigma_3}\Delta +\omega_3-\frac{\sigma_3}{4}|c|^2\right)^{-1}(e^{i\frac{\mu}{2}c\cdot x}\psi_1\psi_2)\right\|_{\dot{H}^{m+1}}\\
    &\lesssim \|\psi_1\psi_2\|_{H^{m-1}}
    \lesssim \|\psi_1\psi_2\|_{L^2}+\|(|\nabla|^{m-1}\psi_1)\psi_2\|_{L^{2}}
    +\|\psi_1|\nabla |^{m-1}\psi_2\|_{L^{2}}. 
    \end{split}
    \]
    The first term of the right-hand-side can be treated by the same way as above. 
    To control the second and third terms of the right-hand-side, 
    we use the H\"older inequality and the Sobolev embeddings $\dot{W}^{2,\frac{d}{3}}(\R^d)\hookrightarrow L^{d}(\R^d)$, $H^1(\R^d)\hookrightarrow L^{\frac{2d}{d-2}}(\R^d)$, 
    and we obtain
    \[
    \|\psi_k|\nabla |^{m-1}\psi_l\|_{L^2}\le \|\psi_k\|_{L^{d}}\||\nabla|^{m-1}\psi_l\|_{L^{\frac{2d}{d-2}}}
    \lesssim \|\psi_k\|_{\dot{W}^{2,\frac{d}{3}}}\||\nabla|^m\psi_l\|_{L^2}
    \]
    for $(k,l)=(1,2),(2,1)$. 
    It says $\psi_3\in \dot{H}^{m+1}(\R^d)$ because $\frac{d}{3}\in \left[\frac{d}{d-1},\frac{d}{d-2}\right]$ 
    holds when $d=4$ or $5$. 
\end{proof}

 \subsection*{Acknowledgement}
The first author is supported by JSPS KAKENHI Grant Number JP25K07071. 
The second author is supported by JSPS KAKENHI Grant Numbers JP25K24910 and JP23K03174. 
 
 \subsection*{Data availability}
Data sharing not applicable to this article as no datasets were generated or analyzed during the current study.

\subsection*{Conflict of interest}
The authors declare that they have no conflict of interest.



\begin{thebibliography}{30}
\bibitem{Ardila18}
Ardila. A. H, 
    {\it Orbital stability of standing waves for a system of nonlinear Schr\"odinger equations with
three wave interaction}, 
    Nonlinear Anal. {\bf 167} (2018), 1--20.

\bibitem{Beg02}
    P. B\'egout,
    {\it Necessary conditions and suﬃcient conditions for global existence in the nonlinear Schr\"odinger equation}, Adv. Math. Sci. Appl. {\bf 12} (2002), 817--827.

\bibitem{CC04}
    M. Colin and T. Colin,
    {\it On a quasilinear Zakharov system describing laser-plasma interactions}, Diff. Int. Equas., {\bf 17} (2004), 297--330.

\bibitem{CCO09}
    M. Colin, T. Colin and M. Ohta,
    {\it Stability of solitary waves for a system of nonlinear Schr\"odinger equations with three wave interaction}, Ann. Inst. H. Poincar\'e C Anal. Non Lin\'eaire., {\bf 26} (2009), 2211--2226.


\bibitem{FLYY26}
L. Forcella, X. Luo, T. Yang, and X. Yang, 
    {\it Standing waves for a Schr\"odinger system with three waves interaction}, 
    Math.Ann. {\bf 394} (2026), no.2, Paper No. 30, 43pp.

\bibitem{FHI24}
N. Fukaya, M. Hayashi and T. Inui, {\it Traveling waves for a nonlinear Schr\"odinger system with quadratic interaction},  Math. Ann. {\bf 388} (2024), no. 2, 1357–1378. 

\bibitem{Hapre}
M. Hamano, {\it Global dynamics below the ground state for the quadratic Schr\"odinger system in 5d}, 
arXiv:1805.12245. 

\bibitem{HIN21}
M. Hamano, T. Inui, and K. Nishimura, 
{\it Scattering for the quadratic nonlinear Schr\"odinger system in $\R^5$ without mass-resonance condition}, 
Funkcial. Ekvac. {\bf 64} (2021), no. 3, 261–291.

\bibitem{HM21}
M. Hamano and S. Masaki, 
{\it A sharp scattering threshold level for mass-subcritical nonlinear Schr\"odinger system}, 
Discrete Contin. Dyn. Syst. {\bf 41} (2021), no. 3, 1415–1447. 

\bibitem{HOT13}
N. Hayashi, T. Ozawa, and K. Tanaka, {\it On a system of nonlinear Schr\"odinger equations with quadratic interaction}, Ann. Inst. H. Poincar\'e Anal. Non Lin\'eaire {\bf 30} (2013), 661--690. 

\bibitem{H14}H. Hirayama, {\it Well-posedness  and scattering for a system of quadratic derivative
nonlinear Schr\"odinger equations with low regularity initial data}, Commun. Pure Appl. Anal. {\bf 13} (2014), no. 4, 1563--1591. 

\bibitem{HI24}
H. Hirayama and M. Ikeda, 
    {\it Variational problems for the system of nonlinear Schr\"odinger equations with derivative nonlinearities}, Calc. Var. Partial Differential Equations {\bf 63} (2024), no. 7, Paper No. 170, 31 pp. 

\bibitem{HK19}H. Hirayama and S. Kinoshita, 
{\itshape Sharp bilinear estimates and its application to 
a system of quadratic derivative nonlinear Schr\"odinger equations}, 
Nonlinear Anal. {\bf 178} (2019), 205--226. 

\bibitem{HKO21}
H. Hirayama, S. Kinoshita, and M. Okamoto,
{\it Well-posedness for a system of quadratic derivative nonlinear Schr\"odinger equations in almost critical spaces},
J. Math. Anal. Appl. {\bf 499} (2021), no. 2, Paper No. 125028, 29 pp.

\bibitem{IKN19}
T. Inui, N. Kishimoto, and K. Nishimura,
{\it Scattering for a mass critical NLS system below the ground state with and without mass-resonance condition},
Discrete Contin. Dyn. Syst. {\bf 39} (2019), no. 11, 6299–6353.

\bibitem{KM26}
H. Kokufukata and H. Matsuzawa, 
    {\it Ground state for a system of nonlinear Schr\"odinger equations with three waves interaction and critical nonlinearities}, 
    J. Math. Anal. Appl. {\bf 553} (2026), no.1, Paper No. 129854, 20pp.

\bibitem{KO22}
K. Kurata and Y. Osada, 
    {\it Variational problems associated with a system of nonlinear Schr\"odinger equa-
tions with three wave interaction}, 
    Discrete Contin. Dyn. syst. Ser. B {\bf 27} (2022), no.3, 1511--1547.

\bibitem{Lieb83}
E. H. Lieb, 
    {\it On the lowest eigenvalue of the Laplacian for the intersection of two domains}, 
    Invent. Math. {\bf 74} (1983), 441--448.

\bibitem{NP21}
N. Noguera and A. Pastor, 
    {\it A system of Schr\"odinger equations with general quadratic-type nonlinearities}, Commun. Contemp. Math. {\bf 23} (2021), no. 4, Paper No. 2050023, 66 pp.

\bibitem{NP21_2}
N. Noguera and A. Pastor, 
    {\it Scattering of radial solutions for quadratic-type Schr\"odinger systems in dimension five}, Discrete Contin. Dyn. Syst. {\bf 41} (2021), no. 8, 3817–3836. 

\bibitem{NP21_3}
N. Noguera and A. Pastor, 
    {\it Scattering for quadratic-type Schr\"odinger systems in dimension five without mass-resonance}, Partial Differ. Equ. Appl. {\bf 2} (2021), no. 4, Paper No. 60, 30 pp. 

\bibitem{NP21_4}
N. Noguera and A. Pastor, 
    {\it On the dynamics of a quadratic Schr\"odinger system in dimension $n=5$}, Dyn. Partial Differ. Equ. {\bf 17} (2020), no. 1, 1–17.

\bibitem{NP22}
N. Noguera and A. Pastor, 
    {\it Blow-up solutions for a system of Schr\"odinger equations with general quadratic-type nonlinearities in dimensions five and six},  Calc. Var. Partial Differential Equations {\bf 61} (2022), no. 3, Paper No. 111, 35 pp.

\bibitem{Lipre}
Y. Li,
{\it On traveling waves and global existence for a nonlinear
Schr\"odinger system with three waves interaction}, J. Math. Phys. {\bf 66}(2025), no.9, Paper No. 091511, 19pp.

\bibitem{Pas15}
A. Pastor,
{\it Weak concentration and wave operator for a 3D coupled nonlinear Schr\"odinger system}, J. Math.
Phys. {\bf 56} (2015), 021507. 

\bibitem{Pomponio10}
A. Pomponio,
{\it Ground states for a system of nonlinear Schr\"odinger equations with three wave interaction}, 
J. Math. Phys. {\bf 51} (2010), 093513. 

\bibitem{Wang17}
J. Wang,
{\it Solitary waves for coupled nonlinear elliptic system with nonhomogeneous non-linearities}, 
Calc. Var. Partial. Differ. Equ. {\bf 56} (2017), no.2, Paper No.38, 38pp. 

\end{thebibliography}

\end{document}